\numberwithin{equation}{section}
\theoremstyle{plain}
\newtheorem{theo}{Theorem}[section]
\newtheorem{coro}{Corollary}[section]
\newtheorem{prop}[coro]{Proposition}
\newtheorem{lemm}[coro]{Lemma}
\newtheorem{rema}[coro]{Remark}
\newtheorem{defi}[coro]{Definition}
\newcommand{\alphaa}{a}
\newcommand{\ps}{\oplus}
\newcommand{\ns}{\ominus}
\renewcommand{\leq}{\leqslant}
\renewcommand{\geq}{\geqslant}
\begin{document}
\normalem
\title{\bf Geometric properties of spin clusters in random triangulations coupled with an Ising Model.}
\author{{Marie Albenque}\footnote{{LIX, UMR CNRS 7161, \'Ecole Polytechnique, 91120 Palaiseau, France}. \href{mailto:albenque@lix.polytechnique.fr}{albenque@lix.polytechnique.fr}} \, and {Laurent Ménard}\footnote{{New York University Shanghai, 200122 Shanghai, China,} and {Modal'X, UMR CNRS 9023, UPL, Univ. Paris Nanterre, F92000 Nanterre, France}. \href{mailto:laurent.menard@normalesup.org}{laurent.menard@normalesup.org}}}
\date{January 27, 2022}

\maketitle


\begin{abstract}
We investigate the geometry of a typical spin cluster in random triangulations sampled with a probability proportional to the energy of an Ising configuration on their vertices, both in the finite and infinite volume settings. This model is known to undergo a combinatorial phase transition at an explicit critical temperature, for which its partition function has a different asymptotic behavior than uniform maps. The purpose of this work is to give geometric evidence of this phase transition.

In the infinite volume setting, called the Infinite Ising Planar Triangulation, we exhibit a phase transition for the existence of an infinite spin cluster: for critical and supercritical temperatures, the root spin cluster is finite almost surely, while it is infinite with positive probability for subcritical temperatures. Remarkably, we are able to obtain an explicit parametric expression for this probability, which allows to prove that the percolation critical exponent is $\beta=1/4$.

We also derive critical exponents for the tail distribution of the perimeter and of the volume of the root spin cluster, both in the finite and infinite volume settings. Finally, we establish the scaling limit of the interface of the root spin cluster seen as a looptree. In particular in the whole supercritical temperature regime, we prove that the critical exponents and the looptree limit are the same as for critical Bernoulli site percolation. 

Our proofs mix combinatorial and probabilistic arguments. The starting point is the gasket decomposition, which makes full use of the spatial Markov property of our model. This decomposition enables us to characterize the root spin cluster as a Boltzmann planar map in the finite volume setting. We then combine precise combinatorial results obtained through analytic combinatorics and universal features of Boltzmann maps to establish our results.

\bigskip

\noindent{\bf MSC 2010 Classification:}
05A15, 
05A16, 
05C12, 
05C30, 
60C05, 
60D05, 
60K35, 
82B44  
\end{abstract}


\newpage
\tableofcontents
\newpage

\section{Introduction}
In recent years, a lot of attention has been devoted to the mathematical study of random planar maps (graphs embedded into surfaces). One of the original motivation, coming from theoretical physics and quantum gravity, is to provide generic models for 2-dimensional random geometries.

The combinatorial study of maps originated in the work of Tutte~\cite{Tuttetri,Tuttecensus}, who obtained closed enumerative formulas for many classes of maps. The combinatorial properties of these models are now fairly well understood. Indeed, semi-automatic methods to enumerate planar maps have been developed, see for instance the monograph by Flajolet and Sedgewick on analytic combinatorics \cite{FS} and the work by Bousquet-Mélou and Jehanne~\cite{BMJ}. In addition, many bijections between planar maps and decorated trees are available to explain the simplicity of their enumeration, see for example Schaeffer's thesis~\cite{Sch98} and the work by Bouttier, Di Francesco and Guitter \cite{BDFG}. We refer to Schaeffer's survey~\cite{autopromo} for a review of the earlier combinatorial literature on random maps.

This has led to spectacular results in the probabilistic study of uniform models of random maps. Important milestones of this field are the introduction of \emph{infinite volume limits} for the local topology by Angel and Schramm \cite{AngelSchramm}, as well as Chassaing and Durhuus~\cite{ChassaingDurhuus} and Krikun~\cite{Kr,KrikunQuad}, and the proof that the \emph{scaling limit of uniform quadrangulations} is the Brownian sphere for the Gromov-Hausdorff topology by Le Gall~\cite{LGbm} and Miermont~\cite{Miebm}. The Brownian sphere has then been proved to share a deep connection with Liouville Quantum Gravity by Miller and Sheffield in a series of articles \cite{MSa,MSb,MSc}, as well as by Holden and Sun \cite{HoldenSun}. 
We refer to the recent surveys by Le Gall~\cite{LGsurvey}, Miermont \cite{MieSF} and Miller \cite{MillerSurvey} for nice entry points to this field.

\bigskip

The common behavior of large scale properties of uniform models of random maps and the ubiquity of the Brownian sphere in this setting are the archetype of a universality class. In the physics literature, this class is called \emph{pure 2d quantum gravity}. From a physics perspective, coupling gravity and matter should lead to other universality classes of \emph{2d quantum gravity with matter}. A natural way to proceed from a mathematical point of view is to consider planar maps with a statistical physics model.

A first important set of such models consists of maps decorated by a critical statistical model for which there exists a \emph{mating-of-trees} bijection. These bijections are generalizations of Mullin's bijection~\cite{Mullin}, which encodes a spanning tree decorated map by a walk in $\mathbb Z^2$. The walks coding the maps together with the statistical model can then be interpreted as a pair of shuffled trees, hence the name \emph{mating-of-trees}. This category includes maps decorated with a critical FK percolation~\cite{She,C}, bipolar-oriented maps~\cite{Bipolar}, and Schnyder-wood decorated maps~\cite{Schnyder}. The bijection allows to study the convergence of these models in the \emph{Peano} sense, or for the local topology. We refer to the survey by Gwynne, Holden and Sun for an overview of this approach \cite{GHSSurvey}.

Models for which no \emph{mating-of-trees} bijection is available have also been considered recently, among which planar maps decorated by the $\mathcal O(n)$-model and triangulations decorated by an Ising model.
The $\mathcal O(n)$-model is a critical model of loops and has been studied by Borot, Bouttier and Guitter~\cite{BBGa,BBGb,BBGc}, as well as Borot, Bouttier and Duplantier~\cite{BBD}. The Ising model was initially introduced by Lenz and studied by Ising in the 1920s~\cite{Ising} to model magnetism, and was proved to exhibit a phase transition in 2-dimension by Onsager~\cite{Onsager}. Probabilistic aspects of planar maps decorated by an Ising model were studied recently by the authors and Schaeffer~\cite{IsingAMS}, and by Chen and Turunen~\cite{ChenTurunen,ChenTurunen2}, as well as Turunen~\cite{Turunen}. In particular, it was proved in~\cite{IsingAMS} that large random planar triangulations coupled with an Ising model converge in law for the local topology for any value of the temperature parameter. The limiting object is called the Ising Infinite Planar Triangulation (IIPT).

\bigskip

\begin{figure}[t!]
\centering
 \subfloat[Triangulation with spins,]{\quad\quad
      \includegraphics[width=0.32\textwidth,page=1]{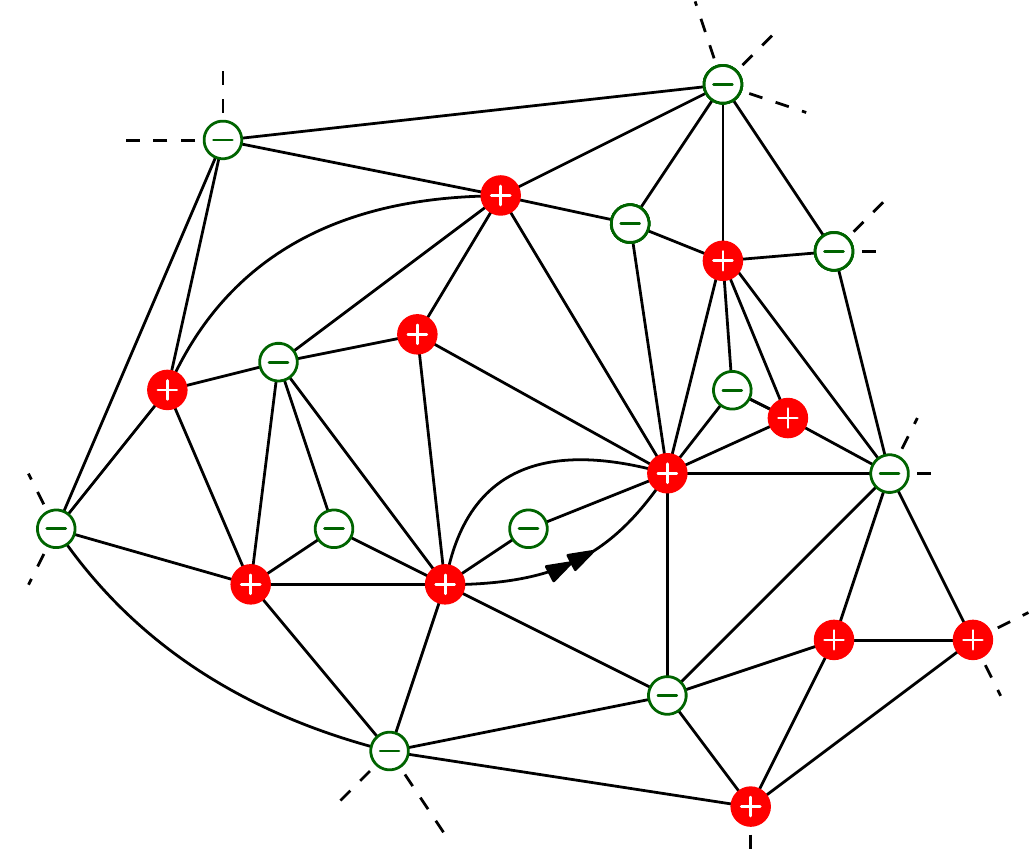}
      \label{subfig:hulla}\quad\quad} 
    \subfloat[its root spin cluster,]{\quad\quad
      \includegraphics[width=0.32\textwidth,page=2]{hull.pdf}
      \label{subfig:hullb}\quad\quad}\\
     \subfloat[and its hull cluster.]{\quad
      \includegraphics[width=0.32\textwidth,page=3]{hull.pdf}
      \label{subfig:hullc}\quad}
    \caption{Example of the root spin cluster $\mathfrak C$ and its hull $\mathfrak H$ for an Ising-weighted triangulation with monochromatic root edge. The boundary $\partial \mathfrak C$ of the root spin cluster is in bold in the last picture and has perimeter $8$.}
    \label{fig:IntroCluster}
\end{figure}

In this paper, we focus on triangulations decorated by an Ising model. Our aim is to study the geometry of a typical spin cluster (see Figure~\ref{fig:IntroCluster} for an illustration), both in the finite volume setting and in the IIPT. An important aspect of our work is that the model is studied not only at criticality, but for any temperature. This allows to capture an interesting phase transition for the geometric properties of the clusters. As we will see, this phase transition coincides with a combinatorial phase transition where the partition function of our model has been known to behave differently than uniform models of random maps since the work of Boulatov and Kazakov~\cite{BoulatovKazakov}.

We quantify several aspects of this phase transition. First, in the IIPT, we show that the cluster of the origin is infinite with positive probability if and only if the temperature is smaller than the critical temperature. We also calculate explicitly this probability and the associated critical exponent. Secondly, we calculate the critical exponents for the tail distribution of the volume and perimeter of the cluster of the origin. Finally, we derive the scaling limit in the Gromov--Hausdorff topology of the interface of this cluster in the setting of looptrees. All these quantities also undergo a phase transition at the critical temperature. Strikingly, we prove that, for all these quantities, the whole supercritical temperature regime behaves like critical Bernoulli percolation.

There are bijections between decorated trees and maps with an Ising model, see Bousquet--Mélou and Schaeffer~\cite{BMS} and Bouttier, Di Francesco and Guitter~\cite{BDFG}. However these bijections are not of the mating-of-tree type and it is not clear if they can give some insight on the geometric properties of our model.

Apart from bijections, the main classical tools to study models of random maps with decorations are the so-called \emph{peeling process} and \emph{gasket decomposition}. Both approaches make use of the spatial Markov properties of planar maps. Roughly speaking, the peeling process is a Markovian exploration that can be tailored to follow interfaces between clusters. It was first used by Angel to study volume growth and site percolation on the Uniform Infinite Planar Triangulation \cite{AngelPerco}, and later by many others. See the lectures notes by Curien~\cite{CurienSF} for a presentation of this procedure.

The peeling process is however not well suited to study the geometry of clusters.
In this work, we rely on the gasket decomposition, which consists on decomposing a triangulation with an Ising model into a spin cluster, and blocks filling the faces of this cluster. This approach has been used successfully by Borot, Bouttier, Duplantier and Guitter to study the $\mathcal O(n)$ model on random maps~\cite{BBGa,BBGb,BBGc,BBD}, and by Bernardi, Curien and Miermont to study Bernoulli bond and site percolation on random triangulations~\cite{BeCuMie}.

\bigskip

The rest of this introduction consists in the definition of our model, the presentation of our main results, the organization of the article and our strategy, as well as how our results relate to important conjectures. 

\subsection{Models studied in this work.}

To state our main results, let us introduce some notation and terminology. Precise definitions will be given in sections~\ref{sec:prelim} and~\ref{sec:combi}. If $\mathfrak t$ is a finite rooted triangulation of the sphere, a spin configuration on the vertices of $\mathfrak t$ is a mapping $\sigma : V(\mathfrak t) \to \{\ns,\ps \}$ and for a triangulation with spins $(\mathfrak t,\sigma)$, we denote by $m(\mathfrak t,\sigma)$ its number of monochromatic edges. We also denote by $|\mathfrak t|$ the size of $\mathfrak t$, that is its number of edges. Denoting by $\mathcal T$ the set of finite triangulations with a spin configuration, we define the partition function of the Ising model on triangulations of the sphere by
\begin{equation} \label{eq:sphereGS}
\mathcal Z (\nu,t) = \sum_{(\mathfrak t,\sigma) \in \mathcal T} \nu^{m(\mathfrak t,\sigma)} t^{|\mathfrak t|}.
\end{equation}
Writing $\nu = \exp (2 / T)$, we have
\[
\nu^{m(\mathfrak t,\sigma)} = \exp \left(\frac{1}{T} \sum_{\{v,v'\}\in E(\mathfrak t)} \sigma(v) \, \sigma(v') \right) \, \exp \left(  |\mathfrak t| / T\right),
\]
and the partition function $\mathcal Z (\nu,t)$ can be seen as the usual partition function of the Ising model with temperature $T$ and no external magnetic field. In particular, when $\nu >1$ the temperature is positive and the model is ferromagnetic. When $\nu <1$ the temperature is negative and the model is antiferromagnetic. When $\nu = 1$, the temperature is infinite and the model corresponds to Bernoulli site percolation with parameter $1/2$, which is the critical parameter for site percolation on triangulations (see \emph{e.g.} the original work of Angel \cite{AngelPerco}).

This partition function has been studied by Boulatov--Kazakov \cite{BoulatovKazakov}, Bernardi--Bousquet-Mélou \cite{BernardiBousquet} and the authors of this article together with Schaeffer \cite{IsingAMS}. For every $\nu >0$, let us denote by $t_\nu$ the radius of convergence in $t$ of $\mathcal Z(\nu,t)$. The value of $t_\nu \in (0,\infty)$ is known explicitly as well as the value $\mathcal Z(\nu,t_\nu)<\infty$, see for example \cite{BernardiBousquet} or \cite{IsingAMS}. It is shown in the works \cite{IsingAMS,BernardiBousquet,BoulatovKazakov,BDFG,BMS} that $\mathcal Z$ undergoes a combinatorial phase transition at
\begin{equation}\label{eq:nuc}
\nu_c := 1 + \frac{\sqrt 7}{7}.
\end{equation}
When $\nu \neq \nu_c$ the coefficients in $t$ of the series $\mathcal Z(\nu,t)$ exhibits the same universal asymptotic behavior as undecorated models of planar maps with exponent $5/2$. On the other hand, when $\nu = \nu_c$ the model falls in a different universality class and the coefficients exhibit an asymptotic behavior with exponent $7/3$.

\bigskip

To simplify our statements,
we restrict our attention to triangulations with spins such that both end vertices of its root edge have spin $\ps$. We denote by $\mathcal T^\ps$ the set of all such triangulations and by $\mathcal Z^{\ps} (\nu , t)$ their partition function, that is:
\begin{equation} \label{eq:GSplus}
\mathcal Z^\ps (\nu,t) = \sum_{(\mathfrak t,\sigma) \in \mathcal T^\ps} \nu^{m(\mathfrak t,\sigma)} t^{|\mathfrak t|}.
\end{equation}
The partition function $\mathcal Z^\ps$ has the same radius of convergence $t_\nu$ as $\mathcal Z$ and undergoes the same combinatorial phase transition.

We will consider three different models of random triangulations coupled with an Ising model, which correspond informally to finite random volume, finite fixed volume, and infinite volume limit:
\begin{itemize}
\item The finite random volume distribution is the probability measure on $\mathcal T^\ps$ defined by
\begin{equation} \label{eq:Pnudef}
\forall (\mathfrak t,\sigma) \in \mathcal T^\ps , \quad \mathbb P^\nu \left( \{(\mathfrak t,\sigma) \}\right) = 
\frac{t_\nu ^{|\mathfrak t|} \nu^{m(\mathfrak t,\sigma)}}{\mathcal Z^{\ps}(\nu,t_\nu)}.
\end{equation}
A random triangulation with spins with law $\mathbb P^\nu$ is called an Ising random triangulation with parameter $\nu$ and will be denoted by $\mathbf T^\nu$.
\item The finite fixed volume distribution $\mathbb P^\nu_n$ is the probability $\mathbb P^\nu$ conditioned on the event where the triangulation has size $3n$:
\begin{equation} \label{eq:Pnundef}
\forall (\mathfrak t,\sigma) \in \mathcal T^\ps , \quad \mathbb P_n^\nu \left( \{(\mathfrak t,\sigma) \}\right) = \mathbb P^\nu \Big( \{(\mathfrak t,\sigma) \} \Big| |\mathfrak t| = 3n \Big) = 
\frac{\nu^{m(\mathfrak t,\sigma)}}{[t^{3n} ] \mathcal Z^{\ps}(\nu,t)} \mathbf{1}_{\{ |\mathfrak t| = 3n  \}}.
\end{equation}
A random triangulation with spins with law $\mathbb P^\nu_n$ will be denoted by $\mathbf T^\nu_n$.
\item The infinite volume distribution $\mathbb P^\nu_\infty $ is the weak limit of $\mathbb P^\nu_n$ when $n\to \infty$ for the local topology as defined in \cite{IsingAMS}.
A random triangulation with spins with law $\mathbb P^\nu_\infty$ is called an Infinite Ising Planar Triangulation with parameter $\nu$ (or $\nu$-IIPT) and will be denoted by $\mathbf T^\nu_\infty$. See Section \ref{sec:IIPTdef} for details.
\end{itemize}

\subsection{Main results}

We now delve into the presentation of our main results. 
For a rooted (possibly infinite) triangulation with spins $(\mathfrak t,\sigma)$, its root spin cluster (or root sign cluster) is the connected component of the root vertex in the submap of $\mathfrak{t}$ spanned by monochromatic edges, and is denoted by $\mathfrak{C}(\mathfrak t,\sigma)$. See Figure \ref{fig:IntroCluster} for an illustration. When the root edge of $\mathfrak t$ is monochromatic, the root spin cluster contains the root edge and is hence a rooted planar map. In this case, we denote by $\partial \mathfrak{C}(\mathfrak t,\sigma)$ the boundary of its root face -- which is the face lying on the right-hand side of the root --  and define the perimeter $|\partial \mathfrak{C}(\mathfrak t,\sigma)|$ of the root spin cluster as the length of $\partial \mathfrak{C}(\mathfrak t,\sigma)$,  see Figure \ref{fig:IntroCluster}.

\bigskip

This work focuses on the geometry of $\mathfrak C(\mathbf{T}^\nu)$, $\mathfrak C(\mathbf{T}_n^\nu)$ and $\mathfrak C(\mathbf{T}_\infty^\nu)$. Since the distribution of these random maps is invariant by re-rooting along a simple random walk, the root spin cluster has the same geometric properties as a typical spin cluster in any of these three models.

\paragraph{Cluster percolation probability and critical exponent.}

Our first main result establishes a phase transition at $\nu_c$ for percolation of the root spin cluster $\mathfrak C(\mathbf T_\infty ^\nu)$ of the $\nu$-IIPT, and gives the value of the associated critical exponent $\beta$:
\begin{theo} \label{th:expo}
The root spin cluster of the $\nu$-IIPT is almost surely finite when $\nu \leq \nu_c$ and infinite with positive probability when $\nu > \nu_c$.

In addition, we have an explicit parametric formula for the probability $\mathbb P_\infty^\nu \left( \left| V(\mathfrak C (\mathbf T^\nu_\infty)) \right| = \infty \right)$, see Figure~\ref{fig:ProbaPerco} and Equation~\eqref{eq:ProbaPerco}. From this formula, we can establish that the percolation critical exponent $\beta$ of the root spin cluster is $1/4$, that is
\[
\mathbb P_\infty^\nu \left( \left| V(\mathfrak C (\mathbf T^\nu_\infty)) \right| = \infty \right)
\underset{\nu \to \nu_c^+}{\sim} \kappa \, \left( \nu - \nu_c \right)^{1/4},
\]
with 
\[
\kappa = \frac{\sqrt{3 + 2 \sqrt{3}} \,  2^{3/4} \, 7^{3/8} \, (63 + 18 \sqrt{7} - 10 \sqrt 3 \, \sqrt 7 - 23 \sqrt 3)}{144}.
\]
\end{theo}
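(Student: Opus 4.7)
The plan is to leverage the gasket decomposition, which the introduction flags as the central tool, to describe $\mathfrak{C}(\mathbf{T}_\infty^\nu)$ as a (possibly infinite) Boltzmann-type planar map. In the finite volume model the root spin cluster is a Boltzmann map whose faces of degree $2k$ are weighted by the partition function of a ``block'' — a bicolored triangulation with boundary of length $2k$, glued into a face of the cluster — at the critical weight $t_\nu$. I would first write this Boltzmann law explicitly, then pass to the infinite volume setting by taking a local limit. In the IIPT the root cluster retains a Boltzmann description, now possibly with an infinite face, so the event $\{|V(\mathfrak{C}(\mathbf{T}_\infty^\nu))|=\infty\}$ is exactly the survival event of the Galton--Watson tree encoding the cluster's face exploration. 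This reduces computing the percolation probability to solving a single fixed-point equation in the weight generating function.

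Next I would exploit the explicit generating functions for the Ising model on triangulations with boundary, available from \cite{BernardiBousquet,IsingAMS}, to obtain a parametric formula for the GW offspring distribution in terms of $\nu$. The survival probability is then $1 - s(\nu)$, where $s(\nu)$ is the smallest non-negative fixed point of the generating function, and this yields the explicit parametric expression announced in Equation~\eqref{eq:ProbaPerco}. The phase transition at $\nu_c$ is then read off from the criticality of the offspring distribution: for $\nu \leq \nu_c$ the mean number of children is $\leq 1$ so $s(\nu)=1$, while for $\nu>\nu_c$ the weight sequence becomes supercritical and $s(\nu)<1$. The coincidence with the combinatorial threshold $\nu_c = 1+\sqrt{7}/7$ is not accidental: it reflects the fact that $\nu_c$ is precisely where $\mathcal{Z}(\nu,t_\nu)$ leaves the universal $5/2$ regime, which forces the associated Boltzmann weights to leave the subcritical regime.

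To compute the critical exponent $\beta$, I would perform a singular expansion near $\nu=\nu_c$. The generating functions entering the parametric formula have at $(\nu_c,t_{\nu_c})$ a non-generic singularity of exponent $7/3$ rather than the usual $5/2$. Differentiating the fixed-point equation and expanding both sides in powers of $(\nu-\nu_c)$ reveals that the leading correction to $s(\nu)$ is of order $(\nu-\nu_c)^{1/4}$, with the exponent $1/4$ arising from the combination of the square-root singularity of the inverse-function problem for fixed points with the $7/3$ cusp of the underlying partition function. The prefactor $\kappa$ is then computed by tracking the leading Puiseux coefficients through the parametric system, which in principle is a finite algebraic calculation.

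The main obstacle will be the simultaneous control of the two critical parameters $t$ and $\nu$ near the cusp point $(\nu_c,t_{\nu_c})$: one has to ensure that the parametrization remains analytic in the correct sector, identify which terms in the fixed-point expansion dominate, and correctly combine the fractional exponents coming from the inverse function and from the non-generic partition function singularity. Once the Puiseux expansion of $s(\nu)$ is in hand, the exponent $1/4$ and the explicit value of $\kappa$ follow from bookkeeping, but pinpointing $\beta=1/4$ (rather than, say, $1/3$ or $1/2$) is the conceptual heart of the argument.
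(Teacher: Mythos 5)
Your starting point---the gasket decomposition and the identification of the finite-volume root spin cluster as a $\mathbf q(\nu,t_\nu)$-Boltzmann map---matches the paper (Propositions~\ref{prop:q_k} and~\ref{prop:bolt}). After that, however, the route you sketch diverges from the actual argument in several places that are not just cosmetic.

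First, the cluster in the IIPT is \emph{not} a Boltzmann map ``possibly with an infinite face.'' Passing to the local limit via coefficient asymptotics (Proposition~\ref{prop:boltIIPT}) produces the size-biased law
$\mathbb P_\infty^\nu(\mathfrak C=\mathfrak m)=w_{\mathbf q(\nu,t_\nu)}(\mathfrak m)\cdot\sum_{f}\,(\nu t_\nu^3)^{\deg(f)/2}\delta_{\deg(f)}(\nu)/q_{\deg(f)}(\nu,t_\nu)$,
i.e.\ a Boltzmann map with a marked face weighted by $\delta_k$. Because of this extra marked face, $\mathbb P_\infty^\nu(|\mathfrak C|<\infty)$ is computed in the paper via the \emph{cylinder} (two-boundary) generating function $W_{\mathbf q,2}$ (Propositions~\ref{prop:gencylinders} and~\ref{prop:formulaperco}), not via a fixed-point/survival equation for a Galton--Watson exploration. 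The GW encoding you invoke is used in the paper only for the conditioned boundary looptree (Section~\ref{sec:looptrees}), and there it encodes a perimeter-conditioned object, not the unconditioned finiteness probability.

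Second, your criticality dichotomy is misplaced. The weight sequence $\mathbf q(\nu,t_\nu)$ is \emph{critical for every} $\nu>0$ (Proposition~\ref{prop:Wnut}); it is never supercritical. What changes at $\nu_c$ is whether it is \emph{regular} critical (exponential perimeter tail, $\nu>\nu_c$) or non-regular critical (polynomial perimeter tail, $\nu\le\nu_c$), see Proposition~\ref{prop:regNonReg}. The paper's proof that the cluster is infinite with positive probability for $\nu>\nu_c$ does not proceed by detecting a supercritical offspring mean; it compares the $n^{-3/2}$ tail of $|V(\mathfrak C(\mathbf T^\nu))|$ (regular criticality, Proposition~\ref{prop:regcritvol}) with the $n^{-3/2}$ tail of $|V(\mathbf T^\nu)|$ (Corollary~\ref{cor:equivZ+}) to get a uniform positive lower bound on the conditional probability, then passes to the local limit. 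Conversely, the statement for $\nu\le\nu_c$ is proved by evaluating the explicit integral formula and showing it equals $1$; no soft GW argument is available there since the cluster volume has infinite expectation.

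Third, the origin of the exponent $1/4$ is not the one you propose. Your heuristic of combining ``the square-root singularity of the inverse-function problem'' with ``the $7/3$ cusp'' does not obviously produce $1/4$ and is not how the paper gets it. What actually happens is that after the change of variables to the $K_\nu$-parametrization, the integral \eqref{eq:probaLT} can be computed in closed form as a rational expression in $K_\nu$ and $\sqrt{1-z(\mathrm{pole})}$, yielding \eqref{eq:ProbaPerco}, which contains the factor $\bigl((3K_\nu^2+4K_\nu-1)/(K_\nu^2+4K_\nu+5)\bigr)^{1/4}$. Since $K_{\nu_c}$ is a simple root of $3K^2+4K-1$ and $K_\nu-K_{\nu_c}\sim c(\nu-\nu_c)$ by \eqref{eq:Knucdev}, this term is exactly what produces $(\nu-\nu_c)^{1/4}$. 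Identifying this factor requires fully evaluating the integral; it does not fall out of a formal singularity analysis of a fixed-point equation. So the exponent $1/4$, which you rightly flag as the conceptual heart, is in the paper the output of a hard explicit computation rather than of the singularity-combination mechanism you propose.
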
 

\begin{figure}[t!]
\centering
\includegraphics[width=0.8\linewidth]{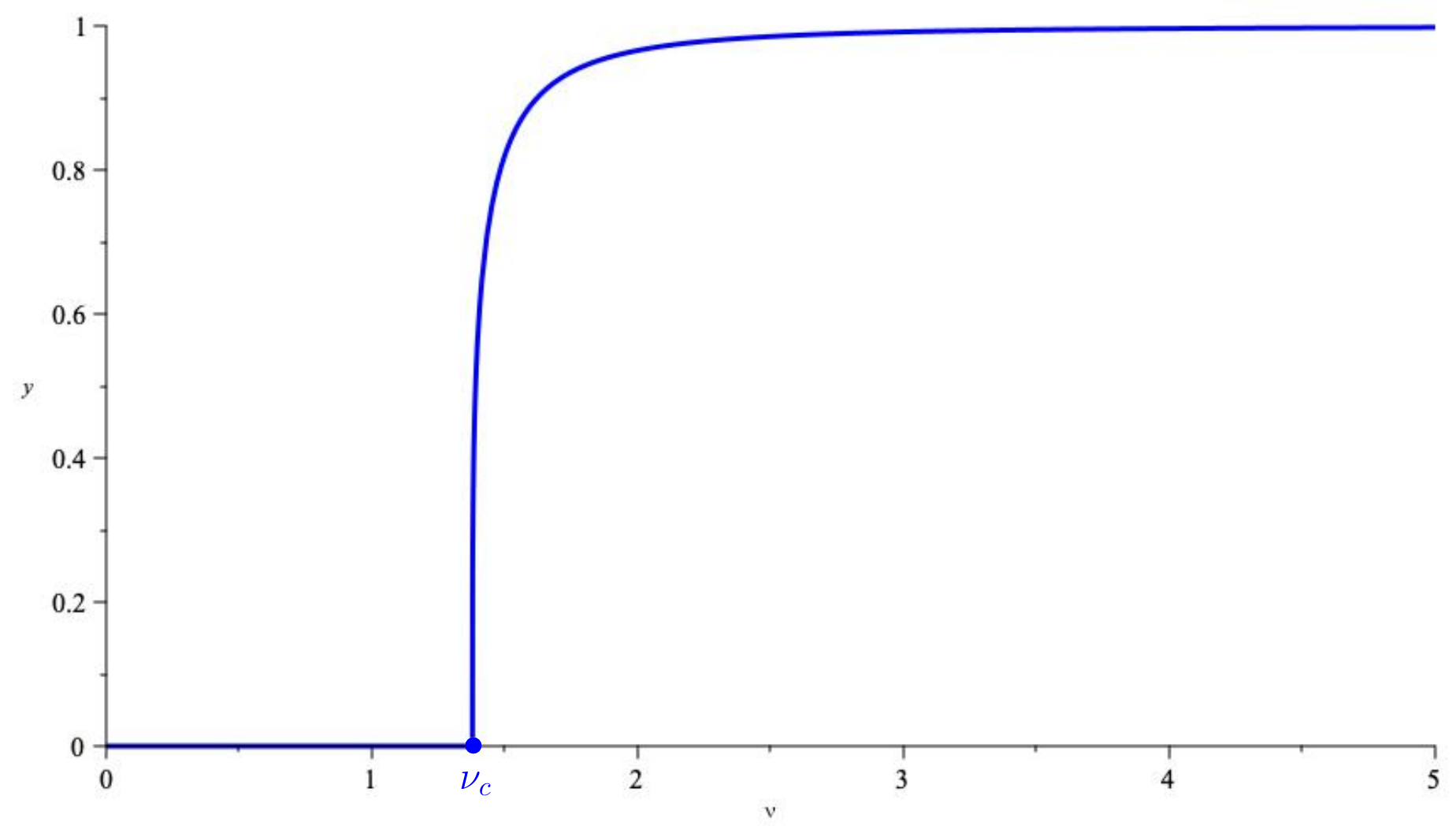}
\caption{\label{fig:ProbaPerco} Curve of $\mathbb P_\infty^\nu \left( \left| V(\mathfrak C (\mathbf T^\nu_\infty)) \right| = \infty \right) $ as a function of $\nu >0$.}
\end{figure}

This result is reminiscent of the percolation properties of the spin clusters in the Ising model on a 2-dimensional Euclidean lattice, for which a similar phase transition has been established using the Edwards-Sokal coupling with Fortuin-Kasteleyn percolation, see for example Duminil-Copin and Sminorv \cite{DCS} for a review on the subject and much more. In addition, the exponent $1/4$ of Theorem~\ref{th:expo} -- also called the spontaneous magnetization exponent in the physics literature -- has a counterpart equal to $1/8$ for the Ising model on the square lattice which was predicted by Onsager \cite{Onsager} and proved by Yang \cite{Yang}. It is not clear to us at the moment whether these two exponents can be related via the KPZ formula \cite{KPZ} and scaling relations \emph{\`a la} Kesten \cite{Kesten}.

\paragraph{Cluster volume and perimeter.}

Our next results deal with the tail distribution of the perimeter and of the volume of the root spin cluster in the three models, for which we establish sharp asymptotic estimates. 

\bigskip

We start by stating our results in the infinite volume case and further characterize the phase transition in the geometry of the root spin cluster. For $\nu\leq \nu_c$, i.e. when the root spin cluster is finite almost surely, we establish tail distributions for its volume and its perimeter. For $\nu>\nu_c$, we establish tail distribution for its perimeter, which stays almost surely finite even if the cluster is infinite with positive probability:
\begin{theo} \label{th:mainIIPT}
In the $\nu$-IIPT, the volume and the perimeter of the root spin cluster exhibit the following asymptotics: 
\begin{itemize}
\item High temperature case. For $0< \nu < \nu_c$, there exist positive constants $\mathrm{C}^{\mathrm{v}}_{\infty}(\nu)$ and $\mathrm{C}^{\mathrm{p}}_{\infty}(\nu)$ such that:
\[
\mathbb P_{\infty}^\nu \left( |V(\mathfrak C (\mathbf T^\nu_\infty))| > n \right)
\underset{n \to \infty}{\sim} \mathrm{C}^{\mathrm{v}}_{\infty}(\nu)\,  n^{-{1}/{7}} \quad \text{and} \quad \mathbb P_{\infty}^\nu \left( |\partial \mathfrak C (\mathbf T^\nu_\infty)| = p \right)
\underset{p \to \infty}{\sim} \mathrm{C}^{\mathrm{p}}_{\infty}(\nu)\, p^{-{4}/{3}}.
\]
\item Critical case. For $\nu = \nu_c = 1 + 1/\sqrt{7}$, there exist positive constants $\mathrm{C}^{\mathrm{v}}_{\infty}(\nu_c)$ and $\mathrm{C}^{\mathrm{p}}_{\infty}(\nu_c)$ such that:
\[
\mathbb P_{\infty}^\nu \left( |V(\mathfrak C (\mathbf T^\nu_\infty))| > n \right)
\underset{n \to \infty}{\sim} \mathrm{C}^{\mathrm{v}}_{\infty}(\nu_c)\, n^{-{1}/{11}} \quad \text{and} \quad \mathbb P_{\infty}^\nu \left( |\partial \mathfrak C (\mathbf T^\nu_\infty)| = p \right)
\underset{p \to \infty}{\sim} \mathrm{C}^{\mathrm{p}}_{\infty}(\nu_c)\, p^{-2}.
\]
\item Low temperature case. For $\nu > \nu_c$, the perimeter $|\partial \mathfrak C (\mathbf T^\nu_\infty)|$ is finite almost surely and has exponential tail.
\end{itemize}
\end{theo}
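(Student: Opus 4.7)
My plan is to combine the authors' announced gasket decomposition with universal scaling results for Boltzmann maps, in the spirit of the percolation analysis of \cite{BeCuMie}.

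First, I would use the spatial Markov property of the $\nu$-IIPT to characterize the root spin cluster $\mathfrak C(\mathbf T_\infty^\nu)$ as an (infinite-volume) Boltzmann map. Conditionally on its combinatorial shape, each bounded face of the cluster is independently filled with an Ising triangulation of a polygon of matching perimeter, subject to the constraint that no inner monochromatic component of the boundary color touches the boundary, and one distinguished face encodes the surrounding IIPT. The resulting weight sequence $q_k^{(\nu)}$ is the partition function of these constrained Ising triangulations with boundary of length $k$, for which exact expressions should follow from the algebraic systems of \cite{BernardiBousquet,IsingAMS}.

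Second, I would perform a singularity analysis of $\sum_k q_k^{(\nu)} x^k$. The combinatorial phase transition of $\mathcal Z(\nu,t)$ at $\nu_c$ should be inherited by this series, giving three regimes: generic critical $k^{-5/2}$ decay for $\nu<\nu_c$, matching the universality class of critical Bernoulli site percolation on uniform triangulations; non-generic critical $k^{-7/3}$ decay at $\nu=\nu_c$, consistent with the global $7/3$ exponent; and a strictly subcritical weight sequence for $\nu>\nu_c$. Plugging this into the universal scaling theory of Boltzmann maps (Le Gall--Miermont, Budd, Bertoin--Curien--Kortchemski) then produces the stated tails: the high-temperature exponents $n^{-1/7}$ and $p^{-4/3}$ match those computed for critical site percolation on the UIPT; the critical exponents $n^{-1/11}$ and $p^{-2}$ arise from the stable laws associated with the $7/3$ non-generic regime; and subcriticality of $(q_k^{(\nu)})$ yields the exponential decay of the perimeter at low temperature.

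The main obstacle I expect is the uniform singularity analysis of $q_k^{(\nu)}$ across the three regimes, including the identification of the correct constants $\mathrm{C}^{\mathrm{v}}_\infty(\nu)$ and $\mathrm{C}^{\mathrm{p}}_\infty(\nu)$ in the prefactors. A secondary technical difficulty is the rigorous infinite-volume transfer from the finite-volume Boltzmann description to the law of $\mathfrak C(\mathbf T_\infty^\nu)$, which requires careful treatment of the infinite face surrounding the cluster (whose role differs in the subcritical, critical, and supercritical regimes) and of the potential infiniteness of the cluster itself when $\nu>\nu_c$.
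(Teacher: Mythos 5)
Your plan correctly identifies the gasket decomposition and the Boltzmann-map point of view as the right framework, and the reference to the $\nu$-dependence inherited from the phase transition of $\mathcal Z$ is apt. However, there are two genuine problems.

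First, the announced singularity exponents for the weight sequence $\mathbf q(\nu,t_\nu)$ are wrong. The paper shows (Proposition~\ref{prop:asymptoq}) that $q_k(\nu,t_\nu) \sim \mathrm{Cst}\cdot r^k k^{-\alphaa(\nu)}$ with $\alphaa(\nu)=5/3$ for $\nu<\nu_c$, $\alphaa(\nu_c)=7/3$, and $\alphaa(\nu)=5/2$ for $\nu>\nu_c$. Thus the high-temperature regime is the \emph{non-generic} one with stability index $\alpha=7/6$ (dense phase), not a generic $k^{-5/2}$ regime, and this is precisely what matches critical Bernoulli percolation on the UIPT. The $k^{-5/2}$ decay you attribute to $\nu<\nu_c$ actually occurs for $\nu>\nu_c$; and for $\nu>\nu_c$ the weight sequence is not subcritical but \emph{regular critical} (Proposition~\ref{prop:regNonReg}). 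So the three regimes are non-generic critical ($\alpha=7/6$) / non-generic critical ($\alpha=11/6$) / generic critical, not generic / non-generic / subcritical.

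Second, and more fundamentally, the cluster in the IIPT is \emph{not} a Boltzmann map, so one cannot read the volume and perimeter exponents off the usual universal scaling theory. Proposition~\ref{prop:boltIIPT} shows that, on the event of finiteness, the law of $\mathfrak C(\mathbf T_\infty^\nu)$ is a size-biased modification of the Boltzmann weight, where one distinguished face is tilted by the coefficients $\delta_k(\nu)$ (which themselves grow like $2^k k^{1/3}$ for $\nu\le\nu_c$). For the perimeter, both the Boltzmann part and the tilted part contribute at the same order, and the exponent that emerges is $\alphaa(\nu)-1/3$ (i.e. $4/3$ or $2$), which is different from the Boltzmann exponent $2\alphaa(\nu)$ of Theorem~\ref{th:mainBoltzmann}. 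For the volume, the IIPT exponent $\tfrac{1}{6(\alphaa(\nu)-1/2)}$ (i.e. $1/7$ or $1/11$) also differs from the Boltzmann exponent $\tfrac{\alphaa(\nu)+1/2}{\alphaa(\nu)-1/2}$; obtaining it requires the new integral representation of the volume generating function (Proposition~\ref{prop:formulaperco}), the singular expansions of the BDG functions $f^\bullet_\nu,f^\diamond_\nu$ (Lemma~\ref{lem:asymptfbfg}) and of $z^+_\nu(g),z^\diamond_\nu(g)$ (Lemma~\ref{lem:asymzgtcrit}), followed by a Tauberian argument. This analysis is the substantive new content here — indeed the $n^{-1/7}$ exponent was open even for critical percolation on the UIPT — and it is not obtainable by plugging the weight sequence into known Boltzmann-map results. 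Similarly, for $\nu>\nu_c$ the exponential tail of $|\partial\mathfrak C(\mathbf T^\nu_\infty)|$ does not follow from a classification of the weight sequence (since the cluster is infinite with positive probability and the finite-cluster law is tilted), but from direct exponential bounds on $\mathbb P^\nu_n(|\partial\mathfrak C(\mathbf T^\nu_n)|=k)$ uniform in $n$.
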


Let us make a few comments on this result. First, we see that for $\nu \leq \nu_c$, the root spin cluster volume has infinite expectation. In particular, the statement of Theorem~\ref{th:expo} on the finiteness of the root spin cluster for this range of values of $\nu$ is not a direct consequence of Theorem~\ref{th:mainIIPT}. We will see that it turns out that the proofs of Theorem~\ref{th:expo} and Theorem~\ref{th:mainIIPT} are mostly independent.

Secondly, the high temperature regime $\nu < \nu_c$ includes the antiferromagnetic regime $\nu <1$ and the infinite temperature regime $\nu = 1$, where the spins are i.i.d. with probability $1/2$ to be $\ps$ or $\ns$, corresponding to critical site percolation on the UIPT as proved by Angel~\cite{AngelPerco}. As a consequence, in the whole high temperature regime, the geometry of the root spin cluster is similar to the geometry of the root spin cluster for critical site percolation, at least in term of the critical exponent $1/7$ for the volume and $4/3$ for the perimeter tail distribution. This can be seen as a Quantum Gravity version of the long standing conjecture that in the high temperature regime, the Ising model on planar lattices is in the same universality class as critical Bernoulli percolation, see for example B\'alint, Camia and Meester~\cite{IsingBernoulli} and the references therein.

The perimeter exponent $4/3$ was previously established for critical site percolation on the UIPT by Curien and Kortchemski \cite{CuKo} with different methods. However, the volume exponent $1/7$ is new even for critical site percolation on the UIPT and answers a conjecture by Gorny--Maurel Segala--Singh \cite{GMSS}. 

Lastly, we also mention that counterparts of Theorem~\ref{th:expo} and of Theorem~\ref{th:mainIIPT} for critical and off-critical site percolation on the UIPT are established independently by the second author \cite{Mperco}.

\bigskip

Our second set of results deals with triangulations with finite volume. First, we study the root spin cluster in the triangulation $\mathbf T^\nu$ with finite random volume:

\begin{theo} \label{th:mainBoltzmann}
In $\mathbf T^\nu$, the volume and the perimeter of the root spin cluster exhibit the following asymptotics: 
\begin{itemize}
\item High temperature case. For $0< \nu < \nu_c$, there exist positive constants $\mathrm{C}^{\mathrm{v}}(\nu)$ and $\mathrm{C}^{\mathrm{p}}(\nu)$ such that:
\[
\mathbb P^\nu \left( |V(\mathfrak C (\mathbf T^\nu))| > n \right)
\underset{n \to \infty}{\sim} \mathrm{C}^{\mathrm{v}}(\nu) \ n^{-{13}/{7}} \quad \text{and} \quad \mathbb P^\nu \left( |\partial \mathfrak C (\mathbf T^\nu)| = p \right)
\underset{p \to \infty}{\sim} \mathrm{C}^{\mathrm{p}}(\nu) \ p^{-{10}/{3}}.
\]
\item Critical case. For $\nu = \nu_c = 1 + 1/\sqrt{7}$, there exist positive constants $\mathrm{C}^{\mathrm{v}}(\nu_c)$ and $\mathrm{C}^{\mathrm{p}}(\nu_c)$ such that:
\[
\mathbb P^\nu \left( |V(\mathfrak C (\mathbf T^\nu))| > n \right)
\underset{n \to \infty}{\sim} \mathrm{C}^{\mathrm{v}}(\nu_c) \ n^{-{17}/{11}} \quad \text{and} \quad \mathbb P^\nu \left( |\partial \mathfrak C (\mathbf T^\nu)| = p \right)
\underset{p \to \infty}{\sim} \mathrm{C}^{\mathrm{p}}(\nu_c) \ p^{-{14}/{3}}.
\]
\item Low temperature case. For $\nu > \nu_c$, there exists a positive constant $\mathrm{C}^{\mathrm{v}}(\nu)$ such that:
\[
\mathbb P^\nu \left( |V(\mathfrak C (\mathbf T^\nu))| = n \right)
\underset{n \to \infty}{\sim} \mathrm{C}^{\mathrm{v}}(\nu) \ n^{-{5}/{2}} \quad \text{and} \quad |\partial \mathfrak C (\mathbf T^\nu)| \, \text{has exponential tail}.
\]
\end{itemize}
\end{theo}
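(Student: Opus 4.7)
The plan is to exploit the gasket decomposition described in the introduction to reduce the problem to the analysis of a Boltzmann planar map, and then to invoke universal scaling results for such maps. First I would show that under $\mathbb P^\nu$ the root spin cluster $\mathfrak C(\mathbf T^\nu)$ is distributed as a Boltzmann planar map with explicit weight sequence $(q_k(\nu))_{k \geq 1}$: via the gasket decomposition, the weight of an Ising triangulation $(\mathfrak t, \sigma)$ factorizes as the weight of the cluster $\mathfrak C$ seen as a planar map, times a product over the faces of $\mathfrak C$ of the partition functions of Ising-weighted triangulations with monochromatic boundary filling these faces (with the constraint that no interior vertex joins the cluster). Writing $q_k(\nu)$ for the latter partition function at boundary length $k$, a map $\mathfrak c$ is sampled with probability proportional to $\prod_f q_{\deg f}(\nu)$.

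Next I would analyze the asymptotic behavior of $(q_k(\nu))$ by singularity analysis of its generating function, built from the enumeration of Ising triangulations with boundary as developed in~\cite{BernardiBousquet,IsingAMS}. The expected picture is a three-regime classification:
\begin{itemize}
\item For $\nu > \nu_c$, the block generating function has radius of convergence strictly larger than the Boltzmann-critical value, so $(q_k(\nu))$ is \emph{subcritical} in the Marckert--Miermont sense and the cluster falls in the universality class of uniform random maps, yielding the volume tail $n^{-5/2}$ and the exponential perimeter tail.
\item For $\nu < \nu_c$, the block generating function is critical non-generic with the same type parameter as that arising in the gasket of critical Bernoulli site percolation on Boltzmann triangulations, hence exponents $p^{-10/3}$ and $n^{-13/7}$.
\item For $\nu = \nu_c$, the block generating function is critical non-generic, but with a distinct type parameter inherited from the non-standard $(t-t_{\nu_c})^{7/3}$ singularity of $\mathcal Z(\nu_c,\cdot)$; this accounts for the different exponents $p^{-14/3}$ and $n^{-17/11}$.
\end{itemize}

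Finally, I would invoke the universal tail asymptotics for rooted Boltzmann planar maps (in the generic subcritical regime for $\nu > \nu_c$ and following the framework of~\cite{BeCuMie} for the two non-generic critical regimes) to translate the weight asymptotics of Step~2 into the claimed asymptotics for $\mathbb P^\nu(|\partial \mathfrak C| = p)$ and $\mathbb P^\nu(|V(\mathfrak C)| > n)$. The explicit constants $\mathrm C^{\mathrm p}(\nu)$ and $\mathrm C^{\mathrm v}(\nu)$ are then read off from the leading coefficients in the singular expansions carried through the chain of generating-function manipulations.

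The hard part will be the singularity analysis at $\nu = \nu_c$: the non-standard $7/3$ exponent of $\mathcal Z$ propagates in a delicate way through the parametric expression of the block generating function, and extracting the precise power-law behavior of $q_k(\nu_c)$ requires careful bootstrap arguments before Flajolet--Odlyzko transfer theorems can be applied. A secondary but essential technical point is to rigorously check the admissibility and (non-)criticality of the weight sequence $(q_k(\nu))$ in each temperature range; this is what justifies the qualitative dichotomy between the subcritical behavior in the low-temperature phase and the critical non-generic behavior at and above $\nu_c$ in the thermal parameter.
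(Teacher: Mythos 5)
Your overall architecture is right: identify $\mathfrak C(\mathbf T^\nu)$ as a $\mathbf q(\nu,t_\nu)$-Boltzmann map via the gasket decomposition, analyze the weight sequence via singularity analysis of $Q^+$, and deduce a three-regime classification. This is exactly the paper's route. Two issues, one minor and one substantive.

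The minor one is terminological but could mislead you: for $\nu > \nu_c$ the weight sequence $\mathbf q(\nu,t_\nu)$ is \emph{not} subcritical in the Marckert--Miermont sense. The paper shows (Proposition~\ref{prop:Wnut}) that it is critical for \emph{every} $\nu>0$ at $t=t_\nu$, and what distinguishes the low-temperature phase is that the sequence is \emph{regular} (generic) critical (Proposition~\ref{prop:regNonReg}), whereas for $\nu\leq\nu_c$ it is non-regular critical. A genuinely subcritical weight sequence would give exponential volume tails, not the $n^{-5/2}$ point masses you (correctly) want; the $n^{-3/2}$ tail from Proposition~\ref{prop:regcritvol} is specific to regular criticality.

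The substantive gap is in the volume tail for $\nu\leq\nu_c$. You propose to ``invoke the universal tail asymptotics for rooted Boltzmann planar maps\ldots following the framework of BeCuMie''. There is no off-the-shelf universal result to invoke here: Bernardi--Curien--Miermont establish the exponent $13/7$ only for $\nu=1$ (site percolation), and the $17/11$ exponent at $\nu=\nu_c$ is entirely new. The paper has to build this machinery itself. Concretely, the route used is: write the volume generating function in terms of $z^+_\nu(g)$ and $z^\diamond_\nu(g)$ via Euler's relation and \eqref{eq:Zg}; obtain explicit integral formulas for the BDG functions $f^\bullet_{\mathbf q(\nu,t)}$ and $f^\diamond_{\mathbf q(\nu,t)}$ in terms of $Q^+$ (Proposition~\ref{prop:fbulletdiamond}, via Hadamard-product contour integrals); extract the singular expansion of $f^\bullet,f^\diamond$ at $(z^+_\nu,z^\diamond_\nu)$ with the exponent $\alphaa(\nu)-1/2$ (Lemma~\ref{lem:asymptfbfg}); bootstrap this into the singular expansion of $z^+_\nu(g),z^\diamond_\nu(g)$ as $g\to 1^-$ (Lemma~\ref{lem:asymzgtcrit}); and finally, since the generating function's singular expansion at $g=1$ is not enough to conclude (other singularities on the circle $|g|=1$ cannot be ruled out a priori), pass to the monotone tail sums $\sigma_n=\sum_{k>n}k\,p_k$ and apply a Tauberian theorem plus a regularization argument to recover the actual point-tail $s_n=\sum_{k>n}p_k \sim C\,n^{-(\alphaa(\nu)+1/2)/(\alphaa(\nu)-1/2)}$. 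Without this chain, the $13/7$ and $17/11$ exponents are not established.
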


Again, we see that volume and perimeter critical exponents are the same in the whole high temperature regime, which includes the antiferromagnetic setting and the critical percolation setting. In particular, the case $\nu =1$ of Theorem~\ref{th:mainBoltzmann} recovers the exponents for critical percolation derived by Bernardi--Curien--Miermont \cite{BeCuMie}.

\bigskip

We also study the root spin cluster of the triangulations $\mathbf T_n^\nu$ with finite fixed volume. In this setting, we establish sharp asymptotics for the expected volume of the root spin cluster:

\begin{theo} \label{th:mainsizen}
In $\mathbf T^\nu_n$, the volume and the perimeter of the root spin cluster exhibit the following asymptotics: 
\begin{itemize}
\item High temperature case. For $0 < \nu < \nu_c$, there exists a positive constant $\mathrm{C}^{\mathrm{v}}_{\mathrm f}(\nu)$ such that:
\[
\mathbb E_{n}^\nu \left( |V(\mathfrak C (\mathbf T^\nu_n))|  \right)
\underset{n \to \infty}{\sim} \mathrm{C}^{\mathrm{v}}_{\mathrm f}(\nu) \, n^{3/4}.
\]
\item Critical case. For $\nu = \nu_c =  1 + 1/\sqrt{7}$, there exists a positive constant $\mathrm{C}^{\mathrm{v}}_{\mathrm f}(\nu_c)$ such that:
\[
\mathbb E_{n}^\nu \left( |V(\mathfrak C (\mathbf T^\nu_n))| \right)
\underset{n \to \infty}{\sim} \mathrm{C}^{\mathrm{v}}_{\mathrm f}(\nu_c) \, n^{5/6}.
\]
\item Low temperature case. For $\nu > \nu_c$, there exists a positive constant $\mathrm{C}^{\mathrm{v}}_{\mathrm f}(\nu)$ such that:
\[
\mathbb E_{n}^\nu \left( |V(\mathfrak C(\mathbf T^\nu_n))| \right)
\underset{n \to \infty}{\sim} \mathrm{C}^{\mathrm{v}}_{\mathrm f}(\nu) \, n.
\]
\end{itemize}
\end{theo}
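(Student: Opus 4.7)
My starting point is the ratio identity
\begin{equation*}
\mathbb{E}_n^\nu\bigl[|V(\mathfrak{C}(\mathbf{T}_n^\nu))|\bigr] \;=\; \frac{[t^{3n}]\,\mathcal{Z}^{\ps,V}(\nu,t)}{[t^{3n}]\,\mathcal{Z}^{\ps}(\nu,t)},
\qquad
\mathcal{Z}^{\ps,V}(\nu,t)\;:=\!\!\sum_{(\mathfrak{t},\sigma)\in\mathcal{T}^{\ps}}\!|V(\mathfrak{C}(\mathfrak{t},\sigma))|\,\nu^{m(\mathfrak{t},\sigma)}t^{|\mathfrak{t}|}.
\end{equation*}
The denominator asymptotics at $t_\nu$ are given by the combinatorial phase transition recalled in the introduction: $[t^{3n}]\mathcal{Z}^{\ps}(\nu,t)\sim C(\nu)\,t_\nu^{-3n}\,n^{-5/2}$ for $\nu\neq\nu_c$ and $\sim C(\nu_c)\,t_{\nu_c}^{-3n}\,n^{-7/3}$ at $\nu_c$. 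Matching the target orders $n^{3/4}$, $n^{5/6}$, $n$ therefore reduces Theorem \ref{th:mainsizen} to proving
\begin{equation*}
[t^{3n}]\,\mathcal{Z}^{\ps,V}(\nu,t)\;\sim\;C\,t_\nu^{-3n}\times\begin{cases}n^{-7/4}, & \nu<\nu_c,\\ n^{-3/2}, & \nu=\nu_c,\\ n^{-3/2}, & \nu>\nu_c,\end{cases}
\end{equation*}
followed by a Flajolet--Sedgewick singularity transfer.

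To access $\mathcal{Z}^{\ps,V}$, I would invoke the gasket decomposition on which the paper is built. Marking cluster vertices by a formal variable $u$ and summing first over the root spin cluster $C$ with independent bichromatic-boundary blocks filling its faces gives
\begin{equation*}
\mathcal{Z}^{\ps}(\nu,t,u) \;=\; \sum_{C}u^{|V(C)|}(\nu t)^{|E(C)|}\prod_{f\in F(C)}Q_{\deg f}(\nu,t),
\end{equation*}
where $Q_d(\nu,t)$ is the partition function of a bichromatic block of boundary length $d$, whose singular expansion at $t_\nu$ has been computed in the combinatorial sections of the paper. Thus $\mathcal{Z}^{\ps,V}=\partial_u\mathcal{Z}^{\ps}|_{u=1}$ is the vertex-derivative of a Boltzmann-map generating function with explicit face weights, and under $\mathbb{P}^\nu$ the cluster itself is a Boltzmann planar map with face weights $q_d(\nu):=Q_d(\nu,t_\nu)$.

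The three temperature regimes correspond to three distinct Boltzmann-map behaviors of the cluster. For $\nu>\nu_c$ the block weights are regular at $t_\nu$, the cluster is a supercritical Boltzmann map of positive mean density, and a law of large numbers for the Boltzmann cluster (or, equivalently, a direct inspection of the singular part) shows that $\mathcal{Z}^{\ps,V}$ acquires a leading $(t_\nu-t)^{1/2}$ contribution, hence $n^{-3/2}$ and ratio $\sim n$. For $\nu=\nu_c$ the block weights themselves are critical with the non-generic $7/3$-exponent; the $u$-dependence of the critical point $t_c(u)$ of $\mathcal{Z}^{\ps}(\nu_c,t,u)$ feeds into the $u$-derivative to produce again a $(t_{\nu_c}-t)^{1/2}$ leading term, yielding $n^{-3/2}$ and ratio $\sim n^{5/6}$. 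For $\nu<\nu_c$ the cluster is a non-generic critical Boltzmann map in which the face weights $Q_d(\nu,t)$ themselves carry a square-root singularity at $t_\nu$; combining these block singularities with the vertex-derivative produces the anomalous fractional exponent $(t_\nu-t)^{3/4}$, and hence $n^{-7/4}$ and ratio $\sim n^{3/4}$.

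The principal obstacle is the high-temperature regime. The low-temperature case is accessible through a transparent probabilistic argument (a law of large numbers for the supercritical Boltzmann cluster under $\mathbb{P}^\nu$), and the critical case follows from a fairly mechanical tracking of the $u$-dependence in the parametric expansion of $\mathcal{Z}^{\ps}$ already available from the earlier combinatorial analysis. The exponent $3/4$ in the high-temperature case, on the other hand, is genuinely new: it cannot be read off any single singularity of $\mathcal{Z}^{\ps}$ alone but emerges from the specific interplay between the square-root singularity of the block weights $Q_d(\nu,t)$ and the non-generic scaling of the Boltzmann cluster, whose face-weight tail is of power-law type with an index tied to the Ising susceptibility computed earlier in the paper. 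Its rigorous derivation requires the most delicate singular expansion of all the main theorems, exploiting the full explicit parametrization of the Ising partition function and the fine structure of the $Q_d$.
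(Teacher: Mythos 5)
Your ratio identity is exactly the paper's starting point, and your numerator $\mathcal{Z}^{\ps,V}(\nu,t)$ is precisely the pointed Boltzmann partition function $w_{\mathbf{q}(\nu,t)}(\mathcal M^\bullet)$. But the heart of the proof --- computing the singular exponent of this series in $t^3$ --- is left as an acknowledged gap: you announce the target exponents $3/4$, $1/2$, $1/2$ (hence $n^{-7/4}$, $n^{-3/2}$, $n^{-3/2}$ for the coefficients) without deriving them, and your heuristic for $3/4$ (an ``interplay between the square-root singularity of the block weights and the non-generic scaling of the Boltzmann cluster'') does not identify the actual mechanism. The paper closes this by the chain $w_{\mathbf{q}}(\mathcal M^\bullet) = W^{(2)}_{\mathbf{q},\bullet}-1$, the universal form $W_{\mathbf{q},\bullet}(z) = \left((z-c_+)(z-c_-)\right)^{-1/2}$ giving $W^{(2)}_{\mathbf{q},\bullet} = \tfrac38 c_+^2 + \tfrac38 c_-^2 + \tfrac14 c_+c_-$, and the explicit relation $c_\pm(\nu,t) = 1/(\sqrt{\nu t^3}\,y_\pm(\nu,t))$ from~\eqref{eq:cy+-} linking the disk endpoints to the singularity positions of $Q^+(\nu,t,ty)$. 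This reduces everything to Proposition~\ref{prop:asym_t_sign_y}, the singular expansion of $y_\pm(\nu,t)$ in $t^3$, which gives exponent $\gamma_+(\nu)-1 = 3/4$, $1/2$, $1/2$ in the three regimes and hence the theorem as $n^{\gamma(\nu)-\gamma_+(\nu)}$.

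Two of your intermediate descriptions are also misleading. For $\nu > \nu_c$ the cluster is a \emph{regular critical} Boltzmann map, not a supercritical one (Proposition~\ref{prop:regNonReg}), and the linear scaling is simply the ratio $n^{-3/2}/n^{-5/2}$ of singular-transfer exponents, not a law of large numbers. For $\nu < \nu_c$ the anomalous $3/4$ is a property of the location $y_+(\nu,t)$ of the singularity of $Q^+$ as $t$ varies: the stationary point $V_+$ of the rational parametrization $\hat Y$ degenerates onto $V=1$ exactly at $t=t_\nu$, producing a $(1-(t/t_\nu)^3)^{1/4}$ coalescence in $V_+$ whose first two orders cancel upon composition with $\hat Y$. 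Identifying and carrying out this reduction --- not merely flagging that some ``delicate singular expansion'' is required --- is the entire content of the proof; once Proposition~\ref{prop:asym_t_sign_y} is available, the rest is a three-line computation.
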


The exponents $3/4$ and $5/6$ of Theorem~\ref{th:mainsizen} already appeared in an article by Borot, Bouttier and Duplantier \cite{BBD} as critical volume exponents for the gasket of an $\mathcal O (n)$ model on random maps. More precisely, the exponent $3/4$ corresponds to $n=1$ in the dense phase, corresponding to critical percolation. And the exponent $5/6$ corresponds to $n=1$ in the dilute phase, corresponding to the critical temperature Ising model.

\paragraph{Cluster boundary and looptrees.}

\begin{figure}[ht!]
\begin{center}
   \includegraphics[width=0.75\textwidth]{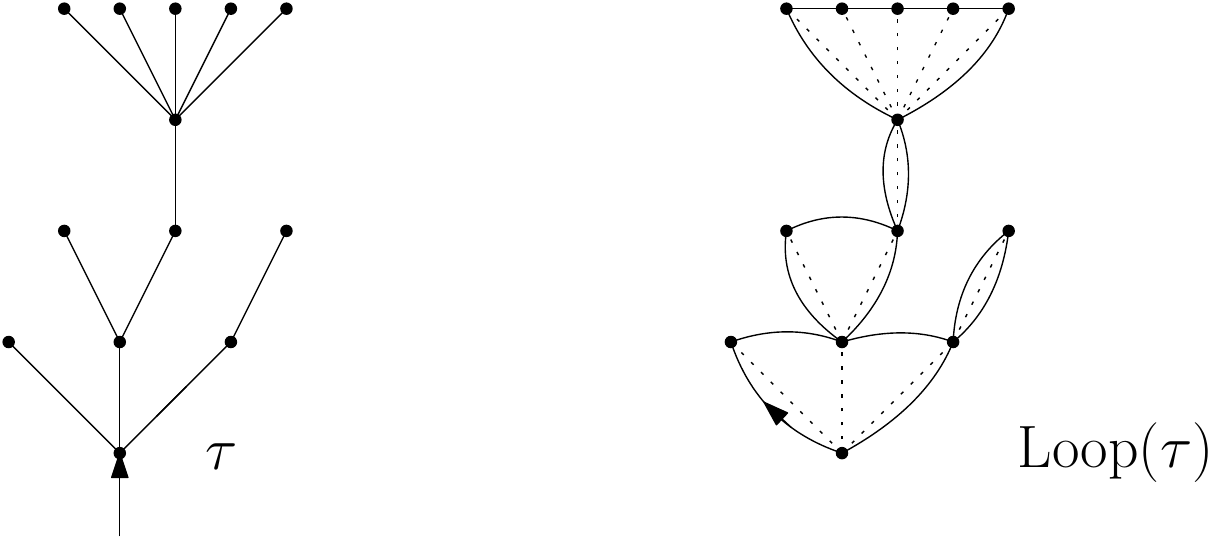}
    \caption{\label{fig:deflooptree} A planar rooted tree and its associated looptree.}
\end{center}
\end{figure}

Our last main result is about the geometry of the boundary of the root spin cluster seen as a discrete looptree and its scaling limit seen as a continuous looptree.

Looptrees were defined by Curien and Kortchemski in \cite{CKlooptrees} as a sort of dual objects for trees. The discrete case is as follows. To any tree $\tau$ we associate a graph $\mathrm{Loop}(\tau)$, called a looptree. The graph $\mathrm{Loop}(\tau)$ has the same vertex set as $\tau$, and there is an edge between two vertices $u$ and $v$ if and only if $u$ and $v$ are consecutive children of the same vertex in  $\tau$, or $v$ is the first child or the last child of $u$ in $\tau$. See Figure~\ref{fig:deflooptree} for an illustration. A continuous version of this definition is possible for random stable trees \cite{CKlooptrees}, which are Gromov-Hausdorff limits of critical Galton-Watson trees with offspring distribution in the domain of attraction of a stable distribution. Of particular interest is the random continuous looptree $\mathscr L_{3/2}$ associated to a random stable tree with index $3/2$, which is proved to be the scaling limit of the boundary of a critical site percolation cluster in the UIPT for the Gromov-Hausdorff topology by Curien and Kortchemski \cite{CuKo}. In the same article, the authors also prove that the scaling limit boundary of a supercritical site percolation cluster in the UIPT is the unit length cycle $\mathscr C_1$ for the Gromov-Hausdorff topology. Similar results were obtained for the boundary of random bipartite Boltzmann maps by Kortchemski and Richier \cite{CRlooptrees}.

\bigskip

Before stating our result, we need to introduce some additional notation. Denote by $\mathfrak H (\mathbf T_\infty^\nu)$ the hull of the cluster $\mathfrak C(\mathbf T_\infty^\nu)$, that is the submap of $\mathbf T_\infty^\nu$ spanned by vertices that are not inside the root face of $\mathfrak C(\mathbf T_\infty^\nu)$ (see Figure~\ref{fig:IntroCluster} for an illustration). From the one ended property of $\mathbf T_\infty^\nu$, either $\mathfrak H (\mathbf T_\infty^\nu)$ or $\mathbf T_\infty^\nu \setminus \mathfrak H (\mathbf T_\infty^\nu)$ is infinite. When $\mathfrak H (\mathbf T_\infty^\nu)$ is finite, its boundary $\partial \mathfrak C(\mathbf T_\infty^\nu)$ can be thought of as a typical spin interface. We denote by $\partial \mathfrak C_\infty^\nu(n)$ the boundary $\mathfrak C(\mathbf T_\infty^\nu)$ conditioned on the event that $\mathfrak H (\mathbf T_\infty^\nu)$ is finite and that the perimeter of $\mathfrak C(\mathbf T_\infty^\nu)$ is $n$. In a similar way, we denote by $\partial \mathfrak C^\nu(n)$ the boundary $\mathfrak C(\mathbf T^\nu)$ conditioned on the event that its perimeter is $n$.
We have the following scaling limits depending on the value of $\nu$:
\begin{theo} \label{th:looptrees}
For every $\nu >0$, there exists a positive constant $\mathrm{C}^{\mathrm{loop}}(\nu)$ such that the following convergences hold in distribution for the Gromov--Hausdorff topology:
\begin{itemize}
\item High temperature case. For $\nu < \nu_c$,
\begin{align*}
\frac{1}{n^{2/3}} \partial \mathfrak C_\infty^\nu(n) &\underset{n\to \infty}{\rightarrow} \mathrm{C}^{\mathrm{loop}}(\nu) \, \mathscr{L}_{3/2},
\quad \text{and} \quad
\frac{1}{n^{2/3}} \partial \mathfrak C^\nu(n) \underset{n\to \infty}{\rightarrow} \mathrm{C}^{\mathrm{loop}}(\nu) \, \mathscr{L}_{3/2}.
\end{align*}
\item Critical temperature case. For $\nu = \nu_c$, $\mathrm{C}^{\mathrm{loop}}(\nu_c) = \frac{1}{1+\sqrt{7}}$ and
\begin{align*}
\frac{1}{n} \partial \mathfrak C_\infty^\nu(n) &\underset{n\to \infty}{\rightarrow} \frac{1}{1+\sqrt{7}} \, \mathscr{C}_{1},
\quad \text{and} \quad
\frac{1}{n} \partial \mathfrak C^\nu(n) \underset{n\to \infty}{\rightarrow} \frac{1}{1+\sqrt{7}} \, \mathscr{C}_{1}.
\end{align*}
\item Low temperature case. For $\nu > \nu_c$, $\mathrm{C}^{\mathrm{loop}}(\nu) \in ( \frac{1}{1+\sqrt{7}} , \frac{\sqrt{3}-1}{2})$ and
\begin{align*}
\frac{1}{n} \partial \mathfrak C_\infty^\nu(n) &\underset{n\to \infty}{\rightarrow} \mathrm{C}^{\mathrm{loop}}(\nu) \, \mathscr{C}_{1},
\quad \text{and} \quad
\frac{1}{n} \partial \mathfrak C^\nu(n) \underset{n\to \infty}{\rightarrow} \mathrm{C}^{\mathrm{loop}}(\nu) \, \mathscr{C}_{1}.
\end{align*}
\end{itemize}
\end{theo}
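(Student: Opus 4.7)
The strategy is to combine the characterization, proved earlier in the paper via the gasket decomposition, of the root spin cluster as a Boltzmann planar map with an explicit face-weight sequence $\mathbf{q}^\nu=(q_k^\nu)_{k\geq 1}$, with the scaling-limit results for boundaries of Boltzmann maps developed by Curien--Kortchemski and Kortchemski--Richier \cite{CKlooptrees,CuKo,CRlooptrees}. For a Boltzmann map with perimeter $n$, the boundary of its root face is naturally encoded as a looptree whose loop lengths form a random walk bridge conditioned to sum to $n$; the common jump law $\xi^\nu$ of this walk has a generating function that can be expressed in closed form from the Ising-weighted block series, and its tail behavior dictates the scaling limit.

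The first step is to read off the tail of $\xi^\nu$ in each of the three regimes from the block series already studied in the paper. In the high-temperature case $\nu < \nu_c$, the blocks fall in the generic $5/2$ universality class, giving $\xi^\nu(k) \sim c(\nu)\, k^{-5/2}$, which puts $\xi^\nu$ in the domain of attraction of a totally asymmetric $3/2$-stable law; the rescaled looptree then converges, after dividing by $n^{2/3}$, to a constant multiple of $\mathscr{L}_{3/2}$ by a direct adaptation of the invariance principle for conditioned stable bridges used in \cite{CuKo,CRlooptrees}. In the low-temperature case $\nu > \nu_c$, the radius $t_\nu$ lies strictly inside the block-series radius of convergence, so $\xi^\nu$ has exponential tail and finite mean $\mu(\nu)$; a law-of-large-numbers argument then shows that the bridge contains $\sim n/\mu(\nu)$ jumps, each of size $O(\log n)$, and that the $1/n$-rescaled looptree concentrates on a cycle of length $\mathrm{C}^{\mathrm{loop}}(\nu)$, which is an explicit function of $\mu(\nu)$. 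The bounds on $\mathrm{C}^{\mathrm{loop}}(\nu)$ then follow from the monotonicity in $\nu$ and the limits at $\nu\downarrow\nu_c$ (matching the critical constant by continuity) and $\nu\to\infty$ (where an explicit computation on the generating function of $\mathbf{q}^\nu$ produces the value $\frac{\sqrt 3-1}{2}$).

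The critical case $\nu=\nu_c$ is where I expect the main obstacle. Here the non-generic singular behavior of the Ising block series with exponent $7/3$ translates into $\xi^{\nu_c}$ having infinite mean with a subexponential tail, and the conditioned bridge enters a condensation regime: a single jump carries a macroscopic proportion of the total length $n$, while all other jumps are $o(n)$. Making this one-big-jump principle rigorous requires a sharp local limit theorem adapted to the specific $7/3$-universality-class tail, which differs from the more classical $3/2$-stable setting. Once this is in hand, the surviving single loop has asymptotic length $\frac{n}{1+\sqrt 7}$, yielding the limit $\frac{1}{1+\sqrt 7}\mathscr{C}_1$; the exact value $\frac{1}{1+\sqrt 7}$ comes out of a residue extraction on the generating function of $\mathbf{q}^{\nu_c}$, matching the asymptotic proportion of the perimeter absorbed by the giant face at criticality. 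Finally, in all three regimes, passing from the finite-volume model $\mathbf{T}^\nu$ to the IIPT model $\mathbf{T}^\nu_\infty$ conditioned on a finite hull amounts to a reweighting of the cluster distribution by a factor that converges to a positive constant as $n\to\infty$, which does not affect the Gromov--Hausdorff scaling limit.
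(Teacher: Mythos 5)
Your framework --- encoding $\partial \mathfrak C$ as a two-type Galton--Watson tree whose offspring laws come from the simple-boundary block series $Z^+$, then invoking the looptree scaling limits of \cite{CKlooptrees,CuKo,CRlooptrees} --- matches the paper's approach (Propositions~\ref{prop:2typeGW} and~\ref{prop:2typeGWbolt} followed by the Janson--Stef\'ansson bijection), and your high-temperature analysis is correct: the reduced one-type offspring law $\mu_\nu$ is critical with tail $\sim k^{-5/2}$, giving convergence to a multiple of $\mathscr{L}_{3/2}$ after $n^{2/3}$ rescaling.

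However, both of your low-temperature and critical-temperature mechanisms are wrong, even though you reach the correct qualitative conclusion. For $\nu > \nu_c$ the loop-length offspring law does \emph{not} have exponential tail: by Lemma~\ref{lem:expZplusxc} the series $Z^+(\nu,t_\nu,t_\nu x)$ still has a power-law singularity of exponent $3/2$ at its radius $x_\nu$ for every $\nu \neq \nu_c$, so $\mu_\nu$ has polynomial tail $\sim k^{-5/2}$ there as well (and $\sim k^{-7/3}$ at $\nu_c$). Your ``many $O(\log n)$-sized loops'' picture is moreover incompatible with a $\mathscr{C}_1$ limit under $1/n$ rescaling: if all loops were microscopic the boundary would be tree-like and would degenerate under $1/n$ (or converge to a Brownian continuum tree under $1/\sqrt n$ after tilting to criticality), not to a circle. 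At $\nu = \nu_c$ the offspring law does \emph{not} have infinite mean either: the tail $k^{-7/3}$ is summable against $k$, and Lemma~\ref{lem:expZplusxc} computes the mean as $\frac{-\aleph_1^{Z^+}(\nu_c)}{1+Z^+(\nu_c,t_{\nu_c},t_{\nu_c} x_{\nu_c})} = \frac{\sqrt 7}{1+\sqrt 7} < 1$. You appear to be conflating the tail of the cluster \emph{perimeter} (which \emph{is} exponential for $\nu > \nu_c$, by Theorems~\ref{th:mainIIPT} and~\ref{th:mainBoltzmann}) with the tail of the individual loop-length offspring, which is always polynomial. The correct unified mechanism for $\nu \geq \nu_c$ is \emph{subcriticality plus a subexponential (polynomial) offspring tail}: by the last part of Lemma~\ref{lem:expZplusxc} the GW tree is subcritical on this whole range, so the conditioned tree falls in the Jonsson--Stef\'ansson condensation regime --- a unique vertex absorbs degree $\sim \bigl(1 - G_{\mu_\nu}^\prime(1)\bigr) n$, producing a single macroscopic loop of normalized length $\mathrm{C}^{\mathrm{loop}}(\nu) = 1 - G_{\mu_\nu}^\prime(1)$, which is $\frac{1}{1+\sqrt 7}$ at $\nu_c$. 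No bespoke local limit theorem for the $7/3$ exponent is needed; the cases $\nu = \nu_c$ and $\nu > \nu_c$ are handled uniformly by the same general condensation results.
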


\bigskip

Here again, we see that the boundary of the root spin cluster in the high temperature regime exhibits the same behavior as the boundary of a critical site percolation cluster on the UIPT as both models converge towards the stable looptree $\mathscr{L}_{3/2}$ in the scaling limit as established in \cite{CuKo}. The boundary of the root spin cluster in the high temperature regime also matches the behavior of the boundary of a supercritical site percolation cluster. Indeed, in the same work \cite{CuKo}, Curien and Kortchemski also prove that the boundary of the supercritical root spin cluster converges to the circle $\mathscr{C}_{1}$ in the scaling limit.

The limit at the critical temperature regime is not surprising either. We will see in this work that in this regime, the root spin cluster corresponds to a non generic critical Boltzmann map. Kortchemski and Richier \cite{CRlooptrees} proved that, in the bipartite case, the boundary of such random maps converges to the circle $\mathscr{C}_{1}$ in the scaling limit, agreeing with our result. This result at criticality can shed some light on the link between random maps coupled with an Ising model at the critical temperature and the $\mathcal O \left(\sqrt 2 \right)$ model in the dilute regime, where interfaces are supposed to be simple in the scaling limit, see for example the papers by Borot, Bouttier, Duplantier and Guitter \cite{BBD,BBGa,BBGc,BBGb}.

\subsection{General strategy and organization of the paper}

\begin{figure}[t!]
\begin{center}
 \subfloat[Triangulation with spins, with its root spin cluster emphasized,]{\quad\quad
      \includegraphics[width=0.35\textwidth,page=1]{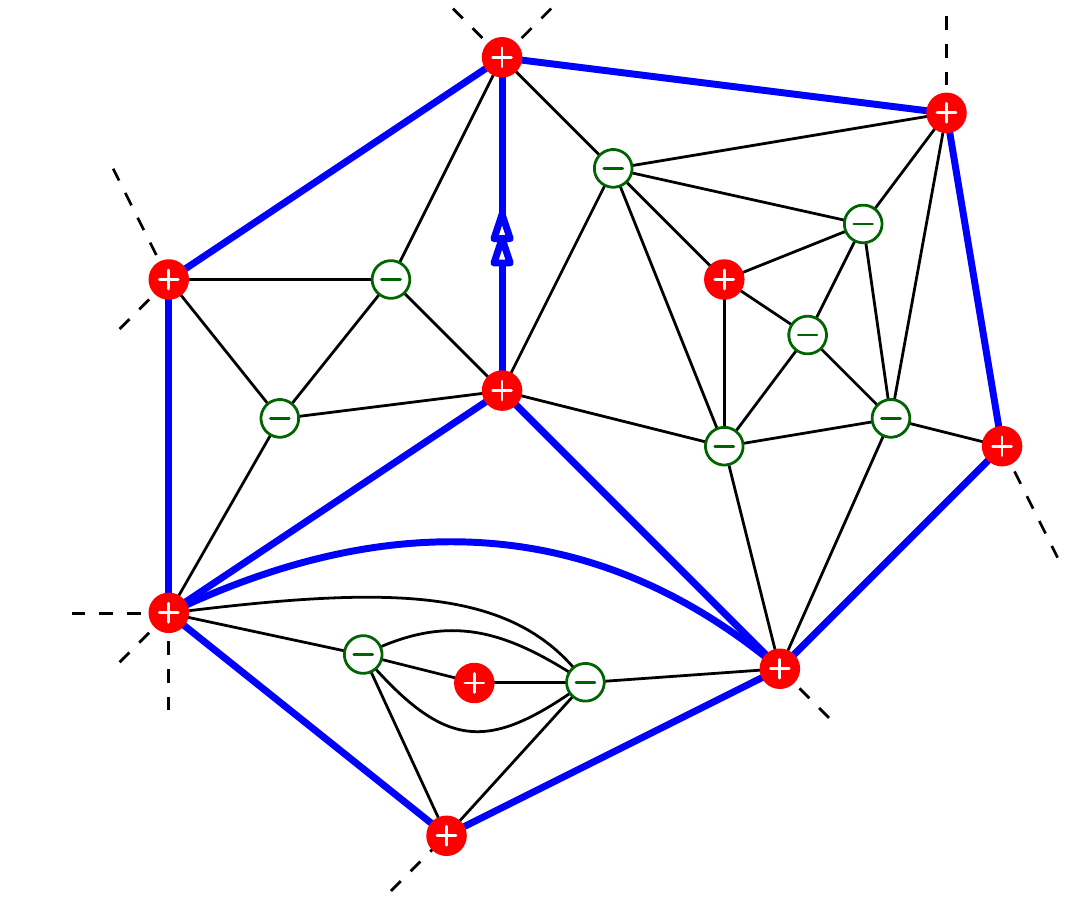}
      \label{subfig:clusterIntroa}\quad\quad\quad} 
     \subfloat[and its gasket decomposition.]{\quad\quad
      \includegraphics[width=0.35\textwidth,page=3]{ClusterIntro.pdf}
      \label{subfig:clusterIntrob}\quad}
    \caption{Example of the gasket decomposition of an Ising-weighted triangulation with monochromatic root edge. The pieces filling the holes of the gasket are the triangulations with a monochromatic boundary represented on the right.}
    \label{fig:IntroClusterGasket}
\end{center}
\end{figure}

\paragraph{Gasket Decomposition.}
The starting point of our work is the so-called gasket decomposition introduced by Borot, Bouttier and Guitter \cite{BBGa} to study the $\mathcal O(n)$ model on planar maps. Roughly speaking, this decomposition consists in partitioning a decorated planar map into an \emph{undecorated} map -- the gasket -- and pieces of the original map filling the holes of the gasket.

This decomposition was later used by Bernardi, Curien and Miermont \cite{BeCuMie} to study site percolation on random triangulations.
In their setting as well as in our setting, the gasket is the root spin cluster and the holes of the gasket are the faces of the root spin cluster, which are filled with triangulations with monochromatic boundaries as established in Proposition~\ref{prop:q_k}. See Figure~\ref{fig:IntroClusterGasket} for an illustration.

This implies that the probability to see a given root spin cluster in a random triangulation with spins is driven by the degrees of its faces. One of the most important models of random maps having this kind of property are the so-called \emph{Boltzmann maps} which have been intensively studied in the recent years \cite{MiermontInvariance,MarckertMiermontInvariance,BuddPeel,LeGallMiermont,Mar18b,CRlooptrees,EynardBook}.

\paragraph{Boltzmann planar maps.} Let $\mathbf q = \left( q_k \right)_{k\geq 1}$ be a sequence of nonnegative real numbers such that
\[
Z_\mathbf q := \sum_{\mathfrak m} \prod_{f \in \mathrm{Faces}(\mathfrak m)} q_{\mathrm{deg}(f)} < \infty,
\]
where the sum in the last display is over all finite rooted planar maps.
A $\mathbf q$-Boltzmann map is a random planar map $\mathbf M_\mathbf q$ such that for any fixed map $\mathfrak m$ one has
\[
\mathbb P \left(\mathbf M_\mathbf q = \mathfrak m  \right) = \frac{\prod_{f \in \mathrm{Faces}(\mathfrak m)} q_{\mathrm{deg}(f)}}{Z_\mathbf q}.
\]
We prove in Proposition~\ref{prop:bolt} that the root spin cluster is a Boltzmann map and we characterize its weight sequence as follows.

For every $l >0$, let $Q^+_l(\nu,t)$ be the generating series of triangulations with spins and a monochromatic, not necessarily simple, boundary of length $l$, counted with a weight $t$ per frustrated edge and $\nu t$ per monochromatic edge (see Section \ref{sec:defSG} for more details). We also define the series
\[
Q^+(\nu,t,y) = \sum_{l\geq 1} Q^+_l(\nu,t) \, y^l.
\]
We prove in Proposition~\ref{prop:bolt} that the generating series of the weight sequence $\mathbf q (\nu,t) = \left( q_k(\nu,t) \right)_{k \geq 1}$ associated to the root spin cluster is given by:
\begin{equation} \label{eq:genqk}
F(\nu ,t ,z) := \sum_{k \geq 1} q_k(\nu,t) \, z^k =  (\nu t)^{3/2} z^3 +
\frac{\sqrt{\nu t^3} z}{1- \sqrt{\nu t^3} z} \,
Q^+ \left( \nu,t , \frac{t}{1-\sqrt{\nu t^3}z} \right).
\end{equation}

\paragraph{Enumerative results.} 

The properties of a $\mathbf q$-Boltzmann map are driven by the asymptotic behavior of its weight sequence. Hence, in our case, the properties of the root spin cluster are driven by the analytic properties and the singularities of $Q^+$ via the paradigm of analytic combinatorics. We refer to the monograph by Flajolet and Sedgewick \cite{FS} for a must-read introduction to the subject.

The generating series $Q^+$ is fortunately algebraic and one of our key results is the derivation of a rational parametrization for it in Theorem~\ref{th:ratPar}. This derivation builds on two previous results. The first is a rational parametrization for $Q^+_1$ obtained by Bernardi and Bousquet--Mélou \cite{BernardiBousquet}. The second is an algebraic equation for a series analogous to $Q^+$ for simple monochromatic boundaries obtained by the authors and Schaeffer in \cite{IsingAMS}.

Thanks to this explicit rational parametrization, we are able to study extensively the analytic properties of $Q^+(\nu,t,y)$. In particular we locate and classify its singularities in $t$ or $y$ when the second variable is fixed. See for instance Proposition~\ref{lemm:weightsclusters} and Proposition~\ref{prop:serQty}.

\paragraph{Universal formulas for Boltzmann maps.}

Two fundamental quantities associated to Boltzmann maps are the so-called unpointed and pointed disk functions $W_\mathbf q$ and $W_\mathbf q^\bullet$ defined in Equations \eqref{eq:defDiskPartition} and \eqref{eq:defPointedDiskPartition}. Roughly speaking, they correspond to the generating series of the total weight of maps with a given perimeter. The pointed disk function has the following universal form:
\[
W_\mathbf q^\bullet (z) = \frac{1}{\sqrt{(z-c_+(\mathbf q))(z-c_-(\mathbf q))}},
\]
where $c_+(\mathbf q)$ and $c_-(\mathbf q)$ are real numbers depending on the weight sequence $\mathbf q$ in a non explicit way.
The unpointed function has no universal form, but it is analytic on the same domain $\mathbb C \setminus [c_-(\mathbf q),c_+(\mathbf q)]$ as the pointed function.

Remarkably, we are able to compute in Proposition~\ref{prop:Wnut} the unpointed disk function in our setting. It is given by:
\[
W_{\mathbf q(\nu,t)}(z) = \frac{1}{z} Q^+ \left(\nu,t,\frac{t}{z \sqrt{\nu t^3}} \right).
\]
This identity allows to express $c_+(\mathbf q(\nu,t))$ and $c_-(\mathbf q(\nu,t))$ in terms of the singularities of $Q^+$ and to study the distribution of the perimeter of the root spin cluster in the finite volume setting.

\bigskip

The two quantities $c_+(\mathbf q)$ and $c_-(\mathbf q)$ are also related to a system of equations which involves two bivariate functions $f_\mathbf q ^\bullet$ and $f_\mathbf q ^\diamond$, defined in \eqref{eq:deffbullet} and \eqref{eq:deffdiamond}, and originally introduced by Bouttier, Di Francesco and Guitter \cite{BDFG}. An essential feature of these two functions is that their singularities are linked to the tail distribution of the volume of the corresponding Boltzmann map.

However, as can be seen from their expressions, the dependency in $\mathbf q$ of the singularities of these two functions is nasty.
Another key point of our work is an explicit integral formula for both $f_\mathbf q ^\bullet$ and $f_\mathbf q ^\diamond$ stated in Proposition~\ref{prop:fbulletdiamond}. These formulas involve again the series $Q^+$ and allow to characterize the singularities of the two functions in Proposition~\ref{lem:asymptfbfg}, allowing in turn to study the distribution of the perimeter and volume of the root spin cluster in the finite volume setting. This allows to establish Theorem~\ref{th:mainBoltzmann} and Theorem~\ref{th:mainsizen}.

\paragraph{Infinite volume setting.}

In this setting, the root spin cluster is not a Boltzmann map, even if it is still related to this model. Indeed, for any fixed map $\mathfrak m$ and $n \geq 0$ we have
\[
\mathbb P_n^\nu \left( \mathfrak C(\mathbf T_n^\nu) = \mathfrak m \right) = \frac{[t^{3n}]\left( \prod_{f \in F(\mathfrak m)} q_{\mathrm{deg}(f)} (\nu ,t) \right)}{[t^{3n}] \mathcal Z ^\ps ( \nu ,t)}.
\]
By continuity of the event $\{ \mathfrak C = \mathfrak m\}$ for the local topology, we then have
\[
\mathbb P_\infty^\nu \left( \mathfrak C(\mathbf T_\infty^\nu) = \mathfrak m \right)
= \lim_{n \to \infty} 
\mathbb P_n^\nu \left( \mathfrak C(\mathbf T_n^\nu) = \mathfrak m \right).
\]
Thanks to our detailed analysis of the series $Q^+$, we are able to compute this limit in Proposition~\ref{prop:boltIIPT}.

Summing the limiting probabilities for every finite map $\mathfrak m$ gives
\[
\mathbb P_\infty^\nu \left( |\mathfrak C(\mathbf T_\infty^\nu) | < \infty \right)
=
\sum_{|\mathfrak m|<\infty}
\mathbb P_\infty^\nu \left( \mathfrak C(\mathbf T_\infty^\nu) = \mathfrak m \right).
\]
We can relate the latter quantity to $\mathbf q (\nu,t)$-Boltzmann maps with two boundaries and the so-called cylinder generating function for which universal formulas involving $c_+(\mathbf q(\nu,t))$ and $c_-(\mathbf q(\nu,t))$ are available. With some extra work, we manage to obtain an integral formula for this probability in Proposition~\ref{prop:formulaperco}. We then compute explicitly this integral using our rational parametrizations to establish Theorem~\ref{th:expo}.

To study the volume distribution of the root spin cluster in the IIPT, we use the volume generating function given by
\[
\mathbb E_\infty^\nu \left[ g^{|V(\mathfrak C(\mathbf T_\infty^\nu)) |}  \right]
=
\sum_{|\mathfrak m|<\infty}
g^{|V(\mathfrak m)|} \, 
\mathbb P_\infty^\nu \left( \mathfrak C(\mathbf T_\infty^\nu) = \mathfrak m \right).
\]
Again, we establish an integral formula for this quantity in Proposition~\ref{prop:formulaperco}. We are then able to analyze its singular behavior as $g \to 1^-$ thanks to our extensive analysis of the functions $f_\mathbf q ^\bullet$ and $f_\mathbf q ^\diamond$ and of their singularities, paving the way to prove Theorem~\ref{th:mainIIPT}.

\paragraph{Looptrees}

Each loop of the looptree associated to the root spin cluster corresponds to a triangulation with simple monochromatic boundary. Hence, the law of the looptree is linked to the generating series of triangulations with simple monochromatic boundary $Z^+$, instead of the series with non simple boundary $Q^+$ for the cluster itself.

Following Curien and Kortchemski \cite{CuKo}, we then encode the looptree by a two type Galton--Watson tree in Proposition~\ref{prop:2typeGWbolt} for the finite volume setting and in Proposition~\ref{prop:2typeGW} for the infinite volume setting. The offspring distribution of this random tree is a simple function of the series $Z^+$.
We study this series and establish a rational parametrization for it in Theorem~\ref{th:ratParZ}. Again, this parametrization allows to obtain all the asymptotic properties we need in Proposition~\ref{prop:asymptoZmono}.

\paragraph{Notation} A table of notations is available at the end of the paper, see page~\pageref{sec:notations}. 

\subsection{Perspectives and conjectures}

\paragraph{The $\alpha$-stable map paradigm}

We saw that the root spin cluster of the random triangulation $\mathbf T^\nu$ is a Boltzmann planar map. These maps can be encoded by multitype Galton--Watson trees via the Bouttier--Di Francesco--Guitter bijection \cite{BDFG}. The offspring distribution of these trees belongs to the domain of attraction of spectrally stable law of parameter $\alpha$, where $\alpha$ is linked to the asymptotic behavior of the disk partition function of the Boltzmann maps via the relations
\[
[z^{-l}] W_\mathbf q (z) \underset{l \to \infty}{\sim} \mathrm{Cst} \, \left(\rho_\mathbf q\right)^l \, l^{-a} \quad \text{and} \quad \alpha = a - \frac{1}{2}.
\]
Le Gall and Miermont \cite{LGM} studied the scaling limit of such maps conditioned to have size $n$ and proved that, after rescaling the distances by $n^{1/\alpha}$, they converge in law to a limit called the $\alpha$-stable map for Gromov--Hausdorff topology. To be fully accurate their result is for the bipartite setting and for subsequential limits.

In our setting, we see from Proposition~\ref{prop:asymptQk} that
\[
\alpha(\nu)
=
\begin{cases}
7/6 & \text{for $\nu < \nu_c$,}\\
11/6 & \text{for $\nu = \nu_c$,}\\
2 & \text{for }\nu > \nu_c.
\end{cases}
\]
This is strong evidence that the scaling limit of the root spin cluster is the Brownian map for $\nu > \nu_c$, the $11/6$-stable map for $\nu = \nu_c$ and the $7/6$-stable map for $\nu <\nu_c$. This paradigm can also be found for critical percolation in Bernardi, Curien and Miermont's work \cite{BeCuMie} and for the $\mathcal O(n)$ model in Borot, Bouttier and Guitter's work \cite{BBGa}.

\paragraph{Liouville Quantum Gravity and KPZ formula.}

There are several important conjectures for the scaling limits of random triangulations coupled with an Ising model. First, when seen as metric spaces, their scaling limit should be the Brownian map for $\nu \neq \nu_c$, with the usual renormalization with exponent $1/4$, and another object for $\nu=\nu_c$ with a different -- but unknown as of yet -- renormalization. In the case $\nu = 1$, our model corresponds to uniform triangulations decorated by a critical site percolation. The fact that their scaling limit is the Brownian map was proved independently by Le Gall \cite{LGbm} and Miermont \cite{Miebm}.

Secondly, some conjectures are also available in the setting of Liouville Quantum Gravity \cite{DS}:
\begin{itemize}
\item Low temperature ($\nu > \nu_c$). The scaling limit of the triangulation should be an undecorated $\sqrt{8/3}$-LQG surface.
\item High temperature ($\nu < \nu_c$). The scaling limit of the triangulation should also be a $\sqrt{8/3}$-LQG surface, but decorated by an independent CLE$_{6}$ representing the interfaces between spin clusters.
\item Critical temperature ($\nu = \nu_c$). The scaling limit of the triangulation should be a $\sqrt 3$-LQG surface decorated by an independent CLE$_{3}$ representing the interfaces between spin clusters.
\end{itemize}
The fact that the Brownian map is equivalent to a $\sqrt{8/3}$-LQG surface has been established by Miller and Sheffield in the series of papers \cite{MSa,MSb,MSc}. 
Moreover, Holden and Sun \cite{HoldenSun} proved the conjecture for $\nu = 1$ by canonically embedding the percolated triangulations in the sphere, building on a previous work by Bernardi, Holden and Sun \cite{BeHoSun} where the embedding was less explicit.

\bigskip

Our work provides evidence comforting these conjectures through the KPZ relation, which originates in the theoretical physics literature in the paper by Knizhnik, Polyakov and Zamolodchikov~\cite{KPZ}. It was then derived rigorously in some cases by Duplantier and Sheffield~\cite{DS}. See also the survey by Garban~\cite{GarbanKPZ}. This relation states that given a fractal $\mathcal X$ on a $\gamma$-LQG surface, its Euclidean conformal weight $x$ is related to its quantum counterpart $\Delta$ by
\[
x = \frac{\gamma^2}{4} \Delta^2 + \left( 1 - \frac{\gamma^2}{4}\right) \Delta.
\] 
The Euclidean conformal weight $x$ of $\mathcal X$ is related to its fractal dimension by the relation $\mathrm{dim}_H (\mathcal X) = 2 (1-x)$. Similarly, the quantum conformal weight $\Delta$ of $\mathcal X$ is defined by how the quantum area of $\mathcal X$ scales as follows: the quantum area (or mass) of the part of $\mathcal X$ inside a subset of the LQG of quantum area $A$ scales as $A^{1-\Delta}$.

Hence, we can check that the critical exponents obtained in Theorem~\ref{th:mainIIPT} and Theorem~\ref{th:mainBoltzmann} agree with their Euclidean counterparts via this relation.
Our perimeter exponents give 
\[
\Delta^{\mathrm p} (\nu) =
\begin{cases}
1/2 & \text{for $\nu < \nu_c$,}\\
1/4 & \text{for $\nu = \nu_c$.}
\end{cases}
\]
Indeed Theorem $\ref{th:mainIIPT}$ directly gives gives $\frac{3}{4} = 1- \Delta^{\mathrm p} (\nu)$ for $\nu < \nu_c$ and $\frac{1}{2} = 1- \Delta^{\mathrm p} (\nu_c)$. To check the consistency with the exponents established in Theorem \ref{th:mainBoltzmann}, we need to take into account the typical size of the Boltzmann triangulations. This gives the relations $\frac{5}{2} \cdot \frac{3}{10} = 1- \Delta^{\mathrm p} (\nu)$ for $\nu <\nu_c$ and $\frac{7}{3} \cdot \frac{3}{14} = 1- \Delta^{\mathrm p} (\nu_c)$. Similarly, the area exponents are
\[
\Delta^{\mathrm v} (\nu) =
\begin{cases}
1/8 & \text{for $\nu < \nu_c$,}\\
1/12 & \text{for $\nu = \nu_c$.}
\end{cases}
\]
coming from the relations $\frac{7}{8} = \frac{5}{2} \cdot \frac{7}{20} = 1- \Delta^{\mathrm v} (\nu)$ for $\nu < \nu_c$ and $\frac{11}{12} = \frac{7}{3} \cdot \frac{11}{28} = 1- \Delta^{\mathrm v} (\nu_c)$.

At the critical temperature, applying the KPZ relation with $\gamma = \sqrt 3$ to $\Delta^{\mathrm p} (\nu_c)$ and $\Delta^{\mathrm v} (\nu_c)$ give $x^{\mathrm{p}}(\nu_c) = \frac{5}{16}$ and $x^{\mathrm{v}}(\nu_c) = \frac{5}{192}$. These conformal weights agree with the dimension $\frac{11}{8}$ of a SLE$_3$ curve calculated by Beffara \cite{SLEdim} and the dimension $\frac{187}{96}$ of the gasket of a CLE$_3$ calculated by Miller, Sun and Wilson \cite{CLEdim}.

In the high temperature regime,applying the KPZ relation with $\gamma = \sqrt{8/3}$ to $\Delta^{\mathrm p} (\nu)$ and $\Delta^{\mathrm v} (\nu)$ give $x^{\mathrm{p}}(\nu) = \frac{1}{8}$ and $x^{\mathrm{v}}(\nu) = \frac{5}{96}$. These conformal weights agree with the dimension $\frac{7}{4}$ of a SLE$_6$ curve calculated in \cite{SLEdim} and the dimension $\frac{91}{48}$ of the gasket of a CLE$_6$ calculated in \cite{CLEdim}.

\subsection*{Acknowledgments}

We thank Jérémie Bouttier for detailed discussions on the series of papers \cite{BBGa,BBD,BBGc,BBGb}.
M.A.'s research was supported by the ANR grant GATO (ANR-16-CE40-0009-01) and the ANR grant IsOMa (ANR-21-CE48-0007). L.M. acknowledges the support of the Labex MME-DII (ANR11-LBX-0023-01). Both authors are supported by the ANR grant ProGraM (ANR-19-CE40-0025).

\section{Preliminaries} \label{sec:prelim}

\subsection{Triangulations with spins: definitions} 

A \emph{planar map} is the embedding of a planar graph in the sphere considered up to sphere homeomorphisms preserving its orientation. Note that loops and multiple edges are allowed. We only consider \emph{rooted} maps, meaning that one oriented edge is distinguished. This edge is called the \emph{root edge}, its tail the \emph{root vertex} and the face on its right the \emph{root face}. The \emph{size} of a planar map $\mathfrak m$ is its number of edges and is denoted by $|\mathfrak m|$. The set of non-atomic maps (i.e. maps with at least one edge)
is denoted by $\mathcal{M}$. A \emph{rooted and pointed map} is an element of $\mathcal{M}$ with an additional marked vertex. The set of non-atomic pointed maps is denoted by $\mathcal{M}^\bullet$.

A \emph{triangulation} is a planar map in which all the faces have degree 3.
More generally, a \emph{triangulation with a boundary} is a planar map in which all faces have degree $3$ except for the root face (which may or may not be simple) and 
a \emph{triangulation of the $p$-gon} is a triangulation whose root face is bounded by a simple cycle and has degree $p$.

A map with a \emph{spin configuration} is a pair $(\mathfrak m,\sigma)$, where $\mathfrak m$ is a planar map and $\sigma$  is a mapping from the set $V(\mathfrak m)$ of vertices of $\mathfrak m$ to the set $\{\ps,\ns\}$. When it is clear from the context that the maps considered carry a spin configuration, we will sometimes call them maps without the additional precision. An edge $\{u,v\}$ of $(\mathfrak m,\sigma)$ is called \emph{monochromatic} if $\sigma(u)=\sigma(v)$ and \emph{frustrated} otherwise. The number of monochromatic edges of $(\mathfrak m,\sigma)$ is denoted by $m(\mathfrak m,\sigma)$.

\subsection{Review of existing results about enumeration of triangulations with spins} \label{sec:sectionAMS}

\subsubsection{Rational parametrization for monochromatic simple boundary}
One of the main contributions of the previous work \cite{IsingAMS} by the authors and Gilles Schaeffer is the study of the generating series of triangulations with spins and simple boundary. We will in particular need the results of \cite{IsingAMS} for monochromatic simple boundary. Let us denote by $\mathcal T_{p}^+$ the set of all finite triangulations of the $p-$gon with spins and with positive monochromatic boundary. Define its generating series by
\begin{equation}\label{eq:defZp+}
Z_p^+ (\nu , t ) = \sum_{(\mathfrak t,\sigma) \in \mathcal T_p^+} \nu^{m(\mathfrak t,\sigma)} t^{|\mathfrak t|},
\end{equation}
and let
\begin{equation} \label{eq:defZ+}
Z^+(\nu,t,y) = \sum_{p \geq 1} Z_p^+ (\nu , t ) \, y^p.
\end{equation}
Theorem 2.11 of \cite{IsingAMS} establishes an algebraic equation linking the series $Z^+(\nu,t,y)$, $Z_1^+(\nu,t)$, $Z_2^+(\nu,t)$ and the variables $\nu,t,y$. We do not display this equation here, but it will be needed later and is included in the Maple companion file \cite{Maple}.

The study of these series relies on the following rational parametrization: 

\begin{prop}[Theorem~23 of~\cite{BernardiBousquet}, Proposition~2.12 of \cite{IsingAMS}]\label{prop:paramU}
Let $U\equiv U(\nu,t^3)$ defined as the unique formal power series in $t^3$ having constant term $0$ and satisfying
\begin{equation} \label{eq:wU}
t^3= U \frac{\Big((1+\nu)U-2 \Big)\,P(\nu,U)}{32\nu^3 (1-2U)^2},
\end{equation}
with
\begin{equation} \label{eq:defPU}
P(\nu,U)= 8(\nu+1)^2U^3-(11\nu+13)(\nu+1)U^2+2(\nu+3)(2\nu+1)U-4\nu.
\end{equation}
Then, for every $p$, the series $t^p Z^+_p$ admits a rational parametrization in terms of $U$. 
\end{prop}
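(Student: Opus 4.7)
The strategy is to bootstrap the parametrization from the two ``boundary'' cases $Z_1^+$ and $Z_2^+$, using the algebraic equation of Theorem~2.11 of \cite{IsingAMS} to propagate it to every $p$. Theorem~2.11 provides a polynomial identity of the form $\mathcal{P}\bigl(Z^+(\nu,t,y),\, Z_1^+,\, Z_2^+,\, \nu,\, t,\, y\bigr)=0$ which is essentially quadratic in $Z^+(\nu,t,y)$. This is the standard ``catalytic'' equation obtained by removing the root edge of a triangulation with monochromatic boundary, using $y$ to track the boundary length.

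Next, I would apply the quadratic method (the obstinate kernel method of Brown). Viewing the equation as a quadratic in $Z^+$, its discriminant must vanish at two algebraic values $y_\pm$ of $y$ at which $Z^+(\nu,t,y)$ would otherwise develop a spurious branch incompatible with the combinatorics (nonnegative coefficients, controlled growth). Writing both the equation and its $Z^+$-derivative at such $y_\pm$ yields a closed system of polynomial relations between $Z_1^+$, $Z_2^+$, $t$ and $\nu$. The key algebraic input is that the series $U$ defined by \eqref{eq:wU} rationally uniformizes this system: this is Theorem~23 of \cite{BernardiBousquet}, whose transcription to our Ising-weighted setting is the content of Section~2 of \cite{IsingAMS}. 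After this step, both $tZ_1^+$ and $t^2 Z_2^+$ are explicit rational functions of $U$ and $\nu$.

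Substituting these rational expressions back into the equation of Theorem~2.11 turns it into a polynomial equation for $Z^+(\nu,t,y)$ whose coefficients now lie in $\Q(U,\nu)[y]$. Since $Z^+(\nu,t,y)$ is a formal power series in $y$ satisfying such a polynomial identity, every coefficient $Z_p^+=[y^p]Z^+(\nu,t,y)$ lies in the field $\Q(U,\nu)$. A parity check on triangulations of the $p$-gon (using the relation $2|\mathfrak{t}|=3f+p$ between internal faces and edges) shows that $t^p Z_p^+$ is actually a formal power series in $t^3$; since $t^3$ is itself a rational function of $U$ and $\nu$ by \eqref{eq:wU}, the product $t^p Z_p^+$ is a genuine rational function of $U$ and $\nu$, as required.

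The main obstacle is the second step: identifying $U$ as the correct uniformizer and checking that it rationalizes the discriminant system. This is algebraically delicate and is the technical heart of the approach of Bernardi and Bousquet-M\'elou; fortunately it can be imported essentially off-the-shelf through the reformulation of the problem in terms of the series $Z^+$. Once this input is in hand, the third step reduces to a direct exercise in coefficient extraction from an algebraic function.
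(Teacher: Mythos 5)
The paper does not actually prove this proposition: it imports it verbatim from~\cite{BernardiBousquet} (Theorem~23) and~\cite{IsingAMS} (Proposition~2.12), so there is no in-paper argument to compare your proof against. Your reconstruction — set up a catalytic equation for $Z^+(\nu,t,y)$, apply the quadratic/obstinate kernel method to pin down $tZ_1^+$ and $t^2Z_2^+$, import the uniformizer $U$ from Bernardi--Bousquet-M\'elou, then bootstrap to all $p$ — is a faithful outline of the route taken in those references, and the closing parity check via $2|\mathfrak t|=3f+p$ (giving $p+f$ even, hence $t^{p+|\mathfrak t|}=t^{3(p+f)/2}$) is correct.

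There is, however, a genuine gap in the bootstrap step. You assert that because $Z^+(\nu,t,y)$ satisfies a polynomial identity with coefficients in $\Q(U,\nu)[y]$, ``every coefficient $Z_p^+=[y^p]Z^+(\nu,t,y)$ lies in the field $\Q(U,\nu)$.'' This is not a valid general principle. For instance, $F(y)^2 = 2/(1-y)^2$ is a polynomial identity over $\Q[y]$, yet its power series solution $F(y)=\sqrt{2}\sum_{n\geq 0} y^n$ has coefficients in $\Q(\sqrt{2})\setminus\Q$; algebraicity of a series over a base field does not propagate rationality of coefficients to that field. What you need — and what the catalytic structure actually delivers — is that when you extract $[y^p]$ from the equation, the result is \emph{linear} in the top unknown $Z_p^+$ with a unit leading coefficient (this comes from the divided-difference terms $\bigl(Z^+(y)-\sum_{i<k}Z_i^+y^i\bigr)/y^k$ in the root-edge deletion, whose $[y^p]$ contribution is $Z_{p+k}^+$, and from the fact that the quadratic term $Z^+(y)^2$ contributes $2Z_0^+Z_p^+$ plus lower-index products). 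Once this recursive structure is invoked, each $Z_p^+$ is manifestly a rational function of $\nu$, $U$ and the already-determined $Z_1^+,Z_2^+\in\Q(U,\nu)$, which closes the argument. So the idea is right, but you must appeal to the recursive catalytic form of the Tutte equation rather than to algebraicity alone.
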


The series $U$ originally appeared in the work of Bernardi and Bousquet-Mélou \cite{BernardiBousquet}. Its properties and singularities were extensively studied in \cite{IsingAMS}. We summarize briefly what will be needed in the present work.
\bigskip

Set
\begin{equation} \label{eq:nucdef}
\nu_c = 1 + 1/\sqrt 7.
\end{equation}
The radius of convergence $t_\nu^3 \in (0,\infty)$ of $U$ satisfies:
\begin{align*}
  P_2(\nu, t^3_\nu)&= 0 \hbox{ for } 0< \nu \le
  \nu_c,\\
P_1(\nu, t^3_\nu)&=0 \hbox{ for } \nu_c \le \nu,
\end{align*}
where $P_1$ and $P_2$ are the following two polynomials:
\begin{align}
 P_1(\nu,\rho) &= 
 131072\,{\rho}^{3}{\nu}^{9}-192\,{\nu}^{6} \left( 3\,\nu+5 \right) 
 \left( \nu-1 \right)  \left( 3\,\nu-11 \right) {\rho}^{2}
\notag\\
& \quad \quad-48\,{\nu}^{3} \left( \nu-1 \right) ^{2}\rho+ \left( \nu-1 \right)  \left( 4\,{\nu
}^{2}-8\,\nu-23 \right),\label{eq:P1}\\
P_2(\nu,\rho) & = 
27648\,{\rho}^{2}{\nu}^{4}+864\,\nu\, \left( \nu-1 \right)  \left( {
\nu}^{2}-2\,\nu-1 \right) \rho+ \left( 7\,{\nu}^{2}-14\,\nu-9 \right) 
 \left( \nu-2 \right) ^{2}.\label{eq:P2}
\end{align}
To define unambiguously $t_\nu^3$, we need to specify which root of $P_1$ and $P_2$ we choose. This is described in full details in~\cite[Definition~2.1]{IsingAMS} and recalled in the Maple companion file \cite{Maple}. The right solution is the only positive solution of $P_2$ for $\nu \leq \nu_c$, and the only positive decreasing branch of $P_1$ for $\nu \geq \nu_c$.

The value of $U$ at its radius of convergence will be of special interest throughout this work and is calculated in the following statement.

\begin{prop} \label{prop:Unuval}
For every $\nu >0$ define $U_\nu := U(\nu,t_\nu^3)$. This quantity satisfies:
\begin{itemize}
\item High temperature case. If $0 < \nu \leq \nu_c$ then
\begin{equation}\label{eq:UnuSubCrit}
 U_\nu = \frac{3\nu+3-\sqrt{-3\nu^2+6\nu+9}}{6(\nu+1)} \quad \text{ and }\quad\nu = \frac{-3U_\nu(U_\nu-1)}{3U_\nu^2-3U_\nu+1}, 
\end{equation}
\item Low temperature case. Set $K_{\nu_c}:=\frac{\sqrt{7}-2}{3}$ and $K_{\infty}:=\sqrt{3}$. The mapping 
\begin{equation}\label{eq:defKnu}
\hat \nu(K) = -\frac{K^3+3K^2+9K+11}{(K+3)(K^2-3)}.
\end{equation}
is increasing from $[K_{\nu_c},K_\infty)$ onto $[\nu_c,+\infty)$. Moreover, for every $\nu \geq \nu_c$, if $K_\nu \in [K_{\nu_c},K_\infty)$ is such that $\hat \nu (K_\nu) = \nu$, then
\begin{equation}\label{eq:defKU}
U_\nu = \frac{3- K_\nu^2}{6K_\nu+10},
\end{equation}
\end{itemize}
We note for later reference that $U_{\nu_c}=(5-\sqrt{7})/9$ and that $U_\nu \in (0, U_{\nu_c})$ for every $\nu \neq \nu_c$.
\end{prop}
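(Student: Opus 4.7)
The plan is to exploit the fact that $t_\nu^3$ is the radius of convergence of an algebraic power series defined implicitly by the rational equation \eqref{eq:wU}, hence corresponds to a square-root type singularity at which the right-hand side $f(U,\nu) := U((1+\nu)U-2)P(\nu,U)/(32\nu^3(1-2U)^2)$ — viewed as a function of $U$ with $\nu$ fixed — has vanishing derivative. Since $U(\nu,\cdot)$ is a power series in $t^3$ vanishing at $0$ with nonnegative coefficients, the value $U_\nu$ is the smallest positive root of $\partial_U f(U,\nu) = 0$. Setting $A(U) := (1+\nu)U^2 - 2U$, a short computation simplifies this condition to the polynomial equation
\[
R(\nu,U) := 2 \, P(\nu,U)\bigl[(\nu-1)U - 1\bigr] + (1-2U)\, A(U)\, \partial_U P(\nu,U) = 0,
\]
which has degree $5$ in $U$.

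The next step is a factorization of $R$ in $U$. A direct (but tedious) calculation, most easily performed symbolically as in the accompanying Maple file, shows that $R$ splits as
\[
R(\nu,U) = R_2(\nu,U)\cdot R_3(\nu,U), \qquad R_2(\nu,U) := 3(\nu+1)U^2 - 3(\nu+1)U + \nu,
\]
with $R_3$ a cubic in $U$ (its explicit form is not needed for the statement). One then matches each factor to one of the branches defining $t_\nu^3$: using $t^3 = f(U,\nu)$ and eliminating $U$ via the resultant with $R_2$ (respectively $R_3$) recovers exactly the polynomial $P_2$ (respectively $P_1$) as the resulting relation in $(\nu,t^3)$. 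This identifies the quadratic factor with the high-temperature regime $\nu \leq \nu_c$ and the cubic factor with the low-temperature regime $\nu \geq \nu_c$.

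In the high-temperature regime, $U_\nu$ is then the smaller positive root of $R_2(\nu,\cdot)$, and solving the quadratic yields $U_\nu = (3\nu+3 - \sqrt{-3\nu^2+6\nu+9})/(6(\nu+1))$. The reciprocal identity $\nu = -3U_\nu(U_\nu-1)/(3U_\nu^2 - 3U_\nu + 1)$ is simply the equation $R_2(\nu,U_\nu) = 0$ solved for $\nu$. In the low-temperature regime, we substitute the parametrization $U = (3-K^2)/(6K+10)$ into $R_3(\nu,U) = 0$; once denominators are cleared, the dependence on $\nu$ turns out to be affine, and solving yields $\nu = \hat\nu(K) = -(K^3+3K^2+9K+11)/((K+3)(K^2-3))$. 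Monotonicity of $\hat\nu$ on $[K_{\nu_c},K_\infty)$ follows by computing $\hat\nu'(K)$ and checking its sign; the boundary values $\hat\nu(K_{\nu_c}) = \nu_c$ and $\hat\nu(K) \to +\infty$ as $K \to \sqrt 3^-$ are immediate, and at $K = K_{\nu_c}$ both parametrizations yield $U_{\nu_c} = (5-\sqrt 7)/9$, ensuring the two branches match continuously. The bound $U_\nu \in (0,U_{\nu_c})$ for $\nu \neq \nu_c$ follows from the monotonicity of each explicit expression on its respective domain.

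The main obstacle is the algebraic content of the factorization $R = R_2 \cdot R_3$ and the identification of each factor with $P_2$ and $P_1$ via elimination of $U$. Both steps reduce to polynomial manipulations, but involve a degree-$5$ polynomial with $\nu$-dependent coefficients and a resultant calculation, so computer algebra is the natural tool to verify them. Once these are in hand, everything reduces to solving a quadratic, an affine equation in $\nu$, and checking a handful of explicit substitutions.
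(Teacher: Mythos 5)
Your approach is a genuinely different route. The paper outsources the key fact — that $U_\nu$ is the smallest positive root of $3(1+\nu)U^2-3(1+\nu)U+\nu$ for $\nu\leq\nu_c$, and of $4(1+\nu)^2U^3-3(1+\nu)(\nu+3)U^2+6(1+\nu)U-2$ for $\nu\geq\nu_c$ — to \cite[Proof of Lemma 2.7]{IsingAMS}, and then merely verifies the stated formulas against those polynomials. You instead derive the polynomial conditions from scratch via the critical-point analysis of the rational parametrization $t^3=f(U,\nu)$: since $U$ has nonnegative coefficients and a finite limit $U_\nu<1/2$ at its radius of convergence, the singularity of $U$ forces $\partial_Uf(U_\nu,\nu)=0$. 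Your $R(\nu,U)$ is indeed the correct numerator of $\partial_Uf$ (the simplification $A'(1-2U)+4A=2[(\nu-1)U-1]$ checks out), and a comparison of leading and constant coefficients confirms $R=-4\,R_2\cdot R_3^{\mathrm{paper}}$ with $R_3^{\mathrm{paper}}$ the paper's cubic, consistent with the factorization you assert. The resultant-based matching of factors to temperature regimes is a clean device, though it implicitly requires $P_1(\nu,t_\nu^3)\neq 0$ for $\nu<\nu_c$ and $P_2(\nu,t_\nu^3)\neq 0$ for $\nu>\nu_c$ to make the identification unambiguous; this holds but deserves a word. The tradeoff is that your proof is more self-contained (it borrows from \cite{IsingAMS} only the nonnegativity of coefficients and the characterization of $t_\nu^3$ via $P_1,P_2$, not the root characterization of $U_\nu$), at the cost of heavier symbolic work: a quintic factorization and two resultants.

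One step is wrong as stated, however. In the low-temperature case you claim that substituting $U=(3-K^2)/(6K+10)$ into $R_3(\nu,U)=0$ and clearing denominators yields an \emph{affine} equation in $\nu$. It does not. Writing $c=1+\nu$, the paper's cubic reads $c^2u^2(4u-3)+6cu(1-u)-2$, a genuine quadratic in $c$, and the coefficient $u^2(4u-3)$ does not vanish at $u=(3-K^2)/(6K+10)$ for $K\in[K_{\nu_c},\sqrt3)$: one has $4u-3=-2(2K+3)(K+3)/(6K+10)<0$. After substitution you get a quadratic in $\nu$ with \emph{two} roots, one of which is $\hat\nu(K)$ and the other a different rational function of $K$ — so you cannot simply ``solve for $\nu$.'' You must either verify the parametrization directly and argue it picks out the smallest positive root (as the paper does, using that the smallest root of the cubic is never a double root, together with the boundary match at $\nu_c$), or explicitly distinguish the two roots of the quadratic. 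Your continuity-at-$\nu_c$ plus monotonicity remark could be developed into such an argument, but as written the ``affine dependence'' claim is a gap that needs to be closed.
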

\begin{proof}
 It was proved in~\cite[Proof of Lemma~2.7]{IsingAMS} that $U_\nu$ is the smallest positive root of the polynomial
\begin{equation}\label{eq:Unu}
\begin{cases}
3(1+\nu)U_\nu^2 - 3 (1+\nu)U_\nu +\nu=0&\text{ when }\nu\leq \nu_c\\
4(1+\nu)^2 U_\nu^3 - 3 (1+\nu)(\nu+3) U_\nu^2 + 6 (1+\nu) U_\nu -2 &\text{ when }\nu\geq \nu_c.
\end{cases}
\end{equation}
The calculation for the following discussion are available in the Maple companion file \cite{Maple}.

When $\nu \leq \nu_c$, it is easy to verify that Equation~\eqref{eq:UnuSubCrit} is indeed the smallest root and the expression of $\nu$ follows easily since the polynomial has degree $1$ in $\nu$.

When $\nu \geq \nu_c$, one can first check that the parametrization given in Equations \eqref{eq:defKnu} and \eqref{eq:defKU} are solutions of \eqref{eq:Unu}. We can then check that $\hat \nu$ is increasing in $K$ and that \eqref{eq:defKU} is decreasing in $K$. Moreover they have the right value at $\nu_c$ and since the smallest root of \eqref{eq:Unu} is never a double root, Equation \eqref{eq:defKU} is indeed the smallest root of the polynomial defining $U_\nu$ for $\nu \geq \nu_c$.
\end{proof}

\subsubsection{Asymptotic expansions}
Another result from \cite{IsingAMS} that will be heavily used in this work is the singular development of $U$ at its radius of convergence:

\begin{lemm}[Lemma 2.7 of \cite{IsingAMS}]\label{lem:propU}
For every fixed $\nu>0$, the series $U$ defined in~\eqref{eq:wU} and seen as a series in $t^3$, has nonnegative coefficients, and a unique dominant singularity at $t_\nu^3$. 
Moreover, it has the following singular behavior at $t_{\nu}^3$:
\[
U(\nu,t^3) =
\begin{cases}
U_\nu  +\displaystyle\sum_{i\in\{\frac{1}{2},1,\frac{3}{2}\}}\beth_i^U (\nu) \cdot \left( 1- \Big(\frac{t}{t_\nu}\Big)^{\!3} \right)^{i}+o\left(\left( 1- \Big(\frac{t}{t_\nu}\Big)^{\!3} \right)^{3/2}\right) & \text{ for $\nu \neq \nu_c$, }\\
U_{\nu_c} +\displaystyle \sum_{i\in\{\frac{1}{3},\frac{2}{3},1,\frac{4}{3}\}}\beth_i^U(\nu_c) \cdot \left( 1- \Big(\frac{t}{t_\nu}\Big)^{\!3} \right)^{i}+o\left(\left( 1- \Big(\frac{t}{t_\nu}\Big)^{\!3} \right)^{4/3}\right)
& \text{ for $\nu = \nu_c$},
\end{cases}
\]
where $U_\nu=U(\nu,t_\nu^3)$, and $\beth_i^U(\nu)$ are explicit functions of $\nu$ given in the Maple companion file \cite{Maple}.
\end{lemm}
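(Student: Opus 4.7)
The plan is to treat Equation~\eqref{eq:wU} as an implicit equation $t^3 = H(\nu, U)$, where
$$H(\nu, U) := U \frac{\bigl((1+\nu) U - 2\bigr)\, P(\nu, U)}{32 \nu^3 (1-2U)^2},$$
and to apply the algebraic singularity analysis of Flajolet--Sedgewick. First, $H$ has a simple zero at $U = 0$ since $P(\nu, 0) = -4\nu \neq 0$, so we may invert $H$ analytically near the origin to define $U(\nu, t^3)$ as an analytic function of $t^3$ in a neighborhood of $0$ with vanishing constant term. Nonnegativity of the coefficients of $U$ in $t^3$ can be read off from its combinatorial interpretation as a generating series of Ising-decorated triangulations already established in~\cite{BernardiBousquet}, where the weights are manifestly nonnegative.

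The radius of convergence $t_\nu^3$ of $U$ and the critical value $U_\nu$ are characterized by the condition that $(U_\nu, t_\nu^3)$ is the first branch point one encounters when continuing $U$ along the positive real axis: at such a point $\partial_U H(\nu, U_\nu) = 0$ and $t_\nu^3 = H(\nu, U_\nu)$. Proposition~\ref{prop:Unuval} provides the explicit identification of $U_\nu$ via the polynomials $P_1, P_2$. The heart of the proof is then a local bifurcation analysis at $U_\nu$: I would compute the successive $U$-derivatives of $H$ at $U_\nu$ and show that $\partial_U^2 H(\nu, U_\nu) \neq 0$ for every $\nu \neq \nu_c$, yielding a square-root type singularity, while at the critical temperature one has $\partial_U^2 H(\nu_c, U_{\nu_c}) = 0$ but $\partial_U^3 H(\nu_c, U_{\nu_c}) \neq 0$, yielding a cube-root type singularity. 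In either case, the classical Newton--Puiseux theorem produces a local expansion of the form
$$U(\nu, t^3) = U_\nu + \sum_{k \geq 1} \beth_{k/m}^U(\nu) \left(1 - \frac{t^3}{t_\nu^3}\right)^{\!k/m},$$
with $m = 2$ away from criticality and $m = 3$ at $\nu = \nu_c$. The coefficients $\beth_{k/m}^U(\nu)$ are then extracted by substituting the ansatz into $H(\nu, U) = t^3$ and solving term by term; since all the computations are polynomial in $\nu$ and $U$, they are mechanical in a computer algebra system and would be recorded in the Maple companion file~\cite{Maple}.

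To upgrade this into the statement that $t_\nu^3$ is the \emph{unique} dominant singularity on the circle of convergence, I would combine Pringsheim's theorem (applicable thanks to the nonnegativity established above) with a standard aperiodicity check: inspection of the first few coefficients of $U$ as a series in $t^3$ shows that their support is not contained in any proper sublattice of $\mathbb N$, which rules out further singularities on $\{|t|^3 = t_\nu^3\}$. The main obstacle is not the local singularity analysis, which is essentially mechanical once $U_\nu$ is pinned down, but rather the global branch identification: one has to check that the pair $(U_\nu, t_\nu^3)$ produced by Proposition~\ref{prop:Unuval} is genuinely the first positive critical value of $H(\nu, \cdot)$ encountered by the analytic continuation of $U$, as opposed to some spurious root of $\partial_U H$ lying further along the real axis. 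This is where the explicit discussion of the correct branches of $P_1$ and $P_2$, together with the case distinction between $\nu \leq \nu_c$ and $\nu \geq \nu_c$, becomes essential.
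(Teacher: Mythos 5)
The paper cites this lemma from \cite{IsingAMS} without re-deriving it, so there is no in-paper proof to compare against. Your approach, which views $t^3 = H(\nu, U)$ as an implicit equation, locates the first positive critical point of $H(\nu, \cdot)$, and runs a Newton--Puiseux expansion there, is the standard singular-inversion method for implicitly defined algebraic series, and the dichotomy between $\partial_U^2 H \neq 0$ (square-root singularity) off criticality versus $\partial_U^2 H = 0$, $\partial_U^3 H \neq 0$ (cube-root singularity) at $\nu_c$ is exactly the right mechanism to produce the stated exponent sets $\{1/2, 1, 3/2\}$ and $\{1/3, 2/3, 1, 4/3\}$. This is almost certainly the route taken in \cite{IsingAMS} as well.

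The substantive gap is the one you flag yourself but do not resolve: the branch identification. To show that $t_\nu^3$ is the radius of convergence of $U$ and not a spurious critical value, one must verify that $H(\nu, \cdot)$ is strictly increasing on $(0, U_\nu)$, i.e.\ that $\partial_U H(\nu, \cdot)$ has no zero in the open interval $(0, U_\nu)$; a factorization of $\partial_U H$ in terms of the quadratic and cubic polynomials that define $U_\nu$ in Proposition~\ref{prop:Unuval} would do it, but that is the step where the hard work lives and you leave it unaddressed. Two further items also need to be promoted from sketches to verifications. Your aperiodicity argument removes conjugate singularities of the form $\omega\, t_\nu^3$ with $\omega$ a root of unity, but does not by itself exclude other singularities of $U$ on the circle $|t^3| = t_\nu^3$ coming from complex critical values of $H$; the clean way to close this is to pass to the polynomial obtained by clearing denominators in $H(\nu, U) - t^3$, compute its discriminant in $U$, and check directly that $t_\nu^3$ is its unique root of smallest modulus. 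Finally, nonnegativity of the coefficients of $U$ is asserted by citation rather than argued; since \eqref{eq:wU} presents $U$ only as the compositional inverse of a rational function, with no manifest positivity, the exact combinatorial identification with a weighted generating series in \cite{BernardiBousquet} should be made explicit.
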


The asymptotic expansions of the previous lemma can be computed explicitly to any fixed order (see the Maple companion file \cite{Maple}). This and the form on the expression of the series $t^pZ_p^+(\nu,t)$ in terms of $U$ given in Proposition~\ref{prop:paramU} leads to the following proposition proved in \cite{IsingAMS}:

\begin{prop}[Theorem 2.4 of \cite{IsingAMS}] \label{prop:asymptoZp+}
Fix $\nu > 0$ and $p \geq 1$, then $t^p Z_p^+(\nu,t)$ seen as a series in $t^3$ is algebraic and has a unique dominant singularity at $t_\nu^3$, where it has the following asymptotic expansion:
\begin{align*}
t^pZ_p^+(\nu,t) &= t^p_\nu Z_p^+(\nu,t_\nu) \\
& \qquad + \beth_1^{Z^+_p}(\nu) \left( 1- \Big(\frac{t}{t_\nu}\Big)^{\!3} \right) + \beth^{Z^+_p}(\nu) \left( 1- \Big(\frac{t}{t_\nu}\Big)^{\!3} \right)^{\gamma(\nu)-1} + o \left( \left( 1- \Big(\frac{t}{t_\nu}\Big)^{\!3} \right)^{\gamma(\nu)-1} \right),
\end{align*}
where 
\begin{equation}\label{eq:gamma}
\gamma(\nu) = 
\begin{cases}
5/2 & \text{for $\nu \neq \nu_c$,}\\
7/3 & \text{for $\nu = \nu_c$,}
\end{cases}
\end{equation}
and where $\beth_1^{Z^+_p}(\nu)$ and $\beth^{Z^+_p}(\nu)$ are non vanishing functions of $\nu$.
\end{prop}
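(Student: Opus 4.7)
The plan is to compose the rational parametrization of $t^p Z_p^+$ from Proposition~\ref{prop:paramU} with the singular expansion of $U$ from Lemma~\ref{lem:propU}, and read off the expansion in $x := 1 - (t/t_\nu)^3$ by substitution. Write $t^p Z_p^+(\nu,t) = R_p(\nu, U)$ and $t^3 = w(\nu, U)$, with $R_p$ and $w$ explicit rational functions in $U$. By Proposition~\ref{prop:Unuval}, the value $U_\nu$ lies in $(0, U_{\nu_c})$, hence away from the poles of $R_p$ and $w$, so both are analytic in $U$ at $U_\nu$.

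Substitute the expansion
\[
U - U_\nu \;=\; \sum_{i \in I(\nu)} \beth_i^U(\nu)\, x^i \;+\; o\bigl( x^{\max I(\nu)} \bigr)
\]
from Lemma~\ref{lem:propU} (with $I(\nu) = \{1/2, 1, 3/2\}$ when $\nu \neq \nu_c$ and $I(\nu) = \{1/3, 2/3, 1, 4/3\}$ when $\nu = \nu_c$) into the Taylor series of $R_p(\nu, \cdot)$ around $U_\nu$, expand each power $(U-U_\nu)^k$ by the multinomial formula, and regroup by powers of $x$. The constant term is $R_p(\nu, U_\nu) = t_\nu^p Z_p^+(\nu, t_\nu)$, and the analytic $x^1$ contribution assembles several terms to form $\beth_1^{Z_p^+}(\nu)$.

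The heart of the argument is the cancellation of the subleading fractional powers of $x$ together with the non-vanishing of the leading singular coefficient. For $\nu \neq \nu_c$, the coefficient of $x^{1/2}$ equals $\partial_U R_p(\nu, U_\nu) \cdot \beth_{1/2}^U(\nu)$, so since $\beth_{1/2}^U(\nu) \neq 0$ the cancellation reduces to showing $\partial_U R_p(\nu, U_\nu) = 0$. At $\nu_c$, two successive cancellations are required, namely $\partial_U R_p(\nu_c, U_{\nu_c}) = \partial_U^2 R_p(\nu_c, U_{\nu_c}) = 0$, so that the $x^{1/3}$ and $x^{2/3}$ coefficients vanish and $x^{4/3}$ emerges as the leading singular power. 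These identities are checked by using the polynomial equations characterizing $U_\nu$ from Proposition~\ref{prop:Unuval} to reduce the rational expression for $\partial_U R_p$ (resp.\ $\partial_U^2 R_p$) at $U_\nu$ modulo the minimal polynomial of $U_\nu$.

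Rather than repeating this verification for each $p$, one works uniformly through the bivariate series $Z^+(\nu, t, y)$, for which an algebraic equation and the seed series $Z_1^+, Z_2^+$ are available from \cite{IsingAMS}: the cancellations then need to be established once at the level of this algebraic equation, and the resulting rational expression for $\beth^{Z_p^+}(\nu)$ follows uniformly in $p$, with non-vanishing a direct inspection. The main obstacle is the cancellation step itself, especially at $\nu = \nu_c$ where two successive cancellations and a compatible third-order expansion of $U$ must be propagated consistently; the heavy symbolic bookkeeping is naturally delegated to the Maple companion file \cite{Maple}.
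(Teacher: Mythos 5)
This proposition is imported verbatim from the authors' earlier paper \cite{IsingAMS} (where it appears as Theorem~2.4) and is not re-proved here; the text preceding it only points to the two ingredients your argument uses, namely the Puiseux expansion of $U$ from Lemma~\ref{lem:propU} and the rational parametrization $t^pZ_p^+ = R_p(\nu,U)$ from Proposition~\ref{prop:paramU}. Your reconstruction is correct and follows precisely that indicated route: the nontrivial content is the cancellation $\partial_U R_p(\nu,U_\nu)=0$ for $\nu\neq\nu_c$ (and, at $\nu_c$, additionally $\partial_U^2 R_p(\nu_c,U_{\nu_c})=0$), which are forced because $\beth_{1/2}^U(\nu)$ (resp.\ $\beth_{1/3}^U(\nu_c)$) are non-zero and the stated singular exponent is $3/2$ (resp.\ $4/3$) rather than $1/2$ (resp.\ $1/3$), together with a non-vanishing check on the resulting coefficients $\beth_1^{Z_p^+}$ and $\beth^{Z_p^+}$. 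Delegating these symbolic verifications to the Maple companion file is exactly how the present paper handles its analogous in-house expansions (Propositions~\ref{prop:serQty} and~\ref{prop:asym_t_sign_y}), so your proposal is both correct and in line with the authors' method.
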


Recall that $\mathcal T^\ps$ is the set of finite triangulations such that the root vertex carries a spin $\ps$ and the root edge is monochromatic and that $\mathcal Z^{\ps}(\nu,t)$ denotes their generating series. Opening the root edge of such triangulations gives the following identity (see Figure~\ref{fig:rootTransform}):
\begin{equation}\label{eq:condplus}
\mathcal Z^{\ps} (\nu , t) = \frac{1}{\nu t} ( Z_1^+(\nu,t)^2 + Z_{2}^+ (\nu,t)).
\end{equation}
\begin{figure}
\begin{center}
\includegraphics[width = \linewidth]{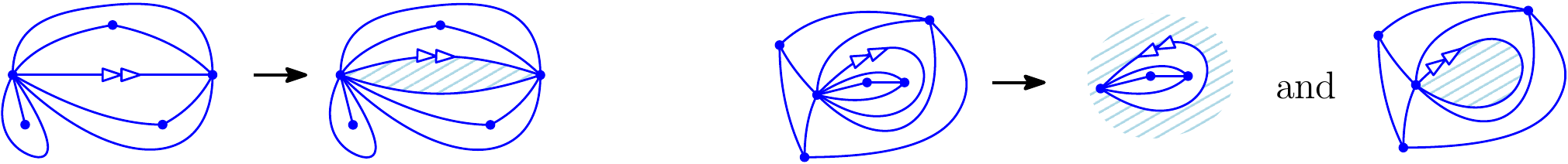}
\caption{\label{fig:rootTransform} Transformation of a triangulation of the sphere with a monochromatic root edge in either a triangulation of the 2-gon or two triangulations of the 1-gon, all with monochromatic boundary conditions.}
\end{center}
\end{figure}
Combining this identity with Proposition~\ref{prop:asymptoZp+} directly yields: 
\begin{prop} \label{prop:asymptoZmono}
Fix $\nu > 0$, then $\mathcal Z^{\ps}(\nu,t)$ seen as a series in $t^3$ is algebraic and has a unique dominant singularity at $t_\nu^3$, where it has the following asymptotic expansion:
\begin{multline*}
\mathcal Z^\ps(\nu,t) = \mathcal Z^\ps(\nu,t_\nu) + \beth_1^{\mathcal Z^\ps}(\nu) \left( 1- \Big(\frac{t}{t_\nu}\Big)^{\!3} \right)\\
 + \beth^{\mathcal Z^\ps}(\nu) \left( 1- \Big(\frac{t}{t_\nu}\Big)^{\!3} \right)^{\gamma(\nu)-1} + o \left( \left( 1- \Big(\frac{t}{t_\nu}\Big)^{\!3} \right)^{\gamma(\nu)-1} \right),
\end{multline*}
where $\gamma(\nu)$ is defined in~\eqref{eq:gamma} and $\beth_1^{\mathcal Z^\ps}(\nu)$ and $\beth^{\mathcal Z^\ps}(\nu)$ are explicit non vanishing functions of $\nu$ given in the Maple companion file \cite{Maple}.
\end{prop}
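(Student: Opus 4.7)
The result follows by combining the identity~\eqref{eq:condplus} with the singular expansions provided by Proposition~\ref{prop:asymptoZp+} applied to $p=1$ and $p=2$. Setting $u := 1 - (t/t_\nu)^3$ for brevity, I would first rewrite~\eqref{eq:condplus} in the symmetric form
$$\mathcal Z^\ps(\nu,t) = \frac{1}{\nu t^3}\bigl((tZ_1^+(\nu,t))^2 + t^2 Z_2^+(\nu,t)\bigr),$$
so that each ingredient is a function of $t^3$ whose behavior at $t_\nu^3$ is directly controlled by Proposition~\ref{prop:asymptoZp+} (or is analytic, in the case of the prefactor $1/(\nu t^3)$).

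Substituting the expansions of $tZ_1^+$ and $t^2 Z_2^+$ and squaring the first, one obtains that the bracket admits an expansion of the form
$$A(\nu) + B(\nu)\, u + C(\nu)\, u^{\gamma(\nu)-1} + o(u^{\gamma(\nu)-1}),$$
where the cross-term $(\beth_1^{Z_1^+}(\nu))^2 u^2$ coming from the square is absorbed into the error because $\gamma(\nu)-1 \in \{3/2,\,4/3\}$ is strictly less than $2$ in both regimes. The leading singular coefficient is explicitly
$$C(\nu) = 2\, t_\nu Z_1^+(\nu,t_\nu)\, \beth^{Z_1^+}(\nu) + \beth^{Z_2^+}(\nu).$$
Multiplying then by $1/(\nu t^3) = (\nu t_\nu^3)^{-1}(1 + u + u^2 + \cdots)$, which is analytic in $u$ in a neighborhood of $0$, preserves the shape of the expansion and yields the announced form for $\mathcal Z^\ps(\nu,t)$, with $\beth_1^{\mathcal Z^\ps}(\nu)$ and $\beth^{\mathcal Z^\ps}(\nu)$ given by explicit polynomial combinations of the previous constants. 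The uniqueness of the dominant singularity at $t_\nu^3$ is inherited from $Z_1^+$ and $Z_2^+$, and algebraicity is immediate from the algebraicity of these two series.

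The only genuinely non-formal step, and hence the main obstacle, is verifying that $\beth^{\mathcal Z^\ps}(\nu)$ does not vanish: since $\beth^{Z_1^+}(\nu)$ and $\beth^{Z_2^+}(\nu)$ are \emph{a priori} unrelated non-zero functions, nothing in the structure of the argument prevents a cancellation in $C(\nu)$. I would resolve this by computing $C(\nu)$ explicitly using the rational parametrization of $t^p Z_p^+$ in terms of $U$ from Proposition~\ref{prop:paramU} together with the singular expansion of $U$ from Lemma~\ref{lem:propU}, and then checking non-vanishing directly on the resulting explicit function of $\nu$; this computation also produces the closed-form expressions for $\beth_1^{\mathcal Z^\ps}(\nu)$ and $\beth^{\mathcal Z^\ps}(\nu)$ referenced in the Maple companion file.
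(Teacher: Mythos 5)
Your proposal is correct and follows exactly the route the paper intends: plug the singular expansions of $tZ_1^+$ and $t^2 Z_2^+$ from Proposition~\ref{prop:asymptoZp+} into the rewritten form of~\eqref{eq:condplus}, track orders of the error terms (using $\gamma(\nu)-1<2$), and verify non-vanishing of the resulting singular coefficient via the explicit rational parametrization, which the paper delegates to its Maple file. You also correctly flag that the only non-formal point is ruling out the cancellation $2\,t_\nu Z_1^+(\nu,t_\nu)\,\beth^{Z_1^+}(\nu)+\beth^{Z_2^+}(\nu)=0$, which is precisely what the explicit computation settles.
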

A direct application of the transfer theorem -- see Theorem~\ref{th:transfer} in Appendix~\ref{sec:algebraic} -- then gives
\begin{coro}\label{cor:equivZ+}
For any $\nu>0$, we have:
\begin{equation}\label{eq:equivZ+}
[t^{3n}]\mathcal Z^\ps(\nu,t) \stackrel[n\rightarrow \infty]{}{\sim} \frac{\beth^{\mathcal Z^\ps}(\nu)}{\Gamma\Big(1-\gamma(\nu)\Big)}\, t_\nu^{-3n}\, n^{-\gamma(\nu)}.
\end{equation}
\end{coro}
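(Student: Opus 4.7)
The plan is to apply the singularity analysis transfer theorem (Theorem \ref{th:transfer} in Appendix \ref{sec:algebraic}) directly to the singular expansion provided by Proposition \ref{prop:asymptoZmono}. The corollary is essentially a matter of verifying that the hypotheses of the transfer theorem hold, extracting the dominant singular contribution, and handling the remainder.

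I would first set $u = t^3$ and regard $\mathcal Z^\ps(\nu,t)$ as a power series in $u$. Proposition \ref{prop:asymptoZmono} states that this series is algebraic with a unique dominant singularity at $u = t_\nu^3$. Algebraicity provides the analytic continuation to a $\Delta$-domain based at $t_\nu^3$ required by Theorem \ref{th:transfer}, and uniqueness of the singularity on the circle $|u| = t_\nu^3$ rules out interfering contributions from other points of modulus $t_\nu^3$. Rewriting the expansion from Proposition \ref{prop:asymptoZmono} as
\[
\mathcal Z^\ps(\nu,t) = A(u) + \beth^{\mathcal Z^\ps}(\nu)\Bigl(1-\tfrac{u}{t_\nu^3}\Bigr)^{\gamma(\nu)-1} + o\!\left(\Bigl(1-\tfrac{u}{t_\nu^3}\Bigr)^{\gamma(\nu)-1}\right),
\]
with $A(u) = \mathcal Z^\ps(\nu,t_\nu) + \beth_1^{\mathcal Z^\ps}(\nu)(1 - u/t_\nu^3)$ a polynomial of degree one in $u$, the analytic part $A(u)$ contributes to $[u^n]$ only for $n \leq 1$ and is therefore invisible in the large-$n$ asymptotics.

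Since $\gamma(\nu) \in \{5/2,\, 7/3\}$, the exponent $\gamma(\nu)-1$ is never an integer, so $\Gamma(1-\gamma(\nu))$ is finite and nonzero, and the singular term is genuinely non-analytic. Applying the transfer theorem to $(1-u/t_\nu^3)^{\gamma(\nu)-1}$ then produces
\[
[u^n]\Bigl(1-\tfrac{u}{t_\nu^3}\Bigr)^{\gamma(\nu)-1} \underset{n\to\infty}{\sim} \frac{t_\nu^{-3n}\, n^{-\gamma(\nu)}}{\Gamma(1-\gamma(\nu))},
\]
while the remainder $o((1-u/t_\nu^3)^{\gamma(\nu)-1})$ transfers to an $o(t_\nu^{-3n}\, n^{-\gamma(\nu)})$ error via the $o$-version of the transfer theorem. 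Since $[t^{3n}]\mathcal Z^\ps(\nu,t) = [u^n]\mathcal Z^\ps(\nu,u^{1/3})$, multiplying by $\beth^{\mathcal Z^\ps}(\nu)$ yields the claimed equivalent. There is no real obstacle here; the only point worth being careful about is that the singular expansion of Proposition \ref{prop:asymptoZmono} is of precisely the form required by the transfer theorem, which is guaranteed by algebraicity (ruling out logarithmic corrections) and by the uniqueness of the dominant singularity on the circle of convergence.
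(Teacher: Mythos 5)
Your proposal is correct and follows exactly the paper's route: the paper derives Corollary~\ref{cor:equivZ+} by "a direct application of the transfer theorem" to the singular expansion of Proposition~\ref{prop:asymptoZmono}, and your argument simply spells out the standard bookkeeping — discarding the polynomial part, transferring the $(1-u/t_\nu^3)^{\gamma(\nu)-1}$ term, checking $\gamma(\nu)-1\notin\mathbb{Z}$, and handling the error via the $o$-version of transfer. There is nothing substantively different from the paper's intended proof.
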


\subsection{Local topology and the IIPT} \label{sec:IIPTdef}

As mentioned in the introduction, the present work deals with the Ising Infinite Planar Triangulation (IIPT), which is the limit in law of large triangulations coupled with an Ising model for the local topology. We briefly recall the definition of this local topology and state the convergence result from \cite{IsingAMS} that we need.

\bigskip

The local topology for planar maps was first considered by Benjamini and Schramm~\cite{BS}. It can be defined from the following distance $d_{\mathrm{loc}}$ on triangulations with spins:
\begin{equation}
\label{eq:dloc}
\forall (\mathfrak t, \sigma) , \, (\mathfrak t', \sigma') \in \mathcal T : \quad
d_{\mathrm{loc}}((\mathfrak t,\sigma),(\mathfrak t',\sigma')) = \frac{1}{1 + \sup \{ R \geq 0 : B_r (\mathfrak t,\sigma) = B_r(\mathfrak t',\sigma')\}},
\end{equation}
where $B_r(\mathfrak t,\sigma)$ is the submap of $\mathfrak t$ composed by its faces having at least one vertex at distance smaller than $r$ from its root vertex, with the corresponding spins. The only difference with the usual setting is the presence of spins on the vertices and, in addition of the equality of the underlying maps, we require that spins coincide to say that two maps are equal. 

The closure $\left( \overline{\mathcal T},d_{\mathrm{loc}} \right)$ of the metric space $(\mathcal T,d_{\mathrm{loc}})$ is a Polish space and elements of $\overline{\mathcal T} \setminus \mathcal T$ are called infinite triangulations with spins. The local topology on triangulations with spins is the topology induced by $d_{\mathrm{loc}}$.

\bigskip

Recall that $\mathbb P_n^\nu$ is the probability measure on triangulations with spins with $3n$ edges (and monochromatic root edge with spin $\ps$) defined by
\[
\forall (\mathfrak t,\sigma) \in \mathcal T^\ps , \quad \mathbb P_n^\nu \left( \{(\mathfrak t,\sigma) \}\right) =
\frac{\nu^{m(\mathfrak t,\sigma)}}{[t^{3n} ] \mathcal Z^{\ps}(\nu,t)} \mathbf{1}_{\{ |\mathfrak t| = 3n  \}}.
\]
The main probabilistic result of \cite{IsingAMS} is: 
\begin{theo}[Theorem 1.1 of \cite{IsingAMS}]
\label{th:localCV}
For every $\nu>0$, the sequence of probability measures $\mathbb P_n^\nu$ converges weakly for the local topology to a limiting probability measure $\mathbb P_\infty^\nu$ supported on one-ended infinite triangulations endowed with a spin configuration.

We call a random triangulation distributed according to this limiting law the \emph{Infinite Ising Planar Triangulation with parameter $\nu$}, or \emph{$\nu$-IIPT}.
\end{theo}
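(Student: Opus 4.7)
The plan is to follow the classical scheme for local limits of random planar maps pioneered by Angel--Schramm, Chassaing--Durhuus and Krikun, tailored here to the Ising-weighted setting by combining the spatial Markov property of the Gibbs measure with the sharp enumerative asymptotics of Proposition~\ref{prop:asymptoZp+} and Corollary~\ref{cor:equivZ+}. Concretely, I would prove the local convergence by identifying the limits of cylinder probabilities, then verifying that these limits define a genuine probability measure supported on one-ended triangulations.

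First I would fix a finite rooted triangulation with spins $(\mathfrak m,\sigma)$ playing the role of a ball of radius $r$, equipped with a single distinguished outer face of degree $p$ carrying a monochromatic boundary condition $\sigma_\partial$ inherited from $\mathfrak m$. Using that the Ising weight factors multiplicatively over edges and that any triangulation with $B_r = (\mathfrak m,\sigma)$ decomposes as $(\mathfrak m,\sigma)$ together with a triangulation with spins and simple monochromatic boundary of length $p$ glued into the outer face, I obtain
\[
\mathbb P_n^\nu\bigl(B_r(\mathbf T_n^\nu)=(\mathfrak m,\sigma)\bigr)
= \frac{\nu^{m(\mathfrak m,\sigma)}\,[t^{3n-|\mathfrak m|}]\,Z_p^{\sigma_\partial}(\nu,t)}{[t^{3n}]\,\mathcal Z^\ps(\nu,t)}.
\]
Both the numerator and the denominator are coefficients of algebraic series with a unique dominant singularity at $t_\nu^3$ and universal singular exponent $\gamma(\nu)-1$ by Propositions~\ref{prop:asymptoZp+} and~\ref{prop:asymptoZmono}. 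Applying the transfer theorem to each gives an asymptotic of the form $\mathrm{Cst}(\nu,p,\sigma_\partial)\,t_\nu^{-3n}\,n^{-\gamma(\nu)}$, so the subexponential factors cancel in the ratio and one obtains an explicit finite limit
\[
\mathbb P_\infty^\nu\bigl(B_r=(\mathfrak m,\sigma)\bigr)
:= \nu^{m(\mathfrak m,\sigma)}\,t_\nu^{|\mathfrak m|}\cdot\frac{\beth^{Z_p^{\sigma_\partial}}(\nu)}{\beth^{\mathcal Z^\ps}(\nu)},
\]
which depends only on $(\mathfrak m,\sigma)$ as required.

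Next I would verify two compatibility properties of these limit weights. The consistency across radii (summing the limits at radius $r+1$ over one-step enlargements of a given ball gives the limit at radius $r$) follows by partitioning the triangulations filling the outer face according to their first layer of faces and passing to the limit in the resulting identity between coefficients of generating series. The second property is that the limit weights sum to $1$ over all admissible balls $(\mathfrak m,\sigma)$ of radius $r$, which is the main obstacle and requires exchanging a limit with an infinite sum. I would handle this by dominated convergence, using uniform-in-$p$ tail bounds on $\beth^{Z_p^{\sigma_\partial}}(\nu)$ extracted from the rational parametrization of Proposition~\ref{prop:paramU}, so that the contribution of hulls with large perimeter or volume to the finite-$n$ partition function is negligible compared with the principal asymptotics $t_\nu^{-3n}n^{-\gamma(\nu)}$.

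Once the candidate measure $\mathbb P_\infty^\nu$ is identified as a probability measure on $\overline{\mathcal T}$, the weak convergence $\mathbb P_n^\nu \to \mathbb P_\infty^\nu$ on cylinder events extends to the whole Borel $\sigma$-algebra of the Polish space $(\overline{\mathcal T},d_{\mathrm{loc}})$ by a standard approximation argument. Finally, one-endedness of the limit is obtained by showing that, for each $r$, the complement of the hull of radius $r$ is almost surely infinite: this amounts to proving that $\mathbb P_\infty^\nu\bigl(|\mathbf T_\infty^\nu|<\infty\bigr)=0$, which follows from comparing the dominant $n^{-\gamma(\nu)}$ asymptotics of $[t^{3n}]\mathcal Z^\ps(\nu,t)$ with the polynomial growth of the total limit mass coming from finite triangulations. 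Throughout, the decisive technical ingredient will be the uniformity in the boundary length $p$ of the singular expansions of $Z_p^{\sigma_\partial}(\nu,t)$, for which the explicit rational parametrization in terms of $U$ from Proposition~\ref{prop:paramU} and the singular expansion of Lemma~\ref{lem:propU} are indispensable.
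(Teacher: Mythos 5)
The paper does not prove Theorem~\ref{th:localCV}: it is imported wholesale as Theorem~1.1 of~\cite{IsingAMS}, with the only original content being the remark that the positive--monochromatic root conditioning is non-degenerate so the conditioned statement follows. Your proposal therefore has nothing to compare against inside this paper; what you have written is an outline of the proof one would carry out in~\cite{IsingAMS}, and it does follow the standard Angel--Schramm/Chassaing--Durhuus scheme that that paper uses. The overall plan (cylinder probabilities as ratios of coefficients of algebraic series, transfer theorem, consistency, tightness, one-endedness) is the right one.

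However, several steps as you have written them would not survive contact with the details. First, the ball $B_r$ of a sphere triangulation is defined as the union of faces with a vertex at distance $<r$; its boundary is in general \emph{not} a single simple cycle, and the complement is in general a union of several triangulations with (possibly non-simple) boundaries. Your decomposition ``$(\mathfrak m,\sigma)$ glued to a single triangulation with simple boundary of length $p$'' therefore does not describe the event $\{B_r = (\mathfrak m,\sigma)\}$. One must work either with hulls or with a product over all complementary components, and one must use the generating series of triangulations with a non-simple boundary (which is exactly why the paper introduces $Q^+_p$ alongside $Z^+_p$). Second, the boundary spin condition of a ball is generically \emph{not} monochromatic, so $Z_p^+$ and its parametrization from Proposition~\ref{prop:paramU} are not the right objects; you need the analogous singular analysis for arbitrary boundary conditions (Dobrushin and worse), which is only available in~\cite{IsingAMS}. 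You simultaneously call $\sigma_\partial$ a monochromatic condition and use the general notation $Z_p^{\sigma_\partial}$, which is a sign this step has not been thought through.

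The most serious gap is the one-endedness argument. You propose to deduce one-endedness from $\mathbb P^\nu_\infty(|\mathbf T^\nu_\infty|<\infty)=0$, but infiniteness of a planar triangulation does not imply it has one end. One-endedness requires showing that for every radius $r$ the complement of the hull has exactly one infinite component; in the finite-$n$ models one must control the probability that the ball of radius $r$ has more than one ``large'' complementary hole and show it vanishes as $n\to\infty$. This needs a separate quantitative estimate (usually a two-point function or skeleton-decomposition bound), not a mere comparison of polynomial prefactors in the partition function. Finally, the step where the limit weights are shown to sum to $1$ is the genuinely hard part; asserting ``dominated convergence with uniform-in-$p$ tail bounds extracted from the parametrization'' names the right goal but not a proof --- the uniformity in $p$ of the singular expansions of $Z^{\sigma_\partial}_p$ is precisely the content one would have to establish, and the present paper only proves it for the monochromatic case.
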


\begin{rema}
In~\cite{IsingAMS}, the result is in fact established for triangulations with no spin constraint on their root edge. However, here, we stated the result for triangulations conditioned to have a monochromatic positive root edge. 
Since this conditioning is not degenerate, the result stated above follows directly from the result established in~\cite{IsingAMS}.
\end{rema}

\section{Enumerative results} \label{sec:combi}

This section is devoted to enumerative results for triangulations with a monochromatic boundary, which will be instrumental in the rest of the paper. We start by establishing a rational parametrization for the generating series $Q^+$ of triangulations with a non simple monochromatic boundary. We then compute the critical points and the singular expansions of this parametrization, which can be transfered to obtain the critical points and the singular expansions of $Q_+$. In the last section, we obtain similar results for the generating series $Z^+$ of triangulations with a \emph{simple} monochromatic boundary, which will appear naturally when we study the cluster interfaces via the framework of looptrees in Section~\ref{sec:looptrees}.

\subsection{Rational parametrization of triangulations with non simple monochromatic boundary} \label{sec:defSG}

For every $p\geq 0$, let us denote by $\mathcal Q_p^+$ the set of finite triangulations with spins and a (non necessarily simple) boundary of length $p$ with positive monochromatic boundary condition, i.e. all the vertices incident to the boundary have spin $\ps$. We adopt the convention that $\mathcal Q_0^+$ is the set containing only the atomic map with one vertex and no edge and set $\mathcal Q^+ = \cup_{p \geq 0} \mathcal Q_p^+$. We denote by $Q_p^+(\nu,t)$ the generating series of triangulations in $\mathcal Q_p^+$ counted by edges and monochromatic edges, that is
\[
Q_p^+(\nu,t) = \sum_{(\mathfrak t,\sigma) \in \mathcal Q_p^+} t^{|\mathfrak t|} \nu^{m(\mathfrak t,\sigma)}.
\]
We also set
\[
Q^+ (\nu,t,y) = \sum_{p \geq 0} Q_p^+(\nu,t) y^p.
\]
We first check that these series are well-defined:
\begin{lemm} \label{lemm:RdC}
Fix $\nu >0$. For every $p$, the radius of convergence of $Q_p^+(\nu,t)$ is equal to $t_\nu$ and $Q_p^+(\nu,t_\nu)<\infty$. This implies in particular that for any $t\in \bar D(0,t_\nu)$, $Q^+(\nu,t,y)$ is well-defined as a formal power series in $y$. 

Moreover, $t^pQ_p^+(\nu,t)$, seen as a series in $t^3$, is algebraic and has a unique dominant singularity at $t_\nu^3$.
\end{lemm}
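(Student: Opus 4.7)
My plan is to reduce the claimed properties for $Q_p^+$ to the corresponding properties of the simple-boundary series $Z_q^+$ already established in Proposition~\ref{prop:paramU} and Proposition~\ref{prop:asymptoZp+}.

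First I would decompose a triangulation $(\mathfrak t,\sigma) \in \mathcal Q_p^+$ according to the pinch structure of its root-face contour. Walking along the boundary from the root edge, the contour splits at each of its pinch vertices into maximal simple sub-contours; each such sub-contour bounds a triangulation with \emph{simple} monochromatic positive boundary, while the gluing pattern of these simple components (along shared vertices and doubled ``bridge'' edges) is encoded by a planar pinch scheme. Since all boundary vertices carry the spin $\ps$, this decomposition is compatible with the weight $\nu^{m(\mathfrak t,\sigma)} t^{|\mathfrak t|}$: each bridge edge contributes a factor $\nu t$, and the simple sub-maps contribute independent factors $Z^+_{q_v}(\nu,t)$. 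At the level of coefficients, this yields for each fixed $p \geq 0$ a \emph{finite} identity
\begin{equation*}
Q_p^+(\nu,t) \;=\; \sum_{\mathcal S} w_{\mathcal S}(\nu,t) \prod_{v \in V(\mathcal S)} Z^+_{q_v}(\nu,t),
\end{equation*}
where $\mathcal S$ runs over admissible pinch schemes, the $q_v \geq 1$ satisfy $\sum_v q_v \leq p$, and $w_{\mathcal S}(\nu,t)$ is a monomial in $\nu$ and $t$ accounting for the bridge edges.

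From this identity, the algebraicity of $t^p Q_p^+(\nu,t)$ as a series in $t^3$ follows from that of each $t^{q_v} Z^+_{q_v}(\nu,t)$ given by Proposition~\ref{prop:paramU}, since algebraic series are stable under finite sums and products. Moreover, Proposition~\ref{prop:asymptoZp+} shows that each $t^{q_v} Z^+_{q_v}(\nu,t)$ has $t_\nu^3$ as its unique dominant singularity in $t^3$; because all terms in our decomposition have nonnegative coefficients there can be no cancellation, so $t_\nu^3$ remains the unique dominant singularity of $t^p Q_p^+$. The same identity, together with the finiteness $Z^+_{q_v}(\nu,t_\nu) < \infty$ from Proposition~\ref{prop:asymptoZp+}, gives $Q_p^+(\nu,t_\nu) < \infty$. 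Conversely, every simple-boundary map is in particular a non-simple-boundary one, whence $Q_p^+(\nu,t) \geq Z_p^+(\nu,t)$ coefficient-wise, so the radius of convergence of $Q_p^+$ cannot exceed $t_\nu$; together with the previous point it is exactly $t_\nu$. Finally, $Q_p^+(\nu,t_\nu)<\infty$ for every $p$ immediately implies that $Q^+(\nu,t,y) = \sum_{p \geq 0} Q_p^+(\nu,t)\, y^p$ is well-defined as a formal power series in $y$ for any $t \in \bar D(0,t_\nu)$.

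The main technical point will be carrying out the pinch decomposition cleanly, in particular correctly treating the degenerate cases — pinch vertices coinciding with the root vertex, simple components of perimeter $1$, and the atomic case $p=0$ — and verifying that the bridge bookkeeping matches both the edge and monochromatic-edge counts. Once this combinatorial step is in place, all analytic consequences follow by direct transfer from the simple-boundary results of~\cite{IsingAMS}.
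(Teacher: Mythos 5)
Your proposal follows essentially the same route as the paper's own (very terse) proof: the paper also decomposes a triangulation with non-simple boundary into a finite collection of simple-boundary pieces, concludes that $Q_p^+$ is a polynomial in $(Z_i^+)_{1\le i\le p}$ with nonnegative coefficients, and transfers algebraicity, finiteness at $t_\nu$, and the unique dominant singularity from Proposition~\ref{prop:asymptoZp+}. Your version is more explicit about the pinch/bridge bookkeeping, but this extra detail does not change the argument in any essential way.
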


\begin{proof} A triangulation with a non simple boundary of length $p$ can be canonically decomposed into a collection of (at most $p$) triangulations with simple boundaries, each of perimeter at most $p$. Therefore, for every $p$, $Q_p^+$ is a polynomial in $(Z_i^+)_{1\leq i \leq p}$. 
The lemma follows easily from this fact and from Proposition \ref{prop:asymptoZp+}.
\end{proof}

All the enumerative properties that we will need for the present work stem from the following algebraicity result and its associated rational parametrization:
\begin{theo}\label{th:ratPar}
Recall the definition of $U$ given in Proposition~\ref{prop:paramU}. We define similarly $V\equiv V(\nu,U,y)$ as the unique formal power series in $\mathbb{Q}[\nu,U]\llbracket y \rrbracket$ having constant term $0$ and satisfying the following equation:
\begin{align}\label{eq:defyV}
y = \hat Y (\nu,U,V),
\end{align}
where $\hat Y (\nu,U,V)$ is the following rational fraction:
\[
\begin{split}
\hat Y (\nu,U,V) &= \frac{8\nu(1-2U)}{U\Big((1+\nu)\cdot U-2\Big)}\\
&\qquad\cdot\frac{V(V+1)}{V^3+\dfrac{9(1+\nu)\cdot U^2-2(3+10\nu)U+8\nu}{U\Big((1+\nu)\cdot U-2\Big)}V^2-\dfrac{9(1+\nu)\cdot U-2(2\nu+3)}{U\Big((1+\nu)\cdot U-2\Big)}V-1}.
\end{split}
\]
The series $Q^+ (\nu,t,ty)$ is algebraic and admits the following rational parametrization in terms of $U(\nu,t^3)$ and $V(\nu,U(\nu,t^3),y)$. For every $\nu>0$, we have the identity
\begin{align}\label{eq:paramQ}
Q^+(\nu,t,ty)=\hat Q^+(\nu,U(\nu,t^3),V(\nu,U(\nu,t^3),y))
\end{align}
as formal power series in $t^3$ and $y$,
where $\hat Q^+(\nu,U,V)$ is the following rational function:
\begin{equation}\label{eq:ratParQ}
\begin{split}
\hat Q^+(\nu,U,V) &= U\cdot\frac{\Big((1+\nu)\cdot U-2\Big)(1-\nu)}{(V+1)^3\cdot P(\nu,U)}\\
&\times \left(V^3+\dfrac{9(1+\nu)\cdot U^2-2(3+10\nu)\cdot U+8\nu}{U\cdot\Big((1+\nu)\cdot U-2\Big)}\cdot V^2-\dfrac{9(1+\nu)\cdot U-2(2\nu+3)}{U\cdot\Big((1+\nu)\cdot U-2\Big)}\cdot V-1\right)\\
&\times \left(V^2+\frac{5(1+\nu)\cdot U^2-2(3\nu+2)\cdot U+2\nu}{U\cdot\Big((1+\nu)\cdot U-2\Big)}\cdot 2V-\frac{P(\nu,U)}{U\cdot\Big((1+\nu)\cdot U-2\Big)(1-\nu))}\right),
\end{split}
\end{equation}
with $P(\nu,U)$ defined in Proposition \ref{prop:paramU}.
\end{theo}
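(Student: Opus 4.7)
The plan is to combine the rational parametrization of simple-boundary triangulations via $U$ (Proposition~\ref{prop:paramU}) with the algebraic equation for $Z^+(\nu,t,y)$ established in Theorem~2.11 of~\cite{IsingAMS}, through a combinatorial substitution that reduces the enumeration of non-simple boundaries to that of simple ones. Given a triangulation in $\mathcal Q_p^+$, its boundary walk may revisit vertices; cutting at each pinch point yields a bijective decomposition into a tree of simple-boundary triangulations glued along identified vertices. Encoding this decomposition as a Tutte-type substitution produces a functional identity of the schematic form
\[
Q^+(\nu,t,y) \;=\; Z^+\!\bigl(\nu,\, t,\, y\cdot\Psi(\nu,t,y,Q^+)\bigr),
\]
where $\Psi$ accounts for the (possibly empty) sequence of non-simple sub-disks that can hang at each boundary vertex of a simple skeleton. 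Combined with the equation for $Z^+$ from~\cite[Thm.~2.11]{IsingAMS}, this yields a closed polynomial equation in $Q^+$ with coefficients in $\mathbb Q[\nu,t,y,Z_1^+,Z_2^+]$.

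To parametrize, I would substitute the expressions of $t^3$, $Z_1^+$ and $Z_2^+$ in terms of $\nu$ and $U$ given by Proposition~\ref{prop:paramU} and~\eqref{eq:wU}. This produces a polynomial equation $P_Q(Q^+;\nu,U,y)=0$ with coefficients in $\mathbb Q[\nu,U,y]$. The role of the auxiliary series $V$ is then to resolve this equation rationally: one recognizes that the cubic-in-$V$ denominator of $\hat Y$ in~\eqref{eq:defyV} is precisely a factor of $P_Q$ once $y$ is expressed in terms of $V$, so that after the change of variable $y=\hat Y(\nu,U,V)$ the series $Q^+$ collapses to the rational function $\hat Q^+(\nu,U,V)$ of~\eqref{eq:ratParQ}. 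The choice of $V$ is motivated a posteriori, or guessed from the shape of $P_Q$, by factoring in a computer algebra system; the Maple companion file~\cite{Maple} carries out this identification and the matching factorization.

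It remains to check that $V$ is well-defined as a formal power series in $y$ with coefficients in $\mathbb Q[\nu,U]$ and that substituting it in $\hat Q^+$ indeed recovers $Q^+(\nu,t,ty)$. Well-definedness of $V$ follows from the implicit function theorem for formal power series: $\hat Y(\nu,U,0)=0$ and the derivative $\partial_V\hat Y$ evaluated at $V=0$ equals a non-zero rational function of $\nu$ and $U$, so $\hat Y$ admits a unique formal inverse in $y$. The resulting series $\hat Q^+(\nu,U(\nu,t^3),V(\nu,U(\nu,t^3),y))$ is then a formal power series in $t^3$ and $y$ that, by construction, satisfies the same polynomial equation $P_Q=0$ and the same initial conditions as $Q^+(\nu,t,ty)$; uniqueness of the power series solution gives the claimed identity~\eqref{eq:paramQ}.

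The principal obstacle is the size of the algebraic computation. The equation $P_Q$ has large degree in $Q^+$ and bulky coefficients in $\nu,U,y$, so identifying the cubic factor whose roots are parametrized by $V$ and verifying that~\eqref{eq:ratParQ} is the resulting rational expression cannot reasonably be done by hand; this is precisely why the authors rely on the Maple companion file~\cite{Maple}. Conceptually, however, the proof is the combination of the combinatorial substitution $Q^+\leftrightarrow Z^+$ with the rational parametrization of $Z^+$ by $U$, and the algebraic observation that the right change of variable $y\mapsto V$ makes the dependence of $Q^+$ on $y$ rational.
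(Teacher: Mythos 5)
Your proposal takes essentially the same route as the paper: decompose non-simple-boundary triangulations into a simple-boundary core with grafted pieces, combine the resulting functional equation with the known algebraic equation and $U$-parametrization for $Z^+$ to get an algebraic equation for $Q^+(\nu,t,ty)$, verify the $V$-parametrization by computer algebra, and conclude by uniqueness of the formal power series solutions with the prescribed constant terms. The one imprecision is your schematic substitution $Q^+=Z^+(\nu,t,y\,\Psi)$ with $\Psi$ left unidentified: the correct relation, used in the paper, is the closed composition $Q^+(\nu,t,y)=Z^+\bigl(\nu,t,y\,Q^+(\nu,t,y)\bigr)+1$ (one non-simple sub-disk grafted per corner of the simple core, plus the atomic term), and pinning this down is what actually produces the algebraic equation \eqref{eq:eqAlgQplus} you then need to parametrize.
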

\begin{proof}
We start by the proof of algebraicity. It is classical that a triangulation with a (not necessarily simple) boundary can be decomposed into a triangulation with a simple boundary on which are grafted triangulations with a boundary. More precisely, let $\mathfrak t$ be a triangulation with a simple boundary of length $p$ and let $(\mathfrak q_1,\ldots, \mathfrak q_p)$ be a collection of triangulations with a boundary. Then, for $i\in \{1,\ldots p\}$, we can graft $\mathfrak q_i$ on $\mathfrak t$ by merging the root corner of $\mathfrak q_i$, with the $i$-th corner of the root face of $\mathfrak t$ (starting from the root corner), see Figure~\ref{fig:SimpleVsNonSimple}. This construction is in fact a bijection between $\mathcal{Q}\backslash\{\dagger\}$ (where $\dagger$ is the atomic map) on the one hand and the set: 
$ \cup_{p\geq 1}\Big\{\mathcal{T}_p \times \mathcal{Q}^p\Big\}$, where $\mathcal{Q}^p$ denotes the set of $p$-tuples of elements of $\mathcal{Q}$. This bijection can of course be extended to triangulations endowed with an Ising configuration with positive boundary conditions, and hence yields the following identity for generating series: 
\begin{equation}\label{eq:relQZ}
  Q^+(\nu,t,y)=Z^+\Big(\nu,t,yQ^+(\nu,t,y)\Big)+1.
\end{equation}
\begin{figure}
\centering
\includegraphics[width=0.9\linewidth]{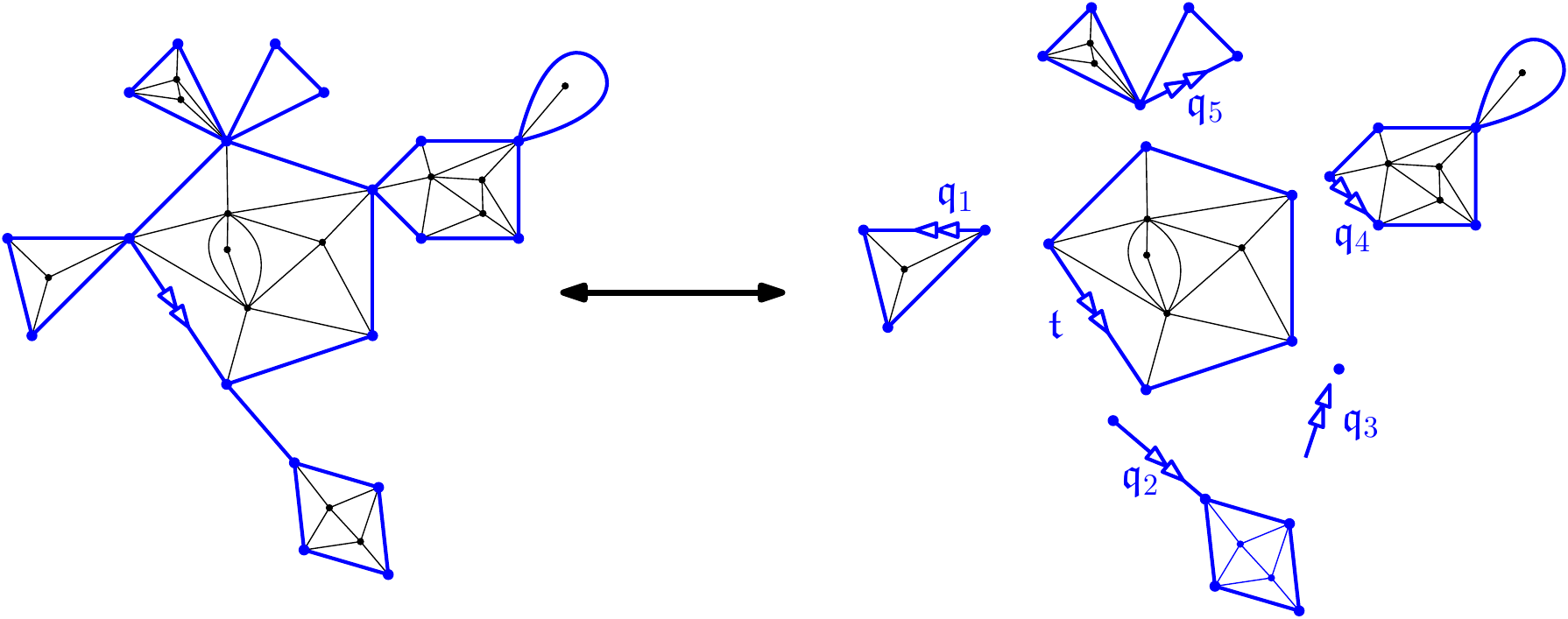}
\caption{\label{fig:SimpleVsNonSimple}Illustration of the decomposition of a triangulation with a boundary into a triangulation with a simple boundary and a collection of triangulations.}
\end{figure}
From the relation \eqref{eq:relQZ} and the algebraic equation obtained for $Z^+(\nu,t,y)$ in the preceding paper~\cite{IsingAMS}, we obtain directly the following algebraic equation for $Q^+(\nu,t,ty)$ (in terms of $Z^+_1$ and $Z^+_2$ defined in \eqref{eq:defZp+}), see also the companion Maple worksheet~\cite{Maple}:
\begin{align}\label{eq:eqAlgQplus}
0  &=\nu^2 t^6 y^5 \Big(Q^+(\nu,t,ty)\Big)^3 -t^3 \nu y^2 (\nu y^2-2 \nu y-y^2-\nu+3 y) \Big(Q^+(\nu,t,ty)\Big)^2 \notag\\
&-(2 \nu^2 t^4 y^3 Z^+_1+\nu^2 t^3 y^3-2 \nu t^3 y^3+\nu^2 y-\nu y^2-2 \nu^2+\nu y+y^2+2 \nu-2 y) Q^+(\nu,t,ty)\nonumber\\
&-\Big(2 \nu^2 t^2 (Z^+_1)^2+2 \nu^2 t^2 Z^+_2-\nu^2 tZ^+_1+\nu^2 t^3-2 \nu t^2 (Z^+_1)^2-2 \nu t^2 Z^+_2-\nu tZ^+_1+\nu+2 tZ^+_1-1\Big) y^2\notag\\
&-(\nu-1) \Big(2 \nu tZ^+_1-\nu-2\Big) y-\Big(2 (\nu-1)\Big) \nu.
\end{align}
By Proposition~\ref{prop:paramU}, $t\, Z^+_1$ and $t^2 \,Z^+_2$ can be rationally parametrized by $U$. Their expression in terms of $U$ is given in~\cite[Theorem~23]{BernardiBousquet} for $t \, Z^+_1$ (with a small change of variables) and \cite[Proposition~2.12]{IsingAMS} for $t^2 \, Z^+_2$ (see also \cite[Theorem~2.5]{IsingAMS} for the expression of $Z^+_1$ used here). By replacing $t^3$, $t \, Z^+_1$ and $t^2 \, Z^+_2$ by their expression in terms of $U$ in \eqref{eq:eqAlgQplus}, we can check directly that the parametrization given in Theorem~\ref{th:ratPar} is solution of \eqref{eq:eqAlgQplus}. See the Maple file~\cite{Maple} for detailed calculations.
\bigskip

It remains to check that the parametrization given in the theorem does indeed give a parametrization of $Q^+(\nu,t,ty)$ and not of another branch, which is also solution of~\eqref{eq:eqAlgQplus}. After replacing $t^3$, $tZ_1^+$ and $t^2Z_2^+$ by their expression in terms of $U$, \eqref{eq:eqAlgQplus} can be rewritten as: 
\[
    Q^+(\nu,t,ty)=1 +y \mathrm{Pol}_{\nu,U}(Q^+,y),
\]
where $\mathrm{Pol}_{\nu,U}$ is a polynomial in two variables whose coefficients are rational in $\nu$ and $U$, and such that $P(1,0)\neq 0$. From this expression, it is clear, that $Q^+(\nu,t,ty)$ is the unique solution of \eqref{eq:eqAlgQplus} that is a formal power series in $y$ (with rational coefficients in $\nu$ and $U$) with constant term $1$. 

On the other hand, \eqref{eq:defyV} can also be rewritten as: 
\[
    V = y\cdot\frac{Pol_{\nu,U}(V)}{1+V},
\]
where $Pol_{\nu,U}$ is a polynomial in one variable whose coefficients are rational in $\nu$ and $U$, and such that $P(0)\neq 0$. This ensures that there exists a unique $V \in\mathbb{Q}[\nu,U]\llbracket y \rrbracket$, with $V(0)=0$ and solution of~\eqref{eq:defyV}. It is then clear that $\hat Q(\nu,U,V(y))\in\mathbb{Q}[\nu,U]\llbracket y \rrbracket$ with constant term 1, which concludes the proof.
\end{proof}

\begin{rema}
 Of course, the main difficulty of Theorem \ref{th:ratPar} (which is not visible in the proof!) is guessing the rational parametrization. The general procedure is as follows. We start with the algebraic equation between $\nu$, $U$, $y$ and $Q^+(\nu, t(U) , t(U) y)$. If we specify a value for $\nu$ and for $U$, we can get (with Maple) a rational parametrization for $y$ and the series $Q^+$ specialized at these values of $\nu$ and $U$. Doing this for several values of $\nu$ and $U$, we get several rational parametrizations for several specialized instances of $(y,Q^+)$. 

For generic values of $\nu$ and $U$, the rational functions given by these parametrizations have the same degrees on both their numerator and denominator. We then perform a polynomial interpolation in $(\nu,U)$ of their coefficients. This yields a guess for an unspecialized parametrization of $(y, Q^+)$, and we verify that it is indeed solution of the equation. 

The guess we obtain is not the formula given in the theorem and is rather complicated. We simplify it further with M\"obius transforms of $V$ to get the parametrization of the theorem. See also \cite[Remark 9]{ChenTurunen2} for details on this guess and check procedure.
\end{rema}

\subsection{Critical points and singularities of the parametrization of \texorpdfstring{$Q^+$}{Q+}}\label{sub:studyV}

\subsubsection{Analytic properties of \texorpdfstring{$V$}{V} and \texorpdfstring{$Q^+$}{Q+}}
In this section, we will study the formal power series $y \mapsto V(\nu,U(\nu,t^3),y)$ of Theorem \ref{th:ratPar} as a function of a complex variable. We will see that it defines an analytic function with two real singularities defined below.

\begin{defi}\label{def:y+-}
Fix $\nu >0$ and $t \in (0,t_\nu]$. Set $V_-\big(\nu,U(\nu,t^3)\big)$ and $V_+\big(\nu,U(\nu,t^3)\big)$ to be respectively the largest negative root and the smallest positive root of the polynomial: 
\begin{equation} \label{eq:VcritU}
V^4 + 2 V^3 + 2 \frac{\Big(3 U(\nu,t^3) - 2\Big) \Big(3 U(\nu,t^3) (\nu + 1) - 2 \nu\Big)}{U(\nu,t^3) \Big(U(\nu,t^3) (\nu+1) - 2\Big) }V^2+ 2 V +1.
\end{equation}
Then, define:
\begin{equation}\label{eq:defy+-}
y_{+}(\nu,t) = \hat Y (\nu,U(\nu,t^3),V_+(\nu,U(\nu,t^3)))\quad \text{ and }\quad
y_{-}(\nu,t) = \hat Y (\nu,U(\nu,t^3),V_-(\nu,U(\nu,t^3))).
\end{equation}
\end{defi}
\begin{rema}
The existence of $V_-(\nu,U(\nu,t^3))$ and $V_+(\nu,U(\nu,t^3))$ is not clear at this point. It is established in the proof of the following proposition. 
\end{rema}

\begin{prop}\label{prop:inverseV}
For any $\nu>0$ and $t\in (0,t_\nu]$, we have $y_+(\nu,t) >0$ and $y_-(\nu,t) < 0$ and the formal power series $y \mapsto V  (\nu,U(\nu,t^3),y)$ of Theorem~\ref{th:ratPar} defines an analytic function on the domain
$\mathbb C \setminus \{ (- \infty , y_-(\nu,t)]\cup [y_+(\nu,t),+\infty) \}$ which is well-defined and singular at $y_-(\nu,t)$ and $y_+(\nu,t)$.

Moreover, $V(\nu,U(\nu,t^3),y)\neq -1$ for every $y \in \mathbb C \setminus \{ (- \infty , y_-(\nu,t)]\cup [y_+(\nu,t),+\infty)\}$.
\end{prop}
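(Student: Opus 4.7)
My plan is to view $V(\nu,U,y)$ as the functional inverse at the origin of the rational map $V \mapsto \hat{Y}(\nu,U,V)$ and to carry out the standard branch-point analysis of this inverse. The starting point is a local computation: $\hat{Y}(\nu,U,0)=0$, because the numerator $V(V+1)$ of $\hat{Y}$ vanishes simply at $0$ while its denominator equals $-1$, and $\partial_V \hat{Y}(\nu,U,0) = -C(\nu,U) \ne 0$ where $C(\nu,U) = 8\nu(1-2U)/(U((1+\nu)U-2))$ is the prefactor in $\hat{Y}$. The implicit function theorem then provides a unique analytic germ of inverse sending $0 \mapsto 0$, which by uniqueness coincides with the formal power series of Theorem~\ref{th:ratPar}, and this germ extends analytically along any path in $\mathbb{C}$ that avoids the critical values of $\hat{Y}$.

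The next step is to locate the obstructions to continuation. Writing $\hat{Y}(\nu,U,V) = C(\nu,U)\cdot V(V+1)/D(V)$ with $D(V) = V^3 + aV^2 - bV - 1$ for the $(\nu,U)$-dependent $a,b$ read off from the formula defining $\hat{Y}$, logarithmic differentiation and clearing denominators turn $\partial_V \hat{Y} = 0$ into a polynomial equation of degree four in $V$. Using the defining relation~\eqref{eq:wU} of $U$ to simplify coefficients, I would identify this polynomial with the palindromic quartic in~\eqref{eq:VcritU}. The palindromic shape invites the substitution $W = V + 1/V$, which reduces~\eqref{eq:VcritU} to a quadratic in $W$; analysing the sign of its discriminant separately in the two regimes $\nu \leq \nu_c$ and $\nu \geq \nu_c$ via the explicit formulas for $U_\nu$ from Proposition~\ref{prop:Unuval}, I would show that this quadratic has two real roots in $W$, each producing two real roots in $V$ split into a positive pair and a negative pair. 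Then $V_+$ and $V_-$ are the smallest positive and the largest negative among these.

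The signs $y_+ > 0$ and $y_- < 0$ follow by monotonicity on the real axis: on the maximal open subinterval between two consecutive real poles of $D(V)$ containing $0$, the function $V \mapsto \hat{Y}(\nu,U,V)$ is real-analytic, vanishes at $V = 0$, and has its first critical point on each side at $V_+$ (to the right) and $V_-$ (to the left); combining the signs of $C(\nu,U)$ and of $V(V+1)/D(V)$ on each subinterval pins down the sign of $y_\pm$. For the global extension, the degree-three rational map $\hat{Y}(\nu,U,\cdot)$ has at most four critical points on the Riemann sphere; the two not accounted for above form a complex-conjugate pair by reality, so the real axis outside $(-\infty,y_-] \cup [y_+,+\infty)$ is free of critical values and the standard theorem on inverses of rational maps extends the germ to the claimed slit plane. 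Singularity at $y_\pm$ comes from $\hat{Y}'(V_\pm) = 0$: the Taylor expansion $\hat{Y}(V) - y_\pm = \tfrac12 \hat{Y}''(V_\pm)(V - V_\pm)^2 + O((V-V_\pm)^3)$ translates into a square-root branching $V(y) - V_\pm \sim c_\pm (y - y_\pm)^{1/2}$ with $c_\pm \ne 0$.

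Finally, $V(\nu,U,y) \ne -1$ on the slit plane follows from a separation argument: since $\hat{Y}(\nu,U,-1) = 0 = \hat{Y}(\nu,U,0)$, the inverse of $\hat{Y}$ near $y = 0$ has two local branches, one starting at $V = 0$ and the other at $V = -1$; a direct check shows that $V = -1$ is not a root of~\eqref{eq:VcritU} in the relevant range of parameters, so $-1 \notin [V_-,V_+]$ and the $0$-branch stays inside $(V_-,V_+)$ along the real slit, hence avoids $-1$ throughout the slit plane by analytic continuation. The main technical difficulty of the whole argument is the algebraic manipulation needed to identify the critical-point polynomial with~\eqref{eq:VcritU} and to track the signs of $y_\pm$ across the transition at $\nu_c$, where the formula for $U_\nu$ changes (Proposition~\ref{prop:Unuval}); these computations are intricate but elementary and are most cleanly handled symbolically using the Maple companion file~\cite{Maple}.
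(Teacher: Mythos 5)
Your overall strategy coincides with the paper's: locate the poles and stationary points of $V\mapsto\hat Y(\nu,U,V)$, show the quartic~\eqref{eq:VcritU} has all its roots real with $V_-\in(-1,0)$ and $V_+\in(0,1]$, deduce the signs of $y_\pm$ from the sign of $\partial_V\hat Y$ at $0$, and invert globally on the slit plane. The palindromic substitution $W=V+1/V$ is a clean alternative to the paper's direct evaluation of the quartic at $V=-1,0,1$, and it buys the useful structural observation that the roots pair off as $\{V_\pm,1/V_\pm\}$.

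However, your global-extension step contains a genuine error. You write that ``the two not accounted for above form a complex-conjugate pair by reality, so the real axis outside $(-\infty,y_-]\cup[y_+,+\infty)$ is free of critical values.'' This contradicts your own earlier conclusion (and the paper's) that all four roots of~\eqref{eq:VcritU} are real. The two remaining critical points are $1/V_+$ and $1/V_-$, which are real with $|1/V_\pm|\ge 1$, and using the symmetry $\hat Y(1/V)=\hat Y(V)/(\hat Y(V)-1)$ their critical values are $y_\pm/(y_\pm-1)$. Since $y_-<0$, one has $y_-/(y_--1)\in(0,1)$, which lies \emph{inside} the slit domain $\Omega$ for every $\nu$; and for $\nu>\nu_c$ one also has $y_+/(y_+-1)\in(1,2)\subset\Omega$. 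So the asserted absence of critical values on $(y_-,y_+)$ is false, and the ``standard theorem on inverses'' does not apply as stated. One must additionally argue that the branch of $V$ through $(0,0)$ never approaches $1/V_\pm$, so that these critical values are not branch points of \emph{this} branch (for instance via the monotonicity and range $V((y_-,y_+))\subset(V_-,V_+)\subset(-1,1)$, combined with the finiteness of the set of branch points of the algebraic function and the reality of all critical values). This is precisely what makes ``global inversion'' legitimate; the paper is terse here too, but it does not make a false claim.

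Two smaller points. First, you invoke~\eqref{eq:wU} to simplify the coefficients when identifying $\partial_V\hat Y=0$ with~\eqref{eq:VcritU}; but $\hat Y$ and~\eqref{eq:VcritU} involve only $\nu$ and $U$, while~\eqref{eq:wU} only expresses $t^3$ in terms of $U$, so it cannot help here, and likewise the formulas for $U_\nu$ in Proposition~\ref{prop:Unuval} only cover the endpoint $t=t_\nu$, whereas the proposition is claimed for the whole range $U\in(0,U_\nu]$. Second, your argument that $V\ne -1$ is workable but roundabout; the paper's observation is more direct: if $V(y)=-1$ then $y=\hat Y(-1)=0$, contradicting $V(0)=0$.
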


\begin{proof}
This property mainly boils down to a study of the rational function $V \mapsto \hat Y (\nu , U(\nu,t^3),V)$. Fix $\nu$ and $t$ as in the proposition. We want to study the critical points of this parametrization. Those are the poles and stationary points of the function $V\mapsto\hat Y (\nu , U(\nu,t^3),V)$, where it cannot be inverted.

We first look at the poles, they are the roots of the following polynomial of degree $3$:
\begin{equation} \label{eq:poley}
V^3+\dfrac{9(1+\nu)\cdot U^2(\nu,t^3)-2(3+10\nu)U(\nu,t^3)+8\nu}{U(\nu,t^3)\Big((1+\nu)\cdot U(\nu,t^3)-2\Big)}V^2-\dfrac{9(1+\nu)\cdot U(\nu,t^3)-2(2\nu+3)}{U(\nu,t^3)\Big((1+\nu)\cdot U(\nu,t^3)-2\Big)}V-1.
\end{equation}
If we compute the discriminant with respect to $V$ of this polynomial, we can see that it is negative for $0 \leq U(\nu,t^3) \leq U(\nu,t^3_\nu)$ (see the Maple file~\cite{Maple} for details). Therefore, $V\mapsto\hat Y (\nu , U(\nu,t^3),V)$ has a single real pole. We can check that the value of the polynomial \eqref{eq:poley} at $V=1$ is negative, meaning that the real pole of $V \mapsto \hat Y (\nu , U(\nu,t^3),V)$ is located after $V=1$.

Let us turn to the stationary points. They correspond to the zeros of the derivative $\partial_V \hat Y (\nu , U(\nu,t^3),V)$, and hence are given by the roots of the polynomial given in \eqref{eq:VcritU}.

In the entire range of our parameters $(\nu,U(\nu,t^3))$, we can determine that this polynomial is negative at $V=-1$, is equal to 1 at $V=0$, and is non positive at $V=1$. Hence, all its roots are real and each of the intervals $(-\infty ,-1)$, $(-1,0)$, $(0,1]$ and $[1,+\infty)$ contains one root. (If one of the roots is equal to 1, then it is a double root). This implies in particular $V_-(\nu,U)$ and $V_+(\nu,U)$ of Definition~\ref{def:y+-} are well-defined. Besides, $V_-(\nu,U)\in(-1,0)$ and $V_+(\nu,U)\in(0,1]$. Since $\partial_V \hat Y (\nu , U(\nu,t^3),V)\vert_{V=0}>0$ and $\hat Y (\nu , U(\nu,t^3),0)=0$, this implies that $y_+(\nu,t) >0$ and $y_-(\nu,t) < 0$.

The first part of the proposition follows by global inversion. Finally, to see that $V(\nu,U(\nu,t^3),y)$ is never equal to $-1$ in its domain of analyticity, we simply check that $\hat Y(\nu,U(\nu,t^3),-1) = 0$ is not in the domain where we inverse it.
\end{proof}

The analyticity properties of $y \mapsto V(\nu,U(\nu,t^3),y)$ can be transferred to the generating series $Q^+$:

\begin{prop} \label{prop:analyticQty}
Fix $\nu >0$ and $t \in (0,t_\nu]$. Then $Q^+(\nu,t,ty)$ seen as a function of $y$ has an analytic continuation on $\mathbb C \setminus \{ (- \infty , y_-(\nu,t)]\cup [y_+(\nu,t),+\infty) \}$. 
Moreover, $Q^+(\nu,t,ty)$ is well-defined and singular at $y_-(\nu,t)$ and $y_+(\nu,t)$.
\end{prop}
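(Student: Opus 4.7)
The plan is to derive the proposition from Theorem~\ref{th:ratPar} and Proposition~\ref{prop:inverseV}. Fix $\nu > 0$ and $t \in (0, t_\nu]$, and set $U := U(\nu, t^3)$, a fixed number in $(0, U_\nu]$. By Theorem~\ref{th:ratPar},
\[
Q^+(\nu, t, ty) = \hat Q^+\bigl(\nu, U, V(\nu, U, y)\bigr)
\]
as formal power series in $y$. The right-hand side is the composition of the rational function $V \mapsto \hat Q^+(\nu, U, V)$ with the map $y \mapsto V(\nu, U, y)$, and the latter is analytic on $\C \setminus \{(-\infty, y_-(\nu, t)] \cup [y_+(\nu, t), +\infty)\}$ by Proposition~\ref{prop:inverseV}.

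The first step is to check that $V \mapsto \hat Q^+(\nu, U, V)$ has no pole at any value taken by $V(\nu, U, y)$ on this open set. Inspecting~\eqref{eq:ratParQ}, the only possible poles come from the factor $(V+1)^3 \cdot P(\nu, U)$ in the denominator. Proposition~\ref{prop:inverseV} ensures $V(\nu, U, y) \neq -1$ throughout the domain, ruling out $(V+1)^3$. For $P(\nu, U)$, I would use the parametrization~\eqref{eq:wU}: since $t^3 > 0$ and, for $\nu>0$ and $U \in (0,U_\nu]$, the factors $U$, $1-2U$ and $(1+\nu)U-2$ are all nonzero (the bound $U \le U_{\nu_c} = (5-\sqrt 7)/9 < 1/2$ from Proposition~\ref{prop:Unuval} handles the last two), one must have $P(\nu, U) \neq 0$. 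Composition then yields analyticity of $y \mapsto Q^+(\nu, t, ty)$ on the claimed open set.

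For the boundary points $y_\pm := y_\pm(\nu, t)$, recall that they are images under $\hat Y(\nu, U, \cdot)$ of the stationary points $V_\pm$ of Definition~\ref{def:y+-}, with $V_\pm \neq -1$ by the proof of Proposition~\ref{prop:inverseV}. A local inversion at a simple stationary point of the rational function $\hat Y(\nu, U, \cdot)$ gives, in a one-sided neighborhood of $y_\pm$ in the domain, an expansion
\[
V(\nu, U, y) = V_\pm + c_\pm \sqrt{y - y_\pm} + O(|y - y_\pm|), \qquad c_\pm \neq 0.
\]
Since $\hat Q^+(\nu, U, \cdot)$ is regular at $V_\pm$ by the first step, this provides a continuous extension of $Q^+(\nu, t, ty)$ to $y_\pm$, which is therefore well-defined there.

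It remains to prove that $Q^+$ is actually singular at $y_\pm$. For $y_+$, the series $Q^+(\nu, t, ty) = \sum_p Q_p^+(\nu, t) t^p y^p$ has nonnegative coefficients in $y$, so Pringsheim's theorem combined with the analyticity on $\{|y| < y_+\}$ established above forces a singularity at $y_+$. The main obstacle is the singularity at $y_-$, where no sign argument is available. For this, I would substitute the square-root expansion of $V$ into $\hat Q^+$ to obtain
\[
Q^+(\nu, t, ty) = \hat Q^+(\nu, U, V_-) + c_- \, \partial_V \hat Q^+(\nu, U, V_-) \sqrt{y - y_-} + O(|y - y_-|),
\]
so singularity at $y_-$ is equivalent to $\partial_V \hat Q^+(\nu, U, V_-) \neq 0$. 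This nonvanishing statement can be checked as a polynomial identity in $(\nu, U)$ on the image of the parametrization using the explicit expressions~\eqref{eq:ratParQ} and~\eqref{eq:VcritU}, a computation recorded in the Maple companion file~\cite{Maple}.
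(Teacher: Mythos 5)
Your treatment of the analyticity part essentially matches the paper's (substitute the rational parametrization, apply Proposition~\ref{prop:inverseV}), and the extra step of verifying that $\hat Q^+(\nu,U,\cdot)$ has no pole on the range of $V$ is a useful addition that the paper leaves implicit. (A small imprecision there: the bound $U<1/2$ rules out $1-2U=0$ but does not by itself rule out $(1+\nu)U=2$ once $\nu\ge 3$; one needs the sharper fact that $(1+\nu)U_\nu<2$ for all $\nu$, which follows from the parametrizations in Proposition~\ref{prop:Unuval}.)

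The singularity part, however, has real gaps, and this is precisely where the paper says ``there is indeed something to prove''. The paper handles both $y_+$ and $y_-$ at once by an algebraic \emph{properness} argument: if $Q^+$ extended analytically past $y_\pm$, then for an open set of $y$ near $y_\pm$ the two local branches of $V$ solving $\hat Y(V)=y$ would have to produce the same value of $\hat Q^+(V)$, giving a one-parameter family of $(y,q)$ on the curve with more than one preimage, contradicting the properness of the parametrization of Theorem~\ref{th:ratPar}. Your route is different and currently does not close. First, the Pringsheim argument for $y_+$ shows only that the \emph{radius of convergence} of $Q^+(\nu,t,ty)$ in $y$ carries a singularity; since all you have established is that this radius is \emph{at least} $y_+$, you cannot conclude the singularity sits at $y_+$ rather than at some larger real point. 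Ruling out a larger radius is exactly what needs to be proved, so the argument is circular as written. Second, your local expansion $V=V_\pm+c_\pm\sqrt{y-y_\pm}+O(|y-y_\pm|)$ with $c_\pm\neq 0$ assumes $V_\pm$ is a \emph{simple} stationary point of $\hat Y$; this fails at $V_+=1$ when $\nu\le\nu_c$ and $t=t_\nu$, where the paper shows a cube-root behaviour (Lemma~\ref{lem:expanVy}). Third, the proposed check that $\partial_V\hat Q^+(\nu,U,V_-)\ne 0$ is a plausible substitute for properness at $y_-$, but the claim that this computation is ``recorded in the Maple companion file'' is not accurate: the paper's companion file does not verify this identity because the paper avoids it altogether via properness. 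To rescue the proposal, you could instead verify $\partial_V\hat Q^+(\nu,U,V_\pm)\ne 0$ (and, at $V_+=1$ in the cube-root case, the relevant higher derivative) for all $\nu>0$, $t\in(0,t_\nu]$, which would give a self-contained argument; but this is a genuinely new computation, not one done in the paper, and one should either carry it out or switch to the properness argument.
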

\begin{proof}
The first statement of the proposition follows from the rational parametrization given in Theorem~\ref{th:ratPar} and from Proposition~\ref{prop:inverseV}. Indeed, from Theorem~\ref{th:ratPar}, we know that 
\[\hat Q^+ \Big(\nu,U(\nu,t^3),V(\nu,U(\nu,t^3),y)\Big) = Q^+(\nu,t,ty)\]
as formal power series in $y$. In addition, from Proposition~\ref{prop:inverseV}, we deduce that
\[
y \mapsto \hat Q^+ \Big(\nu,U(\nu,t^3),V(\nu,U(\nu,t^3),y)\Big),
\]
and therefore $Q^+(\nu,t,ty)$, is analytic on~$\mathbb C \setminus \{ (- \infty , y_-(\nu,t)]\cup [y_+(\nu,t),+\infty) \}$.
\bigskip

It only remains to prove that $Q^+(\nu,t,ty)$ is singular at $y_-(\nu,t)$ and $y_+(\nu,t)$. There is indeed something to prove: even if $y\mapsto V(\nu,U(\nu,t^3),y)$ is singular for these two values, there could be some simplifications in \eqref{eq:ratParQ}, when substituting $V$ by its expansion around $y_{\pm}(\nu,t)$. We rely on an argument which appears in~\cite[Lemma 20]{ChenTurunen2}.

We first observe that the parametrization given in Theorem~\ref{th:ratPar} is \emph{proper}, meaning that $\Big(\hat Y(V),\hat Q^+(V)\Big)=(y,q)$ has a unique solution $V$ for all but finitely many $(y,q)$ which are solutions of~\eqref{eq:eqAlgQplus} (in which $Q^+(\nu,t,ty)$ is replaced by $q$). This fact follows directly from the characterization of properness via the degree of the rational parametrization given in~\cite[Theorem~4.21]{RationalAlgebraicCurves}. 

Assume that $\nu>0$ and $t\in (0,t_\nu]$ are fixed and suppose by contradiction that $Q^+(\nu,t,ty)$ is analytic at $y_+(\nu,t)$ (respectively at $y_-(\nu,t)$), then for $y$ in a neighborhood of $y_+(\nu,t)$ (respectively at $y_-(\nu,t)$), the equation $\hat Y (\nu, U(\nu,t^3), V)= y$ has several solutions. In turn, the equation $(\hat Y (\nu, U(\nu,t^3), V),\hat Q(\nu,U(\nu,t^3),V)) = (y,Q^+(\nu,U(\nu,t^3),y))$ has several solutions in the same neighborhood, contradicting the fact that the parametrization is proper.
\end{proof} 

We conclude this section by important bounds on the functions $y_+(\nu,t)$ and $y_-(\nu,t)$:

\begin{prop} \label{prop:yineq}
Fix $\nu >0$ and $t \in (0,t_\nu]$. We have $y_+(\nu,t)\geq 2$ and $y_{-}(\nu,t)<-y_+(\nu,t)$. This implies in particular that $y_+(\nu,t)$ is the unique dominant singularity of $y\mapsto V(\nu,U(\nu,t^3),y)$ and of $y \mapsto Q^+(\nu,t,ty)$. 
\end{prop}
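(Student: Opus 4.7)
The strategy is to exploit the rational parametrization of Theorem~\ref{th:ratPar} to reduce both inequalities to polynomial conditions in the two variables $(\nu, U)$ with $U = U(\nu, t^3)$, and verify these over the admissible range $\nu > 0$, $U \in (0, U_\nu]$ given by Proposition~\ref{prop:Unuval}.

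A useful structural observation is that the quartic \eqref{eq:VcritU} is palindromic, with coefficient pattern $(1,2,c,2,1)$, so its roots pair as $\{V_+, 1/V_+\}$ and $\{V_-, 1/V_-\}$. Setting $W = V + 1/V$ reduces the quartic to a quadratic in $W$ whose roots $W_\pm := V_\pm + 1/V_\pm$ are algebraic over $\mathbb{Q}(\nu, U)$. The conditions $V_+ \in (0,1]$ and $V_- \in (-1,0)$ translate into the clean bounds $W_+ \geq 2$ and $W_- \leq -2$ respectively (minimizing/maximizing $V + 1/V$ on the corresponding intervals). Moreover $V_\pm$ satisfies the quadratic $V^2 - W_\pm V + 1 = 0$, so every rational function of $V_+$ (respectively $V_-$) reduces modulo this quadratic to an expression affine in $V_+$ (respectively $V_-$) with coefficients in $\mathbb{Q}(\nu, U, W_\pm)$.

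For the bound $y_+(\nu,t) \geq 2$, substitute $V = V_+$ into $\hat Y$ from Theorem~\ref{th:ratPar} and bring $y_+ - 2$ to a common denominator. The affine reduction above expresses $y_+ - 2$ as a fraction of the form $(A V_+ + B)/(\alpha V_+ + \beta)$ with $A, B, \alpha, \beta$ rational in $(\nu, U, W_+)$. Since $W_+$ satisfies an explicit quadratic over $\mathbb{Q}(\nu, U)$, a conjugation step yields a polynomial expression in $(\nu, U)$ alone whose sign is to be checked; combined with the constraint $V_+ \in (0, 1]$, this determines the sign of $y_+ - 2$. For $y_+(\nu,t) + y_-(\nu,t) < 0$, the same approach applied symmetrically to $V_+$ and $V_-$ yields an expression for $y_+ + y_-$ symmetric in $\{W_+, W_-\}$. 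Using Vieta's identities $W_+ + W_- = -2$ and $W_+ W_- = c - 2$ to eliminate $W_\pm$, $y_+ + y_-$ becomes a rational function of $(\nu, U)$ alone, whose negativity on the admissible range is then a direct check.

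The main obstacle is purely the size of the algebraic expressions after elimination; the sign verifications are not feasible by hand and are carried out via computer algebra in the Maple companion file~\cite{Maple}. Identifying the correct branches of the resultants (i.e., distinguishing $V_+$ from $1/V_+$ and from the roots corresponding to $V_-$) can be anchored by an asymptotic analysis at $U \to 0^+$, where the polynomials in \eqref{eq:VcritU} and $\hat Y$ admit explicit leading-order expansions. The last assertion of the proposition---that $y_+$ is the unique dominant singularity of both $y \mapsto V(\nu, U(\nu, t^3), y)$ and $y \mapsto Q^+(\nu, t, ty)$---then follows immediately from $0 < y_+ < |y_-|$ combined with Propositions~\ref{prop:inverseV} and~\ref{prop:analyticQty}.
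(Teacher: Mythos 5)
Your proposal is plausible in principle but takes a genuinely different and substantially heavier route than the paper. Where the paper leans on two soft observations, you propose to do everything by algebraic elimination.

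For $y_+(\nu,t)\geq 2$: the paper observes that $y_+(\nu,t)$ is the radius of convergence in $y$ of $Q^+(\nu,t,ty)$, whose coefficients $t^p Q_p^+(\nu,t)$ are series in $t^3$ with nonnegative coefficients and are therefore nondecreasing in $t$; consequently $y_+(\nu,t)$ is nonincreasing in $t$, and it suffices to check the boundary value $y_+(\nu,t_\nu)\geq 2$, which is exactly what Lemma~\ref{lem:expanVy} computes (with equality for $\nu\leq\nu_c$ and strict inequality for $\nu>\nu_c$). This reduces a two-parameter sign verification over $\{(\nu,U):U\in(0,U_\nu]\}$ to a one-parameter boundary computation. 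You instead propose to carry out the full two-parameter check by elimination, which works but forfeits this reduction.

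For $y_-(\nu,t)<-y_+(\nu,t)$: the paper gets the non-strict inequality $y_-\leq -y_+$ immediately from the fact that $y_+$ is the radius of convergence of $Q^+(\nu,t,ty)$ (so every singularity has modulus at least $y_+$), and obtains strictness in Remark~\ref{rem:y+-} from the universal structural fact that $c_+(\mathbf q)>-c_-(\mathbf q)$ for non-bipartite Boltzmann maps, translated back through \eqref{eq:cy+-}. You do not use the radius-of-convergence shortcut at all and propose to establish $y_+ + y_- < 0$ by polynomial elimination---exactly the sort of verification the authors flag as ``tedious at best.'' There is also a small imprecision in your sketch: after reduction modulo $V^2 - W_\pm V + 1=0$, each $y_\pm$ is still affine in the chosen root $V_\pm$, so eliminating $W_\pm$ via Vieta's identities does \emph{not} make $y_+ + y_-$ a rational function of $(\nu,U)$ alone; the square-root branch choices for $V_+$ and $V_-$ remain and must be resolved before any sign test is meaningful. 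You acknowledge the branch-identification issue, but the path to a clean polynomial criterion is longer than your outline suggests, and this is precisely the difficulty the paper's structural argument sidesteps entirely.
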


\begin{proof}
The series $Q^+(\nu,t,ty)$ having positive coefficients and $y_+(\nu,t)$ being its radius of convergence in $y$ when $\nu$ and $t$ are fixed, it is clear that $y_+(\nu,t)$ is a decreasing function of $t$. Therefore, for $t \in(0,t_\nu]$ we have $y_+(\nu,t) \geq y_+(\nu,t_\nu)$. We will see in Lemma~\ref{lem:expanVy} that $y_+(\nu,t_\nu) \geq 2$ for every $\nu > 0$, giving the first part of the statement. 

In addition, the fact that $y_+(\nu,t)$ is the radius of convergence of $Q^+(\nu,t,ty)$ ensures that $y_-(\nu,t) \leq - y_+(\nu,t)$ since $y_-(\nu,t)$ is a singularity of the series. We cannot directly conclude that $y_-(\nu,t) \neq - y_+(\nu,t)$ and proving it by calculus, even if possible, seems tedious at best. We will see in Remark~\ref{rem:y+-} that this fact is an easy consequence of the link between $y_+(\nu,t)$, $y_-(\nu,t)$ and the universal generating series of a model of \emph{non-bipartite} Boltzmann planar maps given in \eqref{eq:cy+-}.
\end{proof}

\subsubsection{Singular expansions for \texorpdfstring{$V$}{V}}\label{sub:critV}

In this section, we study the singularities of the function $V(\nu,U(\nu,t^3),y)$. In the following, two types of expansion will be needed: first, we set $t=t_\nu$ and compute an expansion in $y$ at $y_+(\nu,t_\nu)$ and secondly, we fix $y$ and perform an expansion in $t$ at $t_\nu$.

\bigskip

\begin{lemm}\label{lem:expanVy}
Recall that $U_\nu:=U(\nu,t_\nu^3)$. The unique dominant singularity
\begin{equation} \label{eq:ynu}
y_\nu := y_+(\nu,t_\nu)
\end{equation}
of $y\mapsto V(\nu,U_\nu,y)$ satisfies
\begin{equation}\label{eq:ynuval}
y_\nu = 2 \quad\text{ for }\nu \leq \nu_c \qquad \text{and}\qquad 
y_\nu>2 \quad\text{ for }\nu > \nu_c.
\end{equation}
In addition, $y\mapsto V(\nu,U_\nu,y)$ admits the following singular expansion at $y_\nu$:
\begin{equation}\label{eq:devV1}
V(\nu,U_\nu,y) = 
\begin{cases}
1+\displaystyle\sum_{i\in\{\frac{1}{3},\frac{2}{3},1,\frac{4}{3}\}} \aleph^V_i(\nu)\left(1-\frac{y}{2}\right)^{i}+o\left(\left(1-\frac{y}{2}\right)^{4/3}\right)&\text{ for }\nu\leq \nu_c,\\
\displaystyle V_+(\nu,U_\nu)+\sum_{i\in\{\frac{1}{2},1,\frac{3}{2}\}} \aleph^V_i(\nu)\left(1-\frac{y}{y_+(\nu,t_\nu)}\right)^{i}+o\left(\left(1-\frac{y}{y_+(\nu,t_\nu)}\right)^{3/2}\right)&\text{ for }\nu> \nu_c,
\end{cases}
\end{equation}
where the coefficients $\aleph^V_i(\nu)$ are explicit functions of $\nu$, which do not vanish. 
\end{lemm}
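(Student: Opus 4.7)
The strategy is to analyze the critical structure of the rational parametrization $V\mapsto \hat Y(\nu, U_\nu, V)$ of Theorem~\ref{th:ratPar}, specialized at $t=t_\nu$, and then apply analytic inversion. Recall from Definition~\ref{def:y+-} and from the proof of Proposition~\ref{prop:inverseV} that $V_+(\nu,U_\nu)\in(0,1]$ is the smallest positive root of the quartic \eqref{eq:VcritU}, that the unique dominant singularity $y_\nu$ of $y\mapsto V(\nu,U_\nu,y)$ is $\hat Y(\nu,U_\nu,V_+(\nu,U_\nu))$, and that the order of this singularity is governed by the multiplicity of $V_+$ as a root of \eqref{eq:VcritU}.

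The first key step is to identify $V_+(\nu,U_\nu)$. Substituting $V=1$ into \eqref{eq:VcritU} and clearing denominators, the equation reduces (after elementary algebraic manipulation) to a multiple of $3(1+\nu)U_\nu^2-3(1+\nu)U_\nu+\nu=0$. By Proposition~\ref{prop:Unuval}, this is precisely the defining relation for $U_\nu$ in the high temperature regime $\nu\leq \nu_c$. An analogous manipulation of $\partial_V$ applied to~\eqref{eq:VcritU} at $V=1$ yields the same algebraic identity. Consequently, for $\nu\leq \nu_c$ the value $V=1$ is automatically a \emph{double} root of \eqref{eq:VcritU}, while for $\nu>\nu_c$ it is not a root at all; combined with $V_+\in(0,1]$, this forces $V_+(\nu,U_\nu)=1$ in the former case and $V_+(\nu,U_\nu)<1$ (simple root of \eqref{eq:VcritU}) in the latter. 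Plugging $V=1$ into the expression for $\hat Y$ and simplifying once more with the relation $3(1+\nu)U_\nu^2=3(1+\nu)U_\nu-\nu$ then yields $\hat Y(\nu,U_\nu,1)=2$, proving $y_\nu=2$ for $\nu\leq \nu_c$. For $\nu>\nu_c$, we use the rational parametrization $\nu=\hat\nu(K)$, $U_\nu=(3-K^2)/(6K+10)$ of Proposition~\ref{prop:Unuval} and solve for $V_+$ as a function of $K$; substituting back into $\hat Y$ and studying the resulting rational function of $K$ on $(K_{\nu_c},K_\infty)$ gives $y_\nu>2$, with the value $y_\nu=2$ attained only at the boundary point $K=K_{\nu_c}$, i.e.\ $\nu=\nu_c$.

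The second step is the singular expansion, obtained by local analytic inversion of $\hat Y$ near $V_+(\nu,U_\nu)$. In the low temperature case $\nu>\nu_c$, $V_+$ is a simple root of \eqref{eq:VcritU}, so $\partial_V\hat Y$ has a simple zero there and $\partial_V^2\hat Y(\nu,U_\nu,V_+)\neq 0$; Taylor expansion combined with the classical analytic inversion of an order two critical point (see e.g.\ Theorem~VII.7 of \cite{FS}) yields an expansion in powers of $(1-y/y_\nu)^{1/2}$ with the asserted exponents. In the high and critical temperature cases $\nu\leq \nu_c$, $V=1$ is a double root of \eqref{eq:VcritU}, so $\partial_V\hat Y$ has a double zero at $V=1$; the corresponding Taylor expansion reads
\[
\hat Y(\nu,U_\nu,V)=2+\frac{1}{6}\partial_V^3\hat Y(\nu,U_\nu,1)\,(V-1)^3+O\bigl((V-1)^4\bigr),
\]
and provided the third derivative does not vanish, analytic inversion yields an expansion in powers of $(1-y/2)^{1/3}$ with the stated exponents.

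The last step is the non-vanishing of the coefficients $\aleph^V_i(\nu)$. The leading coefficient is the dominant inversion coefficient, so it is a rational expression in $\partial_V^3\hat Y(\nu,U_\nu,1)$ (resp.\ $\partial_V^2\hat Y(\nu,U_\nu,V_+)$); its non-vanishing follows from the fact that $\hat Y$ is an explicit rational function whose higher derivatives at $V=1$ (resp.\ at $V_+$) can be computed in closed form and shown not to identically vanish as functions of $\nu$ on the appropriate range, a verification delegated to the Maple file~\cite{Maple}. The subleading coefficients $\aleph^V_i(\nu)$ are then computed from the same inversion procedure, and their non-vanishing is similarly checked symbolically.

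The main obstacle is the bookkeeping of the double-root identity at $V=1$, which is a genuinely algebraic coincidence between the quartic \eqref{eq:VcritU} at $V=1$ and the polynomial defining $U_\nu$ in the high temperature regime. All other steps are routine once this identification is made: the singularity structure is standard analytic inversion, and the explicit non-vanishing checks are symbolic manipulations in the Maple companion file.
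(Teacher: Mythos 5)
Your proof follows the same strategy as the paper's: study the stationary points of the rational parametrization $V \mapsto \hat Y(\nu, U_\nu, V)$, determine the multiplicity of the critical point $V_+(\nu, U_\nu)$, and perform analytic inversion of the resulting order-two or order-three critical point, with the closed-form computations and non-vanishing checks delegated to the Maple file. The one place where you take a slightly different and in fact cleaner route is the identification of the double root at $V=1$ for $\nu \leq \nu_c$: you observe directly that the quartic~\eqref{eq:VcritU} evaluated at $V=1$ reduces, after clearing denominators, to the high-temperature defining polynomial $3(1+\nu)U_\nu^2 - 3(1+\nu)U_\nu + \nu$, and that the vanishing of the $V$-derivative at $V=1$ then produces the same relation (indeed, for any polynomial $p(V)=V^4+2V^3+cV^2+2V+1$ one has $p'(1)=2p(1)$, so a root at $V=1$ is automatically at least double). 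The paper instead substitutes $\nu$ in terms of $U_\nu$ into $\hat Y$, simplifies to~\eqref{eq:ysub}, and reads the critical set $\{-2-\sqrt 3, -2+\sqrt 3, 1\}$ off that expression, inferring the double root from the drop in the number of distinct stationary points. Both routes reach the same conclusion; yours makes the algebraic coincidence driving the phase transition visible at a glance, at the cost of not producing the explicit simplified form~\eqref{eq:ysub}, which the paper reuses downstream.
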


\begin{proof}
To obtain this asymptotic behavior, we rely on the rational parametrization obtained in Theorem~\ref{th:ratPar} and on the quasi-automatic analysis of algebraic generating series described in \cite[Section VII.7]{FS}. However, because of the parameter $\nu$, this analysis must be carried out with care.
We study here the asymptotic behavior of $V(\nu,U,y)$ when $U$ is fixed and equal to $U_\nu$. To simplify the expressions, we replace $\nu$ by its expression in terms of $U_\nu$ as described in Proposition~\ref{prop:Unuval} and proceed by a case-by-case analysis, depending on the range of values for $\nu$. All calculations are done in the companion Maple file \cite{Maple}.

\paragraph{Case $\nu\leq\nu_c$.} Replacing $\nu$ by its expression~\eqref{eq:UnuSubCrit} in terms of $U_\nu$ in $\hat Y$, we obtain: 
\begin{equation}\label{eq:ysub}
\hat Y(\nu,U_\nu,V)= \frac{24(1-U_\nu )V(V+1)}{3U_\nu V^3-21U_\nu V^2-2V^3-3U_\nu V+18V^2-3U_\nu +6V+2}.
\end{equation}
We already checked in the proof of Proposition~\ref{prop:inverseV}, that for $U\in [0,U_{\nu_c}]$, the denominator of \eqref{eq:ysub} has a unique root -- say $V_0(\nu)$ --, and that $V_0(\nu)>1$.

We check that $\frac{\partial}{\partial V}\hat Y(\nu,U_\nu,V)=0$ if and only if $V\in \{-2-\sqrt{3},-2+\sqrt{3},1\}$. This implies that the set of critical values for $\hat Y(\nu,U_\nu,\cdot)$ is $\{-2-\sqrt{3},-2+\sqrt{3},1,V_0(\nu)\}$. For any $
\nu\leq \nu_c$, we have $\hat Y(\nu,U_\nu,1)=2$ and $\hat Y(\nu,U_\nu,-2+\sqrt{3})<-2$. So that, by singular inversion, $V(\nu,U_\nu,y)$ is analytic for $y\in \mathbb{C}\setminus \{(-\infty,\hat Y(\nu,U_\nu,-2+\sqrt{3})]\cup [2,\infty)\}$.

We can then compute the singular expansion of $V$ around $y=2$. There exist functions $\aleph^V_{i}(\nu)$ for $i\in \{\frac{1}{3},\frac{2}{3},1,\frac{4}{3}\}$, which do not vanish on $[0,\nu_c]$, such that: 
\begin{equation}\label{eq:devV1sub}
V(\nu,U_\nu,y) = 1+\sum_{i\in\{\frac{1}{3},\frac{2}{3},1,\frac{4}{3}\}} \aleph^V_i(\nu)\left(1-\frac{y}{2}\right)^{i}+o\left(\left(1-\frac{y}{2}\right)^{4/3}\right).
\end{equation}

\paragraph{Case $\nu > \nu_c$.} Things get slightly more complicated since $U_\nu$ cannot be expressed simply in terms of $\nu$ as in the subcritical case. Instead, we use the rational parametrization for $U_\nu$ and $\nu$ given in Proposition~\ref{prop:Unuval}.
Replacing $U_\nu$ and $\nu$ by their expression in terms of $K_\nu$ in~\eqref{eq:defyV}, we obtain the following expression for $\hat Y$:

\begin{equation}\label{eq:ysurK}
    \hat Y (\nu,U_\nu,V) = \frac{n(K_\nu)V(V+1)}{d_3(K_\nu)V^3+d_2(K_\nu)V^2+d_1(K_\nu)V+d_0(K_\nu)},
\end{equation}
where $n,d_0,d_1,d_2$ and $d_3$ are explicit polynomials.

From the proof of Proposition~\ref{prop:inverseV}, we know that the only pole of $\hat Y(\nu,U_\nu,V)$ occurs for $V>1$, so that we can restrict our attention to values of $V$ which cancels the derivative of \eqref{eq:ysurK} with respect to $V$. There are 4 values of $V$ which cancel $\frac{\partial}{\partial V}\hat Y(\nu,U_\nu,Y)$, they correspond to the roots (in $V$) of the two following polynomials: 
\[
    \begin{cases}
    P_1&= (K_\nu^2-3)V^2+4(1+K_\nu)^2V+K_\nu^2-3\\ 
    P_2&=(K_\nu^2-3)V^2+(-2K_\nu^2-8K_\nu-10)V+K_\nu^2-3.
    \end{cases}
\]
We can check by direct inspection that for $K\in[K_{\nu_c},K_\infty)$, these four roots are all real numbers. We denote $V_{1,1}(K_\nu) \leq V_{1,2}(K_\nu)$ (respectively $V_{2,1}(K_\nu) \leq V_{2,2}(K_\nu)$) the roots of $P_1$ (respectively of $P_2$). A basic analysis yields
\begin{equation} \label{eq:Vijcrit}
V_{2,1}(K_\nu) \leq V_{2,2}(K_\nu) < 0 < V_{1,1}(K_\nu) \leq 1 \leq V_{1,2}(K_\nu).
\end{equation}
So that, with the notation of Definition~\ref{def:y+-}, $V^-(\nu,U_\nu)=V_{2,2}(K_\nu)$ and $V^+(\nu,U_\nu)=V_{1,1}(K_\nu)$.
We can check that, for $\nu > \nu_c$, we have $y^+(\nu,t_\nu):=Y(\nu,U_\nu,V_{1,1}(K_\nu))>2$ and that $y^-(\nu,t_\nu):=Y(\nu,U_\nu,V_{2,2}(K_\nu)) < - Y(\nu,U_\nu,V_{1,1}(K_\nu))$.

We then compute the singular expansion of $V$ around $y_\nu=y_+(\nu,t_\nu)$. There exist functions $\aleph^V_i(\nu)$ for $i\in\{\frac{1}{2},1,\frac{3}{2}\}$, which do not vanish for $\nu \in (\nu_c,\infty)$, such that: 
\begin{equation}\label{eq:devV1sup}
V(\nu,U_\nu,y) =  V_+(\nu,U_\nu)+\sum_{i\in\{\frac{1}{2},1,\frac{3}{2}\}} \aleph^V_i(\nu)\left(1-\frac{y}{y_\nu}\right)^{i}+o\left(\left(1-\frac{y}{y_\nu}\right)^{3/2}\right),
\end{equation}
see the Maple file \cite{Maple} for their explicit expression in terms of $K_\nu$. 
\end{proof}

\bigskip

We now establish the expansion in $t$ of the series $V$ when $y$ is fixed:

\begin{lemm}\label{lem:expanVt} Fix $y$ in $\bar{D}(0,y_+(\nu,t_\nu))$, then $t^3\mapsto V(\nu,U(\nu,t^3),y)$ admits the following singular expansion at $t_\nu^3$:
\begin{equation}\label{eq:devSingV}
V(\nu,U(\nu,t^3),y) =
\begin{cases}
V_\nu(y)  +\displaystyle\sum_{i\in\{\frac{1}{2},1,\frac{3}{2}\}}\beth_i^V (\nu,y) \cdot \left(1-\frac{t^3}{t_\nu^3}\right)^{i}+o\left(\left(1-\frac{t^3}{t_\nu^3}\right)^{3/2}\right) & \text{ for $\nu \neq \nu_c$, }\\
V_{\nu_c}(y) +\displaystyle \sum_{i\in\{\frac{1}{3},\frac{2}{3},1,\frac{4}{3}\}}\beth_i^V(\nu_c,y) \cdot \left(1-\frac{t^3}{t_{\nu_c}^3}\right)^{i}+o\left(\left(1-\frac{t^3}{t_{\nu_c}^3}\right)^{4/3}\right)
& \text{ for $\nu = \nu_c$},
\end{cases}
\end{equation}
where $V_\nu(y):=V(\nu,U(\nu,t_\nu^3),y)$, and $\beth_i^V(\nu,y)$ are explicit functions of $\nu$ and $y$, which do not vanish.
\end{lemm}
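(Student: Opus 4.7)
The plan is to reduce to a local analytic study of the algebraic relation $y = \hat Y(\nu, U, V)$ near $(U_\nu, V_\nu(y))$, and then compose with the Puiseux expansion of $U(\nu, t^3)$ at $t_\nu^3$ provided by Lemma~\ref{lem:propU}. Fix $\nu > 0$ and $y$ in $\bar D(0, y_+(\nu, t_\nu))$; by Proposition~\ref{prop:inverseV}, $V_\nu(y) := V(\nu, U_\nu, y)$ is well-defined. The key observation is that for $y$ in the interior of this disk, $V_\nu(y)$ is not a stationary point of $V \mapsto \hat Y(\nu, U_\nu, V)$: the analysis performed in the proof of Proposition~\ref{prop:inverseV} shows that the real stationary points have images $y_{\pm}(\nu, t_\nu)$ or two further real values lying outside $\bar D(0, y_+(\nu, t_\nu))$ by Proposition~\ref{prop:yineq}. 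Hence $\partial_V \hat Y(\nu, U_\nu, V_\nu(y)) \neq 0$, and the holomorphic implicit function theorem provides an analytic function $U \mapsto \mathcal V(\nu, U, y)$ on a neighborhood of $U_\nu$ satisfying $\hat Y(\nu, U, \mathcal V) = y$. Uniqueness of the formal power series solution in Theorem~\ref{th:ratPar} forces $\mathcal V = V$ near $U_\nu$, yielding a convergent Taylor expansion
\[
V(\nu, U, y) = V_\nu(y) + \sum_{k \geq 1} a_k(\nu, y)\, (U - U_\nu)^k,
\]
with $a_1(\nu, y) = -\partial_U \hat Y(\nu, U_\nu, V_\nu(y)) / \partial_V \hat Y(\nu, U_\nu, V_\nu(y))$ and higher coefficients obtained by successive implicit differentiation of the defining equation.

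Substituting the Puiseux expansion of $U(\nu, t^3) - U_\nu$ from Lemma~\ref{lem:propU} into this Taylor series and collecting terms of equal order in $(1 - t^3/t_\nu^3)$ produces the expansion of $V(\nu, U(\nu, t^3), y)$. Since the exponent set $\{1/2, 1, 3/2\}$ for $\nu \neq \nu_c$ (resp.\ $\{1/3, 2/3, 1, 4/3\}$ for $\nu = \nu_c$) is closed, up to the target truncation order, under the finite sums needed to form $(U - U_\nu)^k$, the resulting expansion has exactly the exponent structure claimed in~\eqref{eq:devSingV}. Boundary values of $y$ on the circle $|y| = y_+(\nu, t_\nu)$ other than $y = y_+(\nu, t_\nu)$ itself are handled identically, since $V_\nu(y)$ remains non-stationary there.

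Each coefficient $\beth_i^V(\nu, y)$ is then an explicit polynomial in the $a_k(\nu, y)$ and the $\beth_j^U(\nu)$. The leading one equals $a_1(\nu, y) \cdot \beth_{i_0}^U(\nu)$, where $i_0$ is the minimal exponent in the relevant set; non-vanishing of $\beth_{i_0}^U(\nu)$ is provided by Lemma~\ref{lem:propU}, and $a_1(\nu, y)$ can be computed explicitly as a rational function of $\nu$, $U_\nu$ and $V_\nu(y)$ from the parametrization of Theorem~\ref{th:ratPar}. The main obstacle I anticipate is the bookkeeping for the higher-order coefficients: each $\beth_i^V(\nu, y)$ aggregates contributions from all tuples of $a_k$'s and $\beth_j^U$'s whose exponents sum to $i$, and certifying the absence of accidental cancellation is best delegated to the Maple companion file~\cite{Maple}.
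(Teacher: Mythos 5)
Your proof is essentially correct and takes the same overall route as the paper: plug the Puiseux expansion of $U(\nu,t^3)$ into the algebraic relation $\hat Y(\nu, U, V)=y$ and solve for $V$. The paper's own proof does exactly this, delegating the computations to the Maple companion file; the one trick the paper adds is to rewrite the relation as $\hat Y(\nu, U_\nu, V_\nu) = \hat Y(\nu, U(\nu,t^3), V(\nu,U(\nu,t^3),y))$, which replaces the parameter $y$ by $V_\nu(y)$ and makes the resulting coefficients $\beth_i^V$ rational in $(U_\nu, V_\nu)$. Your observation that $V_\nu(y)$ is not a stationary point of $\hat Y(\nu, U_\nu, \cdot)$ for $y \neq y_+(\nu,t_\nu)$, so the implicit function theorem yields local analyticity of $V$ in $U$, is a useful conceptual addition: it makes transparent \emph{why} the exponent set of $V$ is the same as that of $U$, which the paper treats as an output of symbolic computation. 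The argument you give for non-stationarity is worded somewhat loosely (phrased in terms of where critical \emph{values} of $\hat Y$ lie), but the underlying fact is exactly the analyticity of $y \mapsto V(\nu,U_\nu,y)$ on the slit plane established in Proposition~\ref{prop:inverseV}.

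One point you flag but do not resolve: the statement allows $y$ in the \emph{closed} disk $\bar D(0,y_+(\nu,t_\nu))$, but at the single boundary point $y = y_+(\nu,t_\nu)$ the quantity $V_\nu(y_\nu)$ \emph{is} the stationary point $V_+(\nu,U_\nu)$ (for $\nu\leq\nu_c$ it is $1$, a degenerate stationary point), so your implicit-function-theorem step fails there, and a Newton-polygon analysis would be required — it happens that $\partial_U\hat Y(\nu,U_\nu,V_+(\nu,U_\nu))$ also vanishes, so the relevant branch still has the claimed exponents, but this requires a separate argument. The paper's proof does not single out this case either (and in practice the lemma is only ever invoked for $|y|<y_\nu$, e.g.\ in Proposition~\ref{prop:serQty}), so this is best regarded as an imprecision shared with the source, but you should state explicitly that this boundary value is not covered by your argument rather than leaving it as the implicit exception to ``handled identically.''
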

\begin{proof}
Fix $\nu > 0 $ and  $y \in \bar{D}(0,y_+(\nu,t_\nu))$. In the algebraic equation  \eqref{eq:defyV} between $y$, $U$ and $V$, we replace $U$ by its development obtained in Lemma~\ref{lem:propU} to get the singular behavior of $V$. We have to proceed by a case-by-case analysis since the singular expansions for $U$ are different for $\nu < \nu_c$, $\nu = \nu_c$ and $\nu > \nu_c$. Again all computations are performed in the Maple worksheet~\cite{Maple}.

\paragraph{Case $\nu < \nu_c$.} Recall \eqref{eq:UnuSubCrit}, which expresses $\nu$ as a function of $U_\nu = U(\nu,t_\nu^3)$. The singular behavior of $U$ for $t^3$ around $t_\nu^3$ obtained in Lemma~\ref{lem:propU} is of the form
\begin{equation*}
U(\nu,t^3) = U_\nu + \sum_{i\in\{\frac{1}{2},1,\frac{3}{2}\}}\beth_i^U (\nu) \cdot \left(1-\frac{t^3}{t_\nu^3}\right)^{i}+o\left(\left(1-\frac{t^3}{t_\nu^3}\right)^{3/2}\right),
\end{equation*}
where $\beth_{1/2}^U, \beth_1^U$ and $\beth_{3/2}^U$ are explicit functions of $U_\nu$ which do not cancel in this range of values for $\nu$. 

Equation \eqref{eq:defyV} gives an algebraic equation between $\nu$, $U=U(\nu,t^3)$, $V=V(\nu,U,y)$ and $y$. Since $y$ is fixed, we could plug the expansion of $U$ in this equation and obtain the development in $t^3$ of $V$ with $y$ as a parameter. However, expressions turn out to be nicer if we replace $y$ by its value in terms of $V_\nu(y) := V(\nu,U_\nu,y)$.
The identity
\[
y = \hat Y(\nu,U_\nu,V_\nu) = \hat Y\left(\nu,U(\nu,t^3),V\big(\nu,U(\nu,t^3),y\big)\right),
\]
which is valid for any fixed $t\in [0,t_\nu^3]$, gives an algebraic equation between $\nu$, $U_\nu$, $V_\nu$, $U(\nu,t^3)$ and $V\big(\nu,U(\nu,t^3),y\big)$. We can again eliminate $\nu$ from this equation by replacing it by its value in terms of $U_\nu$. We then replace $U(\nu,t^3)$ by its singular expansion to obtain the following asymptotic behavior for $V\big(\nu,U(\nu,t^3),y\big)$: 
\begin{equation}\label{eq:devSingVsub}
V\big(\nu,U(\nu,t^3),y\big) = V_\nu + \sum_{i\in\{\frac{1}{2},1,\frac{3}{2}\}}\beth_i^V (\nu,y) \cdot \left(1-\frac{t^3}{t_\nu^3}\right)^{i}+o\left(\left(1-\frac{t^3}{t_\nu^3}\right)^{3/2}\right),
\end{equation}
where $\beth_i^V(\nu,y)$ are non-vanishing explicit functions of $\nu$ and $y$. More precisely, the functions $\beth_i^V(\nu,y)$ we obtain are explicit functions of $U_\nu$ and $V_\nu(y)$. See the Maple companion file \cite{Maple} for details.

\paragraph{Case $\nu = \nu_c$.} The strategy is similar, but since $\nu$ is fixed, computations are slightly less heavy. We refer again to the Maple worksheet.
Notice, that the expansion in $t^3$ of $U(\nu_c,t^3)$ obtained in Lemma~\ref{lem:propU} is in $\Big(1-(\frac{t}{t_{\nu_c}})^3\Big)^{1/3}$ rather than in $\Big(1-(\frac{t}{t_{\nu_c}})^3\Big)^{1/2}$ as above. By performing, the exact same line of arguments, we then obtain the desired singular development for $V(\nu_c,t,ty)$.

\paragraph{Case $\nu > \nu_c$.} Computations are slightly more elaborate in this case since $\nu$ cannot be expressed as a rational function of $U_\nu$. As in the proof of Proposition~\ref{lemm:weightsclusters}, we rely on the rational parametrization for $U_\nu$ and $\nu$ by $K_\nu$ given in \eqref{eq:defKU} and \eqref{eq:defKnu}. Apart from this, the strategy of proof is totally similar and we refer to the Maple worksheet for the details of the computation. In particular the asymptotic development for $U(\nu,t^3)$ around $t_{\nu}^3$ of Lemma~\ref{lem:propU} has the form
\begin{equation*}
U(\nu,t^3) = U_\nu + \sum_{i\in\{\frac{1}{2},1,\frac{3}{2}\}}\beth_i^U (\nu) \cdot \left(1-\frac{t^3}{t_\nu^3}\right)^{i}+o\left(\left(1-\frac{t^3}{t_\nu^3}\right)^{3/2}\right),
\end{equation*}
where $U_\nu$, $\beth_{1/2}^U, \beth_1^U$ and $\beth_{3/2}^U$ are explicit functions of $K_\nu$, defined for $\nu\in (\nu_c,\infty)$ and which do not cancel on this interval, leading to the development of $V(\nu,U(\nu,t),y)$. 
\end{proof}

\subsection{Singular expansions for the generating series of triangulations with monochromatic boundary}

In this section, we study the singularities of $Q^+(\nu,t,ty)$. As was done with $V$, we first consider the case when $t$ is fixed at $t_\nu$ and study the singularities in $y$ of $Q^+(\nu,t_\nu,t_\nu y)$. Then, we fix $y$ and study the singularities in $t$ of $Q^+(\nu,t,t y)$.

\begin{prop}\label{lemm:weightsclusters}
For $\nu >0$, the series $Q^+ (\nu,t_\nu,t_\nu y)$ has radius of convergence $y_\nu = y_+(\nu,t_\nu)$ and a single dominant singularity at $y_\nu$, where it admits the following expansion.

\begin{itemize}
\item For $\nu<\nu_c$: 
\begin{equation}
Q^+(\nu,t_\nu,t_\nu y) = Q^+(\nu,t_\nu ,t_\nu y_\nu) + \aleph^{Q^+}(\nu) \left( 1- \frac{y}{y_\nu} \right)^{\alphaa(\nu)-1} + o \left( \left( 1- \frac{y}{y_\nu} \right)^{\alphaa(\nu)-1} \right);
\end{equation}

\item For $\nu\geq\nu_c$: 
\begin{multline}
Q^+(\nu,t_\nu,t_\nu y) = Q^+(\nu,t_\nu ,t_\nu y_\nu) + \aleph_1^{Q^+}(\nu) \left( 1- \frac{y}{y_\nu} \right)\\
+  \aleph^{Q^+}(\nu) \left( 1- \frac{y}{y_\nu} \right)^{\alphaa(\nu)-1} + o \left( \left( 1- \frac{y}{y_\nu} \right)^{\alphaa(\nu)-1} \right);
\end{multline}
\end{itemize}
where $\aleph_1^{Q^+}(\nu)$ and $\aleph^{Q^+}(\nu)$ are explicit non-vanishing functions of $\nu$ and 
\begin{equation}\label{eq:alphanu}
\alphaa(\nu) =
\begin{cases}
5/3 & \text{for $\nu < \nu_c$,}\\
7/3 & \text{for $\nu = \nu_c$,}\\
5/2 & \text{for $\nu > \nu_c$.}
\end{cases}
\end{equation}
\end{prop}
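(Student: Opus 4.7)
The strategy is to combine the rational parametrization $Q^+(\nu,t_\nu,t_\nu y) = \hat Q^+\bigl(\nu,U_\nu,V(\nu,U_\nu,y)\bigr)$ from Theorem~\ref{th:ratPar} with the Puiseux expansions of $V$ at its dominant singularity $y_\nu$ established in Lemma~\ref{lem:expanVy}, substituted into the Taylor expansion of $V\mapsto\hat Q^+(\nu,U_\nu,V)$ at $V_+(\nu,U_\nu)$.

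First I would check that $\hat Q^+(\nu,U_\nu,V)$ is analytic in $V$ at $V_+(\nu,U_\nu)$. Inspecting~\eqref{eq:ratParQ}, the only potentially problematic factors in the denominator are $(V+1)^3$ and $P(\nu,U_\nu)$. The first is nonzero at $V_+$ since $V_+\in(0,1]$ by Proposition~\ref{prop:inverseV}, while the second is nonzero because $P(\nu,U_\nu)$ appears as a factor of $t_\nu^3>0$ via~\eqref{eq:wU}. One can therefore write the Taylor expansion $\hat Q^+(\nu,U_\nu,V) = \sum_{k\geq 0} q_k(\nu)\bigl(V-V_+(\nu,U_\nu)\bigr)^k$ near $V_+$.

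The crux of the proof is to establish the two cancellation identities $q_1(\nu)=0$ for every $\nu>0$, and $q_2(\nu_c)=0$. Both reduce to polynomial identities once $\nu$, $U_\nu$ and $V_+$ are replaced by their rational parametrizations (namely~\eqref{eq:UnuSubCrit} for $\nu\leq\nu_c$ and~\eqref{eq:defKnu}--\eqref{eq:defKU} for $\nu\geq\nu_c$, together with the defining polynomial of $V_+$ from Definition~\ref{def:y+-}), and can be verified symbolically in the Maple companion file~\cite{Maple}. The non-vanishing of $q_2(\nu)$ for $\nu\neq\nu_c$ and of $q_3(\nu_c)$, needed to ensure that $\aleph^{Q^+}$ does not vanish, is an analogous elementary algebraic check.

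Granting these identities, the three cases follow by routine substitution of Lemma~\ref{lem:expanVy} into the Taylor series. For $\nu<\nu_c$, $V_+=1$ and $V-1 = \aleph^V_{1/3}(\nu)(1-y/y_\nu)^{1/3}+O((1-y/y_\nu)^{2/3})$; since $q_1=0$, the leading singular contribution to $\hat Q^+(V)$ comes from $q_2(\nu)(V-1)^2\sim q_2(\nu)\,\aleph^V_{1/3}(\nu)^2(1-y/y_\nu)^{2/3}$, giving exponent $2/3=\alphaa(\nu)-1$. For $\nu=\nu_c$, again $V_+=1$ but now $q_1=q_2=0$, so $\hat Q^+(V)$ starts at $(V-1)^3$, whose expansion $\aleph^V_{1/3}(\nu_c)^3(1-y/y_\nu)+3\,\aleph^V_{1/3}(\nu_c)^2\aleph^V_{2/3}(\nu_c)(1-y/y_\nu)^{4/3}+\cdots$ yields both the linear term $\aleph^{Q^+}_1(\nu_c)(1-y/y_\nu)$ and the singular term of exponent $4/3$. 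For $\nu>\nu_c$, $V-V_+ = a_1(1-y/y_\nu)^{1/2}+a_2(1-y/y_\nu)+a_3(1-y/y_\nu)^{3/2}+\cdots$ with $a_i=\aleph^V_{i/2}(\nu)$; using $q_1=0$, the contribution $q_2(V-V_+)^2$ produces $q_2 a_1^2(1-y/y_\nu)$ together with $2q_2 a_1 a_2(1-y/y_\nu)^{3/2}$, while $q_3(V-V_+)^3$ contributes $q_3 a_1^3(1-y/y_\nu)^{3/2}$, yielding both the linear term and the singular term of exponent $3/2=\alphaa(\nu)-1$.

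The main obstacle is the verification of the cancellation identities $q_1(\nu)=0$ and $q_2(\nu_c)=0$: they are not transparent from the combinatorial setup, but they reflect the fact that $y_\nu$ is a joint critical value of both $\hat Y$ and $\hat Q^+$ at $V_+$, a rigidity phenomenon consistent with $Q^+$ being linked to the disk function of a non-generic Boltzmann map model. Pragmatically, both identities reduce to short Maple computations once the rational parametrizations are substituted.
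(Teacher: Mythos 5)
Your proposal follows the same route as the paper's proof: both compose the rational parametrization $\hat Q^+(\nu,U_\nu,\cdot)$ with the Puiseux expansion of $V(\nu,U_\nu,y)$ from Lemma~\ref{lem:expanVy} and discharge the final verification to the Maple companion file. What you add is a useful explicit account of \emph{which} cancellations the Maple computation must secretly confirm: that $\partial_V\hat Q^+(\nu,U_\nu,V_+)=0$ for all $\nu$, and that in addition $\partial_V^2\hat Q^+(\nu_c,U_{\nu_c},1)=0$. These are precisely what raise the naive exponent ($1/3$ for $\nu\leq\nu_c$, $1/2$ for $\nu>\nu_c$, coming from the leading term of $V-V_+$) to the stated $\alphaa(\nu)-1$. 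Your bookkeeping of the resulting Puiseux exponents in each of the three regimes is correct, including the origin of the non-singular linear term for $\nu\geq\nu_c$. The structural observation that this reflects a joint degeneracy of the pair $(\hat Y,\hat Q^+)$ at $V_+$, tied to the criticality of the induced Boltzmann weight sequence, is also the right heuristic — though note it cannot be used as a shortcut here, since the paper derives criticality of $\mathbf q(\nu,t_\nu)$ (Proposition~\ref{prop:Wnut}) \emph{from} the asymptotics you are proving, so the Maple verification cannot be avoided without circularity.

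Two small omissions relative to the paper's argument. First, the proposition also asserts that $y_\nu$ is the radius of convergence and the \emph{unique} dominant singularity; this is not a consequence of the local expansion at $y_\nu$ alone and requires the inequality $y_-(\nu,t)<-y_+(\nu,t)$ from Proposition~\ref{prop:yineq}, which you should cite. Second, the non-vanishing of $\aleph_1^{Q^+}(\nu)$ for $\nu\geq\nu_c$ is an additional algebraic check (it equals $q_2(\nu)\,\aleph^V_{1/2}(\nu)^2$ for $\nu>\nu_c$ and $q_3(\nu_c)\,\aleph^V_{1/3}(\nu_c)^3$ at $\nu_c$), and the coefficient $\aleph^{Q^+}(\nu_c)$ receives a contribution from $q_4(V-1)^4$ as well as from $q_3(V-1)^3$; neither changes the exponents but both matter for the claim that the coefficients are non-vanishing.
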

\begin{proof}
The fact that $y_\nu$ is the unique dominant singularity of $Q^+ (\nu,t_\nu,t_\nu y)$ follows from Proposition~\ref{prop:yineq}. It remains to establish the asymptotic behavior at $y_\nu$. We rely on the rational parametrization obtained in Theorem~\ref{th:ratPar}. From \eqref{eq:paramQ}, we see that as long as $V\neq -1$, the asymptotic behavior of $Q^+ (\nu,t_\nu,t_\nu y)$ is driven by the asymptotic behavior of $V$ as a function of $y$. It hence suffices to plug in \eqref{eq:paramQ} the singular expansion of $y\mapsto V\Big(\nu,U(\nu,t_\nu^3),y\Big)$ obtained in Lemma~\ref{lem:expanVy} to get the desired results. All computations are available in the Maple companion file~\cite{Maple}.
\end{proof}

A direct application of the transfer theorem (see Theorem~\ref{th:transfer} in Appendix~\ref{sec:Hadamard}) then implies:
\begin{coro}\label{cor:Q}
For any $\nu>0$, we have:
\begin{equation}\label{eq:coeffQl}
Q^+_\ell(\nu,t_\nu) \stackrel[\ell\rightarrow \infty]{}{\sim} \frac{\aleph^{Q^+}(\nu)}{\Gamma\Big(1-\alphaa(\nu)\Big)}\cdot\Big(t_\nu y_\nu\Big)^{-\ell} \ell^{-\alphaa(\nu)}.
\end{equation}
\end{coro}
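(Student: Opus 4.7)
The plan is to extract the coefficient $Q^+_\ell(\nu,t_\nu)$ from the singular expansion of the generating function $y\mapsto Q^+(\nu,t_\nu,t_\nu y)$ established in Proposition~\ref{lemm:weightsclusters} via a standard application of the transfer theorem (Theorem~\ref{th:transfer}). First, I would rewrite the coefficient extraction as
\[
[y^\ell] Q^+(\nu,t_\nu,t_\nu y) = t_\nu^\ell\, Q^+_\ell(\nu,t_\nu),
\]
so that the asymptotic behavior of $Q^+_\ell(\nu,t_\nu)$ follows from that of the coefficients of $y\mapsto Q^+(\nu,t_\nu,t_\nu y)$, up to the multiplicative factor $t_\nu^{-\ell}$.

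Next, I would invoke Proposition~\ref{lemm:weightsclusters} to get the singular expansion at the unique dominant singularity $y_\nu$. In each of the three temperature regimes, the expansion decomposes into a (possibly affine) polynomial part in $(1-y/y_\nu)$ and the genuinely singular term $\aleph^{Q^+}(\nu)(1-y/y_\nu)^{\alphaa(\nu)-1}$. Since $\alphaa(\nu)-1\in\{2/3,4/3,3/2\}$ is never a nonnegative integer, the polynomial part contributes only to finitely many coefficients and is invisible in the $\ell\to\infty$ asymptotics; only the non-integer power term drives the behavior.

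To apply the transfer theorem, I need to check that $y\mapsto Q^+(\nu,t_\nu,t_\nu y)$ admits an analytic continuation to a Delta-domain around $y_\nu$. This is automatic because, by Theorem~\ref{th:ratPar}, this function is algebraic, and by Proposition~\ref{prop:yineq} it has $y_\nu$ as its unique dominant singularity; algebraic functions with a unique dominant singularity on their circle of convergence always admit such a $\Delta$-analytic continuation (see Section VII.7 of~\cite{FS}). The transfer theorem then yields
\[
[y^\ell]\, \aleph^{Q^+}(\nu)\left(1-\frac{y}{y_\nu}\right)^{\alphaa(\nu)-1} \underset{\ell\to\infty}{\sim} \frac{\aleph^{Q^+}(\nu)}{\Gamma(1-\alphaa(\nu))}\, y_\nu^{-\ell}\,\ell^{-\alphaa(\nu)}.
\]

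Dividing by $t_\nu^\ell$ produces the stated equivalent. There is no real obstacle here beyond verifying the Delta-analyticity, which is standard for algebraic series; the heavy lifting (rational parametrization, identification of $y_\nu$, singular expansion of $Q^+$) has been carried out upstream in Theorem~\ref{th:ratPar}, Proposition~\ref{prop:yineq} and Proposition~\ref{lemm:weightsclusters}, so the corollary is essentially a one-line consequence of the general transfer machinery.
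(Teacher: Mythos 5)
Your argument matches the paper's exactly: the paper simply invokes the transfer theorem (Theorem~\ref{th:transfer}) applied to the singular expansion of $y\mapsto Q^+(\nu,t_\nu,t_\nu y)$ from Proposition~\ref{lemm:weightsclusters}, together with the coefficient rescaling $[y^\ell]Q^+(\nu,t_\nu,t_\nu y)=t_\nu^\ell Q_\ell^+(\nu,t_\nu)$. The extra checks you spell out (uniqueness of the dominant singularity from Proposition~\ref{prop:yineq}, $\Delta$-analyticity of the algebraic function, and that the affine part of the expansion is coefficient-negligible because $\alphaa(\nu)-1\notin\mathbb{Z}_{\geq 0}$) are precisely the implicit hypotheses that make the paper's one-line application legitimate.
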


\bigskip

We now turn to the singular behavior in $t^3$ of $Q_+(\nu,t,ty)$ when $y$ is fixed.

\begin{prop} \label{prop:serQty}
Fix $\nu > 0$ and $y \in (0, y_\nu)$, then $Q_+(\nu,t,ty)$ seen as a series in $t^3$ has radius of convergence $t_\nu^3$ where it has the following asymptotic expansion:
\begin{multline}
Q^+(\nu,t,ty)  = Q^+(\nu,t_\nu ,t_\nu y) + \beth_1^{Q^+}(\nu,y) \left( 1- \Big(\frac{t}{t_\nu}\Big)^{\!3} \right)\\+ \beth^{Q^+}(\nu,y) \left( 1- \Big(\frac{t}{t_\nu}\Big)^{\!3} \right)^{\gamma(\nu)-1} + o \left( \left( 1- \Big(\frac{t}{t_\nu}\Big)^{\!3} \right)^{\gamma(\nu)-1} \right),
\end{multline}
where we recall from Proposition \ref{prop:asymptoZp+} that: 
\[\gamma(\nu) = 
\begin{cases}
5/2 & \text{for $\nu \neq \nu_c$,}\\
7/3 & \text{for $\nu = \nu_c$.}
\end{cases}
\]
Furthermore, for every fixed $\nu >0$, the two coefficients $\beth_1^{Q^+}$ and $\beth^{Q^+}$ are power series in $y$ with radius of convergence $y_\nu$ and their expression is given by an explicit rational function in $V(\nu,U(\nu,t_\nu),y)$.
\end{prop}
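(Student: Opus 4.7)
The plan is to mirror the strategy already used for Lemma~\ref{lem:expanVt} and Proposition~\ref{lemm:weightsclusters}, but now with $y$ playing the role of a parameter rather than the variable in which we expand. The starting point is the rational parametrization of Theorem~\ref{th:ratPar}, which gives
\[
Q^+(\nu,t,ty) \;=\; \hat Q^+\bigl(\nu,U(\nu,t^3),V(\nu,U(\nu,t^3),y)\bigr).
\]
For fixed $y \in (0,y_\nu)$, the quantity $V_\nu(y) := V(\nu,U_\nu,y)$ is well-defined and, by Proposition~\ref{prop:inverseV}, $V_\nu(y) \neq -1$, so $\hat Q^+$ is regular at $\bigl(\nu,U_\nu,V_\nu(y)\bigr)$. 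Hence the singularities of $t^3 \mapsto Q^+(\nu,t,ty)$ inside $|t^3|\le t_\nu^3$ come entirely from the inner series $U(\nu,t^3)$ and $V(\nu,U(\nu,t^3),y)$.

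First, I would substitute the singular expansion of $V(\nu,U(\nu,t^3),y)$ at $t_\nu^3$, given by Lemma~\ref{lem:expanVt}, into the rational function $\hat Q^+$ and Taylor-expand in $\bigl(V - V_\nu(y)\bigr)$. Since $\hat Q^+$ is analytic near $V_\nu(y)$, each power of $(V-V_\nu(y))$ translates into the same power of the corresponding singular factor $(1-(t/t_\nu)^3)^i$. Collecting terms and using that the exponents $i$ in Lemma~\ref{lem:expanVt} are $\{1/2,1,3/2\}$ for $\nu \neq \nu_c$ and $\{1/3,2/3,1,4/3\}$ for $\nu = \nu_c$ produces the desired expansion with exponent $\gamma(\nu)-1$ equal to $3/2$ or $4/3$ respectively, and an analytic regular part whose first non-constant contribution is the $(1-(t/t_\nu)^3)^1$ term.

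Next, I would argue non-vanishing of the leading singular coefficient. Since $\hat Q^+$ is rational in $V$ and $V_\nu(y)\neq -1$, its derivative $\partial_V \hat Q^+(\nu,U_\nu,V_\nu(y))$ vanishes only on a proper algebraic subvariety in $y$. A direct computation (performed in the companion Maple file, exactly as for the analogous statements in Lemma~\ref{lem:expanVt} and Proposition~\ref{lemm:weightsclusters}) shows that, for every $\nu>0$, this derivative is a non-zero rational function of $V_\nu(y)$, hence $\beth^{Q^+}(\nu,y)$ is the product of $\partial_V \hat Q^+$ evaluated at $V_\nu(y)$ and the coefficient $\beth_{\gamma(\nu)-1}^V(\nu,y)$ of Lemma~\ref{lem:expanVt}, both of which are non-vanishing on $(0,y_\nu)$. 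The same computation yields the explicit formula for $\beth_1^{Q^+}(\nu,y)$ and $\beth^{Q^+}(\nu,y)$ as rational functions in $V_\nu(y) = V(\nu,U(\nu,t_\nu),y)$.

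Finally, for the statement on the radius of convergence of $y \mapsto \beth_1^{Q^+}(\nu,y)$ and $y \mapsto \beth^{Q^+}(\nu,y)$, I would invoke Proposition~\ref{prop:inverseV}, which ensures that $y\mapsto V(\nu,U_\nu,y)$ is analytic on $\mathbb{C}\setminus\{(-\infty,y_-(\nu,t_\nu)]\cup[y_\nu,\infty)\}$ with singularities at the endpoints. Since the two $\beth$'s are rational functions in $V_\nu(y)$ with poles only where $V_\nu(y) = -1$ or at explicit values excluded by $|y|<y_\nu$, they extend analytically to this same domain and therefore are power series in $y$ with radius of convergence equal to $y_\nu$, using Proposition~\ref{prop:yineq} to confirm that $y_\nu$ is the dominant singularity. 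The main delicate point, as in the earlier lemmas, is checking by explicit computation the non-vanishing of the singular coefficient; this is handled in the Maple worksheet via a case-by-case analysis along the three regimes $\nu<\nu_c$, $\nu=\nu_c$, $\nu>\nu_c$ using the parametrizations of Proposition~\ref{prop:Unuval}.
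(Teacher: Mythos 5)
Your proposal follows essentially the same route as the paper's proof: use the rational parametrization of Theorem~\ref{th:ratPar}, substitute the singular expansions of the parametrizing variables in $t^3$ into $\hat Q^+$, and read off the resulting expansion, with the numerical verifications of non-vanishing and cancellations delegated to the Maple worksheet, exactly as the paper does.

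One point to tighten, though. You write that you would substitute the expansion of $V(\nu,U(\nu,t^3),y)$ into $\hat Q^+$ and ``Taylor-expand in $(V - V_\nu(y))$.'' But $\hat Q^+(\nu,U,V)$ depends explicitly on \emph{both} $U$ and $V$ — see~\eqref{eq:ratParQ} — and both $U(\nu,t^3)$ and $V(\nu,U(\nu,t^3),y)$ are singular at $t_\nu^3$. One therefore has to substitute the singular expansion of $U$ from Lemma~\ref{lem:propU} as well, and Taylor-expand $\hat Q^+$ jointly in $(U-U_\nu)$ and $(V-V_\nu(y))$, as the paper states. You do acknowledge at the outset that the singularities come from both $U$ and $V$, so this is only a slip in the write-up of the substitution step, but as worded the expansion step would miss the direct $U$-dependence of $\hat Q^+$. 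A related, purely presentational caveat: the claim that the $(1-(t/t_\nu)^3)^{1/2}$ term (resp.\ $1/3$ and $2/3$ terms when $\nu=\nu_c$) cancels so that the first singular contribution occurs at exponent $\gamma(\nu)-1$ is asserted rather than explained — the paper does the same, pointing to the companion computation — but it is worth flagging that this cancellation is the crux of the expansion and is not automatic from the expansions of $U$ and $V$ alone.
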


\begin{rema}
For every $p > 0$, the series $t^pQ^+_p(\nu,t)$ has a unique dominant singularity at $t_\nu^3$, so that it is reasonable to think that $t_\nu^3$ is also the unique dominant singularity of $Q_+(\nu,t,ty)$. This could be rigorously proved by a similar approach as in the proof of Proposition~\ref{prop:asymptQk}. However, we did not prove this statement as it is not needed for our work.
\end{rema}

\begin{proof}
For any fixed $\nu>0$ and any fixed $y\in (0,y_\nu)$, the function $Q_+(\nu,t,ty)$ seen as a series in $t^3$ has non negative coefficients. It is also clear from its definition that its radius of convergence cannot be larger that $t_\nu^3$. From Proposition \ref{lemm:weightsclusters}, we deduce that its radius of convergence is indeed $t_\nu^3$. By Pringsheim's theorem, $Q_+(\nu,t,ty)$ is then singular at $t_\nu^3$. It hence remains to prove that the asymptotic behavior at this singularity is the one given in the proposition. 

To get the asymptotic expansion of $Q^+(\nu,t,ty)$ around $t_\nu^3$, we once again rely on the rational parametrization obtained in Theorem~\ref{th:ratPar}. In \eqref{eq:paramQ}, we substitute $U$ and $V$ by their singular expansion obtained respectively in Lemma~\ref{lem:propU} and Lemma~\ref{lem:expanVt}
to get the desired result. 

All computations are performed in \cite{Maple}.
\end{proof}

\bigskip

We will need later the asymptotic development of $\beth^{Q^+}$ in the sub-critical and critical regime:

\begin{prop} \label{prop:asymptalephQ+}
Fix $\nu \in (0,\nu_c]$. Then $\beth^{Q^+}(\nu,y)$, seen as a power series in $y$, has a unique dominant singularity at $y_\nu = 2$, where it has the following singular expansion:
\[
\beth^{Q^+}(\nu,y) = \sum_{i \in \left\{\frac{4}{3},\frac{2}{3},\frac{1}{3}\right\}}\frac{\aleph_i^{\beth^{Q^+}} (\nu)}{(2-y)^{i}} + \mathcal O (1),
\]
with non vanishing coefficients $\aleph_i^{\beth^{Q^+}} (\nu)$ that are explicit functions of $U_\nu$.
\end{prop}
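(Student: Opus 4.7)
The proof proceeds by exploiting the explicit rational parametrization of $\beth^{Q^+}(\nu,y)$ in terms of $V_\nu(y) := V(\nu,U_\nu,y)$ produced in the proof of Proposition~\ref{prop:serQty}, combined with the Puiseux expansion of $V_\nu$ around $y_\nu = 2$ provided by Lemma~\ref{lem:expanVy} in the subcritical/critical regime.

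First, I would isolate from the computation of Proposition~\ref{prop:serQty} the explicit closed-form expression $\beth^{Q^+}(\nu,y) = R(U_\nu, V_\nu(y))$, where $R$ is an explicit rational function. Factoring its denominator, the claim is that its only singularity attainable by $V_\nu(y)$ as $y$ ranges over $\bar D(0,2)$ is a pole at $V = 1$. For the unique dominant singularity assertion, Proposition~\ref{prop:inverseV} together with Lemma~\ref{lem:expanVy} shows that $y \mapsto V_\nu(y)$ is analytic on $\bar D(0,2) \setminus \{2\}$ and that its image on the real segment $[-2,2]$ sits inside $[V_-(\nu,U_\nu), 1]$ with $V = 1$ reached only at $y = 2$. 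It then suffices to verify that the denominator of $R$, viewed as a polynomial in $V$, has no root in $[V_-(\nu,U_\nu), 1)$ for $U_\nu \in (0, U_{\nu_c}]$; I would carry this out in Maple by the same kind of root-location analysis used in Proposition~\ref{prop:inverseV}.

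Next, I would expand $R(U_\nu, V)$ as a Laurent series in $V - 1$. The expected outcome is a pole of order $4$,
\[
R(U_\nu, V) = \frac{C_4(U_\nu)}{(V-1)^4} + \frac{C_3(U_\nu)}{(V-1)^3} + \frac{C_2(U_\nu)}{(V-1)^2} + \frac{C_1(U_\nu)}{V-1} + \text{analytic part},
\]
with the $C_i(U_\nu)$ given explicitly by Maple. Substituting Lemma~\ref{lem:expanVy}, $V_\nu(y) - 1 = \aleph^V_{1/3}(\nu)(1-y/2)^{1/3} + \aleph^V_{2/3}(\nu)(1-y/2)^{2/3} + \aleph^V_1(\nu)(1-y/2) + \aleph^V_{4/3}(\nu)(1-y/2)^{4/3} + o((1-y/2)^{4/3})$, and re-expanding in the uniformizer $w := (1-y/2)^{1/3}$ yields terms a priori at orders $w^{-4}, w^{-3}, w^{-2}, w^{-1}$, i.e.\ exponents $-4/3, -1, -2/3, -1/3$ in $(1-y/2)$.

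The main obstacle is the \emph{vanishing} of the $w^{-3}$ (equivalently $(2-y)^{-1}$) coefficient, since this exponent is absent from the statement. Expanding the two leading Laurent contributions at order $w^{-3}$ gives the cancellation condition $C_3(U_\nu)\,\aleph^V_{1/3}(\nu)^2 = 4\, C_4(U_\nu)\,\aleph^V_{2/3}(\nu)$, which is the genuine algebraic content of the proposition and must be verified symbolically in Maple using the closed forms of $C_i(U_\nu)$ and $\aleph^V_i(\nu)$ (expressed via \eqref{eq:UnuSubCrit} in terms of $U_\nu$). Once this identity is confirmed, the remaining coefficients $\aleph^{\beth^{Q^+}}_{4/3}, \aleph^{\beth^{Q^+}}_{2/3}, \aleph^{\beth^{Q^+}}_{1/3}$ are computed in closed form, and their non-vanishing for $U_\nu \in (0, U_{\nu_c}]$ reduces to checking that a few explicit polynomials in $U_\nu$ have no root in this interval, which can be handled by direct inspection in the companion Maple file.
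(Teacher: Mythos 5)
Your approach is essentially the same as the paper's: Proposition~\ref{prop:serQty} gives $\beth^{Q^+}(\nu,y)$ as a rational function of $V(\nu,U_\nu,y)$, one substitutes the Puiseux expansion of $V$ at $y=2$ from Lemma~\ref{lem:expanVy}, and the remaining computations are delegated to Maple. Your identification of the hidden algebraic content --- namely that the pole of $R$ at $V=1$ must have order exactly $4$ (to produce the leading exponent $-4/3$), and that the absence of a $(2-y)^{-1}$ term in the statement forces the cancellation $C_3(U_\nu)\,\aleph^V_{1/3}(\nu)^2 = 4\,C_4(U_\nu)\,\aleph^V_{2/3}(\nu)$ at order $w^{-3}$ --- is a genuinely useful observation that the paper leaves implicit in the Maple file, and the derivation of that identity from the Laurent coefficients of $1/(V-1)^4$ and $1/(V-1)^3$ is correct.

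One point you should tighten: to justify \emph{unique} dominance of the singularity at $y=2$, it is not enough to check that the image of the real segment $[-2,2]$ under $V_\nu$ avoids the poles of $R(U_\nu,\cdot)$. Other singularities of $\beth^{Q^+}$ on the circle $|y|=2$ would arise from complex $y$ with $|y|\le 2$ for which $V_\nu(y)$ hits a pole of $R$, and $V_\nu$ maps the closed disk onto a two-dimensional region, not just $[V_-(\nu,U_\nu),1]$. Since $\beth^{Q^+}$ is algebraic (a rational function composed with the algebraic $V_\nu$), one can instead locate all its branch points and poles from the discriminant of its minimal polynomial, or equivalently from the finitely many solutions of $\hat Y(\nu,U_\nu,V)=y$ with $V$ a pole of $R$, and verify in Maple that none has modulus $\le 2$ besides $y=2$. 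The paper's proof is itself terse on this point, but if you carry out the argument you propose, this is the piece that needs to be done over $\mathbb{C}$ rather than over $\mathbb{R}$.
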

\begin{proof}
This is a direct consequence of the expression of $\beth^{Q^+}(\nu,y)$ as a rational function of $V(\nu,U_\nu,y)$. The development is obtained as usual by replacing $V$ by its singular development obtained in Lemma~\ref{lem:expanVy}. See the Maple companion file \cite{Maple} for details.
\end{proof}

\subsection{Singularities in \texorpdfstring{$y$}{y} of \texorpdfstring{$Q^+(\nu,t,ty)$}{Q+(nu,t,ty)}}

In this section, we study the value of the singularities (in $y$) of $Q^+(\nu,t,ty)$. Recall the definition of $y_-(\nu,t)$ and $y_+(\nu,t)$ given in Definition~\ref{def:y+-}. The next statement studies the singular behavior in $t$ of these functions:

\begin{prop} \label{prop:asym_t_sign_y}
Fix $\nu >0$. The functions $y_+(\nu,t)$ and $y_-(\nu,t)$ of $Q_+ (\nu,t,ty)$ defined in Definition~\ref{def:y+-} are power series in $t^3$ and have a unique dominant singularity at $t_\nu^3$. In addition, they have the following asymptotic expansions:
\begin{multline}
y_-(\nu,t) = y_-(\nu,t_\nu) +\\ \beth^-_1(\nu) \left( 1- \Big(\frac{t}{t_\nu}\Big)^{\!3} \right) + \beth^-(\nu) \left( 1- \Big(\frac{t}{t_\nu}\Big)^{\!3} \right)^{\gamma(\nu)-1} + o \left( \left( 1- \Big(\frac{t}{t_\nu}\Big)^{\!3} \right)^{\gamma(\nu)-1} \right),
\end{multline}
and
\begin{equation}
y_+(\nu,t) = y_\nu + \beth^+(\nu) \left( 1- \Big(\frac{t}{t_\nu}\Big)^{\!3} \right)^{\gamma_+(\nu)-1} + o \left( \left( 1- \Big(\frac{t}{t_\nu}\Big)^{\!3} \right)^{\gamma_+(\nu)-1} \right),
\end{equation}
where $\beth^-_1(\nu)$, $\beth^-(\nu)$ and $\beth^+(\nu)$ are explicit non-vanishing functions of $\nu$. Furthermore, $\gamma(\nu) = 5/2$ for $\nu \neq \nu_c$ or $7/3$ for $\nu = \nu_c$ as defined in Proposition~\ref{prop:asymptoZp+}, and
\[
\gamma_+(\nu) =
\begin{cases}
7/4 & \text{for $\nu < \nu_c$;}\\
3/2 & \text{for $\nu =\nu_c$;}\\
3/2 & \text{for $\nu > \nu_c$.}
\end{cases}
\]
\end{prop}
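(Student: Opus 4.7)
The plan is to transport the Puiseux expansion of $U(\nu,t^3)$ at $t_\nu^3$ given by Lemma~\ref{lem:propU} through the double composition $y_\pm(\nu,t) = \hat Y(\nu, U(\nu,t^3), V_\pm(\nu, U(\nu,t^3)))$ of Definition~\ref{def:y+-}. My first task will be to analyse $V_\pm(\nu,U)$ as an algebraic function of $U$ near $U=U_\nu$. From the proof of Proposition~\ref{prop:inverseV}, $V_-(\nu,U)$ stays strictly in $(-1,0)$ on the relevant range and is a simple root of the stationary polynomial~\eqref{eq:VcritU}; it is therefore analytic in $U$ in a neighbourhood of $U_\nu$. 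For $V_+$, the regime matters: when $\nu>\nu_c$, the proof of Lemma~\ref{lem:expanVy} identifies $V_+(\nu,U_\nu)=V_{1,1}(K_\nu)\in(0,1)$ as a simple root, so $V_+$ is again analytic in $U$; when $\nu\leq\nu_c$, the same proof shows that $V_+(\nu,U_\nu)=1$ is a double root of~\eqref{eq:VcritU}, and a local unfolding yields a Puiseux expansion $V_+(\nu,U)=1+c(\nu)\sqrt{U_\nu-U}+O(U_\nu-U)$ with an explicit nonzero constant $c(\nu)$.

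Setting $g_\pm(u)\equiv\hat Y(\nu,u,V_\pm(\nu,u))$ and using the chain rule together with $\partial_V\hat Y|_{V=V_\pm}=0$ gives $g_\pm'(u)=\partial_U\hat Y(\nu,u,V_\pm(u))$ at every smooth point. For $y_-$ in all regimes, and for $y_+$ when $\nu>\nu_c$, $g_\pm$ is analytic near $U_\nu$, and I will extract the Puiseux expansion of $y_\pm$ in $\tau=1-(t/t_\nu)^3$ by composing the Taylor series of $g_\pm$ at $U_\nu$ with the Puiseux series of $U$ from Lemma~\ref{lem:propU}. For $y_+$ in the regimes $\nu\leq\nu_c$ the composition is more delicate: expanding $\hat Y$ around the doubly critical point $(U_\nu,1)$, where both $\partial_V\hat Y$ and $\partial_V^2\hat Y$ vanish, the leading fractional contribution in $U$ comes from the cubic term $(V-1)^3$ combined with $(V_+(U)-1)^3=O((U_\nu-U)^{3/2})$; substituting the Puiseux of $U$ in $\tau$ then produces the announced exponents $\gamma_+(\nu)-1\in\{3/4,1/2\}$, with the appropriate arithmetic at $\nu=\nu_c$ coming from the $\frac{1}{3}\mathbb{Z}$-ramification of $U$ there.

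The hard part will be to verify that the resulting expansions contain no intermediate fractional powers of $\tau$ that would dominate the announced leading singular term. For $y_-$, the naive substitution would produce a $\tau^{1/2}$ contribution (and additional $\tau^{1/3}$, $\tau^{2/3}$ contributions when $\nu=\nu_c$), and its vanishing reduces to the algebraic identity $g_-'(U_\nu)=0$ (and, for $\nu=\nu_c$, also $g_-''(U_{\nu_c})=0$). An analogous cancellation is required for $y_+$ in the regimes $\nu\leq\nu_c$, where the unwanted low-order terms involve $\partial_U\hat Y(\nu,U_\nu,1)$ and the cross derivatives at $(U_\nu,1)$. All of these identities reduce to polynomial relations in $U_\nu$, $V_\pm(\nu,U_\nu)$ and, for $\nu>\nu_c$, the parameter $K_\nu$ of Proposition~\ref{prop:Unuval}; they are verified symbolically in the Maple companion file~\cite{Maple}, together with the non-vanishing of the leading coefficients $\beth^-_1(\nu), \beth^-(\nu), \beth^+(\nu)$. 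Finally, the fact that $t_\nu^3$ is the unique dominant singularity of $y_\pm$ follows from the analyticity of $\hat Y$ and $V_\pm(\nu,\cdot)$ on the image of $\{|t^3|<t_\nu^3\}$ under $U(\nu,\cdot)$, combined with the uniqueness of the dominant singularity of $U$ given by Lemma~\ref{lem:propU}.
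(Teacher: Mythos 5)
Your strategy is essentially the one the paper uses: push the Puiseux expansion of $U(\nu,t^3)$ from Lemma~\ref{lem:propU} through the rational parametrization $y_\pm = \hat Y(\nu,U,V_\pm(\nu,U))$. The only reorganization is that you factor through the composite $g_\pm(U) := \hat Y(\nu,U,V_\pm(\nu,U))$, exploit the critical-point identity $\partial_V\hat Y(\nu,U,V_\pm(\nu,U))=0$ to get $g_\pm'=\partial_U\hat Y$, and then compose the local expansion of $g_\pm$ at $U_\nu$ with the Puiseux expansion of $U$; the paper instead computes the Puiseux expansion of $V_\pm(\nu,U(\nu,t^3))$ in $1-(t/t_\nu)^3$ directly and substitutes it into $\hat Y$. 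These are two phrasings of the same algebra. Your dichotomy --- $V_-$ always a simple root of~\eqref{eq:VcritU}, $V_+$ a simple root for $\nu>\nu_c$ but a double root at $U_\nu$ for $\nu\le\nu_c$ --- is exactly what produces the paper's exponent table, and the cancellations you flag ($g_-'(U_\nu)=0$ for all $\nu$, additionally $g_-''(U_{\nu_c})=0$ at criticality, and the vanishing of $\partial_U\hat Y(\nu,U_\nu,1)$ for $y_+$ when $\nu\le\nu_c$) are precisely the identities the paper verifies symbolically.

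The one place where your sketch has a genuine hole is the uniqueness of the dominant singularity. You invoke ``the analyticity of $\hat Y$ and $V_\pm(\nu,\cdot)$ on the image of $\{|t^3|<t_\nu^3\}$ under $U(\nu,\cdot)$,'' but your analysis of $V_\pm$ as a function of $U$ is only carried out locally at $U_\nu$ and on the real segment. The map $U(\nu,\cdot)$ sends the open disk $\{|t^3|<t_\nu^3\}$ into the full complex disk $\{|U|<U_\nu\}$, so what must actually be ruled out is a complex ramification point of~\eqref{eq:VcritU} (or a pole of $\hat Y$ on the chosen branch) somewhere in that disk, which would create a singularity of $y_\pm$ of modulus $\le t_\nu^3$ distinct from $t_\nu^3$. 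The paper handles this precisely by factorizing the discriminant of~\eqref{eq:VcritU} in $V$ and checking that none of its factors vanishes for $|U|\le U_\nu$ except, when $\nu\le\nu_c$, at $U=U_\nu$ itself. You need this discriminant check (or an equivalent global branching argument) to upgrade ``$t_\nu^3$ is a singularity'' to ``$t_\nu^3$ is the unique dominant singularity''; once it is added, your proof plan is complete.
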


\begin{proof}
Recall from Definition~\ref{def:y+-} that $y_\pm(\nu,t)$ are given respectively by
\[
\hat Y(\nu,U(\nu,t^3),V_\pm(\nu,U(\nu,t^3))),
\]
where $V_\pm(\nu,U(\nu,t^3)))$ denote the largest negative root and the smallest positive root of the polynomial~\eqref{eq:VcritU}. This already proves that $y_\pm(\nu,t)$ are formal power series in $U(\nu,t^3)$, and therefore in $t^3$. To prove that they both have the same singularities as $U(\nu,t^3)$, and therefore a unique singularity at $t_\nu^3$, we look at the discriminant in $V$ of the polynomial \eqref{eq:VcritU}. This discriminant factorizes as:
\[
\frac{1024 (-1+2 U) (U \nu+U-\nu) (3 U^{2} \nu+3 U^{2}-3 U \nu-3 U+\nu) (15 U^{2} \nu+15 U^{2}-24 U \nu-6 U+8 \nu)^{2}}{(U \nu+U-2)^{4} U^{4}}.
\]
The factors of degree 1 in $U$ are clearly not 0 for $|U| \leq U(\nu,t_\nu^3)$. The first factor of degree 2 in $U$ is the polynomial giving $U(\nu,t_\nu^3)$ for $\nu \leq \nu_c$ and can only be $0$ at this value for $|U|\leq U(\nu,t_\nu^3)$. Finally we can check that the last factor can only be $0$ for values of $U$ with a modulus larger than $U(\nu,t_\nu^3)$ (see the Maple companion file~\cite{Maple} for details). Hence, $y_\pm(\nu,t)$ have unique dominant singularities at $t_\nu^3$.

\bigskip

It remains to prove that the asymptotic expansions at $t_\nu^3$ have the form given in the proposition. The calculations share similarities with the proof of Proposition \ref{prop:serQty}. Informally, in the equation \eqref{eq:VcritU} verified by $V_\pm$, we can replace $U(\nu,t^3)$ by its singular behavior around $t_{\nu}^3$ obtained in Lemma~\ref{lem:propU}. It then suffices to identify the right branch for $V$, to get a singular expansion for $V_\pm(\nu,U(\nu,t^3))$. Plugging back the expansions for $U(\nu,t^3)$ and $V_\pm(\nu,U(\nu,t^3))$ in $\hat Y(\nu,U(\nu,t^3),V_\pm(\nu,U(\nu,t^3)))$ then yields the desired result.

We now give some details about the computations for the different values of $\nu$. All the computations are performed in the Maple companion file~\cite{Maple}.

\paragraph{Case $\nu<\nu_c$.} From Proposition~\ref{lemm:weightsclusters}, we know that, for $\nu\leq \nu_c$, the radius of convergence in $y$ of $Q(t_\nu,t_\nu y)$ is equal to 2, which corresponds to $V_+(\nu,U(t_\nu^3))=1$. 

We can indeed check that, after plugging the expansion of $U(\nu,t^3)$ in~\eqref{eq:VcritU}, one (and only one) of the solutions has a constant term equal to 1, and we can hence compute its expansion around 1. We obtain: 
\begin{equation}
V_+(\nu,U(\nu,t^3))
= 1 + \sum_{i\in\{\frac{1}{4},\frac{1}{2},\frac{3}{4}\}}\beth_i^{V_+} (\nu) \cdot \left(1-\frac{t^3}{t_\nu^3}\right)^{i}+o\left(\left(1-\frac{t^3}{t_\nu^3}\right)^{3/4}\right),
\end{equation}
where $\beth_{\frac{1}{4}}^{V_+}$, $\beth_{\frac{1}{2}}^{V_+}$ and $\beth_{\frac{3}{4}}^{V_+}$ are explicit functions of $\nu$. Replacing $U$ and $V_+$ by their expansion in $\hat Y(\nu,U(\nu,t^3),V_+(\nu,U(\nu,t^3)))$ gives the desired expansion for $y_+(\nu,t)$.

We now move to the singular expansion of the negative singularity $y_-(\nu,t)$. From the proof of Proposition~\ref{lemm:weightsclusters}, we know that $y_-(\nu,t_\nu)$ corresponds to $V_-(\nu,U(\nu,t_\nu^3))=-2+\sqrt{3}$. We can hence perform the same type of computation around $V=-2+\sqrt{3}$ rather than $V=1$ to get the desired result.

\paragraph{Case $\nu=\nu_c$.} In this case, the value of $y_+(\nu_c,t_{\nu_c})$ also corresponds to $V_+(\nu_c,U(\nu_c,t^3_{\nu_c}))=1$. 
This time, when we compute the expansion of $V$, we obtain a different singular behavior, namely:
\begin{equation}
V_+(\nu_c,U(\nu_c,t^3)) 
= 1 + \sum_{i\in\{\frac{1}{6},\frac{1}{3},\frac{1}{2}\}}\beth_i^{V_+} (\nu) \cdot \left(1-\frac{t^3}{t_\nu^3}\right)^{i}+o\left(\left(1-\frac{t^3}{t_\nu^3}\right)^{1/2}\right),
\end{equation}
where $\beth_{\frac{1}{6}}^{V_+}$, $\beth_{\frac{1}{3}}^{V_+}$ and $\beth_{\frac{1}{2}}^{V_+}$ are explicit functions of $\nu$. Replacing $U$ and $V_+$ by their expansion in $\hat Y(\nu_c,U(\nu_c,t^3),V_+(\nu_c,U(\nu_c,t^3)))$ gives the desired expansion for $y_+(\nu_c,t)$.

The expansion around $V_-(\nu_c,U(\nu_c,t_{\nu_c}^3))=-2+\sqrt{3}$ does not yield any additional difficulties and gives the desired singular expansion. 

\paragraph{Case $\nu>\nu_c$.} As in the proofs of Proposition \ref{lemm:weightsclusters} and of Proposition \ref{prop:serQty}, we parametrize $\nu$ and $U(\nu,t_\nu^3)$ by $K_\nu$ with \eqref{eq:defKnu} and \eqref{eq:defKU}. The values of $V_\pm(\nu,U(\nu,t_\nu^3))$ can be calculated explicitly from \eqref{eq:Vijcrit} as roots of polynomials of degree 2. We obtain from this the two following asymptotic expansions
\begin{align*}
V_+(\nu,U(\nu,t^3))
&= V_+(\nu,U_\nu) + \sum_{i\in\{\frac{1}{2},1,\frac{3}{2}\}}\beth_i^{V_+} (\nu) \cdot \left(1-\frac{t^3}{t_\nu^3}\right)^{i}+o\left(\left(1-\frac{t^3}{t_\nu^3}\right)^{3/2}\right)\\
V_-(\nu,U(\nu,t^3))
&= V_-(\nu,U_\nu) + \sum_{i\in\{\frac{1}{2},1,\frac{3}{2}\}}\beth_i^{V_-} (\nu) \cdot \left(1-\frac{t^3}{t_\nu^3}\right)^{i}+o\left(\left(1-\frac{t^3}{t_\nu^3}\right)^{3/2}\right),
\end{align*}
where $\beth_{\frac{1}{2}}^{V_+}$, $\beth_{\frac{1}{2}}^{V_-}$, $\beth_{1}^{V_+}$, $\beth_{1}^{V_-}$, $\beth_{\frac{3}{2}}^{V_+}$ and $\beth_{\frac{3}{2}}^{V_-}$ are explicit functions of $\nu$.

Replacing $U$ and $V_\pm$ by their expansions in $\hat Y(\nu,U(\nu,t^3),V_\pm(\nu,U(\nu,t^3)))$ gives the desired expansions for $y_\pm(\nu,t)$ without additional difficulties, after noticing that the term of order $\left( 1- (t/t_\nu)^3 \right)^{1/2}$ vanishes in $y_-(\nu,t)$ but not in $y_+(\nu,t)$. 
\end{proof}

\subsection{Monochromatic simple boundary} \label{sec:monochromatic}

Recall that $Z^+(\nu,t,x)$ denotes the generating series of Ising-weighted  triangulations with a \emph{simple} monochromatic boundary. It was proved in \cite{IsingAMS} that for every $\nu>0$, the series $Z^+(\nu,t_\nu,x)$ is well-defined as $t_\nu$ is the radius of convergence of $Z^+_p(t)$ for every $p>0$ and $Z^+_p(t_\nu)<\infty$. We start this section with a rational parametrization for $Z^+$:

\begin{theo} \label{th:ratParZ}
Recall the definition of $U$ given in Proposition~\ref{prop:paramU}. We define $W\equiv W(\nu,U,x)$ as the unique formal power series in $\mathbb{Q}[\nu,U]\llbracket x \rrbracket$ having constant term $0$ and satisfying the following equation:
\begin{align}\label{eq:defxW}
x = \hat X (\nu,U,W),
\end{align}
where $\hat X (\nu,U,W)$ is the following rational fraction:
\[
\begin{split}
\hat X (\nu,U,W) &= 
\frac{8 \nu  \left(1-2 U\right) \left(1-\nu \right) }{ 8 \left(\nu +1\right)^{2} U^{3}-\left(11 \nu +13\right) \left(\nu +1\right) U^{2}+2 \left(\nu +3\right) \left(2 \nu +1\right) U-4 \nu }
\frac{W}{\left(W+1\right)^{2}}
\\
& \qquad \left(W^{2}+\frac{2 \left(5 \left(\nu +1\right) U^{2}-2 \left(3 \nu +2\right) U+2 \nu \right) }{U \left(U \left(\nu +1\right)-2\right)} W \right.\\
& \qquad \qquad \left.-\frac{8 \left(\nu +1\right)^{2} U^{3}-\left(11 \nu +13\right) \left(\nu +1\right) U^{2}+2 \left(\nu +3\right) \left(2 \nu +1\right) U-4 \nu}{U \left(U \left(\nu +1\right)-2\right) \left(1-\nu \right)}\right).
\end{split}
\]
The series $Z^+ (\nu,t,tx)$ is algebraic and admits the following rational parametrization in terms of $U$ and $W$:
\begin{align}\label{eq:paramZ}
Z^+(\nu,t,tx)=\hat Q^+ \Big( \nu,U(\nu,t^3),W \big(\nu,U(\nu,t^3),x \big) \Big) - 1,
\end{align}
where $\hat Q^+(\nu,U,W)$ is the rational parametrization for $Q^+$ given in Theorem \ref{th:ratPar}.
\end{theo}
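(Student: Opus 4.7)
The proof should be a direct consequence of the key relation \eqref{eq:relQZ} between $Q^+$ and $Z^+$, combined with the rational parametrization of $Q^+$ established in Theorem~\ref{th:ratPar}.

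The plan is as follows. Starting from the identity $Q^+(\nu,t,y) = Z^+(\nu,t,yQ^+(\nu,t,y))+1$ and rescaling $y \leftarrow ty$, I get $Q^+(\nu,t,ty) = Z^+(\nu,t,tyQ^+(\nu,t,ty)) + 1$. I introduce the variable $x$ defined by $x = y \, Q^+(\nu,t,ty)$, so that $tx = ty Q^+(\nu,t,ty)$ and therefore
\[
Z^+(\nu,t,tx) = Q^+(\nu,t,ty) - 1.
\]
Using Theorem~\ref{th:ratPar}, I can then replace the right-hand side by $\hat Q^+(\nu,U,V)-1$ where $U = U(\nu,t^3)$ and $V = V(\nu,U,y)$ satisfies $y = \hat Y(\nu,U,V)$. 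In these parameters, the change of variable reads
\[
x \,=\, \hat Y(\nu,U,V)\cdot \hat Q^+(\nu,U,V).
\]

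The candidate parametrization of $Z^+$ thus arises by taking $W$ to play the role of $V$ but parametrized by $x$ rather than $y$: we set $\hat X(\nu,U,W) := \hat Y(\nu,U,W) \cdot \hat Q^+(\nu,U,W)$ and define $W(\nu,U,x)$ implicitly via $x = \hat X(\nu,U,W)$. The second step is to verify that the rational function $\hat Y \cdot \hat Q^+$ simplifies to the explicit expression for $\hat X$ stated in the theorem. This is a purely algebraic computation: one multiplies the two rational fractions and reduces, using the definitions of $\hat Y$ and $\hat Q^+$ from Theorem~\ref{th:ratPar}, and checks that the cubic factor in $V$ appearing in both the numerator of $\hat Y$'s denominator and in $\hat Q^+$ cancels out, leaving the expression claimed. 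I would delegate this to the Maple companion file \cite{Maple}.

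The last step is to verify that $W \in \mathbb{Q}[\nu,U]\llbracket x \rrbracket$ with constant term $0$ is uniquely defined by $x = \hat X(\nu,U,W)$. This follows from the explicit expression of $\hat X$: one checks that $\hat X(\nu,U,0) = 0$ and that $\partial_W \hat X(\nu,U,W)\big|_{W=0}$ is a non-vanishing rational function of $\nu$ and $U$, so the implicit function theorem for formal power series applies. The identity $Z^+(\nu,t,tx) = \hat Q^+(\nu,U(\nu,t^3),W(\nu,U(\nu,t^3),x)) - 1$ then holds as formal power series, by the same uniqueness argument used at the end of the proof of Theorem~\ref{th:ratPar}: both sides are formal power series in $x$ with rational coefficients in $\nu$ and $U$, and they satisfy the same defining functional equation with the same initial value at $x=0$.

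The main obstacle is the algebraic verification that the product $\hat Y(\nu,U,V)\hat Q^+(\nu,U,V)$ simplifies to the fairly compact expression given for $\hat X$; this is mechanical but not transparent without machine computation, which is why it is pushed to the Maple companion file.
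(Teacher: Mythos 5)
Your proof is correct, but it takes a genuinely different and more illuminating route than the paper's. The paper simply asserts that one needs to verify that the displayed parametrization satisfies the algebraic equation for $Z^+$ established in \cite{IsingAMS}, delegates that verification to Maple, and invokes the same uniqueness argument as in Theorem~\ref{th:ratPar}. You instead \emph{derive} the parametrization from the relation~\eqref{eq:relQZ} between $Q^+$ and $Z^+$: setting $x = y\,Q^+(\nu,t,ty)$ turns~\eqref{eq:relQZ} into $Z^+(\nu,t,tx) = Q^+(\nu,t,ty) - 1$, and in the uniformizing variable $V$ of Theorem~\ref{th:ratPar} this change of variable reads $x = \hat Y(\nu,U,V)\,\hat Q^+(\nu,U,V)$. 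This explains structurally why $\hat Q^+$ reappears in the parametrization of $Z^+$ and why $\hat X = \hat Y\cdot\hat Q^+$. Your approach does not need the algebraic equation for $Z^+$ from \cite{IsingAMS} at all; it only uses Theorem~\ref{th:ratPar} and the combinatorial identity~\eqref{eq:relQZ}, which is conceptually cleaner.

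One small remark: the algebraic simplification you defer to Maple is in fact transparent by hand. The cubic $V^3 + \cdots - 1$ appears as the denominator of $\hat Y$ and as a factor of the numerator of $\hat Q^+$, so it cancels outright; the factor $U\big((1+\nu)U-2\big)$ likewise cancels between the prefactors, and $V(V+1)/(V+1)^3 = V/(V+1)^2$. What remains is exactly
\[
\frac{8\nu(1-2U)(1-\nu)}{P(\nu,U)}\,\frac{W}{(W+1)^2}\,\Bigl(W^2 + \tfrac{2\left(5(\nu+1)U^2-2(3\nu+2)U+2\nu\right)}{U\left((\nu+1)U-2\right)}W - \tfrac{P(\nu,U)}{U\left((\nu+1)U-2\right)(1-\nu)}\Bigr),
\]
which is the displayed $\hat X$ once $P(\nu,U)$ is expanded. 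So the algebraic content is actually lighter than you make it sound; no machine computation is needed for this step.

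The only point you pass over quickly is that $x\mapsto y$ is a legitimate formal change of variable. This is immediate: $Q^+(\nu,t,ty)$ has constant term $1$ in $y$ (the atomic map), so $x = y\,Q^+(\nu,t,ty) = y\big(1+\mathcal O(y)\big)$ is formally invertible with $y$ a power series in $x$ of constant term $0$. With that noted, the composition $V\big(\nu,U,y(x)\big)$ has constant term $0$, satisfies $x = \hat X(\nu,U,\cdot)$, and hence equals $W$ by the uniqueness you establish; the final identity follows.
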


\begin{proof}
As mentioned earlier, the algebraicity of $Z^+$ is established in \cite{IsingAMS}, see in particular Theorem 2.11 in this reference. We only have to check that the expressions given in the statement are solution of the algebraic equation satisfied by $Z^+$ and the theorem follows by uniqueness of the solution in a similar fashion as Theorem \ref{th:ratPar}. Computations are performed in the Maple file \cite{Maple}.
\end{proof}

\bigskip

With the rational parametrization of $Z^+$ the study of the singular behavior of $Z^+(\nu,t_\nu,t_\nu \,x)$ is very similar to the study of $Q^+(\nu,t_\nu,t_\nu y)$ performed in Proposition~\ref{lemm:weightsclusters}. We gather in the next statement the information we need on $Z^+$.

\begin{lemm} \label{lem:expZplusxc}
For $\nu >0$, the series $Z^+ (\nu,t_\nu,t_\nu x)$ has radius of convergence $x_\nu > 0$ and a single dominant singularity at $x_\nu$, where it admits the following expansion:
\begin{multline}
Z^+(\nu,t_\nu,t_\nu x) = Z^+(\nu,t_\nu ,t_\nu x_\nu) + \aleph_1^{Z^+}(\nu) \left( 1- \frac{x}{x_\nu} \right)\\
+  \aleph^{Z^+}(\nu) \left( 1- \frac{x}{x_\nu} \right)^{\gamma(\nu)-1} + o \left( \left( 1- \frac{x}{x_\nu} \right)^{\gamma(\nu)-1} \right),
\end{multline}
where $\aleph_1^{Z^+}(\nu)$ and $\aleph^{Z^+}(\nu)$ are explicit functions of $\nu$, which do not vanish, and $\gamma$ is the function of $\nu$, which already appeared in the asymptotic developments of $Q^+$ (Proposition~\ref{prop:serQty}) and is given by: 
\begin{equation*}
\gamma(\nu) =
\begin{cases}
5/2 & \text{for $\nu \neq \nu_c$,}\\
7/3 & \text{for $\nu = \nu_c$.}
\end{cases}
\end{equation*}
Furthermore, we have
\[
\begin{cases}
\dfrac{-\aleph_1^{Z^+}(\nu)}{1+ Z^+(\nu,t_\nu ,t_\nu x_\nu)} =1 & \text{if $\nu<\nu_c$,} \\
\dfrac{-\aleph_1^{Z^+}(\nu)}{1+ Z^+(\nu,t_\nu ,t_\nu x_\nu)} \leq \dfrac{\sqrt 7}{1 + \sqrt 7} & \text{if $\nu \geq\nu_c$,}
\end{cases}
\]
with equality in the last case if and only if $\nu = \nu_c$.
\end{lemm}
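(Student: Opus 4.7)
The plan is to parallel the analysis performed for $Q^+$ in Proposition~\ref{lemm:weightsclusters}, using the rational parametrization for $Z^+$ from Theorem~\ref{th:ratParZ}. Fixing $\nu > 0$ and $U = U_\nu$, the first step is to study the rational function $W \mapsto \hat X(\nu, U_\nu, W)$: one locates its (unique real) pole and its stationary points by analyzing the roots and discriminant of $\partial_W \hat X(\nu, U_\nu, W) = 0$, as was done for $\hat Y$ in the proofs of Proposition~\ref{prop:inverseV} and Lemma~\ref{lem:expanVy}. This identifies the smallest positive stationary point $W_\nu$ and sets $x_\nu := \hat X(\nu, U_\nu, W_\nu) > 0$. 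A case-by-case inspection (subcritical, critical, supercritical in $\nu$) shows that the other stationary and polar values of $\hat X$ have modulus strictly larger than $x_\nu$, so by global inversion $x \mapsto W(\nu, U_\nu, x)$ extends analytically to $\mathbb{C} \setminus \bigl((-\infty, x_\nu^-] \cup [x_\nu, +\infty)\bigr)$ with $x_\nu^- < -x_\nu$, and $x_\nu$ is its unique dominant singularity. Combined with $Z^+(\nu, t_\nu, t_\nu x) = \hat Q^+(\nu, U_\nu, W) - 1$ and positivity of coefficients, this also gives $x_\nu$ as the radius of convergence and unique dominant singularity of $Z^+(\nu, t_\nu, t_\nu x)$.

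For the singular expansion, I would perform singular inversion at $W_\nu$. For $\nu \neq \nu_c$, $W_\nu$ is a simple stationary point of $\hat X(\nu, U_\nu, \cdot)$, yielding a Puiseux expansion
\[
W(\nu, U_\nu, x) = W_\nu + \sum_{i \geq 1} \aleph_i^{W}(\nu)\,(1 - x/x_\nu)^{i/2}
\]
with $\aleph_1^W(\nu) \neq 0$. At $\nu = \nu_c$ an additional degeneracy, analogous to the one responsible for the cube-root expansion of $U$ in Lemma~\ref{lem:propU}, forces $\partial_W^2 \hat X(\nu_c, U_{\nu_c}, W_{\nu_c}) = 0$ and the expansion is in powers of $(1 - x/x_{\nu_c})^{1/3}$. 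Substituting into $\hat Q^+(\nu, U_\nu, W) - 1$, the naive leading singular exponent would be $1/2$ (respectively $1/3$ at $\nu_c$), but the algebraic identity
\[
\partial_W \hat Q^+\bigl(\nu, U_\nu, W_\nu\bigr) = 0
\]
(and the further vanishing $\partial_W^2 \hat Q^+(\nu_c, U_{\nu_c}, W_{\nu_c}) = 0$) causes these terms to cancel, elevating the exponent to $\gamma(\nu) - 1 \in \{3/2, 4/3\}$. These cancellations are verified by direct substitution in the explicit rational expressions for $\hat Q^+$ and the defining polynomial of $W_\nu$, and are cleanly checked in Maple~\cite{Maple}; conceptually they are forced by the functional relation~\eqref{eq:relQZ}, which compels the singular exponent of $Z^+$ in $x$ to match the universal $\gamma(\nu) - 1$ rather than the non-universal $\alphaa(\nu) - 1$ coming from $Q^+$.

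The inequality on $-\aleph_1^{Z^+}/(1 + Z^+(\nu, t_\nu, t_\nu x_\nu))$ is then obtained by matching singular expansions in the same functional relation. Writing $Q^+_\nu := Q^+(\nu, t_\nu, t_\nu y_\nu) = 1 + Z^+(\nu, t_\nu, t_\nu x_\nu)$, plugging $x = y Q^+(\nu, t_\nu, t_\nu y)$ into the expansion of $Z^+$ at $x_\nu$, and comparing the coefficient of $(1 - y/y_\nu)$ on both sides of~\eqref{eq:relQZ} with the expansion of $Q^+$ from Proposition~\ref{lemm:weightsclusters}, one obtains the closed form
\[
\frac{-\aleph_1^{Z^+}(\nu)}{1 + Z^+(\nu, t_\nu, t_\nu x_\nu)} = \frac{r(\nu)}{1 + r(\nu)}, \qquad r(\nu) := \frac{-\aleph_1^{Q^+}(\nu)}{Q^+_\nu}.
\]
In the subcritical regime $\nu < \nu_c$, matching instead the dominant $(1 - y/y_\nu)^{2/3}$ term in the same functional equation forces $\aleph^{Q^+}(\nu) = -Q^+_\nu \cdot \aleph_1^{Z^+}(\nu)/(1+Z^+(\nu,t_\nu,t_\nu x_\nu))$ and gives the ratio equal to $1$ directly. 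For $\nu \geq \nu_c$, both $\aleph_1^{Q^+}(\nu)$ and $Q^+_\nu$ are explicit rational functions of the auxiliary parameter $K_\nu$ of Proposition~\ref{prop:Unuval}, and the inequality reduces to $r(\nu) \leq \sqrt{7}$ with equality at $\nu = \nu_c$, which is verified by an elementary monotonicity analysis on the resulting function of $K_\nu$.

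The main obstacle is the cancellation $\partial_W \hat Q^+(\nu, U_\nu, W_\nu) = 0$: this is the crucial mechanism that upgrades the exponent of $Z^+$ from the naive $1/2$ to $3/2$, and while it is transparent from the functional equation, verifying it directly from the raw rational expressions is algebraically delicate and effectively requires the symbolic computation carried out in the Maple companion file.
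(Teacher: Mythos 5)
Your proposal follows the paper's approach closely: use the rational parametrization of Theorem~\ref{th:ratParZ}, locate the stationary point $W_\nu$ of $\hat X$, do singular inversion, and plug into $\hat Q^+$. Your observation that the upgrade of the exponent from $1/2$ to $3/2$ is driven by the identity $\partial_W\hat Q^+(\nu,U_\nu,W_\nu)=0$ (and by the further vanishing of $\partial_W^2\hat Q^+$ at $\nu_c$) is correct and usefully demystifies the Maple step. It has an even cleaner algebraic explanation you could add: one checks from Theorems~\ref{th:ratPar} and~\ref{th:ratParZ} that $\hat X(\nu,U,V)=\hat Y(\nu,U,V)\,\hat Q^+(\nu,U,V)$, and the degree-two factors of the critical polynomials of $\hat X$ and $\hat Y$ coincide, so $W_\nu = V_+(\nu,U_\nu)$; then $0=\hat X'(W_\nu)=\hat Y'(W_\nu)\hat Q^+(W_\nu)+\hat Y(W_\nu)(\hat Q^+)'(W_\nu)=\hat Y(W_\nu)(\hat Q^+)'(W_\nu)$ and $\hat Y(W_\nu)=y_\nu\neq 0$ forces $(\hat Q^+)'(W_\nu)=0$.

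Two inaccuracies should be repaired. First, uniqueness of the dominant singularity is not simply ``the other stationary and polar values of $\hat X$ have modulus strictly larger than $x_\nu$'': the paper's proof acknowledges that the images of the other stationary points may lie on the circle of radius $x_\nu$, and instead verifies that the corresponding branch of $Z^+$ is non-singular there. Second, in the subcritical matching your intermediate formula is off: since $Q^+_\nu = 1+Z^+(\nu,t_\nu,t_\nu x_\nu)$, the displayed relation reduces to $\aleph^{Q^+}(\nu)=-\aleph_1^{Z^+}(\nu)$, which neither follows from the matching nor yields ratio $1$. The correct bookkeeping: with $\eta:=1-y/y_\nu$ and $\xi:=1-x/x_\nu$, the substitution $x=yQ^+(\nu,t_\nu,t_\nu y)$ gives $\xi = -\tfrac{y_\nu\aleph^{Q^+}(\nu)}{x_\nu}\eta^{2/3}+o(\eta^{2/3})$, so comparing $\eta^{2/3}$ terms of $Q^+=Z^++1$ gives $\aleph_1^{Z^+}(\nu)\cdot\bigl(-\tfrac{y_\nu\aleph^{Q^+}(\nu)}{x_\nu}\bigr)=\aleph^{Q^+}(\nu)$, i.e.\ $\aleph_1^{Z^+}(\nu)=-x_\nu/y_\nu=-Q^+_\nu$, which directly gives the ratio $1$. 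Your closed form for $\nu\geq\nu_c$ in terms of $r(\nu)=-\aleph_1^{Q^+}(\nu)/Q^+_\nu$ is correct; the paper instead verifies the inequality by direct evaluation of the explicit $U_\nu$- (or $K_\nu$-) expressions.
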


\begin{proof}
The proof is very similar to what was done with $Q^+$ and we only point out the differences. To study the singularities of $x \mapsto Z^+(\nu,t_\nu, t_\nu \,x)$, we can first study the singularities of $x \mapsto W(\nu,U_\nu,x)$, where we recall that $U_\nu = U(\nu,t_\nu^3)$. Again we treat separately the case $\nu \leq \nu_c$ and $\nu > \nu_c$. Detailed computations are available in the Maple worksheet \cite{Maple}. 

\paragraph{Case $\nu \leq \nu_c$.} In this regime, $\hat X(\nu,U_\nu,W)$ has a pole at $W=-1$ and its stationary points are given by the roots of the polynomial
\begin{equation} \label{eq:wcritsubc}
\left(W-1\right) \left((6 U^{2} - 6U +1) W^{2}+4 (6 U^{2} - 6 U + 1) W+6 U^{2}-10 U+3\right).
\end{equation}
We check that for $U \in (0,U_c]$, the roots of the polynomial of degree 2 are never in $(-1,1)$ and therefore $x \mapsto W(\nu,U(\nu,t_\nu^3),x)$ is bijective from $(-\infty,x_\nu]$ onto $(-1,1]$, where
\[{}
x_\nu = \hat X(\nu,U_\nu,1) >0
\]
is the radius of convergence of $x \mapsto Z^+(\nu,U(\nu,t_\nu^3),x)$ by composition.

Before computing the expansion of $ Z^+(\nu,U(\nu,t_\nu^3),x)$ at $x_\nu$, we still have to check that it has no other dominant singularity. The only candidates are the images $x_1$ and $x_2$ of the two roots of the polynomial of degree two \eqref{eq:wcritsubc} by the function $\hat X$. We compute explicitly the asymptotic expansions of $Z^+$ at these two values and see that if $|x_i| = x_\nu$, the development is non singular (we identify the right branch using the value of $Z^+(\nu,t_\nu,t_\nu x_i)$, which has to be of modulus smaller than $Z^+(\nu,t_\nu,t_\nu x_\nu$)).

Now that we know that $x \mapsto Z^+(\nu,U(\nu,t_\nu^3),x)$ has a unique dominant singularity at $x_\nu$, we compute the expansion of $x \mapsto W(\nu,U(\nu,t_\nu^3),x)$ at this value by singular inversion and plug it in $\hat Z^+$ to obtain the expansions given in the statement. The coefficients $\aleph_1^{Z^+}(\nu)$, $\aleph^{Z^+}(\nu)$ and $Z^+(\nu,U(\nu,t_\nu^3),x_\nu)$ are all explicit functions of $U_\nu$ and establishing the last statement is an easy calculation.

\paragraph{Case $\nu > \nu_c$.} As done several times before, we work in this regime by replacing $\nu$ and $U_\nu$ by their value in terms of $K_\nu$ given in equations \eqref{eq:defKnu} and \eqref{eq:defKU}. The stationary points of $\hat X( \nu, U_\nu, W)$ are given by the roots of the polynomial
\[
\left(K_\nu^{2} W-K_\nu^{2}-8 K_\nu-3 W-13\right) \left(K_\nu^{2} W^{2}+4 K_\nu^{2} W+K_\nu^{2}+8 K_\nu W-3 W^{2}+4 W-3\right).
\]
We check that the root of the factor of degree $1$ is real and smaller that $-1$, and that the factor of degree two has two real roots, one in $(0,1)$ that we will denote by $W_\nu$, and one in $(1,\infty)$. By global inversion, we see that $x \mapsto W(\nu,U(\nu,t_\nu^3),x)$ is analytic on $\mathbb C \setminus [x_\nu,\infty)$, where
\[
x_\nu = \hat X(\nu,U_\nu,W_\nu) >0
\]
is the radius of convergence of $x \mapsto Z^+(\nu,U(\nu,t_\nu^3),x)$ by composition. The rest is relatively straightforward: we compute the expansion of $\hat X$ at $W_\nu$ and plug it into $\hat Z^+$ to get the statement.
\end{proof}

\section{Boltzmann maps and BDG functions} \label{sec:BoltzmannAll}

To study the root spin cluster, we will interpret it in the context of Boltzmann planar maps associated to a given weight sequence, which has been studied intensively in the literature~\cite{MiermontInvariance,MarckertMiermontInvariance,BuddPeel,LeGallMiermont,Mar18b,CRlooptrees}. In this section, we first review the necessary material about this model of maps. This includes universal expressions for their partition functions, and the connection with a pair of bivariate functions $f^\bullet$ and $f^\diamond$, given by the Bouttier--Di Francesco--Guitter bijection. 

In a second part, we prove that the root spin cluster has the distribution of a Boltzmann planar map with an explicit weight sequence. We study the asymptotic properties of this weight sequence as well as the singular behavior of the two functions $f^\bullet$ and $f^\diamond$ associated to it.

\subsection{Background on Boltzmann maps} \label{sec:Boltzmann}

\subsubsection{Partition functions}\label{sub:partitionFunctions}

Fix any sequence of non-negative real numbers $\mathbf q = (q_k)_{k \geq 1}$ such that $q_k \neq 0$ for some odd $k \geq 3$.
Recall that $\mathcal{M}$ (respectively $\mathcal{M}^\bullet$) denotes the set of finite non-atomic rooted (respectively and pointed) planar maps. For any $\mathfrak m \in \mathcal M$, the $\mathbf q$-Boltzmann weight of $\mathfrak m$ is defined by
\begin{equation}\label{eq:defwq}
w_{\mathbf q}(\mathfrak m) = \prod_{f \in F(\mathfrak m)} q_{\mathrm{deg}(f)}.
\end{equation}
The partition function $w_{\mathbf q}(\mathcal{M})$ and the pointed partition function $w_{\mathbf q}^\bullet(\mathcal{M})$ of $\mathbf q$-Boltzmann maps are then defined by:
\begin{align*}
w_{\mathbf q} (\mathcal M) & = \sum_{\mathfrak m \in \mathcal{M} } \, \prod_{f \in F(\mathfrak m)} q_{\mathrm{deg}(f)} ,\\
w_{\mathbf q} (\mathcal M^\bullet)  &= \sum_{\mathfrak m \in \mathcal{M}^\bullet} \,  \prod_{f \in F(\mathfrak m)} q_{\mathrm{deg}(f)} = \sum_{\mathfrak m \in \mathcal{M}} \, |V(\mathfrak m)| \, \prod_{f \in F(\mathfrak m)} q_{\mathrm{deg}(f)}.
\end{align*}

\begin{defi}\label{def:admissible}
A weight sequence $\mathbf q$ is said to be \emph{admissible} if its partition function satisfies
\[
w_{\mathbf q} (\mathcal M) < \infty,
\]
which is equivalent to $w_{\mathbf q} (\mathcal M^\bullet) < \infty$, see \cite[Proposition 4.1]{BeCuMie}.

We can associate to any admissible weight sequence $\mathbf q$ a probability measure $\mathbb P^{\mathbf q}$ on $\mathcal M$ defined by,
\begin{equation}
\forall \mathfrak m \in \mathcal M , \quad \mathbb P^{\mathbf q} \left( \{ \mathfrak m \} \right)
=
\frac{w_{\mathbf q}(\mathfrak m)}{w_{\mathbf q} (\mathcal M)}.
\end{equation}
A random planar map sampled from $\mathbb P^{\mathbf q}$ is called a \emph{$\mathbf q$-Boltzmann map} and is denoted by $\mathbf M^\mathbf q$.
\end{defi}

\bigskip

For admissible weight sequences, other important quantities are the so-called disk partition functions and their pointed versions. For $l \geq 1$, let $\mathcal M_l$ denote the set of finite rooted planar maps whose root face $f_r$ is of degree $l$. For every $l \geq 1$, the disk function and the pointed disk function are respectively defined  by:
\begin{align}\label{eq:defDiskL}
W^{(l)}_{\mathbf q} &= \sum_{ \mathfrak m \in \mathcal M_l} 
\prod_{f \in F( \mathfrak m) \setminus \{ f_r\}} q_{\mathrm{deg}(f)},\\
W^{(l)}_{\mathbf q ,\bullet} &= \sum_{ \mathfrak m \in \mathcal M_l}
|V(\mathfrak m)|
\prod_{f \in F(\mathfrak m) \setminus \{ f_r\}} q_{\mathrm{deg}(f)},
\end{align}
which are all finite if $\mathbf q$ is admissible.
Note that the case $l = 2$ is of special interest since it allows to recover the partition functions of the $\mathbf q$-Boltzmann maps by the identities
\begin{equation} \label{eq:cyltobolt}
W^{(2)}_{\mathbf q} - 1 = w_{\mathbf q} (\mathcal M)
\quad \text{and} \quad
W^{(2)}_{\mathbf q ,\bullet} - 1 = w_{\mathbf q} (\mathcal M^\bullet),
\end{equation}
which come from gluing the edges of the root face of maps with root face degree $2$.

\bigskip

The generating functions of disk partition functions are usually defined formally as follows:
\begin{align}
W_{\mathbf q}(z) &=  \sum_{l \geq 0} W^{(l)}_{\mathbf q} \, z^{-(l+1)},\label{eq:defDiskPartition}\\
W_{\mathbf q ,\bullet}(z) &= \sum_{l \geq 0} W^{(l)}_{\mathbf q ,\bullet} \, z^{-(l+1)},\label{eq:defPointedDiskPartition}
\end{align}
with the convention that $ W^{(0)}_{\mathbf q} = W^{(0)}_{\mathbf q ,\bullet} = 1$. 
A striking property of Boltzmann maps is the universal form of the pointed disk function $W_{\mathbf q,\bullet}$ (see for instance Budd \cite[Proposition 2]{BuddPeel} and Borot-Bouttier-Guitter \cite[Section 6.1]{BBGb} for two different proofs). Indeed, there exist real numbers $c_+ (\mathbf q) >2$ and $c_-(\mathbf q) \in [-c_+(\mathbf q),c_+(\mathbf q))$ such that $W_{\mathbf q,\bullet}$ has an analytic continuation on $\mathbb C \setminus [c_-(\mathbf q),c_+(\mathbf q)]$ and such that, on this domain, we have:
\begin{equation}\label{eq:pointedDisk}
W_{\mathbf q,\bullet}(z) = \frac{1}{\sqrt{\left( (z- c_+(\mathbf q)) (z- c_-(\mathbf q)) \right)}}.
\end{equation}
In addition, if the maps are not bipartite (which is the case here), then $c_- (\mathbf q)\neq -c_+(\mathbf q)$. The unpointed disk function does not have a universal form, but it does have an analytic continuation on the same domain $\mathbb C \setminus [c_-(\mathbf q),c_+(\mathbf q)]$ than the pointed disk function (see for example \cite[Section 6.1]{BBGb} and \cite[Proposition 4.3]{BeCuMie}).

\subsubsection{Critical and regular critical weight sequences}

Weight sequences are classified according to their asymptotic properties.
The case where the number of vertices of a $\mathbf q$-Boltzmann map has infinite variance is of special interest, which leads to the following definition.

\begin{defi} \label{def:critique}
An admissible weight sequence $\mathbf q$ is said to be
\begin{itemize}
\item \emph{critical} if
\begin{equation} \label{eq:defcrit}
\sum_{\mathfrak m \in \mathcal{M}^\bullet} \, |V(\mathfrak m)|^2 \, \prod_{f \in F(\mathfrak m)} q_{\mathrm{deg}(f)} = + \infty,
\end{equation}
\item \emph{sub-critical} if
\begin{equation} \label{eq:defsubcrit}
\sum_{\mathfrak m \in \mathcal{M}^\bullet} \, |V(\mathfrak m)|^2 \, \prod_{f \in F(\mathfrak m)} q_{\mathrm{deg}(f)} < + \infty.
\end{equation}
\end{itemize}
\end{defi}

Critical weight sequences are further classified according to the tail distribution of the degree of the root face of the associated $\mathbf q$-Boltzmann map. This classification can be found in \cite[Definition 1 and Proposition 3]{MiermontInvariance}.

\begin{defi} \label{def:regcrit}
A critical weight sequence $\mathbf q$ is said to be
\begin{itemize}
\item \emph{regular} (or \emph{generic}) critical if there exists $r \in (0,1)$ such that, as $l \to \infty$, one has
\begin{equation} \label{eq:defregcrit}
\mathbb P^{\mathbf q} \left( |\partial \mathbf M^\mathbf q | = l \right) = 
\frac{w_\mathbf{q} (\mathcal M_l)}{w_\mathbf{q} (\mathcal M)} = \frac{q_l \cdot W_{\mathbf q}^{l}}{w_\mathbf{q} (\mathcal M)} = \mathcal O \left( r^l \right);
\end{equation}
\item \emph{non-regular} (or \emph{non-generic}) critical if it is not regular critical.
\end{itemize}
\end{defi}

\bigskip

An important feature of regular critical Boltzmann random maps is the universal behavior for the tail distribution of their number of vertices. We refer the interested reader to \cite{BeCuMie} for further details and also to \cite[Lemma 2]{BuddPeel} for a related result.

\begin{prop}[Proposition 5.1 of \cite{BeCuMie}] \label{prop:regcritvol}
Let $\mathbf q$ be a regular critical weight sequence. Then there exists a constant $c > 0$ such that
\begin{equation} \label{eq:regcritvol}
\mathbb P^{\mathbf q} \left( |V( \mathbf M^\mathbf q ) | \geq  n \right)
\underset{n \to \infty}{\sim}
c \, n^{-3/2}.
\end{equation}
\end{prop}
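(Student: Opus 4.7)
My plan is to route through the Bouttier--Di Francesco--Guitter (BDG) bijection. This bijection encodes a pointed rooted planar map by a two-type labelled tree (a "mobile") in which vertices of one type correspond to non-distinguished vertices of the map and vertices of the other type correspond to faces labelled by their degree. After integrating the labels out (a geometric-type summation over increment sequences), the push-forward of the pointed $\mathbf q$-Boltzmann measure on $\mathcal M^\bullet$ becomes the law of a two-type Galton--Watson tree whose offspring distributions are explicit in terms of $\mathbf q$ and of the bivariate BDG functions. In particular, $|V(\mathbf M^\mathbf q)|$ equals the number of vertex-type points in the mobile (plus the marked vertex), so estimating the tail of $|V(\mathbf M^\mathbf q)|$ reduces to estimating the tail of the total progeny of this Galton--Watson process.

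The argument then splits into two steps. First, I would identify criticality of $\mathbf q$ with criticality of the two-type branching process. Indeed, the sum $\sum_{\mathfrak m \in \mathcal{M}^\bullet} |V(\mathfrak m)|\, \prod q_{\deg(f)}$ appearing in~\eqref{eq:defcrit} and~\eqref{eq:defsubcrit} computes, via the BDG correspondence, the expected total progeny of the mobile; writing this as the Neumann series $(\mathrm{Id} - M)^{-1}\mathbf 1$ of the $2{\times}2$ mean matrix $M$ of the branching process, \eqref{eq:defsubcrit} is equivalent to $\rho(M) < 1$ and \eqref{eq:defcrit} to $\rho(M) = 1$. Second, I would show that the regularity assumption~\eqref{eq:defregcrit} upgrades to exponential tails (hence finiteness of all moments) for both offspring distributions. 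For the face-type offspring this is essentially the content of~\eqref{eq:defregcrit}. For the vertex-type offspring, exponential tails follow from the universal form~\eqref{eq:pointedDisk} together with the strict inequality $c_+(\mathbf q) > 2$ recorded just after it: these provide a uniform disk of analyticity inside which the generating function of the vertex-type offspring distribution is analytic.

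Once both offspring distributions are known to have finite variance, the statement follows from the classical asymptotics for critical Galton--Watson trees. Collapsing the two-type tree to a single-type tree (keeping only vertex-type points and declaring their offspring to be the total number of vertex-type grand-children) produces a critical single-type Galton--Watson tree with finite-variance offspring law. Kolmogorov's estimate (or equivalently Otter's local limit theorem for the total progeny) gives $\mathbb P(|T| = n) \sim c\, n^{-5/2}$, whence $\mathbb P(|T| \geq n) \sim c'\, n^{-3/2}$; this asymptotic transfers directly to $\mathbb P^\mathbf q(|V(\mathbf M^\mathbf q)| \geq n)$ via the bijection.

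The main obstacle is the second step of the splitting above: verifying that regularity---an assumption phrased only on the tail of the root-face perimeter---upgrades to a finite-variance statement about the \emph{vertex-type} offspring law. Concretely, this amounts to controlling the radius of convergence of the explicit generating series of the BDG offspring distributions, which in turn relies on extracting analyticity in a neighbourhood of $z = 2$ from the universal form~\eqref{eq:pointedDisk} and from $c_+(\mathbf q) > 2$. Everything else, including the collapsing trick and the final $n^{-3/2}$ asymptotic, is standard and does not require any fine information on $\mathbf q$ beyond regular criticality.
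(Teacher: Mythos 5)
The paper cites Proposition 5.1 of \cite{BeCuMie} without providing its own proof; the argument there indeed goes through the Bouttier--Di~Francesco--Guitter mobile encoding of pointed Boltzmann maps by (multi-type) Galton--Watson trees and the classical Otter--Dwass/Kolmogorov asymptotics, exactly as you propose. Your plan matches the cited approach, and the step you correctly flag as the main obstacle---extracting geometric decay of $q_k$, hence finite offspring variance for the face-type law of the mobile, from the regularity condition~\eqref{eq:defregcrit}---is precisely the technical content that makes the notion of regular criticality well-suited to these Galton--Watson tail estimates.
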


\subsubsection{Bouttier-Di Francesco-Guitter functions} \label{sec:BDGgen}

An essential tool to study Boltzmann planar maps is the Bouttier-Di Francesco-Guitter bijection \cite{BDFG}. Our work relies heavily on one of its consequences established in slight variations by Miermont \cite{MiermontInvariance}, Budd \cite{BuddPeel} or Bernardi, Curien and Miermont \cite{BeCuMie}. 
For a weight sequence $\mathbf{q}$, they give a criterion for $\mathbf{q}$ to be admissible, and if applicable, they give an expression for the partition function of $\mathbf{q}$-Boltzmann maps.

More precisely, let $f^\bullet_{\mathbf q}$ and $f^\diamond_{\mathbf q}$ be the two bivariate functions defined by: 
\begin{align}
f^\bullet_{\mathbf q} (z_1,z_2) &= \sum_{k,k' \geq 0} z_1^k z_2^{k'} \binom{2k+k' +1}{k+1,k,k'} q_{2+2k+k'}, \label{eq:deffbullet}\\
f^\diamond_{\mathbf q} (z_1,z_2) &= \sum_{k,k' \geq 0} z_1^k z_2^{k'} \binom{2k+k'}{k,k,k'} q_{1+2k+k'}.\label{eq:deffdiamond}
\end{align}
It is proved in these three articles that the weight sequence $\mathbf q$ is admissible if and only if there is a unique solution $(z^+(\mathbf q), z^\diamond(\mathbf q)) \in \mathbb R_+^2$ to the system
\begin{equation}\label{eq:fixpoint}
\begin{cases}
f^\bullet_{\mathbf q} (z^+(\mathbf q),z^\diamond(\mathbf q)) = 1 - \frac{1}{z^+(\mathbf q)},\\
f^\diamond_{\mathbf q} (z^+(\mathbf q),z^\diamond(\mathbf q)) = z^\diamond(\mathbf q),\\
(\partial_{z_2} + \sqrt{z_1} \partial_{z_1}) f^\diamond_{\mathbf q} (z^+(\mathbf q),z^\diamond(\mathbf q)) \leq 1.
\end{cases}
\end{equation}
In addition, if $\mathbf{q}$ is admissible, the solution of the system has the following combinatorial interpretation
\begin{equation} \label{eq:zpzdcombi}
\begin{cases}
z^+ (\mathbf q) &= 1 + \sum_{\mathfrak m \in \mathcal{M}^+} \, \prod_{f \in F(\mathfrak m)} q_{\mathrm{deg}(f)} = 1+ w_{\mathbf q} (\mathcal{M}^+), \\
z^\diamond (\mathbf q) &= \sqrt{ \sum_{\mathfrak m \in \mathcal{M}^0}  \, \prod_{f \in F(\mathfrak m)} q_{\mathrm{deg}(f)}} = \sqrt{w_{\mathbf q} (\mathcal{M}^0)},
\end{cases}
\end{equation}
where $\mathcal M^+$ (respectively $\mathcal M^0$) denotes the subset of $\mathcal{M}^\bullet$ made of maps whose root edge points to a vertex that is closer (respectively at equal distance) to the marked vertex than is the root vertex. See for example \cite[Lemma 4.4]{BeCuMie}. As a consequence, we have
\begin{equation} \label{eq:wdzpzd}
w_{\mathbf q} (\mathcal M^\bullet) = 2z^+ (\mathbf q) + (z^\diamond (\mathbf q))^2 -1,
\end{equation}
or, equivalently,
\begin{equation} \label{eq:cpmz}
c_+ (\mathbf q) = z^\diamond (\mathbf q) + 2 \sqrt{z^+} (\mathbf q) \quad \text{and} \quad  \quad c_- (\mathbf q) = z^\diamond (\mathbf q) - 2 \sqrt{z^+} (\mathbf q).
\end{equation}
Based on these results, the following characterization of criticality is given in 
\cite{BeCuMie}:
\begin{prop}[Proposition~4.3 of~\cite{BeCuMie}]\label{prop:criterionCriticality}
For $\mathbf q$ an admissible weight sequence, the three following statements are equivalent: 
\begin{itemize}
\item $\mathbf{q}$ is sub-critical (in the sense given in Definition~\ref{def:critique}),
\item the inequality in the third equation of~\eqref{eq:fixpoint} is strict, 
\item its disk partition functions have the following asymptotic behavior: there exist constants $c,R>0$ such that: 
\begin{equation} \label{eq:subcritweight}
W_\mathbf q ^{(l)} \underset{l \to \infty}{\sim} c \, R^l \, l^{-3/2}.
\end{equation}
\end{itemize}
\end{prop}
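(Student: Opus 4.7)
My approach is to reduce the three statements to the analytic behaviour of the pair $(z^+, z^\diamond)$ defined by the fixed-point system \eqref{eq:fixpoint}, viewed as functions of a one-parameter deformation of the weight sequence. Concretely, I would introduce the family $\mathbf{q}^{(s)} = (s \, q_k)_{k\geq 1}$ for $s\in[0,1]$ and denote by $(z^+(s), z^\diamond(s))$ the solution of \eqref{eq:fixpoint} associated with $\mathbf{q}^{(s)}$. Using \eqref{eq:wdzpzd}, the pointed partition function of $\mathbf{q}^{(s)}$-Boltzmann maps equals $2z^+(s) + z^\diamond(s)^2 - 1$. Differentiating with respect to $s$ at $s=1$ gives (up to a factor of $s$) the weighted sum in \eqref{eq:defcrit}, so subcriticality is equivalent to the differentiability of $s \mapsto (z^+(s), z^\diamond(s))$ at $s=1$, which in turn is exactly the statement that $(z^+, z^\diamond)$ extends analytically beyond $s=1$.

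Next, I would apply the implicit function theorem to the system \eqref{eq:fixpoint} at $(z^+(\mathbf{q}), z^\diamond(\mathbf{q}))$. The crucial computation is to show that the Jacobian determinant of the map $(z_1, z_2) \mapsto (z_1 f^\bullet_\mathbf{q}(z_1, z_2) - z_1 + 1, f^\diamond_\mathbf{q}(z_1, z_2) - z_2)$, after using the two equalities in \eqref{eq:fixpoint} to simplify, factors as (a nonzero quantity) times $1 - (\partial_{z_2} + \sqrt{z_1}\,\partial_{z_1}) f^\diamond_\mathbf{q}(z^+, z^\diamond)$. The appearance of the weight $\sqrt{z_1}$ is not accidental: it comes from diagonalising the linearised system in the natural coordinates $(z_1, z_2) = (\sqrt{z^+}, z^\diamond)$ suggested by the formula $c_\pm(\mathbf{q}) = z^\diamond \pm 2\sqrt{z^+}$ of \eqref{eq:cpmz}. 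This identification yields the equivalence (i) $\Longleftrightarrow$ (ii): the strict inequality is exactly the non-degeneracy of the Jacobian, which by the implicit function theorem gives analytic continuation of $(z^+(s), z^\diamond(s))$ past $s=1$ and hence finiteness of the second moment.

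For the equivalence with (iii), I would analyse the singularity structure of the disk generating function. In the subcritical case, the analytic continuation of $(z^+(s), z^\diamond(s))$ past $s=1$, combined with $c_+(\mathbf q) = z^\diamond(\mathbf q) + 2\sqrt{z^+(\mathbf q)}$, implies that $W_\mathbf{q}(z)$ has a pure square-root singularity at $z = c_+(\mathbf{q})$: the local expansion is of the form $W_\mathbf{q}(z) = A + B\sqrt{1 - z/c_+(\mathbf{q})} + o\bigl(\sqrt{1 - z/c_+(\mathbf{q})}\bigr)$ with $B \neq 0$, where the constant $B$ is traced back to the nonvanishing Jacobian. The transfer theorem (Theorem~\ref{th:transfer}) then yields $W^{(l)}_\mathbf{q} \sim c \, c_+(\mathbf{q})^{-l} \, l^{-3/2}$, giving (iii) with $R = 1/c_+(\mathbf{q})$. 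Conversely, if (ii) fails then the Jacobian vanishes, forcing the singularity of $W_\mathbf{q}$ to be of a strictly weaker type (typically $3/2$), which rules out (iii). The main obstacle will be the algebraic computation identifying the Jacobian determinant with $1 - (\partial_{z_2} + \sqrt{z_1}\,\partial_{z_1}) f^\diamond_\mathbf{q}$; this requires bookkeeping of the derivatives of $f^\bullet$ and $f^\diamond$ together with careful use of the first two equations of \eqref{eq:fixpoint} to eliminate terms, and is the place where the specific combinatorial structure of the BDG bijection enters in a non-trivial way.
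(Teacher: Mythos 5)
The paper states this result as Proposition~4.3 of~\cite{BeCuMie} and gives no proof, so there is no in-paper argument to compare against; your proposal is therefore assessed on its own merits. The Jacobian factorisation you describe is correct: using the two generic BDG identities $\partial_{z_2}f^\bullet = \partial_{z_1}f^\diamond$ and $z_1\partial_{z_1}f^\bullet + f^\bullet = \partial_{z_2}f^\diamond$ (recorded in the proof of Lemma~\ref{lem:asymptfbfg}), the Jacobian of $(F_1,F_2) = (z_1 f^\bullet - z_1 + 1, f^\diamond - z_2)$ at a solution of \eqref{eq:fixpoint} becomes
\[
(\partial_{z_2} f^\diamond - 1)^2 - z_1 (\partial_{z_1} f^\diamond)^2
= \bigl(1 - \partial_{z_2} f^\diamond + \sqrt{z_1}\,\partial_{z_1} f^\diamond\bigr)\bigl(1 - (\partial_{z_2} + \sqrt{z_1}\,\partial_{z_1})f^\diamond\bigr),
\]
and the first factor is strictly positive by nonnegativity of the coefficients of $f^\diamond$ together with the third line of \eqref{eq:fixpoint}. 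So the link between (ii) and analytic continuation of the fixed point is sound.

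However, there is a genuine gap in your bridge to (i). Your deformation $\mathbf{q}^{(s)} = (s\,q_k)_k$ produces the weight $s^{F(\mathfrak m)}$ on a map $\mathfrak m$, where $F$ is the number of faces, so
\[
w_{\mathbf{q}^{(s)}}(\mathcal M^\bullet) = \sum_{\mathfrak m \in \mathcal M^\bullet} s^{F(\mathfrak m)} \prod_{f} q_{\deg(f)}, \qquad
\frac{d}{ds}\bigg|_{s=1} w_{\mathbf{q}^{(s)}}(\mathcal M^\bullet) = \sum_{\mathfrak m \in \mathcal M^\bullet} F(\mathfrak m)\prod_f q_{\deg(f)},
\]
which is $\sum_{\mathfrak m} |V(\mathfrak m)|\,F(\mathfrak m) \prod_f q_{\deg(f)}$, not the sum $\sum_{\mathfrak m \in \mathcal M^\bullet} |V(\mathfrak m)|^2 \prod_f q_{\deg(f)}$ appearing in \eqref{eq:defcrit}. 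Since for planar maps $F = E - V + 2$ and $E$ can scale very differently from $V$ when face degrees are heavy-tailed, these conditions are a priori inequivalent, and the asserted equality "up to a factor of $s$" is simply wrong. The correct deformation is precisely the one introduced in Section~\ref{sec:BDGgen} of the paper, $\mathbf q_g(k) = g^{(k-2)/2}q_k$: by Euler's formula, this produces the weight $g^{|V(\mathfrak m)|-2}$ on $\mathfrak m$, and differentiating $g\,w_{\mathbf{q}_g}(\mathcal M^\bullet)$ in $g$ yields $\sum_{\mathfrak m \in \mathcal M^\bullet}\bigl(|V(\mathfrak m)|-1\bigr)\prod_f q_{\deg(f)}$; together with admissibility this is finite if and only if \eqref{eq:defcrit} fails. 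With this substitution, the rest of your argument goes through because the Jacobian of the perturbed system \eqref{eq:fixpointg} at $g=1$ coincides with the Jacobian at $s=1$ of your system, so the implicit-function-theorem criterion is unchanged. A secondary weakness: the passage from "Jacobian vanishes" to "the disk function does not have a square-root singularity" is asserted ("typically $3/2$") but not argued; to close the (ii) $\Rightarrow$ (iii) converse you should show that degeneracy of the linearisation forces a singularity exponent incompatible with the $l^{-3/2}$ coefficient asymptotics required by Theorem~\ref{th:transfer}.
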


\bigskip

Finally, the functions $f^\bullet_{\mathbf q}$ and $f^\diamond_{\mathbf q}$ also give indirect access to the \emph{volume generating functions} of $\mathbf q$-Boltzmann planar maps. Indeed, for $0 < g \leq 1$ consider the weight sequence $\mathbf q_g$ defined by $\mathbf q_g(k) = g^{(k-2)/2} q_k$ for $k \geq 1$. Euler's formula ensures that
\[
g \, w_{\mathbf q_g} (\mathcal M^\bullet)  = \sum_{ \mathfrak m \in \mathcal M } \, |V( \mathfrak m)| \, g^{|V( \mathfrak m)| -1} \, \prod_{f \in F( \mathfrak m)} q_{\mathrm{deg}(f)}.
\]
Following~\cite{BeCuMie}, and by analogy with \eqref{eq:zpzdcombi}, we define
\begin{equation} \label{eq:zpzdcombig}
\begin{cases}
z^+ (\mathbf q,g) &= g \left( 1+ w_{\mathbf q_g} (\mathcal{M}^+) \right), \\
z^\diamond (\mathbf q,g) &= \sqrt{g \, w_{\mathbf q_g} (\mathcal{M}^0)},
\end{cases}
\end{equation}
so that we have: 
\begin{equation} \label{eq:Zg}
\sum_{ \mathfrak m \in \mathcal M } \, |V( \mathfrak m)| \, g^{|V( \mathfrak m)| -1} \, \prod_{f \in F( \mathfrak m)} q_{\mathrm{deg}(f)} 
= 2 (z^+(\mathbf q,g) - g) + (z^\diamond(\mathbf q,g))^2,
\end{equation}
and
\begin{equation} \label{eq:cpmqg}
c_\pm(\mathbf q_g) = \frac{z^\diamond(\mathbf q,g) \pm 2 \sqrt{z^+(\mathbf q,g)}}{\sqrt{g}}.
\end{equation}

\bigskip

From their definition, $z^+(\mathbf q,g)$ and $z^\diamond(\mathbf q,g)$ are easily seen to be increasing in $g$. Besides, by monotone convergence their limit as $g \to 1^-$ are given by $z^+(\mathbf q,1) = z^+(\mathbf q)$ and $z^\diamond(\mathbf q,1) = z^\diamond(\mathbf q)$.
On the other hand, we can easily check that for $g \in (0,1]$ and $x,y >0$ we have
\begin{equation}\label{eq:fqFixed}
f^\bullet_{\mathbf q_g} (x,y) = f^\bullet_{\mathbf q} (g \, x, \sqrt g  \, y) \quad \text{and} \quad f^\diamond_{\mathbf q_g} (x,y) = f^\diamond_{\mathbf q} (g \, x, \sqrt g  \, y) / \sqrt{g}.
\end{equation}
Therefore, by first applying~\eqref{eq:fixpoint} and~\eqref{eq:zpzdcombi} to $f^\bullet_{\mathbf q_g}$ and $f^\circ_{\mathbf q_g}$, then by replacing $w_{\mathbf q_g}(\mathcal{M}^+)$ and $w_{\mathbf q_g}(\mathcal{M}^0)$ by their expression in terms of $z^\diamond(\mathbf q,g)$ and of $z^+(\mathbf q,g)$, and by using~\eqref{eq:fqFixed}, we obtain that for $0 < g \leq 1$ the quantities $0 < z^+(\mathbf q,g) \leq z^+(\mathbf q)$ and $0 < z^\diamond(\mathbf q,g) \leq z^\diamond( \mathbf q)$ are the unique solutions of: 
\begin{equation} \label{eq:fixpointg}
\begin{cases}
f^\bullet_\mathbf q (z^+(\mathbf q,g),z^\diamond(\mathbf q,g)) &= 1 - \frac{g}{z^+(\mathbf q,g))},\\
f^\diamond_\mathbf q (z^+(\mathbf q,g),z^\diamond(\mathbf q,g)) &= z^\diamond(\mathbf q,g).
\end{cases}
\end{equation}

\subsubsection{Maps of the cylinder}

We now turn our attention to the generating function of Boltzmann maps of the cylinder, that is maps with two marked faces. More precisely, for $l_1, l_2 >0$, let $\mathcal M_{(l_1,l_2)}$ be the set of all finite maps with two boundary faces $f_1$ and $f_2$ of respective perimeter $l_1$ and $l_2$, and rooted at each boundary in such a way that $f_1$ and $f_2$ lie on the right side of the corresponding root. The partition function of $\mathbf q$-Boltzmann maps in $\mathcal M_{(l_1,l_2)}$ is defined by:
\begin{equation} \label{eq:cylinder}
W_{\mathbf q, 2}^{(l_1,l_2)} 
=
\sum_{ \mathfrak m \in \mathcal M_{(l_1,l_2)}} 
\prod_{f \in F( \mathfrak m) \setminus \{ f_1,f_2\}} q_{\mathrm{deg}(f)}
\end{equation}
and their generating series by
\[
W_{\mathbf q, 2}(z_1,z_2) = \sum_{l_1,l_2 \geq 1} \frac{W_{\mathbf q,2}^{(l_1,l_2)}}{z_1^{l_1+1} z_2^{l_2+1}}.
\]
The expression of $W_{\mathbf q , 2}$ is well known and is given by:
\begin{prop} \label{prop:gencylinders} The generating series $W_{\mathbf q , 2} (z_1,z_2)$  of $\mathbf q$-Boltzmann planar maps of the cylinder admits the following expression: 
\begin{multline}\label{eq:cylindre}
W_{\mathbf q , 2} (z_1,z_2) \\
= \frac{1}{2(z_1-z_2)^2} \left(W_{\mathbf q, \bullet} (z_1) W_{\mathbf q ,\bullet} (z_2) \left( z_1z_2 -\frac{c_+(\mathbf q) + c_-(\mathbf q)}{2} (z_1 + z_2 ) + c_+(\mathbf q) c_- (\mathbf q)\right) -1\right).
\end{multline}
\end{prop}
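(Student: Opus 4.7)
My plan is to prove the formula by first verifying the analytic properties of the right-hand side and then identifying its Laurent coefficients at infinity with $W_{\mathbf q, 2}^{(l_1, l_2)}$ via a pointed version of the BDG bijection.

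First I would show that the right-hand side, call it $\Phi(z_1, z_2)$, is analytic on $(\mathbb C \setminus [c_-(\mathbf q), c_+(\mathbf q)])^2$. The only potential issue is the apparent double pole on the diagonal $\{z_1 = z_2\}$. From \eqref{eq:pointedDisk} one has $W_{\mathbf q, \bullet}(z)^2 = 1/((z - c_+)(z - c_-))$, while by direct expansion $P(z, z) = z^2 - (c_+ + c_-) z + c_+ c_- = (z - c_+)(z - c_-)$. A short computation then shows that the numerator $N(z_1, z_2) := W_{\mathbf q, \bullet}(z_1) W_{\mathbf q, \bullet}(z_2) P(z_1, z_2) - 1$ satisfies $N(z, z) = 0$ and $\partial_{z_1} N(z_1, z_2) \big|_{z_1 = z_2} = 0$, so the apparent double pole is removable. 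Combined with the behaviour $W_{\mathbf q, \bullet}(z) \sim 1/z$ at infinity and $P(z_1, z_2) \sim z_1 z_2$, this shows that $\Phi$ admits a symmetric double Laurent expansion $\sum_{l_1, l_2 \geq 1} A_{l_1, l_2} \, z_1^{-l_1 - 1} z_2^{-l_2 - 1}$ at infinity.

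Next I would identify $A_{l_1, l_2}$ with $W_{\mathbf q, 2}^{(l_1, l_2)}$. The cleanest route, consistent with the toolset already in use in the paper, is to introduce first the \emph{pointed} cylinder partition function, obtained by marking an additional interior vertex on each cylinder map, and to encode it through the BDG bijection. A marked inner vertex naturally splits the cylinder into two ``labelled half-cylinders'', each one a labelled tree joining the marked vertex to one of the boundaries via a distinguished chain. The pointed cylinder generating function therefore factorises in terms of the resolvents of the fixed-point system \eqref{eq:fixpoint}, in much the same way as the pointed disk function $W_{\mathbf q,\bullet}$ does for the one-boundary case. Using \eqref{eq:pointedDisk} and \eqref{eq:cpmz} to translate between $z^+(\mathbf q), z^\diamond(\mathbf q)$ and $c_\pm(\mathbf q)$, the resulting expression for the pointed cylinder function simplifies to $\partial_{z_1} \partial_{z_2}$ of an explicit function of $(z_1, z_2, c_\pm(\mathbf q))$, and integrating (using the Euler-type identity relating pointed and unpointed sums over vertices) recovers the claimed formula.

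The main obstacle will be the BDG encoding for \emph{two}-boundary maps with an inner marked vertex, in a form that cleanly exposes the product $W_{\mathbf q,\bullet}(z_1) W_{\mathbf q,\bullet}(z_2)$ and the Bergman-type kernel $1/(z_1 - z_2)^2$. In particular, the two boundary chains have to be tracked independently (rather than as the single chain arising for the disk), and the integration step that trades the marked vertex for a derivative is subtle: the prefactor $\tfrac{1}{2}$ and the ``$-1$'' in the numerator arise as integration constants, which have to be pinned down by matching against a small case such as $(l_1, l_2) = (1, 1)$.
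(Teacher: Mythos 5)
Your approach is genuinely different from the paper's. The paper gives two derivations in Appendix~\ref{appendix:cylinder}: one that verifies the formula against Eynard's rational parametrization in Zhukovsky's variables (substituting $W_{\mathbf q,\bullet}(z(x)) = \sqrt{z^+(\mathbf q)}(x - 1/x)$ and checking against Theorem~3.2.1 of \cite{EynardBook}), and a combinatorial one via Bouttier--Guitter's slice decomposition, which cuts the cylinder along geodesics and yields the factorization $W_{\mathbf q,2}(z_1,z_2) = \sum_{h\geq 1} h\, M_h(z_1) M_{-h}(z_2)$ in terms of weighted Motzkin-path generating series; the closed form then follows by summing the geometric series. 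You instead propose an analyticity check plus a pointed-BDG encoding of the cylinder.

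Your first step is sound: the computation that $N(z,z) = 0$ and $\partial_{z_1} N|_{z_1=z_2} = 0$ (using $W_{\mathbf q,\bullet}(z)^2 = 1/((z-c_+)(z-c_-))$ and $P(z,z) = (z-c_+)(z-c_-)$) does establish that the apparent double pole on the diagonal is removable, and the decay at infinity gives a Laurent expansion of the correct shape. The gap is in the second step, and it is substantial. There is no off-the-shelf ``pointed BDG bijection for two-boundary maps'' of the kind you describe; the claim that a marked inner vertex splits the cylinder into ``two labelled half-cylinders'' whose generating series are the resolvents of the fixed-point system \eqref{eq:fixpoint} is an assertion rather than a bijection, and in practice establishing it amounts to reinventing the slice decomposition (the ``chains joining the marked vertex to the boundaries'' are essentially the geodesic slices encoded by $M_h$). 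Moreover you acknowledge that the prefactor $\tfrac12$ and the constant $-1$ would be left as undetermined integration constants to be fixed by checking a small case, which by itself does not show that the proposed ansatz is the unique function with the right Laurent coefficients unless you also argue uniqueness (e.g.\ via the discontinuity of $\Phi$ across the cut, or via a generating-function identity with no free constants). As written, the combinatorial heart of the proof is missing. The slice-decomposition route the paper follows is the standard way to make precisely this two-boundary encoding rigorous, and if you want to pursue the pointing idea you should at minimum explain how the two boundary slices are separated, why each contributes a factor $M_{\pm h}$, and where the extra factor $h$ (reflecting the choice of reference geodesic) comes from.
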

This formula was originally obtained in the physics literature via some connections with matrix integrals, see~\cite{AJM90}. It also appears in \cite[Theorem 3.3]{borot2018nesting} and in the recent book by Eynard~\cite[Theorem~3.2.1]{EynardBook}, but is given via a rational parametrization. We give in Appendix~\ref{appendix:cylinder}, the connection between the expression given in~\eqref{eq:cylindre} and Eynard's formula. For the sake of completeness, we also give a fully combinatorial derivation of this proposition based on the theory of slice decomposition introduced by Bouttier and Guitter~\cite{BouttierGuitterIrreducible,BouttierHDR}.

\subsection{The root spin cluster is a \texorpdfstring{$\mathbf q(\nu,t)$}{q(nu,t)}-Boltzmann map}

The main point of this section is to prove that the root spin cluster of an Ising-weighted triangulation is a Boltzmann map, whose weight sequence is the sequence $\mathbf{q}(\nu,t)$ defined in the introduction. As a byproduct, we will see that this weight sequence is always admissible when $\nu > 0$ and $0 < t \leq t_\nu$. We will then study the singular behavior in $t$ and $y$ of these weights.

\subsubsection{Decomposition of finite triangulations into the root spin cluster and islands}\label{sub:gasket}

Recall that $\mathcal T^\ps$ is the set of Ising-weighted triangulations with a positive monochromatic root edge. For $\mathfrak t \in \mathcal T^\ps$, we also recall that its root spin cluster -- denoted by $\mathfrak C (\mathfrak t)$ -- is the submap of $\mathfrak t$ spanned by all the vertices with spin $\ps$ connected to the root by a sequence of monochromatic edges, with the same root edge as $\mathfrak t$. The root spin cluster of a triangulation of $\mathcal T^\ps$ can thus be seen as a finite rooted non atomic map without spins. 
\bigskip

To characterize the probability distribution of the root spin cluster of a triangulation with spins $\mathbf{T}^\nu$ sampled from $\mathbb{P}^\nu$, we decompose $\mathbf{T}^\nu$ into its root spin cluster and some submaps filling its faces. As mentioned in the introduction, such decompositions are now classical and appear in the literature under several aliases: gasket decomposition in \cite{BBD,BBGa,BBGc,BBGb,borot2018nesting} or reef-island decomposition in \cite{BeCuMie}. We start by introducing the weight sequence:

\begin{defi}\label{def:qk}
For every $\nu > 0$ and every positive integer $k$, we define for $| t | \leq t_\nu$
\begin{equation}\label{eq:qk}
q_k (\nu, t) = 
\left( \nu \, t \right)^{k/2} \mathbf{1}_{\{k=3\}}
+ \left( \nu \, t^{3}  \right)^{k/2} \cdot \sum_{l \geq 0} \binom{k +l-1}{k-1} \, t^{l} \, Q_l^+ (\nu,t),
\end{equation}
and set $\mathbf q(\nu,t) = \left( q_k (\nu, t) \right)_{k \geq 1}$. 
\end{defi}
Notice that the generating series of this weight sequence is the series $F( \nu , t ,z )$ introduced in~\eqref{eq:genqk} in the introduction. Indeed, we have: 
\begin{align*}
\sum_{k \geq 1} q_k (\nu,t)z^k
& = (\nu t)^{3/2} z^3 +
\sum_{l \geq 0}
t^{l} \, Q_l^+ (\nu,t) \,
\sum_{k \geq 1} \binom{k +l-1}{k-1} \left( \nu \, t^{3} \right)^{k/2} z^k, \notag\\
&= (\nu t)^{3/2} z^3 +
\sqrt{\nu t^3} z \,
\sum_{l \geq 0}
t^{l} \, Q_l^+ (\nu,t) \cdot (1 - \sqrt{\nu t^3} z)^{-(l+1)}, \notag\\
&= (\nu t)^{3/2} z^3 +
\frac{\sqrt{\nu t^3} z}{1- \sqrt{\nu t^3} z} \,
Q^+ \left( \nu,t , \frac{t}{1-\sqrt{\nu t^3}z} \right ), \notag\\
&= F(\nu ,t ,z).
\end{align*}

The next statement relates the weight sequence $\mathbf{q}(\nu,t)$ with the partition function of the root spin cluster: 
\begin{prop} \label{prop:q_k}
Fix $\nu >0$ and $t \in (0,t_\nu]$.
For any $\mathfrak m \in \mathcal M$, we have:
\[
 \sum_{(\mathfrak t,\sigma) \in \mathcal T^\ps \, : \, \mathfrak C (\mathfrak t) = \mathfrak m}
t^{|\mathfrak t|} \nu^{m(\mathfrak t,\sigma)}
= \prod_{f \in F(\mathfrak m)} q_{\mathrm{deg}(f)}(\nu,t).
\]
As a consequence, we have the following identity:
\[
\mathcal Z ^\ps (\nu , t) = \sum_{\mathfrak m \in \mathcal M }
\prod_{f \in F(\mathfrak m)} q_{\mathrm{deg}(f)}(\nu,t) < \infty, 
\]
where $\mathcal Z^{\ps}$ is defined in \eqref{eq:condplus}. In particular, the weight sequence $\mathbf q (\nu ,t )$ is admissible.
\end{prop}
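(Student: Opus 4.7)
The plan is a weight-preserving gasket bijection. Fix a cluster $\mathfrak m \in \mathcal M$; I will show that a triangulation $(\mathfrak t,\sigma) \in \mathcal T^\ps$ with $\mathfrak C(\mathfrak t) = \mathfrak m$ is uniquely encoded by a choice of \emph{filling} for each face of $\mathfrak m$, and that the total weight factorizes multiplicatively over these fillings. The key structural observation is that, inside each face $f$ of $\mathfrak m$, every interior vertex of $\mathfrak t$ adjacent to $\partial f$ must carry spin $\ns$: otherwise such a vertex would be joined to $\partial f$ by a monochromatic edge and hence belong to the cluster, contradicting maximality. Peeling off the layer of triangles of $\mathfrak t$ incident to $\partial f$ therefore leaves inside $f$ a triangulation with $\ns$-monochromatic (possibly non-simple) boundary of some length $l \geq 0$, whose generating series equals $Q_l^+(\nu,t)$ by the obvious spin-flip symmetry.

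For a face $f$ of degree $k$, the filling is thus either the empty filling (possible only when $k=3$, corresponding to $f$ being a triangular face of $\mathfrak t$) or a pair $(\mathfrak c_f, \phi_f)$ consisting of a core triangulation $\mathfrak c_f$ of boundary length $l$ (counted by $Q_l^+$) together with an attachment $\phi_f$ of its boundary to $\partial f$ through an annular strip of $k+l$ triangles. An elementary edge count shows this annulus contains exactly $k+l$ internal frustrated edges (split into one ``apex-on-core'' triangle per edge of $\partial f$ and one ``apex-on-$\partial f$'' triangle per edge of the core boundary), and the rooted attachments $\phi_f$ are parametrized by the weak compositions $(l_1,\ldots,l_k)$ of $l$ recording how many core vertices are attached to each vertex of $\partial f$, contributing a factor $\binom{k+l-1}{k-1}$. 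The weight of the filling is then computed by distributing each cluster edge evenly between the two faces it bounds, giving $(\nu t)^{k/2}$ for the $k$ boundary edges of $\partial f$, multiplied by $t^{k+l}$ for the frustrated annular edges and $Q_l^+(\nu,t)$ for the core. Summing over $l \geq 0$ recovers exactly the defining formula \eqref{eq:qk} for $q_k(\nu,t)$.

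Multiplying the filling weights over faces of $\mathfrak m$ yields the first displayed identity, and summing over $\mathfrak m \in \mathcal M$ gives $\mathcal Z^\ps(\nu,t) = w_{\mathbf q(\nu,t)}(\mathcal M)$. Since the left-hand side is finite on $(0,t_\nu]$ by Proposition~\ref{prop:asymptoZmono}, admissibility of $\mathbf q(\nu,t)$ follows at once. The main obstacle I anticipate is the combinatorial bookkeeping in the second step: one must carefully justify that rooted annular triangulations interpolating between boundaries of length $k$ and $l$ are indeed in bijection with weak compositions of $l$ into $k$ parts, and verify that the half-weight convention for shared cluster edges is consistent with the product over faces so that no edge of $\mathfrak t$ is over- or under-counted. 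Once these points are settled, the algebraic match with \eqref{eq:qk} is essentially the expansion $F(\nu,t,z)=\sum_k q_k(\nu,t)\, z^k$ already verified in the discussion preceding the proposition.
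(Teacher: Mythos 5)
Your proof is correct and follows essentially the same gasket/island/necklace decomposition as the paper: decompose $\mathfrak t$ into the cluster $\mathfrak m$, the island triangulations inside each face (with $\ns$-monochromatic boundary, identified with $Q_l^+$ by spin-flip), and the annular necklace of $k+l$ frustrated triangles interpolating between $\partial f$ and the island; your weak-composition argument for the $\binom{k+l-1}{k-1}$ count is equivalent to the paper's direct enumeration of orderings of the two types of necklace triangles. The only (harmless) imprecision is the phrase ``peeling off the layer of triangles incident to $\partial f$'': this by itself captures only the $k$ type-A triangles, whereas the full annulus also includes the $l$ type-B triangles whose edge lies on the island boundary and whose apex lies on $\partial f$; the correct description is that one removes all triangles whose three vertices lie on $\partial f \cup \partial(\text{island})$, or equivalently all triangles incident to a frustrated cut-edge.
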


Before giving the proof of this proposition, we state the following characterization of the root spin cluster as a $\mathbf{q}(\nu,t)$-Boltzmann map, which is at the heart of our analysis and is one of its direct consequences:

\begin{prop} \label{prop:bolt}
Let $\mathfrak m \in \mathcal{M}$ and fix $\nu >0$.
\begin{itemize}
\item Unconditioned volume case: The law of the root spin cluster $\mathfrak C(\mathbf T^\nu)$ is given by
\begin{equation}  \label{eq:bolt}
\mathbb P^\nu \left( \mathfrak C(\mathbf T^\nu) = \mathfrak m \right) = \frac{\prod_{f \in F(\mathfrak m)} q_{\mathrm{deg}(f)} (\nu ,t_\nu)}{\mathcal Z ^\ps ( \nu ,t_\nu)}.
\end{equation}
In other words, \emph{$\mathfrak{C}(\mathbf{T}^\nu)$ is distributed as a $\mathbf{q}(\nu,t_\nu)$-Boltzmann map}.

\item Conditioned volume case: The law of the root spin cluster $\mathfrak C(\mathbf T^\nu_n)$ is given by
\begin{equation} \label{eq:boltn}
\mathbb P_n^\nu \left( \mathfrak C(\mathbf T_n^\nu) = \mathfrak m \right) = \frac{[t^{3n}] \prod_{f \in F(\mathfrak m)} q_{\mathrm{deg}(f)} (\nu ,t)}{[t^{3n}] \mathcal Z ^\ps ( \nu ,t)}.
\end{equation}
\end{itemize}
\end{prop}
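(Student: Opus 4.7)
The plan is to derive Proposition~\ref{prop:bolt} as an immediate corollary of Proposition~\ref{prop:q_k}, which already encodes all of the combinatorial content of the gasket decomposition. The strategy, in each of the two cases, is to partition the event $\{\mathfrak C(\mathbf T^\nu) = \mathfrak m\}$ according to the underlying Ising-weighted triangulation $(\mathfrak t,\sigma) \in \mathcal T^\ps$ and then substitute the enumerative identity provided by Proposition~\ref{prop:q_k}.

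For the unconditioned case, I would start from the definition of $\mathbb P^\nu$ in \eqref{eq:Pnudef} and write
\[
\mathbb P^\nu\bigl(\mathfrak C(\mathbf T^\nu) = \mathfrak m\bigr) \;=\; \frac{1}{\mathcal Z^\ps(\nu,t_\nu)} \sum_{\substack{(\mathfrak t,\sigma) \in \mathcal T^\ps \\ \mathfrak C(\mathfrak t) = \mathfrak m}} t_\nu^{|\mathfrak t|}\, \nu^{m(\mathfrak t,\sigma)}.
\]
Applying Proposition~\ref{prop:q_k} specialised at $t = t_\nu$ rewrites the numerator as $\prod_{f \in F(\mathfrak m)} q_{\mathrm{deg}(f)}(\nu,t_\nu)$, which is exactly \eqref{eq:bolt}. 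The finiteness $\mathcal Z^\ps(\nu,t_\nu) < \infty$ required for the denominator to be well-defined is itself part of Proposition~\ref{prop:q_k}, and was also obtained independently via Corollary~\ref{cor:equivZ+}.

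For the conditioned case, exactly the same partitioning gives
\[
\mathbb P_n^\nu\bigl(\mathfrak C(\mathbf T_n^\nu) = \mathfrak m\bigr) \;=\; \frac{1}{[t^{3n}]\mathcal Z^\ps(\nu,t)} \sum_{\substack{(\mathfrak t,\sigma)\in\mathcal T^\ps \\ \mathfrak C(\mathfrak t) = \mathfrak m,\; |\mathfrak t|=3n}} \nu^{m(\mathfrak t,\sigma)},
\]
and here I would read Proposition~\ref{prop:q_k} as an identity of formal power series in $t$ rather than a numerical one, and extract the coefficient of $t^{3n}$ on both sides. This immediately produces \eqref{eq:boltn}.

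There is no real obstacle at this stage: all of the combinatorial substance lies upstream, in Proposition~\ref{prop:q_k}. That proof will require exhibiting the gasket bijection explicitly, identifying each face of $\mathfrak m$ of degree $k \geq 1$ with a triangulation of a (non-simple) $k$-gon with positive monochromatic boundary together with the data needed to reconstruct the frustrated edges bordering it, and treating separately the case of triangular faces which are either filled by a single triangle or by a nontrivial triangulation (this is the origin of the isolated term $(\nu t)^{3/2} z^3$ in $F(\nu,t,z)$). Once that identity is in hand, Proposition~\ref{prop:bolt} reduces to one line of bookkeeping in each case.
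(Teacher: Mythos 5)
Your proposal is correct and takes essentially the same route as the paper: the paper states Proposition~\ref{prop:bolt} explicitly as a ``direct consequence'' of Proposition~\ref{prop:q_k} and does not write out the bookkeeping, while you have simply spelled out that bookkeeping (partitioning on $(\mathfrak t,\sigma)$ with $\mathfrak C(\mathfrak t)=\mathfrak m$, substituting the identity~\eqref{eq:weightcluster}, and for the fixed-volume case reading that identity as one of formal power series in $t$ and extracting $[t^{3n}]$). Nothing is missing.
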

\begin{figure}[t!]
\begin{center}
 \subfloat[An element of $\mathcal{T}^\ps,$]{
      \includegraphics[width=0.45\textwidth,page=1]{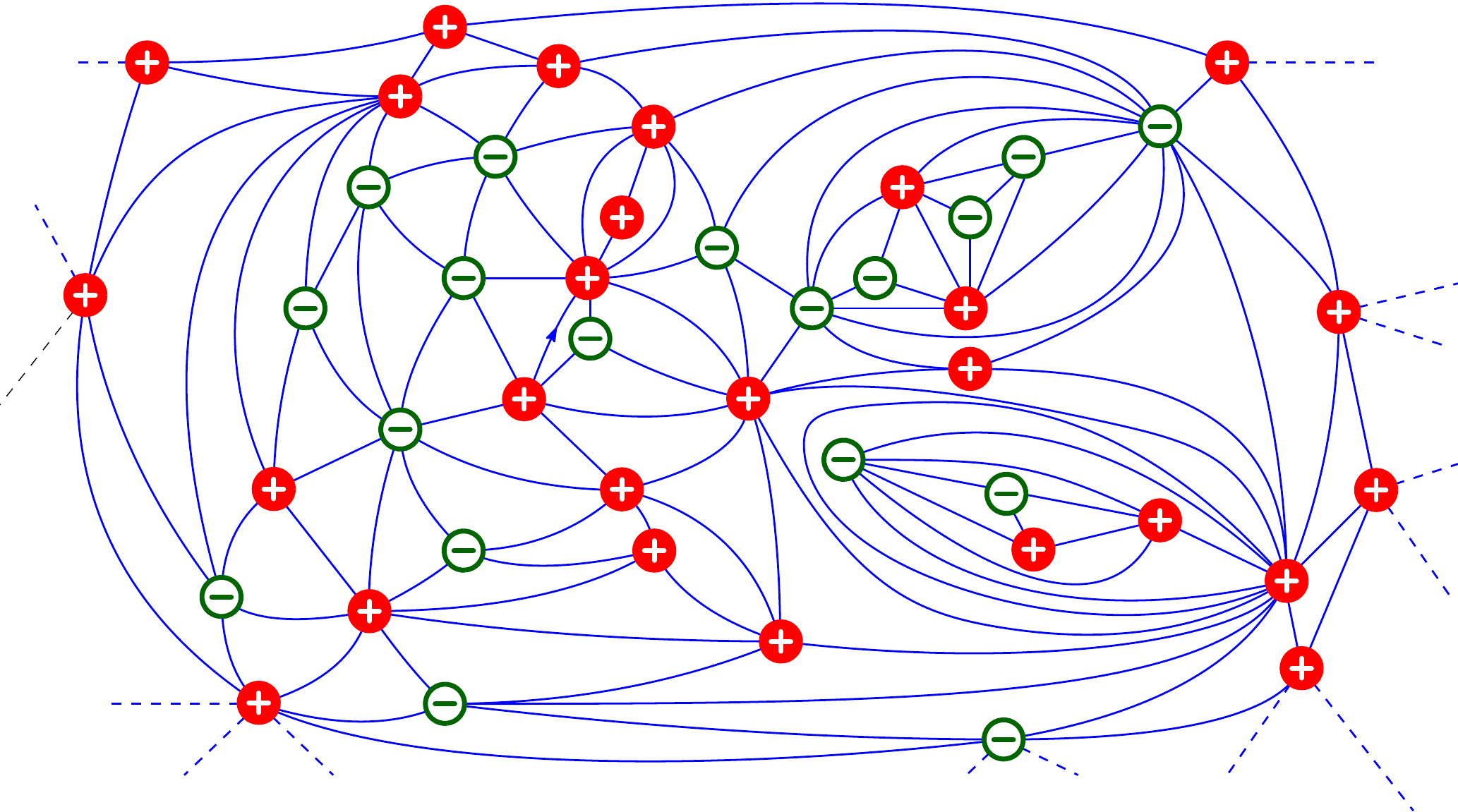}
      \label{subfig:config}} \qquad
    \subfloat[and its root spin cluster. Letters $A$ and $B$ refer to Figures~\ref{subfig:decA} and~\ref{subfig:decB}.]{
      \includegraphics[width=0.45\textwidth,page=2]{Clusters.pdf}
      \label{subfig:cluster}}\\

     \subfloat[Island (left) and necklace (right) corresponding to the face $A$ in \ref{subfig:cluster}.]{
      \includegraphics[width=0.45\textwidth,page=5]{Clusters.pdf}
      \label{subfig:decA}}\qquad  
    \subfloat[Island (left) and necklace (right) corresponding to the face $B$ in \ref{subfig:cluster}.]{
      \includegraphics[width=0.45\textwidth,page=6]{Clusters.pdf}
      \label{subfig:decB}
                     }

    \caption{Example of the root spin cluster of an Ising configuration, see \ref{subfig:cluster}, and of the necklace/island decomposition, see \ref{subfig:decA} and \ref{subfig:decB}}
    \label{fig:ExampleCluster}
\end{center}
\end{figure}

\begin{proof}[Proof of Proposition~\ref{prop:q_k}]
Following \cite{BBGa} and \cite{BeCuMie}, for any $\mathfrak{t}\in \mathcal{T}^\ps$, we consider the following decomposition of $\mathfrak t$. Let $C$ be the cut-set of $\mathfrak{C}(\mathfrak t)$, that is the set of edges of $\mathfrak t$ that have one endpoint in $\mathfrak{C}(\mathfrak t)$ and the other endpoint not in $\mathfrak{C}(\mathfrak t)$. It follows directly from the definition of $\mathfrak{C}(\mathfrak t)$, that every edge in $C$ is frustrated (\emph{i.e.} not monochromatic). Next, the connected components of $\mathfrak t \backslash C$ are $\mathfrak{C}(\mathfrak t)$ and a collection of maps (eventually atomic) -- called \emph{islands} -- with non simple boundaries, and such that their boundary vertices have all spin $\ns$, see Figure~\ref{fig:ExampleCluster}. Furthermore, each face of $\mathfrak C (\mathfrak t)$ that is not of degree $3$ contains exactly one of these islands. Faces of $\mathfrak C (\mathfrak t)$ with degree $3$ can contain either an island or nothing if it is also a face of the original triangulation $\mathfrak t$.

\bigskip

Reciprocally, suppose that we are given a cluster $\mathfrak C(\mathfrak t)$ and a collection of maps with spins, indexed by the faces of $\mathfrak C(\mathfrak t)$ (except possibly for some faces of degree 3). Assume that these maps have non simple boundaries (eventually atomic) with monochromatic $\ns$ boundary condition. 

The only missing information to recover the full triangulation is the edges between vertices of $\mathfrak C(\mathfrak t)$ and the maps inside each of its faces. These parts are called reefs in \cite{BeCuMie} or necklaces in \cite{BBGa}. The part between the boundary of a face of $\mathfrak C(\mathfrak t)$ of degree $k$ and the corresponding island with boundary length $l$ is therefore a map with a root face of degree $k$, a marked inner face of degree $l$ such that these two faces do not share vertices. Moreover, vertices in this map are only incident either to the root face or to the marked inner face. 
 In addition, every other face of this map is a triangle. Thus, there are $k+l$ such triangles, among them $k$ share an edge with the root face and their third vertex with the marked face (if it is non empty), and $l$ share a edge with the marked face and their third vertex with the root face. There are $\binom{k+l-1}{k-1}$ such necklaces (this is the number of different orderings of the two types of triangles). See Figures~\ref{fig:ExampleCluster}\subref{subfig:decA} and \ref{fig:ExampleCluster}\subref{subfig:decB} for an illustration.

Summing over every possible island and reef, this decomposition yields the following identity for any fixed non atomic map $\mathfrak m$:
\[
\sum_{(\mathfrak t,\sigma) \in \mathcal T^\ps \, : \, \mathfrak C(\mathfrak t) = \mathfrak m}
t^{|\mathfrak t|} \nu^{m(\mathfrak t,\sigma)}
=
t^{|\mathfrak m|} \nu^{|\mathfrak m|}
\prod_{f \in F(\mathfrak m)} \left( \mathbf 1_{\mathrm{deg}(f)=3} + \sum_{l \geq 0} \binom{\mathrm{deg}(f) +l-1}{\mathrm{deg}(f)-1} \, t^{\mathrm{deg}(f) +l} \, Q_l^+ (\nu,t) \right)
\]
where the term $\mathbf 1_{k=3}$ takes into account the fact that a face of degree $3$ in $\mathfrak m$ could be a face of the original triangulation $\mathfrak t$. In view of the definition of $\mathbf q (\nu,t)$ given in Definition~\ref{def:qk}, this identity can immediately be rewritten as: 
\begin{align} \label{eq:weightcluster}
\sum_{(\mathfrak t,\sigma) \in \mathcal T^\ps \, : \, \mathfrak C(\mathfrak t) = \mathfrak m}
t^{|\mathfrak t|} \nu^{m(\mathfrak t,\sigma)}
=
\prod_{f \in F(\mathfrak m)} q_{\mathrm{deg}(f)} (\nu, t)
\end{align}
The last statement of the proposition follows from the definition of $\mathcal Z ^\ps (\nu,t)$ and the fact that $\mathcal Z^\ps(\nu,t_\nu)<\infty$ as stated in Proposition~\ref{prop:asymptoZmono}.
\end{proof}

\subsubsection{Disk generating functions of \texorpdfstring{$\mathbf{q} (\nu,t)$}{q(nu,t)}-Boltzmann maps}

As mentioned in Section~\ref{sub:partitionFunctions}, the unpointed disk function defined in~\eqref{eq:defDiskPartition} does not have a universal form. Fortunately, for $\mathbf q(\nu,t)$-Boltzmann maps it can be computed explicitly:

\begin{prop} \label{prop:Wnut}
Fix $\nu > 0$ and $t \in (0,t_\nu]$, the disk generating function associated to $\mathbf q(\nu,t)$ is given by
\[
W_{\mathbf q(\nu,t)}(z) = \frac{1}{z} Q^+ \left(\nu,t,\frac{t}{z \sqrt{\nu t^3}} \right).
\]
As a consequence, for every $\nu >0$, when $t = t_\nu$, the weight sequence $\mathbf q(\nu,t_\nu)$ is critical in the sense given in~\eqref{eq:defcrit}.
\end{prop}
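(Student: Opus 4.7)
The plan is to derive the formula for $W_{\mathbf{q}(\nu,t)}(z)$ by adapting the reef--island decomposition of Proposition~\ref{prop:q_k} to triangulations with a monochromatic boundary of length $l$, and then to deduce the criticality statement from the known tail asymptotics of $Q^+_l(\nu,t_\nu)$.

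First, I would revisit the gasket decomposition in the setting of $\mathcal{Q}^+_l$. For any $(\mathfrak{t},\sigma) \in \mathcal{Q}^+_l$, its monochromatic boundary of length $l$ is automatically contained in the root spin cluster $\mathfrak{C}(\mathfrak{t})$, which therefore defines an element of $\mathcal{M}_l$ whose root face coincides with the root face of $\mathfrak{t}$. All non-root faces of $\mathfrak{C}(\mathfrak{t})$ are then filled by islands and necklaces exactly as in the proof of Proposition~\ref{prop:q_k}, and their contribution factorizes as $\prod_{f \neq f_r} q_{\deg(f)}(\nu,t)$. The only adjustment is the treatment of the $l$ boundary edges of $\mathfrak{t}$: they are monochromatic $+$ edges weighted by $(\nu t)^l$, but since $q_k(\nu,t)$ already absorbs the half-contribution $(\nu t)^{k/2}$ of every side of a filled face, removing the $q_l$ factor corresponding to the unfilled root face leaves exactly the prefactor $(\nu t)^{l/2}$. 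This yields
\begin{equation*}
Q^+_l(\nu,t) = (\nu t)^{l/2} \, W^{(l)}_{\mathbf{q}(\nu,t)}, \qquad l \geq 0,
\end{equation*}
with the convention $Q^+_0 = W^{(0)}_{\mathbf{q}(\nu,t)} = 1$.

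Multiplying by $z^{-(l+1)}$ and summing over $l \geq 0$, and observing that $(\sqrt{\nu t})^{-1} = t/\sqrt{\nu t^3}$, gives
\begin{equation*}
W_{\mathbf{q}(\nu,t)}(z) = \frac{1}{z} \sum_{l \geq 0} Q^+_l(\nu,t) \left( \frac{1}{z \sqrt{\nu t}} \right)^{\!l} = \frac{1}{z} \, Q^+\!\left( \nu, t, \frac{t}{z \sqrt{\nu t^3}} \right),
\end{equation*}
which is the desired formula; the series converges for $|z|$ large enough by Proposition~\ref{prop:analyticQty}. For the criticality claim, specialize to $t = t_\nu$ and combine the identity $W^{(l)}_{\mathbf{q}(\nu,t_\nu)} = (\nu t_\nu)^{-l/2}\, Q^+_l(\nu,t_\nu)$ with Corollary~\ref{cor:Q} to obtain
\begin{equation*}
W^{(l)}_{\mathbf{q}(\nu,t_\nu)} \underset{l\to\infty}{\sim} \frac{\aleph^{Q^+}(\nu)}{\Gamma\bigl(1-\alphaa(\nu)\bigr)} \left( t_\nu y_\nu \sqrt{\nu t_\nu} \right)^{-l} l^{-\alphaa(\nu)}.
\end{equation*}
Since $\alphaa(\nu) \in \{5/3,\,7/3,\,5/2\}$ by \eqref{eq:alphanu}, one has $\alphaa(\nu) \neq 3/2$ for every $\nu > 0$. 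By the equivalence in Proposition~\ref{prop:criterionCriticality}, the admissible weight sequence $\mathbf{q}(\nu,t_\nu)$ cannot be sub-critical, hence is critical.

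The main obstacle — though a rather mild one — is the careful bookkeeping in the first step: because the definition of $q_k(\nu,t)$ already incorporates the boundary edge weight of each \emph{filled} face, one must precisely identify what is and is not absorbed when the root face is instead left open, in order to locate the correct prefactor $(\nu t)^{l/2}$. Once this identity is established, the remainder of the proof reduces to a variable change in the generating series and a direct application of the classification results for Boltzmann weight sequences recalled in Section~\ref{sec:Boltzmann}.
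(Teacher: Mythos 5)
Your proof is correct and follows essentially the same route as the paper's. Both arguments rerun the gasket decomposition of Proposition~\ref{prop:q_k} for a triangulation with monochromatic boundary of length $l$, identify $W^{(l)}_{\mathbf{q}(\nu,t)} = (\nu t)^{-l/2}\,Q^+_l(\nu,t)$ by the missing half-weight of the $l$ root-face edges, resum to obtain the formula for $W_{\mathbf q(\nu,t)}(z)$, and then invoke Corollary~\ref{cor:Q} together with the criticality criterion of Proposition~\ref{prop:criterionCriticality} (noting $\alphaa(\nu) \neq 3/2$) to rule out sub-criticality.
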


\begin{proof}
Consider a triangulation $\mathfrak{t}$ endowed with an Ising configuration and with a \emph{monochromatic} boundary of length $l$. When we apply the gasket decomposition presented in Section~\ref{sub:gasket} to $\mathfrak{t}$, we obtain a map of $\mathcal M_{l}$.
By following the same line of arguments as in Proposition~\ref{prop:q_k}, we see that: 
\begin{equation} \label{eq:Wl}
W^{(l)}_{\mathbf q(\nu,t)} = Q_l^+(\nu,t) \cdot (\nu t )^{-l/2},
\end{equation}
where the corrective term $(\nu t )^{-l/2}$ comes from the fact that the root face does not contribute to $W^{(l)}_{\mathbf q(\nu,t)}$. Therefore each edge incident to the root face contributes only a weight $\sqrt{\nu t}$ to $W^{(l)}$, whereas it contributes a weight $\nu t$ to $Q_l^+(\nu,t)$.

The generating series is then easy to compute:
\begin{align*}
W_{\mathbf q(\nu,t)}(z) &= \sum_{l \geq 0} Q_l^+(\nu,t) \cdot (\nu t )^{-l/2} z^{-(l+1)},\\
& = \sum_{l \geq 0} t^l Q_l^+(\nu,t) \cdot (\nu t^3 )^{-l/2} z^{-(l+1)},\\
& = \frac{1}{z} Q^+ \left(\nu,t,\frac{t}{z \sqrt{\nu t^3}} \right).
\end{align*}

To see that the weight sequence $\mathbf q (\nu,t_\nu)$ is critical, we apply the criticality criterion given in Proposition~\ref{prop:criterionCriticality}, where we rely on the asymptotic behavior of $Q^+_l(\nu,t_\nu)$, as $l$ grows to infinity, given in Corollary \ref{cor:Q}.
\end{proof}

Since the function $W_{\mathbf q(\nu,t)}$ always has an analytic continuation on the same domain as the pointed function $W_{\mathbf q(\nu,t),\bullet}$ defined in~\eqref{eq:defPointedDiskPartition}, this expression allows us to compute $c_+ (\nu,t) := c_+(\mathbf q(\nu,t))$ and $c_-(\nu,t) := c_-(\mathbf q(\nu,t))$ in terms of the singularities in y of $Q^+(t,ty)$ appearing in Proposition \ref{prop:asym_t_sign_y}. We obtain: 

\begin{coro}
The pointed disk function $W_{\mathbf q(\nu,t),\bullet}$ defined in~\eqref{eq:defPointedDiskPartition} admits the following expression: 
\begin{equation}\label{eq:pointedExplicit}
W_{\mathbf q(\nu,t),\bullet}(z) = \frac{1}{\sqrt{\Big( z- c_+(\nu,t)\Big) \Big(z- c_-(\nu,t) \Big)}},
\end{equation}
where
\begin{equation}\label{eq:cy+-}
c_+(\nu,t) = \frac{1}{\sqrt{\nu t^3} \, y_+(\nu,t)}\quad \text{ and }\quad 
c_-(\nu,t) = \frac{1}{\sqrt{\nu t^3} \, y_-(\nu,t)}. 
\end{equation}
\end{coro}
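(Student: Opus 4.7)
The corollary is a direct consequence of Proposition~\ref{prop:Wnut} combined with the universal formula \eqref{eq:pointedDisk} for the pointed disk function. The explicit shape of $W_{\mathbf q(\nu,t),\bullet}$ is automatic once the values $c_\pm(\nu,t)$ are identified, so the entire content of the statement is the identification
\[
c_+(\nu,t) = \frac{1}{\sqrt{\nu t^3}\, y_+(\nu,t)}, \qquad c_-(\nu,t) = \frac{1}{\sqrt{\nu t^3}\, y_-(\nu,t)}.
\]

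The plan is to exploit the fact recalled in Section~\ref{sec:Boltzmann} that the unpointed disk function $W_{\mathbf q}$ admits an analytic continuation on the same cut complement $\mathbb{C}\setminus[c_-(\mathbf q),c_+(\mathbf q)]$ as $W_{\mathbf q,\bullet}$, while being singular at the endpoints of the cut. Applied to $\mathbf q = \mathbf q(\nu,t)$ and combined with Proposition~\ref{prop:Wnut}, this shows that the set of real singularities of $z\mapsto W_{\mathbf q(\nu,t)}(z)$, which by admissibility consists of exactly the two points $c_-(\nu,t)$ and $c_+(\nu,t)$, must coincide with the image under $z\mapsto \frac{1}{z\sqrt{\nu t^3}}$ of the real singularities of $y\mapsto Q^+(\nu,t,ty)$. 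By Propositions~\ref{prop:analyticQty} and~\ref{prop:yineq}, the latter are exactly $y_-(\nu,t)<0$ and $y_+(\nu,t)>0$, and $Q^+(\nu,t,ty)$ is analytic on $\mathbb{C}\setminus\{(-\infty,y_-(\nu,t)]\cup[y_+(\nu,t),+\infty)\}$. The change of variable $y = \frac{1}{z\sqrt{\nu t^3}}$ sends these two half-lines to an interval $[a,b]$ of the real axis, with $a = 1/(\sqrt{\nu t^3}\,y_-(\nu,t))<0$ and $b = 1/(\sqrt{\nu t^3}\,y_+(\nu,t))>0$.

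Matching the two cuts forces
\[
c_+(\nu,t) = \frac{1}{\sqrt{\nu t^3}\,y_+(\nu,t)}, \qquad c_-(\nu,t) = \frac{1}{\sqrt{\nu t^3}\,y_-(\nu,t)},
\]
which in turn yields the claimed formula for $W_{\mathbf q(\nu,t),\bullet}$ via \eqref{eq:pointedDisk}. To complete the argument one needs to check the consistency condition $c_-(\mathbf q(\nu,t)) \in [-c_+(\mathbf q(\nu,t)),c_+(\mathbf q(\nu,t)))$, which is imposed by the general theory. This is where Proposition~\ref{prop:yineq} is essential: the inequality $y_-(\nu,t) < -y_+(\nu,t)$ translates, after taking reciprocals and multiplying by the positive quantity $1/\sqrt{\nu t^3}$, into $|c_-(\nu,t)| < c_+(\nu,t)$, which is exactly the required condition.

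The only potential obstacle is the verification that the cut $[c_-(\nu,t),c_+(\nu,t)]$ of $W_{\mathbf q(\nu,t),\bullet}$ and the cut obtained from $Q^+$ by inversion really coincide rather than being contained in one another. This is however automatic: $W_{\mathbf q(\nu,t)}$ must be analytic on $\mathbb{C}\setminus[c_-(\nu,t),c_+(\nu,t)]$ and singular at both endpoints (because these are the only two singularities allowed by the universal structure and both appear in the non-bipartite case, see Section~\ref{sub:partitionFunctions}), while the representation of Proposition~\ref{prop:Wnut} together with Proposition~\ref{prop:analyticQty} gives the exact set of real singularities on the other side. The two descriptions can only agree in the way displayed above.
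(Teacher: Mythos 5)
Your core argument is the same as the paper's: combine Proposition~\ref{prop:Wnut} with the universal form~\eqref{eq:pointedDisk}, use the fact that $W_{\mathbf q}$ and $W_{\mathbf q,\bullet}$ share the analyticity domain $\mathbb C\setminus[c_-(\mathbf q),c_+(\mathbf q)]$, and match the endpoints of the cut with the singularities of $y\mapsto Q^+(\nu,t,ty)$ under $y = 1/(z\sqrt{\nu t^3})$; since $y_+>0$ and $y_-<0$ this forces $c_+=1/(\sqrt{\nu t^3}y_+)$ and $c_-=1/(\sqrt{\nu t^3}y_-)$. That part is correct and is exactly the paper's proof.

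However, your closing paragraph introduces a circularity that you should be careful about. You invoke the \emph{strict} inequality $y_-(\nu,t)<-y_+(\nu,t)$ from Proposition~\ref{prop:yineq} as ``essential'' to verify the consistency condition $c_-\in[-c_+,c_+)$. But in the paper this strict inequality is \emph{not} proven independently: the proof of Proposition~\ref{prop:yineq} only establishes the non-strict $y_-\leq -y_+$ directly, and explicitly defers the strict case to Remark~\ref{rem:y+-}, which in turn relies on the very corollary you are proving (via the non-bipartite property $c_+>-c_-$ combined with~\eqref{eq:cy+-}). Fortunately, the damage is nil because the check is superfluous: admissibility of $\mathbf q(\nu,t)$ (Proposition~\ref{prop:q_k}) already guarantees $c_-\in[-c_+,c_+)$ by the general theory recalled in Section~\ref{sub:partitionFunctions}, so there is nothing to verify. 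You should simply delete that consistency paragraph and let the matching argument stand on its own; the arrow of implication runs from the corollary to the strict inequality, not the other way around.
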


\begin{rema}\label{rem:y+-}
Since the maps we consider are not bipartite, we have $c_+(\nu,t)>-c_-(\nu,t)$ (this is a general property of non-bipartite Boltzmann maps recalled in Section~\ref{sub:partitionFunctions} and which is a direct consequence of~\eqref{eq:cpmz}). The expression of $c_+(\nu,t)$ and $c_-(\nu,t)$ given above then imply that 
\begin{equation} \label{eq:ymcompareyp}
y_-(\nu,t) < - y_+(\nu,t),
\end{equation}
as was announced in Proposition \ref{prop:yineq}.
\end{rema}

The expressions for $c_+(\nu,t)$ and $c_-(\nu,t)$ in terms of $y_+(\nu,t)$ and $y_-(\nu,t)$ also give expressions for the solution $\left( z^+(\nu,t),z^\diamond(\nu,t) \right) := \left( z^+(\mathbf q(\nu,t)),z^\diamond(\mathbf q(\nu,t)) \right)$ of the system of equations \eqref{eq:fixpoint}:
\begin{align}
z^+ (\nu,t) &= \frac{1}{2\sqrt{\nu t^3}} \left( \frac{1}{y_+(\nu,t)} + \frac{1}{y_-(\nu,t)} \right), \label{eq:z+y+-}\\
z^\diamond ( \nu,t) &= \frac{1}{4 \, \nu t^3} \left( \frac{1}{y_+(\nu,t)} - \frac{1}{y_-(\nu,t)} \right)^2. \label{eq:zdy+-}
\end{align}

\subsubsection{Asymptotic properties of the weights \texorpdfstring{$q_k(\nu,t)$}{qk(nu,t)}}

We gather in this section some asymptotic properties of the weight sequence $\mathbf q (\nu,t)$. We start with the special case where $t = t_\nu$:

\begin{prop} \label{prop:asymptoq}
The weight sequence $(q_k(\nu,t_\nu))_{k \geq 1}$ has the following asymptotic behavior:
\[
q_k (\nu,t_\nu) \underset{k\to \infty}{\sim} \frac{(y_\nu -1)^{\alphaa(\nu)} \, \aleph^{Q^+} (\nu)}{\Gamma (1-\alphaa(\nu))} \, \left(\frac{y_\nu}{y_\nu-1} \, \sqrt{\nu t_\nu^{3} } \right)^k k^{-\alphaa(\nu)},
\]
where we recall that $y_\nu$ and $\alphaa(\nu)$ were first defined in Proposition~\ref{lemm:weightsclusters} and satisfy:
\begin{equation*}
\begin{cases}
y_\nu = 2 \text{ and }\alphaa(\nu) = 5/3& \text{for }\nu < \nu_c\\
y_{\nu_c} = 2 \text{ and }\alphaa(\nu_c) = 7/3& \text{for }\nu = \nu_c\\
y_\nu > 2 \text{ and }\alphaa(\nu) = 5/2& \text{for }\nu > \nu_c
\end{cases}
\end{equation*}

\end{prop}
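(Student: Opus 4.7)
The plan is to apply singularity analysis to the generating series $F(\nu,t_\nu,z) = \sum_{k \geq 1} q_k(\nu,t_\nu) z^k$ whose closed form is given by \eqref{eq:genqk}:
\[
F(\nu,t_\nu,z) = (\nu t_\nu)^{3/2} z^3 + \frac{\sqrt{\nu t_\nu^3}\, z}{1-\sqrt{\nu t_\nu^3}\, z}\, Q^+\!\left(\nu,t_\nu,\frac{t_\nu}{1-\sqrt{\nu t_\nu^3}\, z}\right).
\]
The first term is a polynomial hence irrelevant for the asymptotics, while the singular behavior of the second term is governed by the Möbius substitution $z \mapsto y(z) := 1/(1-\sqrt{\nu t_\nu^3}\, z)$, which sends the $z$-disk to a disk in the $y$-plane. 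Since by Proposition~\ref{lemm:weightsclusters} the function $y \mapsto Q^+(\nu,t_\nu,t_\nu y)$ has a unique dominant singularity at $y_\nu$, the candidate dominant singularity in $z$ is the solution of $y(z)=y_\nu$, namely
\[
z^\ast := \frac{y_\nu-1}{y_\nu \sqrt{\nu t_\nu^3}}, \qquad \text{so that} \qquad \frac{1}{z^\ast} = \frac{y_\nu \sqrt{\nu t_\nu^3}}{y_\nu-1},
\]
which is precisely the exponential growth rate claimed in the statement.

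The next step is to verify that $z^\ast$ is the unique dominant singularity on $|z| = z^\ast$. For $z=z^\ast e^{i\theta}$ with $\theta \neq 0$, we have $|1-\sqrt{\nu t_\nu^3}\, z| > 1/y_\nu$, hence $|y(z)| < y_\nu$, so $y(z)$ cannot hit $[y_\nu,+\infty)$ except at $\theta=0$. Moreover $y(z)$ is real only when $\theta \in \{0,\pi\}$, and at $\theta=\pi$ one checks $y(z) = 1/(2-1/y_\nu) \in (0,y_\nu)$, so $y(z)$ never hits the other branch cut $(-\infty,y_-(\nu,t_\nu)]$ either. Combined with Proposition~\ref{prop:analyticQty}, this establishes the uniqueness of $z^\ast$.

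The third step is to compute the singular expansion of $F(\nu,t_\nu,z)$ at $z^\ast$ by substituting the expansion of Proposition~\ref{lemm:weightsclusters} and using the elementary linearization
\[
1-\frac{y(z)}{y_\nu} = (y_\nu-1)\left(1-\frac{z}{z^\ast}\right) + O\!\left(\left(1-\frac{z}{z^\ast}\right)^2\right),
\]
together with the observation that the prefactor $\sqrt{\nu t_\nu^3}\, z/(1-\sqrt{\nu t_\nu^3}\, z)$ evaluated at $z=z^\ast$ equals exactly $y_\nu-1$. For $\nu<\nu_c$ this yields directly
\[
F(\nu,t_\nu,z) = \text{(analytic)} + \aleph^{Q^+}(\nu)\,(y_\nu-1)^{\alphaa(\nu)}\!\left(1-\frac{z}{z^\ast}\right)^{\alphaa(\nu)-1} + o\!\left(\left(1-\frac{z}{z^\ast}\right)^{\alphaa(\nu)-1}\right),
\]
and for $\nu\geq \nu_c$ the additional linear term $\aleph_1^{Q^+}(\nu)(1-y/y_\nu)$ is analytic at $z^\ast$ and therefore absorbed into the regular part without affecting the singular order (since $\alphaa(\nu)-1 \in \{2/3,\,4/3,\,3/2\}$ is not a nonnegative integer).

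The final step is a direct application of the transfer theorem (Theorem~\ref{th:transfer}), which gives
\[
q_k(\nu,t_\nu) = [z^k] F(\nu,t_\nu,z) \underset{k\to\infty}{\sim} \frac{\aleph^{Q^+}(\nu)\,(y_\nu-1)^{\alphaa(\nu)}}{\Gamma(1-\alphaa(\nu))}\,(z^\ast)^{-k} k^{-\alphaa(\nu)},
\]
matching the desired formula. The main technical delicacy is not the computation itself but rather the uniqueness of the dominant singularity on the circle $|z|=z^\ast$, which must be argued carefully because the Möbius substitution mixes the radial and angular behavior, so the naive bound from $Q^+$ alone is not enough; one really needs the geometric argument above showing that $y(z)$ stays in the analyticity domain of $Q^+(\nu,t_\nu,t_\nu\cdot)$ along this entire circle.
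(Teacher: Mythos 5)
Your proof is correct and follows essentially the same route as the paper's: both identify the dominant singularity $z^\ast = (y_\nu - 1)/(y_\nu\sqrt{\nu t_\nu^3})$ via the Möbius substitution $y(z) = 1/(1-\sqrt{\nu t_\nu^3}\,z)$, transport the singular expansion of $Q^+$ from Proposition~\ref{lemm:weightsclusters}, and conclude by the transfer theorem. Your geometric verification that $y(z)$ stays in the analyticity domain of $Q^+(\nu,t_\nu,t_\nu\,\cdot)$ on the whole circle $|z|=z^\ast$ except at $z^\ast$ itself is in fact spelled out more explicitly than in the paper, which only notes $0<z_\nu<z_-(\nu)$ and relies on the conformality of the Möbius map implicitly; this added care is welcome but does not change the substance of the argument.
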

\begin{proof}
From the expression of the generating series $F(\nu,t_\nu,z)$ of the weights $q_k(\nu,t_\nu)$, we can see that the asymptotic behavior of $q_k(\nu,t_\nu)$ as $k$ grows to infinity is driven (up to a change of variable) by the singular behavior (in y) of $Q^+(\nu,t_\nu,y)$. 

We established in Proposition~\ref{prop:analyticQty} that, $y\mapsto Q^+(\nu,t_\nu,ty)$ has two singularities, namely $y_\nu=y_+(\nu,t_\nu)$ and $y_-(\nu,t_\nu)$. The possible singularities $z_\nu$ and $z_-(\nu)$ of $z\mapsto F(\nu,t_\nu,z)$ are then given by:  
\begin{equation} \label{eq:znu}
\begin{cases}
\dfrac{1}{1 - \sqrt{\nu t_\nu^3} z_\nu} = y_\nu \, &\Leftrightarrow \, z_\nu = \dfrac{y_\nu - 1}{\sqrt{\nu t_\nu^3} \, y_\nu}\\
\dfrac{1}{1 - \sqrt{\nu t_\nu^3} z_-(\nu)} = y_-(\nu,t_\nu) \, &\Leftrightarrow \, z_-(\nu) = \dfrac{y_-(\nu,t_\nu) - 1}{\sqrt{\nu t_\nu^3} \, y_-(\nu,t_\nu)}.
\end{cases}
\end{equation}
Since for any $\nu>0$, $y_\nu >0$ and $y_-(\nu,t_\nu)<0$, we have $0<z_\nu<z_-(\nu)$, so that $z_\nu$ is the unique dominant singularity of $F(\nu,t_\nu,z)$.
In addition, from the singular expansion of $y \mapsto Q^+(\nu,t_\nu, t_\nu \, y)$ at $y_\nu$, we see that the dominant singular term in the expansion of $z \mapsto F(\nu,t_\nu ,z)$ at $z_\nu$ is of the form
\begin{align*}
(y_\nu -1)^{\alphaa(\nu)} \, \aleph^{Q^+} (\nu) \, \left(1 - \frac{z}{z_\nu} \right)^{\alphaa(\nu) -1},
\end{align*}
and the classical transfer theorem of Appendix~\ref{sec:algebraic} gives the statement.
\end{proof}

\bigskip

Next we establish the asymptotic behavior of the coefficients in $t$ of the weight $q_k(\nu,t)$ for fixed $k$:

\begin{prop}\label{prop:asymptQk}
For every $k\geq 1$, we have: 
\begin{equation}\label{eq:asymQk}
\frac{[t^{3n}]\left((\nu t^3)^{-k/2}q_{k} (\nu ,t)\right)}{[t^{3n}] \mathcal Z ^\ps ( \nu ,t)}
\underset{n \to \infty}{\rightarrow} \delta_k(\nu),
\end{equation}
where the generating series of the numbers $\delta_k(\nu)$ is given by
\[
\Delta(\nu,z) = \sum_k \delta_k(\nu) \, z^k = \frac{1}{ \beth^{\mathcal Z^\ps}(\nu) } \, \frac{ z}{1 -  z} \beth^{Q^+} \left( \nu, \frac{1}{1-  z}\right).
\]
Moreover, we recall that $\beth^{Q^+} (\nu,y)$, defined in Proposition \ref{prop:serQty}, seen as a series in $y$, has an explicit rational parametrization in terms of $V(\nu,U_\nu,y)$.
\end{prop}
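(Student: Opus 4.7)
The plan is to introduce the bivariate generating function
\begin{equation*}
G(\nu,t,z):=\sum_{k\geq 1}\bigl[(\nu t^3)^{-k/2}q_k(\nu,t)\bigr]\,z^k
\end{equation*}
and analyze its singular behavior in $t^3$ at $t_\nu^3$. From Definition~\ref{def:qk} one sees that $(\nu t^3)^{-k/2}q_k(\nu,t)=t^{-3}\mathbf{1}_{\{k=3\}}+g_k(\nu,t)$ with $g_k(\nu,t):=\sum_{l\geq 0}\binom{k+l-1}{k-1}\,t^l\,Q_l^+(\nu,t)$; the Laurent term contributes nothing to $[t^{3n}]$ for $n\geq 0$, so it may be dropped. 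Using $\sum_{k\geq 1}\binom{k+l-1}{l}z^k=z(1-z)^{-l-1}$, summation over $l$ gives the closed form
\begin{equation*}
G(\nu,t,z)=\frac{z}{1-z}\,Q^+\!\left(\nu,\,t,\,\frac{t}{1-z}\right),
\end{equation*}
and for $z$ in a sufficiently small complex disk around $0$ the point $y=1/(1-z)$ stays in a neighborhood of $1$, hence in $\{|y|<y_\nu\}$ since $y_\nu\geq 2$ by Proposition~\ref{prop:yineq}.

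For each such fixed $z$, applying Proposition~\ref{prop:serQty} to $Q^+(\nu,t,ty)$ with $y=1/(1-z)$ and multiplying by $z/(1-z)$ gives the singular expansion
\begin{equation*}
G(\nu,t,z)=G(\nu,t_\nu,z)+\tilde{\beth}_1(\nu,z)\Bigl(1-\tfrac{t^3}{t_\nu^3}\Bigr)+\tilde{\beth}(\nu,z)\Bigl(1-\tfrac{t^3}{t_\nu^3}\Bigr)^{\gamma(\nu)-1}+o\Bigl(\bigl(1-\tfrac{t^3}{t_\nu^3}\bigr)^{\gamma(\nu)-1}\Bigr),
\end{equation*}
with $\tilde{\beth}(\nu,z):=\tfrac{z}{1-z}\,\beth^{Q^+}\!\bigl(\nu,1/(1-z)\bigr)$. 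The transfer theorem (Theorem~\ref{th:transfer}) then yields, uniformly for $z$ on a sufficiently small circle $|z|=r$,
\begin{equation*}
[t^{3n}]\,G(\nu,t,z)\;\underset{n\to\infty}{=}\;\frac{\tilde{\beth}(\nu,z)}{\Gamma(1-\gamma(\nu))}\,t_\nu^{-3n}\,n^{-\gamma(\nu)}\,(1+o(1)).
\end{equation*}

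To conclude, we use Cauchy's formula
\begin{equation*}
[t^{3n}]\,g_k(\nu,t)=\frac{1}{2\pi i}\oint_{|z|=r}\frac{[t^{3n}]\,G(\nu,t,z)}{z^{k+1}}\,\d z,
\end{equation*}
pass to the limit under the integral thanks to the uniform asymptotic above, and obtain $[t^{3n}]g_k(\nu,t)\sim[z^k]\tilde{\beth}(\nu,z)\,\Gamma(1-\gamma(\nu))^{-1}\,t_\nu^{-3n}\,n^{-\gamma(\nu)}$. Dividing by the asymptotic of $[t^{3n}]\mathcal Z^\ps(\nu,t)$ given by Corollary~\ref{cor:equivZ+} produces the ratio $[z^k]\tilde{\beth}(\nu,z)/\beth^{\mathcal Z^\ps}(\nu)$, which is precisely $[z^k]\Delta(\nu,z)=\delta_k(\nu)$. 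The main obstacle is the uniformity of the transfer theorem in the auxiliary parameter $z$: pointwise it is exactly Proposition~\ref{prop:serQty}, and uniformity on $|z|=r$ is obtained from the observation that the coefficient functions of the singular expansion are rational in $V(\nu,U_\nu,y)$, which is analytic in $y$ in a complex neighborhood of $y=1$, so the $o$-remainder admits a uniform bound on the image of $|z|=r$ under $y=1/(1-z)$.
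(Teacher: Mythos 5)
Your closed form $G(\nu,t,z)=\frac{z}{1-z}\,Q^+\bigl(\nu,t,\tfrac{t}{1-z}\bigr)$ is exactly the paper's $\tilde F(\nu,t,z)$, and identifying $\Delta(\nu,z)$ with $\frac{z}{1-z}\,\beth^{Q^+}\!\bigl(\nu,\tfrac{1}{1-z}\bigr)/\beth^{\mathcal Z^\ps}(\nu)$ is the right target. However, the step where you apply Theorem~\ref{th:transfer} to $t^3\mapsto G(\nu,t,z)$ for fixed $z$ has a genuine gap: that theorem requires the series to have a \emph{unique} dominant singularity in $t^3$ on the circle of convergence, and Proposition~\ref{prop:serQty} supplies only the singular expansion at $t_\nu^3$ along the real axis, not unique dominance. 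The paper explicitly flags this after Proposition~\ref{prop:serQty}: for fixed $y$, it is not established that $t_\nu^3$ is the unique dominant singularity of $Q^+(\nu,t,ty)$, and the authors remark that they deliberately avoid proving it because their own argument for Proposition~\ref{prop:asymptQk} does not need it. Your appeal to "uniformity" also has a hole: the coefficient functions being rational in $V(\nu,U_\nu,y)$ and analytic in $z$ near $0$ bounds the \emph{coefficients}, not the $o$-remainder of the singular expansion, and a genuinely uniform transfer theorem requires uniform control of the Hankel (or slit-disk) contour estimates, not merely analyticity of the parameters.

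The paper's proof sidesteps both problems by going back to algebraicity at the level of individual coefficients. From the algebraic equation for $\tilde F$ derived via~\eqref{eq:eqAlgQplus}, it shows each $\tilde q_k(\nu,t)$ is an explicit rational expression whose denominator (a power of $\mathrm{Pol}_1$) can only vanish at $t^3=t_\nu^3$, so each $\tilde q_k$ is algebraic with a unique dominant singularity there; the transfer theorem then applies to each fixed $k$, with the exponent $\gamma(\nu)$ pinned down by squeezing between $[t^{3n}]tZ_1^+$ and $[t^{3(n-k)}]\mathcal Z^\ps$. The identification of the generating series of the $\delta_k$ is then done by writing $\beth^{\tilde q}_1$ and $\beth^{\tilde q}$ as limits as $t\to t_\nu^-$ of rescaled differences of $\tilde F$, valid as equalities of formal power series in $z$ on $|z|<1-1/y_\nu$; this route never requires a $z$-uniform error term. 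To repair your argument you would either have to prove the unique-dominant-singularity statement for $Q^+(\nu,t,ty)$ at fixed $y$ (which the paper says can be done by the same algebraicity technique used in its own proof, so you would not really be saving work), or establish a uniform transfer theorem in $z$, which needs genuinely more than you have written.
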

\begin{proof}
For $k\geq 1$, we set: 
\[
    \tilde q_k(\nu,t) = (\nu t^3)^{-k/2}\left(q_{k} (\nu ,t)-(\nu t)^{3/2}\mathbf{1}_{k=3}\right)
\]
It follows directly from the definition of the weights $q_k(\nu,t)$ that, for every $k$, the modified weight $\tilde q_k(\nu , t)$ is a series in $t^3$ with nonnegative coefficients and radius of convergence $t_\nu^3$. By Pringsheim's theorem, $\tilde q_k(\nu , t)$ is hence singular at $t_\nu^3$ and we will prove that this is its unique dominant singularity.

From Proposition~\ref{prop:q_k} and the expression of the generating series $F(\nu,t,z)$ given in~\eqref{eq:genqk}, we can write: 
\begin{equation}\label{eq:Ftilde}
\tilde F( \nu , t ,z ) = \sum_{k \geq 1} \tilde q_k(\nu,t) z^{k-1}
= 
\frac{1}{1-  z} \,
Q^+ \left(\nu, t , \frac{t}{1-z} \right).
\end{equation}
From this expression and the algebraic equation~\eqref{eq:eqAlgQplus} satisfied by $Q^+$, $\tilde F$ can be expressed as the solution of an algebraic equation involving $\nu$, $t^3$ and $tZ_1^+(\nu,t)$, see the Maple file~\cite{Maple} for details. Note that, by definition, the constant coefficient of $\tilde F$ (as a formal power series in $z$) is equal to $Q^+(\nu,t,t)$. The algebraic equation satisfied by $\tilde F$ takes the following form: There exists two explicit polynomials $\mathrm{Pol}_1$ and $\mathrm{Pol}_2$ such that:
\begin{align} 
\left(\tilde F( \nu , t ,z ) -  Q^+(\nu, t,t) \right) & \cdot \mathrm{Pol}_1 \left(\tilde F( \nu , t ,z ) , Q^+(\nu,t,t) , \nu ,t^3 , t Z_1^+(\nu,t) \right) \notag\\
& \qquad = 
z \cdot \mathrm{Pol}_2 \left(z, \tilde F( \nu , t ,z ) , \nu ,t^3 , t Z_1^+(\nu,t) ) \right). \label{eq:tildeFeq}
\end{align}
Furthermore, $\mathrm{Pol}_1 \left(Q^+(\nu, t,t) , Q^+(\nu, t,t) , \nu ,t^3 , t Z_1^+(\nu,t) \right)$ is the derivative with respect to $Q^+(\nu,t,t)$ of the algebraic equation satisfied by $Q^+(\nu,t,t)$ and can be $0$ only at the singularities (in $t^3$) of $Q^+(\nu,t,t)$, namely only for $t^3 = t_\nu^3$. This implies that the coefficients $\tilde q_k(\nu,t)$ can be iteratively calculated from \eqref{eq:tildeFeq}. In addition, each $\tilde q_k(\nu,t)$ has the following expression:
\[
\tilde q_k(\nu,t) = \frac{\mathrm{P}_k \left(Q^+(\nu,t,t) , \nu ,t^3 , t Z_1^+(\nu,t) , (\tilde q_l(\nu,t))_{l<k}\right)}{\left(\mathrm{Pol}_1 \left(Q^+(\nu,t,t) , Q^+(\nu,t,t) , \nu ,t^3 , t Z_1^+(\nu,t) \right) \right)^k}
\]
where $(\mathrm P_k)$ is a sequence of polynomials, which can be computed by induction. Since the denominator of this expression cannot be $0$ for $|t^3| \leq t_\nu^3$ and $t^3 \neq t_\nu^3$, this implies that for every $k$, $\tilde q_k(\nu,t)$ seen as a series in $t^3$ is algebraic and has a unique dominant singularity at $t^3_\nu$ (recall that we already established that $t^3_\nu$ is a singularity).

By the standard singular behavior of algebraic functions near their singularities (see for instance \cite[Theorem VII.7 page 198]{FS}), we know that $[t^{3n}] \tilde q_k(\nu,t)$ has an asymptotic behavior of the form $\mathrm{Cst}(k) \, t_\nu^{-3n} \, n^{r_k(\nu)}$, where $r_k(\nu)\in \mathbb{Q}$ for any $\nu$. We can easily squeeze $[t^{3n}] \tilde q_k(\nu,t)$ between two bounds of the same order to identify $r_k$. Indeed from the definition of the weights $q_k(\nu,t)$:
\[
[t^{3n}] t Z_1^+(\nu,t) \leq [t^{3n}] \tilde q_k(\nu,t) \leq [t^{3(n-k)}] \mathcal{Z}^{\ps}(\nu,t).
\]
This implies that for every $k$ we have $r_k(\nu) = -5/2$ for any $\nu \neq \nu_c$ and $r_k(\nu_c) = -7/3$, i.e. for any $k$, $r_k$ is equal to the function $\gamma$ defined in \eqref{eq:gamma} of Proposition~\ref{prop:asymptoZp+}.
This finishes to establish the asymptotic behavior of $[t^{3n}] \tilde q_k(\nu,t)$ given in \eqref{eq:asymQk}. It remains to identify the generating series of the numbers $\delta_k(\nu)$.

\bigskip

For every $k$, the asymptotic expansion of $\tilde q_k(\nu, t)$ is of the form:
\begin{equation}\label{eq:devtildeq}
\tilde q_k(\nu,t) = \tilde q_k(\nu,t_\nu) + \beth^{\tilde q_k}_1(\nu) \left( 1- (t/t_\nu)^3 \right) +  \beth^{\tilde q_k}(\nu) \left( 1- (t/t_\nu)^3 \right)^{\gamma(\nu)-1} +  o\left( 1- (t/t_\nu)^3 \right)^{\gamma(\nu)-1}.
\end{equation}
Recall from Proposition~\ref{prop:asymptoZmono} that we also have 
\[
\mathcal Z^\ps (\nu,t) = \mathcal Z^\ps (\nu,t_\nu) + \beth^{\mathcal Z^\ps}_1(\nu) \left( 1- (t/t_\nu)^3 \right) +  \beth^{\mathcal Z^\ps}(\nu) \left( 1- (t/t_\nu)^3 \right)^{\gamma(\nu)-1} +  o\left( 1- (t/t_\nu)^3 \right)^{\gamma(\nu)-1}.
\]
Therefore, for every $k \geq 1$:
\[
\delta_k (\nu) = \frac{\beth^{\tilde q_k}(\nu)}{\beth^{\mathcal Z^\ps}(\nu)}.
\]
We define the formal power series in $z$:
\[
\beth^{\tilde q}_1(\nu,z) = \sum_{k \geq 1} \beth^{\tilde q_k}_1(\nu) \, z^k \quad \text{and} \quad \beth^{\tilde q} (\nu,z) = \sum_{k \geq 1} \beth^{\tilde q_k}(\nu) \, z^k.
\]
By definition, the generating series $\Delta (\nu,z)$ of the lemma is the formal power series $\beth^{\tilde q} (\nu,z) / \beth^{\mathcal Z^\ps}(\nu)$. Therefore, we want to show that, as a generating series in z, we have
\[
{\beth^{\tilde q} (\nu,z) } =  
\frac{ z}{1 -  z} \, \beth^{Q^+} \left( \nu, \frac{1}{1-  z}\right) .
\]
Informally, it boils down to proving that the development of $Q^+$ coincides with the sum of the developments of the $\tilde q_k$. We cannot sum them directly since the error term in~\eqref{eq:devtildeq} is \emph{a priori} not uniform in $k$. However, by an approach similar to the proof of \cite[Lemma 5.1]{IsingAMS}, we can establish this result.

Indeed, for every $t$ such that $|t| \leq |t_\nu|$, the radius of convergence in $y$ of $Q^+(\nu, t,ty)$ is at least $y_\nu \geq 2$. Fix $z$ such that $|1/(1-z)| < y_\nu$, or equivalently $|z|<1 - 1/y_\nu$. From Proposition \ref{prop:serQty}, the series $\tilde F(\nu,t,z)$ seen as a series in $t^3$ is analytic, has radius of convergence $t_\nu^3$, and has an asymptotic expansion at $t_\nu^3$ given by:
\begin{align*}
\tilde F(\nu,t,z) &= \frac{1}{1-z} Q^+ \left(t_\nu,\frac{t_\nu}{1-z} \right) + \frac{1}{1-z} \, \beth^{Q^+}_1 \left(\nu,\frac{1}{1-z} \right) \left( 1- (t/t_\nu)^3 \right)\\
& \quad +  \frac{1}{1-z} \, \beth^{Q^+} \left(\nu,\frac{1}{1-z} \right) \left( 1- (t/t_\nu)^3 \right)^{\gamma(\nu)-1} +  o\left( 1- (t/t_\nu)^3 \right)^{\gamma(\nu)-1},
\end{align*}
where $\beth^{Q^+}_1 (\nu,y)$ and $\beth^{Q^+} (\nu,y)$ are explicit rational functions of $\nu$ and $ V(\nu,U(t^3_\nu),y)$. Since $V(\nu, U(t^3_\nu, 1/(1-z))$ is analytic for $|z|<1 - 1/y_\nu$, so are $\beth^{Q^+}_1(\nu,1/(1-z))$ and $\beth^{Q^+}(\nu,1/(1-z))$. In particular for $|z|<1 - 1/y_\nu$, the applications $z\mapsto \beth^{Q^+}_1(\nu,1/(1-z))$ and $z \mapsto\beth^{Q^+}(\nu,1/(1-z))$ are equal to their respective developments in power series.
Even if the error term in the expansion of $\tilde F(\nu,t,z))$ is not uniform in $z$, the power functions $\beth^{Q^+}_1$ and $\beth^{Q^+}$ can be seen as the following limits as formal power series:
\begin{align*}
\frac{1}{1-z} \, \beth^{Q^+}_1 \left( \nu,\frac{1}{1-z} \right) &= \lim_{t \to t_\nu^-} \left(\tilde F(\nu,t,z) - \frac{1}{1-z} Q^+ \left(t_\nu,\frac{t_\nu}{1-z} \right) \right) \cdot \left( 1- (t/t_\nu)^3 \right)^{-1}, \\
\frac{1}{1-z} \, \beth^{Q^+} \left( \nu,\frac{1}{1-z} \right) &= \lim_{t \to t_\nu^-} \left(\tilde F(\nu,t,z) - \frac{1}{1-z} Q^+\left(t_\nu,\frac{t_\nu}{1-z} \right) - \beth^{Q^+}_1 \left( \nu,\frac{1}{1-z} \right) \left( 1- (t/t_\nu)^3 \right) \right) \\
& \qquad \qquad \qquad  \times \left( 1- (t/t_\nu)^3 \right)^{1-\gamma(\nu)}.
\end{align*}
As a consequence,
\[
\beth^{\tilde q}_1 (\nu,z)  =  \frac{ z}{1 -  z} \, \beth^{Q^+}_1 \left( \nu, \frac{1}{1-  z}\right)
\quad \text{and} \quad 
\beth^{\tilde q} (\nu,z)  =  \frac{ z}{1 -  z} \, \beth^{Q^+} \left( \nu, \frac{1}{1-  z}\right)
\]
in their respective domains of analycity, which includes the open disk $|z|<1 - 1/y_\nu$.
This finishes the proof.
\end{proof}

\bigskip

We get as an immediate corollary the following asymptotic behavior of the numbers $\delta_k(\nu)$ in the regime $\nu \leq \nu_c$.

\begin{coro} \label{coro:asymptodeltak}
For $\nu \leq \nu_c$ one has
\[
\delta_k (\nu) \underset{k \to \infty}{\sim}
\frac{1}{\beth^{\mathcal Z^\ps}(\nu)}
\frac{\aleph_{4/3}^{\beth^{Q^+}}(\nu)}{2^{4/3} \, \Gamma(4/3)} \,
2^{k} k^{1/3}.
\]
\end{coro}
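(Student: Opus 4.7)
The plan is to apply singularity analysis to the generating series $\Delta(\nu,z) = \sum_k \delta_k(\nu) z^k$ given in Proposition~\ref{prop:asymptQk}, using the singular expansion of $\beth^{Q^+}(\nu,y)$ at $y=2$ obtained in Proposition~\ref{prop:asymptalephQ+}.

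First, I would perform the change of variable $y = 1/(1-z)$. Since $2 - y = (1-2z)/(1-z)$, the formula
\[
\Delta(\nu,z) = \frac{1}{\beth^{\mathcal Z^\ps}(\nu)} \cdot \frac{z}{1-z} \cdot \beth^{Q^+}\!\left(\nu, \frac{1}{1-z}\right)
\]
combined with the expansion
\[
\beth^{Q^+}(\nu,y) = \sum_{i \in \{4/3,\,2/3,\,1/3\}} \frac{\aleph_i^{\beth^{Q^+}}(\nu)}{(2-y)^i} + \mathcal O(1)
\]
yields, near $z = 1/2$,
\[
\Delta(\nu,z) = \frac{1}{\beth^{\mathcal Z^\ps}(\nu)}  \sum_{i \in \{4/3,\,2/3,\,1/3\}} \aleph_i^{\beth^{Q^+}}(\nu) \cdot \frac{z\,(1-z)^{i-1}}{(1-2z)^i} + \mathcal O\!\left(\frac{1}{1-z}\right).
\]
The dominant contribution comes from $i = 4/3$, and evaluating the smooth factor $z(1-z)^{1/3}$ at $z = 1/2$ produces the prefactor $\frac{1}{2} \cdot (1/2)^{1/3} = 2^{-4/3}$.

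Next I would check that $z = 1/2$ is the unique dominant singularity of $\Delta(\nu,z)$. Since $\Delta$ has nonnegative coefficients, Pringsheim's theorem guarantees a singularity on its radius of convergence; the singularity at $z = 1/2$ corresponds to $y = 2 = y_\nu$, which is the unique dominant singularity of $y \mapsto \beth^{Q^+}(\nu,y)$ by Proposition~\ref{prop:asymptalephQ+}. The only other candidate is $z = 1$ (where $y = \infty$), but $\beth^{Q^+}(\nu,y)$ is algebraic and grows at most polynomially at infinity (its expression as a rational function of $V(\nu,U_\nu,y)$, together with the boundedness of $V$ established in Proposition~\ref{prop:inverseV}, gives the needed control), so this point contributes only lower-order terms. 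Hence $1/2$ is the unique dominant singularity and the expansion above is a $\Delta$-analytic singular expansion.

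Finally, applying the classical transfer theorem (see Theorem~\ref{th:transfer} in the appendix) to the dominant term $\frac{\aleph_{4/3}^{\beth^{Q^+}}(\nu)}{2^{4/3}} \cdot (1-2z)^{-4/3}$ gives
\[
[z^k](1-2z)^{-4/3} \underset{k\to\infty}{\sim} \frac{2^k\, k^{1/3}}{\Gamma(4/3)},
\]
while the subdominant terms ($i = 2/3, 1/3$ and the $\mathcal O(1/(1-z))$ remainder) contribute at orders $k^{-1/3}$, $k^{-2/3}$ and $k^{-1}$ respectively relative to the main term. Combining everything yields the announced asymptotic. The main point requiring care is the verification that $z=1/2$ is really the unique dominant singularity and that the error term from Proposition~\ref{prop:asymptalephQ+} can be transferred; this follows from the algebraicity of $\beth^{Q^+}$ via its rational parametrization in $V$ and the analyticity properties of $V$ established in Section~\ref{sub:critV}.
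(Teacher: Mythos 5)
Your proposal is correct and follows essentially the same route as the paper's (very terse) proof: combine the expression for $\Delta$ from Proposition~\ref{prop:asymptQk} with the singular expansion of $\beth^{Q^+}$ at $y=2$ from Proposition~\ref{prop:asymptalephQ+}, observe that $2-y=(1-2z)/(1-z)$ turns the leading singular term into $\aleph_{4/3}^{\beth^{Q^+}}(\nu)\, z(1-z)^{1/3}(1-2z)^{-4/3}$, evaluate the smooth factor at $z=1/2$ to get $2^{-4/3}$, and transfer. One small clarification: your worry about $z=1$ is tangential, since on the circle $|z|=1/2$ one has $|1-z|\geq 1/2$ with equality only at $z=1/2$, so $|1/(1-z)|\leq 2$ there and $z=1/2$ is automatically the unique singularity on that circle; once uniqueness of the dominant singularity is known, the algebraicity of $\Delta$ gives the required $\Delta$-analyticity for the transfer theorem as stated in Theorem~\ref{th:transfer}, and no separate control of $\beth^{Q^+}$ at infinity (i.e., near $z=1$) is needed.
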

\begin{proof}
This is a direct consequence of the expression of $\Delta$ in terms of $\beth^{Q^+}$, of Proposition~\ref{prop:asymptalephQ+} giving the singular behavior of $\beth^{Q^+}$ and of the transfer theorem of Appendix~\ref{sec:algebraic}.
\end{proof}

\subsection{Computations around the BDG functions \texorpdfstring{$f^\bullet_{\mathbf{q}(\nu,t)}$}{fdot} and \texorpdfstring{$f^\diamond_{\mathbf q(\nu,t)}$}{fdiamond}}

The purpose of this section is to establish some preliminary and technical results that will be useful to compute the volume generating function of the root spin cluster of random Ising-weighted triangulations. 

The main tool at our disposal to compute this generating function is its expression in terms of $z^+(\mathbf q, g)$ and $z^\diamond(\mathbf q, g)$ given in \eqref{eq:Zg}. In order to be able to use this expression, we need to calculate the development of $z^+(\mathbf q (\nu,t_\nu), g)$ and $z^\diamond(\mathbf q(\nu,t_\nu), g)$ around $z^+_\nu$ and $z^\diamond_{\nu}$ respectively (or, in other words, when $g$ goes to 1).

We will proceed in three steps. In a first section, we obtain an explicit integral formula for $f^\bullet_{\mathbf{q}(\nu,t)}$ and $f^\diamond_{\mathbf{q}(\nu,t)}$. Thanks to this expression, we can compute the singular development of these functions around $(z^+_\nu,z^\diamond_\nu)$. Finally, we use this development together with the fact that $z^+(\mathbf q (\nu,t_\nu), g)$ and $z^\diamond(\mathbf q(\nu,t_\nu), g)$ are solutions of a system of equations involving $f^\bullet_{\mathbf{q}(\nu,t)}$ and $f^\diamond_{\mathbf{q}(\nu,t)}$, to get the desired expansions.

\subsubsection{Integral expressions for \texorpdfstring{$f^\bullet_{\mathbf{q}(\nu,t)}$}{fdot} and \texorpdfstring{$f^\diamond_{\mathbf q(\nu,t)}$}{fdiamond}}

The functions $f^\bullet_{\mathbf q}$ and $f^\diamond_{\mathbf q}$ defined in~\eqref{eq:deffbullet} and~\eqref{eq:deffdiamond} of Section \ref{sec:BDGgen} are usually hard to compute for non bipartite maps. However, in our setting, we are able to obtain an integral expression involving $Q^+$ for $f^\bullet_{\mathbf q(\nu,t)}$ and $f^\diamond_{\mathbf q(\nu,t)}$ :
\begin{prop} \label{prop:fbulletdiamond}
Fix $\nu >0$ and $t \in (0, t_\nu]$. For any $z_1 \in [0,z^+(\nu,t)]$ and $z_2 \in [0, z^\diamond(\nu,t)]$ we have
\begin{align}
f^\bullet_{\mathbf q (\nu,t)} (z_1,z_2) &= (\nu t)^{3/2} \, 2z_2 + \frac{\nu t^3}{\pi}
\int_{{1}/{a_-}}^{{1}/{a_+}} \,
\frac{1 - \sqrt{\nu t^3} z_2 - \frac{1}{z}}{2 \nu t^3 z_1}
\,
\frac{Q^+(\nu,t,tz)}
{\sqrt{
\left(1-z a_+ \right)\left(z a_- -1 \right)
}} \,  \mathrm{d}z,\\
f^\diamond_{\mathbf q (\nu,t)} (z_1,z_2) &= (\nu t)^{3/2} \, (2z_1 + z_2^2) + \frac{\sqrt{\nu t^3}}{\pi}
\int_{1/a_-}^{1/a_+} \,
\frac{Q^+(\nu,t,tz) }
{\sqrt{
\left(1-z a_+ \right)\left(z a_- - 1 \right)
}}\,  \mathrm{d}z ,
\end{align}
where we set:
\begin{align}
a_+ := 1-\sqrt{\nu t^3} (z_2 + 2 \sqrt{z_1}) \quad \text{and} \quad
a_- := 1-\sqrt{\nu t^3} (z_2 - 2 \sqrt{z_1}).
\end{align}
\end{prop}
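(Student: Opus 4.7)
The plan is to recast the defining series of $f^\bullet_{\mathbf q(\nu,t)}$ and $f^\diamond_{\mathbf q(\nu,t)}$ as Fourier integrals, identify the resulting sums as $F(\nu,t,\cdot)$ divided by its argument, and finally perform a change of variables to obtain the announced $Q^+$-integrals in $z$. The key combinatorial observation is the trinomial identity
\[
[x^j]\bigl(\sqrt{z_1}(x+x^{-1})+z_2\bigr)^{d-1} = (\sqrt{z_1})^{j}\sum_{k\geq 0} \binom{d-1}{k+j,\,k,\,d-1-2k-j}\, z_1^{k}\, z_2^{d-1-2k-j}, \qquad j\in\{0,1\},
\]
which follows directly from the multinomial expansion. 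Evaluating the coefficient extraction at $x=e^{i\theta}$ and setting $u_\theta := z_2 + 2\sqrt{z_1}\cos\theta$, and then grouping face contributions by their degree $d=2+2k+k'$ or $d=1+2k+k'$, one obtains
\[
f^\diamond_{\mathbf q(\nu,t)}(z_1,z_2) = \frac{1}{2\pi}\int_0^{2\pi}\frac{F(\nu,t,u_\theta)}{u_\theta}\,\d\theta, \qquad \sqrt{z_1}\,f^\bullet_{\mathbf q(\nu,t)}(z_1,z_2) = \frac{1}{2\pi}\int_0^{2\pi}\frac{F(\nu,t,u_\theta)}{u_\theta}\cos\theta\,\d\theta.
\]
The exchange of sum and integral is legitimate on the stated range because, by~\eqref{eq:cy+-} and Proposition~\ref{prop:yineq}, one has $|u_\theta|\leq z_2+2\sqrt{z_1}\leq c_+(\nu,t)$, which lies inside the disk of convergence of $F(\nu,t,\cdot)$ by Proposition~\ref{prop:asymptoq} (the tail exponent $\alphaa(\nu)>1$ of the $q_k$ in fact ensures absolute convergence up to the boundary).

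Next I split $F(\nu,t,u)/u$ via~\eqref{eq:genqk} into its polynomial piece $(\nu t)^{3/2}u^{2}$ and its $Q^+$ piece $\tfrac{\sqrt{\nu t^{3}}}{1-\sqrt{\nu t^{3}}u}\,Q^+\bigl(\nu,t,\tfrac{t}{1-\sqrt{\nu t^{3}}u}\bigr)$. The polynomial piece produces the announced prefactors through the elementary identities
\[
\frac{1}{2\pi}\int_0^{2\pi}u_\theta^{2}\,\d\theta = z_2^{2}+2z_1 \quad\text{and}\quad \frac{1}{2\pi}\int_0^{2\pi}u_\theta^{2}\cos\theta\,\d\theta = 2z_2\sqrt{z_1},
\]
yielding $(\nu t)^{3/2}(2z_1+z_2^{2})$ for $f^\diamond$ and $(\nu t)^{3/2}\cdot 2z_2$ for $f^\bullet$. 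For the $Q^+$ piece, I apply the change of variables $z=1/(1-\sqrt{\nu t^{3}}\,u)$, which is a bijection sending $u\in[z_2-2\sqrt{z_1},\,z_2+2\sqrt{z_1}]$ onto $z\in[1/a_-,\,1/a_+]$. The factor $\tfrac{\sqrt{\nu t^{3}}}{1-\sqrt{\nu t^{3}}u}\d u$ pulls back to $\d z/z$, and using the algebraic identity
\[
\bigl(u-(z_2-2\sqrt{z_1})\bigr)\bigl((z_2+2\sqrt{z_1})-u\bigr) = \frac{(za_--1)(1-za_+)}{z^{2}\,\nu t^{3}}
\]
the Chebyshev weight $\d u / \sqrt{(u-(z_2-2\sqrt{z_1}))((z_2+2\sqrt{z_1})-u)}$ translates into exactly the square-root kernel appearing in the statement. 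For $f^\bullet$ the additional factor $\cos\theta=(u_\theta-z_2)/(2\sqrt{z_1})$ becomes $(1-\sqrt{\nu t^{3}}z_2-1/z)/(2\sqrt{z_1}\sqrt{\nu t^{3}})$, which after combining with the outer $1/\sqrt{z_1}$ is exactly the numerator $(1-\sqrt{\nu t^{3}}z_2-1/z)/(2\nu t^{3}z_1)$ displayed in the proposition.

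The expected obstacles are modest: guessing the correct trinomial generating-function identity, and verifying that $a_+$ and $a_-$ remain strictly positive on the full admissible range $(z_1,z_2)\in[0,z^+(\nu,t)]\times[0,z^\diamond(\nu,t)]$, regardless of the sign of $z_2-2\sqrt{z_1}$, so that $[1/a_-,1/a_+]$ is always a genuine real segment. This last point follows from $z_2+2\sqrt{z_1}\leq c_+(\nu,t)<1/\sqrt{\nu t^{3}}$ (using $y_+(\nu,t)\geq 2$). Once these points are in place, the remainder consists of explicit trigonometric integration and algebraic simplification.
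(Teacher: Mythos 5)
Your proposal is correct and is a genuinely different route from the paper's. The paper expresses each of the double sums $f^\bullet,f^\diamond$ as a Hadamard product $Q^+(\nu,t,tz)\odot g(z)\vert_{z=1}$, computes the auxiliary generating functions $g^\bullet,g^\diamond$ in closed form by summing geometric series in three stages, and then invokes the contour-integral representation of the Hadamard product, deforming $\gamma$ onto the cut $[1/a_-,1/a_+]$. You instead group the sum by face degree $d$, observe that the inner sums over $(k,k')$ at fixed $d$ are exactly the constant and first Fourier coefficients of $(\sqrt{z_1}(x+x^{-1})+z_2)^{d-1}$, and thereby land directly on a trigonometric integral of $F(\nu,t,u_\theta)/u_\theta$; the Chebyshev substitution then produces the arcsine kernel and the change of variables $z=1/(1-\sqrt{\nu t^3}u)$ gives the announced $Q^+$-integrals. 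What your route buys is that it bypasses the explicit computation of $g^\bullet,g^\diamond$ and the analyticity bookkeeping for the contour $\gamma$; the square-root kernel appears automatically from the Jacobian of $\cos\theta$ rather than from a branch cut of a closed-form $g$. The computations check out: the trinomial identity and $[x^j]$-extraction correctly reproduce the multinomial coefficients of~\eqref{eq:deffbullet}--\eqref{eq:deffdiamond} with $d=2+2k+k'$ (resp.\ $d=1+2k+k'$); the $d=1$ term vanishes for the $j=1$ extraction, so summing $F(u_\theta)/u_\theta = \sum_{d\geq 1}q_d u_\theta^{d-1}$ is legitimate for $f^\bullet$ too; the polynomial piece $(\nu t)^{3/2}u^2$ of $F(u)/u$ integrates to the stated prefactors; and the identity $(u-(z_2-2\sqrt{z_1}))((z_2+2\sqrt{z_1})-u)=(za_--1)(1-za_+)/(z^2\nu t^3)$ converts the arcsine weight to the kernel in the statement. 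One small imprecision: $c_+(\nu,t)$ is not strictly \emph{inside} the disk of convergence of $F(\nu,t,\cdot)$ when $t=t_\nu$ and $\nu\leq\nu_c$ (it sits exactly on the boundary, since the radius equals $(y_\nu-1)c_+(\nu,t_\nu)$ and $y_\nu=2$ there); your parenthetical remark about $\alphaa(\nu)>1$ giving absolute convergence on the closed disk, and hence a uniform dominating bound in $\theta$, is precisely what is needed, so the conclusion stands but the phrasing ``lies inside'' should be softened. The paper handles this same boundary case by establishing the formula for $(z_1,z_2)$ in the open box and passing to the limit.
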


\begin{proof}
First, plugging the value of the weights $q_l(\nu,t)$ in the definition of the two functions gives the following identities:
\begin{align}
f^\bullet_{\mathbf q (\nu,t)} (z_1,z_2) &= 2 (\nu t)^{3/2} z_2 + \nu t^3 \, \sum_{k,k',l \geq 0}
\binom{2k+k'+l +1}{k+1,k,k',l} \cdot
t^l Q_l^+(\nu,t) \cdot 
(\nu t^3 \, z_1)^k ( \sqrt{\nu t^3} z_2)^{k'}, \\
f^\diamond_{\mathbf q (\nu,t)} (z_1,z_2) &=  (\nu t)^{3/2} (2z_1+z_2^2) + \sqrt{\nu t^3} \, \sum_{k,k',l \geq 0}
\binom{2k+k'+l}{k,k,k',l} \cdot
t^l Q_l^+ (\nu,t) \cdot
(\nu t^3 \, z_1)^k ( \sqrt{\nu t^3} z_2)^{k'}.
\end{align}

Fix $\nu >0$, $t \in (0,t_\nu]$, and then $z_1 \in (0,z^+(\nu,t)]$ and $z_2 \in (0,z^\diamond(\nu,t)]$. The sums appearing in the previous displays can be expressed as Hadamard products, see Appendix~\ref{sec:Hadamard}. Indeed, define the two formal power series $g^\bullet(z)$ and $g^\circ(z)$ as:
\begin{align*}
g^\bullet(z) &= \sum_{k,k',l \geq 0}
\binom{2k+k'+l +1}{k+1,k,k',l}
(\nu t^3 \, z_1)^k ( \sqrt{\nu t^3} z_2)^{k'} \cdot z^l,\\
g^\diamond(z) &= \sum_{k,k',l \geq 0}
\binom{2k+k'+l}{k,k,k',l}
(\nu t^3 \, z_1)^k ( \sqrt{\nu t^3} z_2)^{k'} \cdot z^l.
\end{align*}
Then
\begin{align}
f^\bullet_{\mathbf q (\nu,t)} (z_1,z_2) &= 2 (\nu t)^{3/2} z_2 + \nu t^3 \, 
Q^+(\nu,t,tz) \odot g^\bullet(z) \vert_{z=1}
, \\
f^\diamond_{\mathbf q (\nu,t)} (z_1,z_2) &=  (\nu t)^{3/2} (2z_1+z_2^2) + \sqrt{\nu t^3} \,
Q^+(\nu,t,tz) \odot g^\diamond(z) \vert_{z=1}.
\end{align}
Now let us compute the functions $g^\bullet$ and $g^\diamond$. 
We first rewrite: 
\[
g^\bullet(z) = \sum_{k,k' \geq 0}
\binom{2k+k'+1}{k+1,k,k'}
(\nu t^3 \, z_1)^k ( \sqrt{\nu t^3} z_2)^{k'} \sum_{l\geq 0}\binom{2k+k'+l+1}{l}z^l.
\]
Then, if $|z|<1$, summing over $l$ yields
\[
g^\bullet(z) = \sum_{k,k' \geq 0}
\binom{2k+k'+1}{k+1,k,k'}
(\nu t^3 \, z_1)^k ( \sqrt{\nu t^3} z_2)^{k'} \cdot (1-z)^{-(2k+k'+2)}.
\]
Similarly, if $|z| < 1 - \sqrt{\nu t^3} z_2$, which guarantees $|\sqrt{\nu t^3} z_2/(1-z)|<1$, summing over $k'$ yields
\[
g^\bullet(z) = \sum_{k \geq 0}
\binom{2k+1}{k+1}
(\nu t^3 \, z_1)^k \left(1-z-\sqrt{\nu t^3} z_2 \right)^{-(2k+2)}.
\]
Finally, for $|z| < 1 - \sqrt{\nu t^3} (z_2 + 2 \sqrt{z_1})$, which guarantees that $|\nu t^3 \, z_1/(1-z-\sqrt{\nu t^3} z_2)^2|<1/4$, we get
\begin{align*}
g^\bullet(z) &= \frac{1}{2 \nu t^3 \, z_1} \left( \left(1 - 4 \frac{\nu t^3 \,z_1 }{(1-z-\sqrt{\nu t^3} \, z_2)^2}\right)^{-1/2} - 1\right),\\
&= \frac{1}{2 \nu t^3 \, z_1} \left( \frac{1-z-\sqrt{\nu t^3}z_2}{\sqrt{\left(1-z-\sqrt{\nu t^3} (z_2 + 2 \sqrt{z_1})\right) \left(1-z-\sqrt{\nu t^3} (z_2 - 2 \sqrt{z_1})\right) }} - 1\right).
\end{align*}
A similar sequence of computations implies that for $|z| < 1 - \sqrt{\nu t^3} (z_2 + 2 \sqrt{z_1})$:
\[
g^\diamond(z) = \frac{1}{\sqrt{\left(1-z-\sqrt{\nu t^3} (z_2 + 2 \sqrt{z_1})\right) \left(1-z-\sqrt{\nu t^3} (z_2 - 2 \sqrt{z_1})\right) }}.
\]
Note that the region $|z| < 1 - \sqrt{\nu t^3} (z_2 + 2 \sqrt{z_1})$ is not empty since $\sqrt{\nu t^3} (z_2 + 2 \sqrt{z_1}) \leq \sqrt{\nu t^3} c_+(\nu,t) = \frac{1}{y_+(\nu,t)} \leq \frac{1}{2}$.

\bigskip

The Hadamard product can be computed with an integral formula (see for instance~\cite[Section VI.10.2]{FS} or Appendix~\ref{sec:Hadamard}). This yields:
\[
Q^+(\nu,t,tz) \odot g^\bullet(z)\vert_{z=1} = \frac{1}{2 i \pi} \oint_{\gamma} Q^+(\nu,t,tw) \,  g^\bullet \left( \frac{1}{w} \right) \frac{\mathrm{d} w}{w},
\]
and
\[
Q^+(\nu,t,tz) \odot g^\diamond(z)\vert_{z=1} = \frac{1}{2 i \pi} \oint_{\gamma} Q^+(\nu,t,tw) \,  g^\diamond \left( \frac{1}{w} \right) \frac{\mathrm{d} w}{w},
\]
where the integration contour $\gamma$ in the $w$-plane should be chosen in such a way that both factors $Q^+(\nu,t,tw)$ and $g^\bullet \left( \frac{1}{w} \right)$ (or $g^\diamond \left( \frac{1}{w} \right)$) are analytic in an open neighborhood of $\gamma$.

\begin{figure}[!t]
\centering
\includegraphics[width=0.8\linewidth]{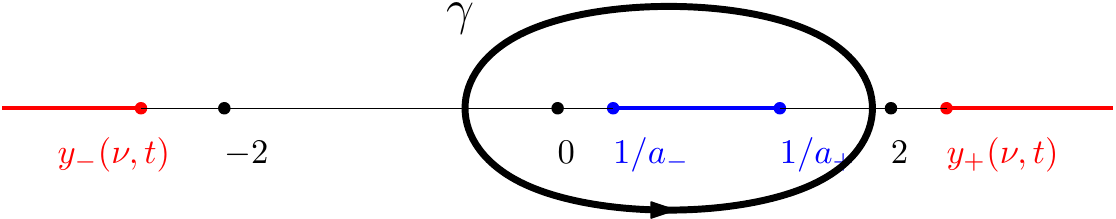}
\caption{\label{fig:contour}Integration contour for the Hadamard product.}
\end{figure}
By the above computations, $g^\bullet \left( \frac{1}{w} \right)$ and $g^\diamond \left( \frac{1}{w} \right)$ are analytic for 
\[
w \in \mathbb C \setminus \Big[ \frac{1}{1-\sqrt{\nu t^3} (z_2 - 2 \sqrt{z_1})} , \frac{1}{1-\sqrt{\nu t^3} (z_2 + 2 \sqrt{z_1})}\Big].\] Furthermore, Proposition \ref{prop:analyticQty} ensures that $Q^+(\nu,t,tw)$ is analytic for $w \in \mathbb C \setminus \{(-\infty, y_-(\nu,t)] \cup [y_+(\nu,t) , + \infty) \}$. For $z_2+2\sqrt{z_1} < c_+(\nu,t)$, a suitable contour of integration then exists. Indeed, recalling that $y_+(\nu,t) \geq 2$, we have in this case
\[
\frac{1}{1-\sqrt{\nu t^3} (z_2 + 2 \sqrt{z_1})} < \frac{1}{1-\sqrt{\nu t^3} c_+(\nu,t)} = \frac{y_+(\nu,t)}{y_+(\nu,t) -1 } \leq 2,
\]
and we also always have
\[
\frac{1}{1-\sqrt{\nu t^3} (z_2 - 2 \sqrt{z_1})} > \frac{1}{1+\sqrt{\nu t^3} c_+(\nu,t)} = \frac{y_+(\nu,t)}{y_+(\nu,t) + 1 } \geq \frac{2}{3}.
\]
We can thus take $\gamma$ inside the disk of convergence of $Q^+(\nu,t,tw)$ and enclosing a cut $[1/(1-\sqrt{\nu t^3} (z_2 - 2 \sqrt{z_1})), 1/(1-\sqrt{\nu t^3} (z_2 + 2 \sqrt{z_1}))]$, see Figure~\ref{fig:contour}. Taking the limit when the curve $\gamma$ approaches the cut then gives the expressions of the proposition for $z_1 \in [0,z^+(\nu,t))$ and $z_2 \in [0, z^\diamond (\nu,t))$. The equality for $(z_1,z_2) = (z^+(\nu,t),z^\diamond (\nu,t))$ follows by taking the limit.
\end{proof}

\subsubsection{Singular developments of \texorpdfstring{$f^\bullet_\nu$}{fdot} and \texorpdfstring{$f^\diamond_\nu$}{fdiamond} around \texorpdfstring{$(z^+_\nu , z^\diamond_\nu)$}{(z+,zd)}}\label{sub:devfBDG}

Denote by $(z^+_\nu , z^\diamond_\nu) := (z^+(\mathbf q (\nu,t_\nu)) ,z^\diamond(\mathbf q (\nu,t_\nu)))$ the solution of the system of equations \eqref{eq:fixpoint}. Recall from~\eqref{eq:z+y+-} and~\eqref{eq:zdy+-} that there exist explicit expressions for $z^+_\nu , z^\diamond_\nu$ in terms of the singularities $y_+(\nu,t_\nu)$ and $y_-(\nu,t_\nu)$ of the function $y \mapsto Q^+(\nu,t_\nu,t_\nu \, y)$, which are themselves explicit functions of $\nu$. 

Using the expressions for $f^\bullet$ and $f^\diamond$ given in Proposition \ref{prop:fbulletdiamond} and the singular behavior of $y \mapsto Q^+(\nu,t_\nu,t_\nu \, y)$ obtained in Proposition~\ref{lemm:weightsclusters}, we can compute explicitly an asymptotic expansion of the BDG functions $f^\bullet$ and $f^\diamond$ at $(z^+_\nu , z^\diamond_\nu)$:

\begin{lemm} \label{lem:asymptfbfg}
Fix $\nu \in (0,\nu_c]$ and write $f^\bullet_\nu = f^\bullet_{\mathbf q (\nu,t_\nu)}$, $f^\diamond_\nu = f^\diamond_{\mathbf q (\nu,t_\nu)}$, $z^+_\nu = z^+(\mathbf q (\nu,t_\nu))$ and $z^\diamond_\nu = z^\diamond(\mathbf q (\nu,t_\nu))$. We have the following asymptotic expansions at $(z^+_\nu,z^\diamond_\nu)^-$:
\begin{align*}
f^\bullet_\nu(z_1,z_2) &= 1 - \frac{1}{z^+_\nu} 
- \frac{1}{z^+_\nu} \left(\frac{1}{z^+_\nu} - \sqrt{z^+_\nu} 
\partial_{z_1}f^\diamond_\nu(z^+_\nu,z^\diamond_\nu) \right)
\left( z^+_\nu - z_1 \right) 
- \partial_{z_1}f^\diamond_\nu(z^+_\nu,z^\diamond_\nu) \left( z^\diamond_\nu - z_2 \right) \\
& \qquad + \aleph^\bullet(\nu) \left( (z^\diamond_\nu - z_2) + \frac{1}{\sqrt{z^+_\nu}} (z^+_\nu - z_1) \right)^{\alphaa(\nu) - 1/2}
+ \mathcal O \left( (z^\diamond_\nu - z_2)^2 + (z^+_\nu - z_1)^2 \right),\\
f^\diamond_\nu(z_1,z_2) &= z^\diamond_\nu - \partial_{z_1}f^\diamond_\nu(z^+_\nu,z^\diamond_\nu) \left( z^+_\nu - z_1 \right) - \left( 1 - \sqrt{z^+_\nu} \, \partial_{z_1}f^\diamond_\nu(z^+_\nu,z^\diamond_\nu)\right) \left( z^\diamond_\nu - z_2 \right) \\
& \qquad + \aleph^\diamond(\nu) \left( (z^\diamond_\nu - z_2) + \frac{1}{\sqrt{z^+_\nu}} (z^+_\nu - z_1) \right)^{\alphaa(\nu) - 1/2}
+ \mathcal O \left( (z^\diamond_\nu - z_2)^2 + (z^+_\nu - z_1)^2 \right),
\end{align*}
where $\partial_{z_1}f^\diamond_\nu(z^+_\nu,z^\diamond_\nu) \in (0,1)$, $\aleph^\diamond(\nu),\aleph^\bullet(\nu) > 0$ are explicit functions of $\nu$ and where we recall that
\begin{equation}
\alphaa(\nu) =
\begin{cases}
5/3 & \text{for $\nu < \nu_c$,}\\
7/3 & \text{for $\nu = \nu_c$.}
\end{cases}
\end{equation}
\end{lemm}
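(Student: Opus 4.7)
The plan is to start from the integral representations of $f^\bullet_\nu$ and $f^\diamond_\nu$ given by Proposition~\ref{prop:fbulletdiamond} at $t = t_\nu$, where the integration limits $1/a_\pm$ depend on $(z_1, z_2)$ via $a_\pm = 1 - \sqrt{\nu t_\nu^3}(z_2 \mp 2\sqrt{z_1})$. The key observation is that $1/a_+$ coincides with the dominant singularity $y_\nu$ of $y \mapsto Q^+(\nu, t_\nu, t_\nu y)$ exactly at $(z^+_\nu, z^\diamond_\nu)$: by~\eqref{eq:cpmz}, $z^\diamond_\nu + 2\sqrt{z^+_\nu} = c_+(\nu, t_\nu)$, and by~\eqref{eq:cy+-}, $\sqrt{\nu t_\nu^3}\,c_+(\nu, t_\nu) = 1/y_\nu$, giving $a_+(z^+_\nu, z^\diamond_\nu) = 1 - 1/y_\nu$. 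For $\nu \le \nu_c$ one has $y_\nu = 2$ by Proposition~\ref{lemm:weightsclusters}, so $1/a_+(z^+_\nu, z^\diamond_\nu) = y_\nu$. A direct Taylor expansion then shows that $y_\nu - 1/a_+(z_1, z_2)$ is asymptotically proportional to $h := (z^\diamond_\nu - z_2) + (z^+_\nu - z_1)/\sqrt{z^+_\nu}$ as $(z_1, z_2) \to (z^+_\nu, z^\diamond_\nu)^-$.

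I next extract the singular behavior from the integrals. Splitting $Q^+(\nu, t_\nu, t_\nu z) = Q_{\mathrm{reg}}(z) + \aleph^{Q^+}(\nu)(1 - z/y_\nu)^{\alphaa(\nu)-1}$ using Proposition~\ref{lemm:weightsclusters}, the $Q_{\mathrm{reg}}$-part gives jointly analytic (one-sided) functions of $(z_1, z_2)$ near $(z^+_\nu, z^\diamond_\nu)$ and contributes only to the Taylor-regular part. The singular contribution is analyzed via the conformal change of variables $z(w) = (1+w)/(2a_+) + (1-w)/(2a_-)$ mapping $[1/a_-, 1/a_+]$ onto $[-1, 1]$, which simplifies the denominator to $\sqrt{a_+ a_-}\sqrt{1 - w^2}$, the quantity $1 - z/2$ vanishing to first order at the corner $(w, a_+) = (1, 1/2)$. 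A standard scaling analysis near this corner then yields, after subtracting the analytic part, a leading contribution of order $(a_+ - 1/2)^{\alphaa(\nu)-1/2} \asymp h^{\alphaa(\nu)-1/2}$, with an explicit coefficient obtained via a Hadamard finite-part regularization of the associated divergent beta-type integral. This delivers the singular terms $\aleph^\bullet(\nu)\,h^{\alphaa(\nu)-1/2}$ and $\aleph^\diamond(\nu)\,h^{\alphaa(\nu)-1/2}$, and a direct sign check establishes $\aleph^\bullet(\nu),\aleph^\diamond(\nu) > 0$.

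To identify the constant and linear coefficients, I combine three ingredients. First, the constants follow from the defining system~\eqref{eq:fixpoint}, namely $f^\bullet_\nu(z^+_\nu, z^\diamond_\nu) = 1 - 1/z^+_\nu$ and $f^\diamond_\nu(z^+_\nu, z^\diamond_\nu) = z^\diamond_\nu$. Second, a direct reindexing of the series~\eqref{eq:deffbullet}--\eqref{eq:deffdiamond} yields the formal identity $\partial_{z_2} f^\bullet_\mathbf q = \partial_{z_1} f^\diamond_\mathbf q$, which accounts for the common factor $\partial_{z_1} f^\diamond_\nu(z^+_\nu, z^\diamond_\nu)$ appearing in both expansions as the coefficient of $(z^\diamond_\nu - z_2)$. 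Third, since $\mathbf q(\nu, t_\nu)$ is critical by Proposition~\ref{prop:Wnut}, the inequality in the third line of~\eqref{eq:fixpoint} becomes an equality, giving $\partial_{z_2} f^\diamond_\nu + \sqrt{z^+_\nu}\,\partial_{z_1} f^\diamond_\nu = 1$ at $(z^+_\nu, z^\diamond_\nu)$, which rewrites the coefficient of $(z^\diamond_\nu - z_2)$ in $f^\diamond_\nu$ as $1 - \sqrt{z^+_\nu}\,\partial_{z_1} f^\diamond_\nu$ as claimed.

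The principal obstacle is the coefficient of $(z^+_\nu - z_1)$ in $f^\bullet_\nu$, equivalent to the companion identity $\partial_{z_1} f^\bullet_\nu = 1/(z^+_\nu)^2 - \partial_{z_1} f^\diamond_\nu/\sqrt{z^+_\nu}$. I plan to derive this from the vanishing, at criticality, of the Jacobian of the system $\bigl(f^\bullet_\nu(z_1, z_2) - (1 - 1/z_1),\,f^\diamond_\nu(z_1, z_2) - z_2\bigr)$ at $(z^+_\nu, z^\diamond_\nu)$, a reflection of the tangential nature of the critical solution: using $\partial_{z_2} f^\bullet_\mathbf q = \partial_{z_1} f^\diamond_\mathbf q$ and $\partial_{z_2} f^\diamond_\nu = 1 - \sqrt{z^+_\nu}\,\partial_{z_1} f^\diamond_\nu$, the Jacobian factors as $-\partial_{z_1} f^\diamond_\nu\cdot\bigl[\sqrt{z^+_\nu}\bigl(\partial_{z_1} f^\bullet_\nu - 1/(z^+_\nu)^2\bigr) + \partial_{z_1} f^\diamond_\nu\bigr]$, and its vanishing gives the required identity after dividing by the positive $\partial_{z_1} f^\diamond_\nu$. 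Finally, $\partial_{z_1} f^\diamond_\nu(z^+_\nu, z^\diamond_\nu) \in (0,1)$ follows from positivity of the series coefficients, the criticality relation $\sqrt{z^+_\nu}\,\partial_{z_1} f^\diamond_\nu < 1$, and $z^+_\nu > 1$ via~\eqref{eq:zpzdcombi}, since $\mathcal M^+$ carries strictly positive $\mathbf q(\nu, t_\nu)$-weight.
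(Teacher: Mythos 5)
Your plan starts from the same integral representations in Proposition~\ref{prop:fbulletdiamond} and correctly identifies the driving mechanism: the integration endpoint $1/a_+$ hits the singularity $y_\nu = 2$ of $y \mapsto Q^+(\nu,t_\nu,t_\nu y)$ at $(z_1,z_2) = (z^+_\nu, z^\diamond_\nu)$, and $2 - 1/a_+$ is proportional to $h := (z^\diamond_\nu - z_2) + (z^+_\nu - z_1)/\sqrt{z^+_\nu}$ at first order. Your identification of the constant and of the $(z^\diamond_\nu - z_2)$-coefficient via~\eqref{eq:fixpoint}, the formal identity $\partial_{z_2}f^\bullet = \partial_{z_1}f^\diamond$, and the equality case of the criticality condition are all correct and match the paper's approach. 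However, two steps are not sound.

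First, subtracting only the leading singular term $\aleph^{Q^+}(\nu)(1-z/y_\nu)^{\alphaa(\nu)-1}$ does \emph{not} leave an analytic remainder. For $\nu < \nu_c$, Proposition~\ref{lemm:weightsclusters} gives $Q^+(\nu,t_\nu,t_\nu y)$ a full singular expansion with terms $(2-y)^{2/3}$, $(2-y)^{4/3}$, $(2-y)^{5/3}$, \dots; after removing only the $(2-y)^{2/3}$ piece, your $Q_{\mathrm{reg}}$ still carries a $(2-y)^{4/3}$ singularity, whose second derivative blows up like $(2-y)^{-2/3}$. Since $2-y$ scales linearly with the distance to the integration endpoint, this is not integrable against the $(1-\xi)^{-1/2}$ weight there (the integrand is of order $(1-\xi)^{-7/6}$), so the $Q_{\mathrm{reg}}$-part of the integral is not twice differentiable and in fact contributes a genuine singular term of order $h^{4/3+1/2} = h^{11/6}$. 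You must subtract the singular terms up to $(2-y)^{5/3}$, as the paper does, to obtain a twice-differentiable remainder. Each subtracted power, integrated against $\sqrt{\xi(1-\xi)}$, is then recognizable as an Euler integral representation of ${}_2F_1(-p,1/2;1;\cdot)$, whose standard singular expansion at $1^-$ produces the $(2-1/a_+)^{p+1/2}$ coefficient explicitly; this is both more concrete and more robust than the Hadamard finite-part scaling you sketch.

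Second, the Jacobian-vanishing argument for the coefficient of $(z^+_\nu - z_1)$ in $f^\bullet_\nu$ is circular as written. Your factorization of the Jacobian into $-\partial_{z_1}f^\diamond_\nu\,\bigl[\sqrt{z^+_\nu}\bigl(\partial_{z_1}f^\bullet_\nu - 1/(z^+_\nu)^2\bigr) + \partial_{z_1}f^\diamond_\nu\bigr]$ is algebraically correct, but you offer no proof that the Jacobian vanishes. The criticality condition in~\eqref{eq:fixpoint} constrains only the combination $\partial_{z_2}f^\diamond_\nu + \sqrt{z^+_\nu}\,\partial_{z_1}f^\diamond_\nu$, and together with $\partial_{z_2}f^\bullet = \partial_{z_1}f^\diamond$ this still leaves $\partial_{z_1}f^\bullet_\nu$ free, so asserting the vanishing tacitly presupposes what you want to prove. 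The missing ingredient is the \emph{second} formal BDG identity $z_1\,\partial_{z_1}f^\bullet + f^\bullet = \partial_{z_2}f^\diamond$, obtained by the same reindexing of~\eqref{eq:deffbullet}--\eqref{eq:deffdiamond} that proves the first. Evaluating it at $(z^+_\nu,z^\diamond_\nu)$ with $f^\bullet_\nu(z^+_\nu,z^\diamond_\nu) = 1-1/z^+_\nu$ and the criticality equality gives $\partial_{z_1}f^\bullet_\nu = 1/(z^+_\nu)^2 - \partial_{z_1}f^\diamond_\nu/\sqrt{z^+_\nu}$ directly, as in the paper; the Jacobian then indeed vanishes, but as a consequence rather than a usable premise.
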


\begin{rema}
Our proof of Lemma \ref{lem:asymptfbfg} can also be used to show that the functions $f^\bullet_\nu$ and $f^\diamond_\nu$ are analytic in a neighborhood of $(z^+_\nu,z^\diamond_\nu)$ when $\nu > \nu_c$. We do not include this part here as we do not use that fact, and also because it is already a consequence (see for example Miermont \cite{MiermontInvariance}) of the fact that the weight sequence $\mathbf q (\nu,t_\nu)$ is regular critical when $\nu >\nu_c$ since the root face of the associated random map has exponential tails (see Proposition~\ref{prop:regNonReg}).
\end{rema}

\begin{proof}[Proof of Lemma \ref{lem:asymptfbfg}]
Fix $\nu \in (0,\nu_c]$ and $(z_1,z_2) \in (0,z^+_\nu] \times (0,z^\diamond_\nu]$.
We start with the expressions for $f^\bullet$ and $f^\diamond$ given in Proposition \ref{prop:fbulletdiamond}. Recall the notation $a_+ = 1  - \sqrt{\nu t_\nu^3} (z_2 + 2 \sqrt{z_1})$ and $a_- = 1  - \sqrt{\nu t_\nu^3} (z_2 - 2 \sqrt{z_1})$.
The change of variable
\[
z= \phi(z_1,z_2;\xi) := \frac{1}{a_-} + \left(\frac{1}{a_+} -  \frac{1}{a_-} \right) \xi
\]
in the expressions of $f_{\nu}^\bullet$ and $f_{\nu}^\diamond$ give:
\begin{align*}
f_{\nu}^\bullet &(z_1,z_2) \\
&= (\nu t_\nu)^{3/2} \, 2 z_2 + \frac{1}{\pi \sqrt{a_+a_-}} \, \int_0^1 \,
\frac{1 - \sqrt{\nu t^3_\nu} - \frac{1}{\phi(z_1,z_2;\xi)}}{2 z_1}
\, \xi^{-1/2} (1-\xi)^{-1/2} Q^+ \left(\nu, t_\nu,t_\nu
\phi(z_1,z_2;\xi) \right) d\xi,\\
f_{\nu}^\diamond &(z_1,z_2) \\
&= (\nu t_\nu)^{3/2} (2z_1 + z_2^2) + \frac{\sqrt{\nu t_\nu^3}}{\pi \sqrt{a_+a_-}} \, \int_0^1 \, \xi^{-1/2} (1-\xi)^{-1/2} Q^+ \left(\nu, t_\nu,t_\nu
\phi(z_1,z_2;\xi) \right) d\xi.
\end{align*}
\bigskip

Note that for $(z_1,z_2)\in (0,z_\nu^+]\times(0,z_\nu^\diamond)$ and $\xi\in [0,1]$, we have: 
\begin{equation} \label{eq:rangephi}
\phi(z_1,z_2;\xi) \in \left[ \frac{1}{a_-} ,  \frac{1}{a_+}  \right] \subset \left[ \frac{1}{1  + 2\sqrt{\nu t_\nu^3} z^+_\nu} ,  \frac{1}{1-1/y_+(\nu,t_\nu)}  \right] \subset (0,2],
\end{equation}
and $\phi(z_1,z_2;\xi)=2$, if and only if $(z_1,z_2,\xi)=(z_\nu^+,z_\nu^\diamond,1)$. 
Moreover recall that, since $\nu \leq \nu_c$, the dominant singularity $y_+(\nu,t_\nu)$ of $y\mapsto Q^+(\nu,t_\nu,t_\nu y)$ is equal to 2. 

The general strategy to compute the singular expansion of these integrals is then the following. 
We are first going to prove that, if we remove from $Q^+$ its first singular terms in its development at $y=2$, the first terms of the asymptotic development of the integral are linear and quadratic terms.
Then, we prove that the singular part in the development is driven by the singular part in the asymptotic behavior of $y\mapsto Q^+(\nu,t_\nu,t_\nu y)$ around 2 and calculate it. 

\bigskip

We start with $f^\diamond$ as the calculations are slightly simpler in this case. From Proposition~\ref{lemm:weightsclusters}, we know that $y \mapsto Q^+(\nu,t_\nu,t_\nu y)$ is analytic for $|y| < 2$ and we can compute the following expansion as $y \to 2^-$:
\begin{equation}\label{eq:devABC}
\begin{split}
Q^+(\nu,t_\nu,t_\nu y)
= &
Q^+(\nu,t_\nu, 2 t_\nu)\\
&+ \aleph_{2/3}^{Q^+}(\nu) \cdot (2-y)^{2/3} + \aleph_{1}^{Q^+}(\nu) \cdot (2-y) + \aleph_{4/3}^{Q^+}(\nu) \cdot (2-y)^{4/3}\\
&+ \aleph_{5/3}^{Q^+}(\nu) \cdot (2-y)^{5/3} + \aleph_{2}^{Q^+}(\nu) \cdot (2-y)^{2} + o \left( (2-y)^{2} \right)
\end{split}
\end{equation}
where we recall that the coefficients $\aleph_{i}^{Q^+}$ are explicit functions of $\nu$ such that $\aleph_{2/3}^{Q^+}(\nu) = 0$ if and only if $\nu = \nu_c$ and $\aleph_{4/3}^{Q^+}(\nu) \neq 0$ for $\nu \leq \nu_c$. 

Set
\[
\varphi(y) = Q^+(t_\nu,t_\nu y) -  \aleph_{2/3}^{Q^+}(\nu) \cdot (2-y)^{2/3} - \aleph_{4/3}^{Q^+}(\nu) \cdot (2-y)^{4/3} - \aleph_{5/3}^{Q^+}(\nu) \cdot (2-y)^{5/3}.
\]
In view of~\eqref{eq:rangephi} and \eqref{eq:devABC}, the function $\varphi$ is twice differentiable on $(0,2)$ and we have:
\[
        \int_0^1\xi^{-1/2} (1-\xi)^{-1/2} \left|\varphi'\left( \phi(z_1,z_2;\xi) \right)\right|d\xi <\infty
\]
and
\[
    \int_0^1\xi^{-1/2} (1-\xi)^{-1/2} \left|\varphi''\left( \phi(z_1,z_2;\xi) \right)\right|d\xi <\infty
\] 
Together with the fact that $ \frac{1}{a_-}$ and $\frac{1}{a_+}$ are twice differentiable on $[0,z^+_\nu] \times [0,z^\diamond_\nu]$, this implies that the function $I_\varphi$ defined by:
\[
I_\varphi (z_1,z_2) =  \frac{1}{\pi} \int_0^1 \, \xi^{-1/2} (1-\xi)^{-1/2} \varphi 
\left( \frac{1}{a_-} + \left(\frac{1}{a_+} -  \frac{1}{a_-} \right) \xi \right) d\xi
\]
is twice differentiable on $[0,z^+_\nu] \times [0,z^\diamond_\nu]$. Therefore, as $z_1 \to (z^+_\nu)^-$ and $z_2 \to (z^\diamond_\nu)^-$ we get the non singular expansion:
\[
I_\varphi (z_1,z_2) = I_\varphi(z^+_\nu,z^\diamond_\nu) + \nabla I_\varphi(z^+_\nu,z^\diamond_\nu) \cdot (z_1-z^+_\nu,z_2 - z^\diamond_\nu) + \mathcal O \left((z_1-z^+_\nu)^2 +(z_2 - z^\diamond_\nu)^2 \right).
\]

\bigskip

We now move to the singular part in the development of the function $f^\diamond_{\nu}$. It comes from the terms of the form $(2-y)^{p}$ with $p=2/3,4/3$ and $5/3$ that we subtracted. Indeed, for these values of $p$, their contribution to the integral is equal to:
\begin{align*}
I_p(z_1,z_2) &:=
\frac{1}{\pi} \int_0^1 \, \xi^{-1/2} (1-\xi)^{-1/2}  
\left(2 - \left(\frac{1}{a_-} + \left(\frac{1}{a_+} -  \frac{1}{a_-} \right) \xi \right)\right)^p d\xi, \\
&= \left({2 - \frac{1}{a_-}} \right)^p \, \frac{1}{\pi} \int_0^1
\, \xi^{-1/2} (1-\xi)^{-1/2}  
\left(1 - \left(\frac{\frac{1}{a_+} -  \frac{1}{a_-}}{{2 - \frac{1}{a_-}} } \right) \xi \right)^p d\xi.
\end{align*}
To compute this integral, we rewrite it as the Euler integral representation of a hypergeometric function. Indeed, when $z_1 \to (z^+_\nu)^-$ and $z_2 \to (z^\diamond_\nu)^-$, we have $\frac{1}{a_+} \to 2^-$ and $\frac{\frac{1}{a_+} - \frac{1}{a_-}}{{2 - \frac{1}{a_-}} } \to 1^-$, so that:
\begin{align*}
I_p(z_1,z_2) &=
\left({2 - \frac{1}{a_-}} \right)^p \,
_2 F_1 \left( -p, \frac{1}{2}; 1;
\frac{\frac{1}{a_+} -  \frac{1}{a_-}}{{2 - \frac{1}{a_-}} } \right).
\end{align*}
This allows us to use the singular behavior at $1^-$ of $\,
_2 F_1$, which is of the form: 
\begin{equation}\label{eq:devHypergeom}
_2 F_1 \left( -p, \frac{1}{2}; 1;
u \right) = a_p + b_p \, (1-u) + c_p \left(1- u\right)^{p+1/2} + \mathcal O \left(1- u\right)^2
\end{equation}
as $u \to 1^-$, where $a_p$, $b_p$ and $c_p$ are explicit (see the Maple file \cite{Maple}).

We have $1-\frac{\frac{1}{a_+} - \frac{1}{a_-}}{{2 - \frac{1}{a_-}} }= \frac{2 - \frac{1}{a_+}}{2 - \frac{1}{a_-}}$, with $2 - \frac{1}{a_-} \rightarrow 2-c_-(\nu,t_\nu)>0$ and : 
\begin{align*}
2 - \frac{1}{a_+} = 4 \sqrt{\nu t_\nu^3} \left( (z^\diamond_\nu - z_2) + \frac{1}{\sqrt{z^+}} (z^+_\nu - z_1) \right) + \mathcal O \left( (z^\diamond_\nu - z_2)^2 + (z^+_\nu - z_1)^2 \right),
\end{align*}
as $z_1\rightarrow z_\nu^+$ and $z_2\rightarrow z^\diamond_\nu$. So, that plugging this expression in~\eqref{eq:devHypergeom}, we get:
\begin{multline}
I_p(z_1,z_2) = a'_p + b'_p \, (z^+_\nu - z_1) + b''_p (z^\diamond_\nu - z_2)\\
 + c_p \left(4 \sqrt{\nu t_\nu^3} \left( (z^\diamond_\nu - z_2) + \frac{1}{\sqrt{z^+_\nu}} (z^+_\nu - z_1) \right) \right)^{p+1/2}\\ + \mathcal O \left( (z^\diamond_\nu - z_2)^2 + (z^+_\nu - z_1)^2 \right)
\end{multline}
as $z_1 \to (z^+)^-$ and $z_2 \to (z^\diamond)^-$ and where the constants are explicit. Notice that when $p=5/3$, the singular term of the development is of order higher than $2$ and will not contribute in the final development.

\bigskip

Finally, we can sum the different contributions to the integral and write:
\begin{multline*}
f^\diamond_\nu (z_1,z_2) = 
(\nu t_\nu)^{3/2} (2z_1 + z_2^2) \\
+ \frac{\sqrt{\nu t_\nu^3}}{\sqrt{a_+a_-}} \,
\left(
I_\varphi(z_1,z_2) + \aleph_{2/3}^{Q^+} (\nu) I_{2/3}(z_1,z_2) + \aleph_{4/3}^{Q^+} (\nu) I_{4/3}(z_1,z_2)+ \aleph_{5/3}^{Q^+} (\nu) I_{5/3}(z_1,z_2)
\right).
\end{multline*}
Plugging into this expression the expansions of $I_\varphi$, $I_{2/3}$, $I_{4/3}$, $I_{5/3}$, and recalling that $f^\diamond$ is differentiable at $(z^+,z^\diamond)$, we get
\begin{align*}
f^\diamond_\nu(z_1,z_2) &= f^\diamond(z^+_\nu,z^\diamond_\nu) - \nabla f^\diamond(z^+_\nu,z^\diamond_\nu) \cdot (z^+_\nu - z_1,z^\diamond_\nu - z_2) \\
& \qquad + \aleph^\diamond_{2/3} (\nu) \left( (z^\diamond_\nu - z_2) + \frac{1}{\sqrt{z^+_\nu}} (z^+_\nu - z_1) \right)^{2/3+1/2}
\\
& \qquad + \aleph^\diamond_{4/3} (\nu) \left( (z^\diamond_\nu - z_2) + \frac{1}{\sqrt{z^+_\nu}} (z^+_\nu - z_1) \right)^{4/3+1/2}
\\
& \qquad + \mathcal O \left( (z^\diamond_\nu - z_2)^2 + (z^+_\nu - z_1)^2 \right),
\end{align*}
with
\[
\aleph^\diamond_p(\nu) = \frac{c_p \left(4 \sqrt{\nu t_\nu^3}\right)^{p+1/2} \, \sqrt{\nu t_\nu^3} \, \aleph_p^{Q^+}(\nu)}{\sqrt{\frac{1}{2} \left(1-\frac{1}{y_- \left(\nu,t_\nu \right)}\right)}}.
\]
Note that we have $\aleph^\diamond_{2/3} (\nu_c) = 0$, so we set
\[
\aleph^\diamond(\nu) =
\begin{cases}
\aleph_{2/3}^\diamond(\nu) & \text{for $\nu < \nu_c$,}\\
\aleph_{4/3}^\diamond(\nu_c) & \text{for $\nu = \nu_c$.}
\end{cases}
\]
The expansion for $f^\diamond_\nu$ given in the lemma then follows from the fact that $f^\diamond(z^+_\nu,z^\diamond_\nu) = z^\diamond_\nu$ and from the criticality of the weight sequence $\mathbf q(\nu, t_\nu)$ established in Proposition \ref{prop:Wnut}, which ensures the equality
\[
\partial_{z_2} f^\diamond(z^+_\nu,z^\diamond_\nu) + \sqrt{z^+_\nu} \partial_{z_1} f^\diamond (z^+_\nu,z^\diamond_\nu) = 1.\]

\bigskip

We now move to the development for $f^\bullet$. It is obtained with the same strategy by replacing $\varphi$ by the function
\[
\varphi^\bullet (y) = \frac{1 - \sqrt{\nu t^3_\nu} - \frac{1}{y}}{2 z_1} Q^+(t_\nu,t_\nu y) -  \frac{1 - \sqrt{\nu t^3_\nu} - \frac{1}{2}}{2 z_1} \left( \aleph_{2/3}^{Q^+}(\nu) \cdot (2-y)^{2/3} + \aleph_{4/3}^{Q^+}(\nu) \cdot (2-y)^{4/3} \right).
\]
We can prove as above that the function
\[
I_{\varphi^\bullet} (z_1,z_2) =  \frac{1}{\pi} \int_0^1 \, \xi^{-1/2} (1-\xi)^{-1/2} \varphi^\bullet 
\left( \frac{1}{a_-} + \left(\frac{1}{a_+} -  \frac{1}{a_-} \right) \xi \right) d\xi
\]
is twice differentiable on $[0,z^+_\nu] \times [0,z^\diamond_\nu]$. We can then use the identity
\[
f_{\nu}^\bullet (z_1,z_2)
= (\nu t_\nu)^{3/2} \, 2 z_2 + \frac{1}{\sqrt{a_+a_-}} \, 
I_\varphi(z_1,z_2) + \frac{\frac 1 2 - \sqrt{\nu t^3_\nu}}{2 z_1 \sqrt{a_+a_-}}\,
\left(
  \aleph_{2/3}^{Q^+} (\nu) I_{2/3}(z_1,z_2) + \aleph_{4/3}^{Q^+} (\nu) I_{4/3}(z_1,z_2)
\right)
\]
and the developments of $I_{2/3}$ and $I_{4/3}$ to establish that 
\begin{align*}
f^\bullet_\nu(z_1,z_2) &= f^\bullet(z^+_\nu,z^\diamond_\nu) - \nabla f^\bullet(z^+_\nu,z^\diamond_\nu) \cdot (z^+_\nu - z_1,z^\diamond_\nu - z_2) \\
& \qquad + \aleph^\bullet_{2/3} (\nu) \left( (z^\diamond_\nu - z_2) + \frac{1}{\sqrt{z^+_\nu}} (z^+_\nu - z_1) \right)^{2/3+1/2}
\\
& \qquad + \aleph^\bullet_{4/3} (\nu) \left( (z^\diamond_\nu - z_2) + \frac{1}{\sqrt{z^+_\nu}} (z^+_\nu - z_1) \right)^{4/3+1/2}
\\
& \qquad + \mathcal O \left( (z^\diamond_\nu - z_2)^2 + (z^+_\nu - z_1)^2 \right),
\end{align*}
 with
\[
\aleph^\bullet_p(\nu) = \frac{\frac{1}{2} - \sqrt{\nu t_\nu^3} z^\diamond_\nu}{2 z^+_\nu} \, 
\frac{c_p \left(4 \sqrt{\nu t_\nu^3}\right)^{p+1/2} \, \aleph_p^{Q^+}(\nu)}{\sqrt{\frac{1}{2} \left(1-\frac{1}{y_- \left(\nu,t_\nu \right)}\right)}}.
\]
Similarly as before, we set
\[
\aleph^\bullet(\nu) =
\begin{cases}
\aleph_{2/3}^\bullet(\nu) & \text{for $\nu < \nu_c$,}\\
\aleph_{4/3}^\bullet(\nu_c) & \text{for $\nu = \nu_c$.}
\end{cases}
\]
The statement for $f^\bullet_\nu$ then follows from the criticality of the weight sequence $\mathbf q (\nu,t_\nu)$ that ensures the equality $f^\bullet_\nu (z^+_\nu,z^\diamond_\nu) = 1 - 1/z^+_\nu$, and from the two following generic identities for BDG functions:
\[
\forall (z_1,z_2), \quad \partial_{z_2} f^\bullet  (z_1,z_2)= \partial_{z_1} f^\diamond  (z_1,z_2) \quad \text{and} \quad z_1 \partial{z_1} f^\bullet  (z_1,z_2) + f^\bullet  (z_1,z_2) =  \partial_{z_2} f^\diamond  (z_1,z_2).
\]
\end{proof}

\subsubsection{Singular developments of \texorpdfstring{$z^+_\nu (g)$}{z+(g)} and \texorpdfstring{$z^\diamond_\nu (g)$}{zd(g)}}

For $\nu > 0$ and $g \in (0,1]$, let us denote by $(z^+_\nu (g),z^\diamond_\nu (g))$ the solution in $(0,z^+_\nu] \times (0,z^\diamond_\nu]$ of the system \eqref{eq:fixpointg} when the weight sequence is $\mathbf q (\nu,t_\nu)$. In other words, $(z^+_\nu (g),z^\diamond_\nu (g))$ is the solution of the system of equations:
\begin{equation} \label{eq:zgtcrit}
\begin{cases}
f^\bullet_\nu \Big(z^+_\nu(g),z^\diamond_\nu(g)\Big) &= 1 - \dfrac{g}{z^+_\nu(g))},\\
f^\diamond_\nu \Big(z^+_\nu(g),z^\diamond_\nu(g)\Big) &= z^\diamond_\nu(g),
\end{cases}
\end{equation}
where we recall that $f^\bullet_{\nu}:= f^\bullet_{\mathbf q(\nu,t_\nu)}$ and $f^\diamond_{\nu}:= f^\diamond_{\mathbf q(\nu,t_\nu)}$. 

Using the asymptotic expansions of $f^\bullet_\nu$ and $f^\diamond_\nu$ established in Lemma \ref{lem:asymptfbfg}, we can obtain the asymptotic behavior of $z^+_\nu (g)$ and $z^\diamond_\nu (g)$ as $g$ approaches $1$:

\begin{lemm} \label{lem:asymzgtcrit}
For $\nu \in (0, \nu_c]$, we have as $g \to 1^-$:
\begin{align} 
z^\diamond_\nu(g) &= z^\diamond_\nu - \frac{1}{2} \left( {\sqrt{z^+_\nu}} \aleph^\diamond (\nu) + z^+_\nu \aleph^\bullet(\nu) \right)^{-\frac{1}{\alphaa(\nu)-1/2}} \, \left( 1-g \right)^{\frac{1}{\alphaa(\nu)-1/2}} + o \left(\left( 1-g \right)^{\frac{1}{\alphaa(\nu)-1/2}} \right), \label{eq:zdgexp} \\
z^+_\nu(g) &= z^+_\nu - \frac{\sqrt{z^+_\nu}}{2} \left( {\sqrt{z^+_\nu}} \aleph^\diamond (\nu) + z^+_\nu \aleph^\bullet(\nu) \right)^{-\frac{1}{\alphaa(\nu)-1/2}} \, \left( 1-g \right)^{\frac{1}{\alphaa(\nu)-1/2}} + o \left(\left( 1-g \right)^{\frac{1}{\alphaa(\nu)-1/2}} \right), \label{eq:zpgexp}
\end{align}
where we recall that
\begin{equation}
\alphaa(\nu) =
\begin{cases}
5/3 & \text{for $\nu < \nu_c$,}\\
7/3 & \text{for $\nu = \nu_c$.}
\end{cases}
\quad \text{ and hence }\quad  \frac{1}{\alphaa(\nu)-1/2}=
\begin{cases}
6/7 & \text{for $\nu < \nu_c$,}\\
6/11 & \text{for $\nu = \nu_c$.}
\end{cases}
\end{equation}
\end{lemm}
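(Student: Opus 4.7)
The plan is to treat the system \eqref{eq:zgtcrit} as an implicit equation for the pair
$(\delta^+, \delta^\diamond) := (z^+_\nu - z^+_\nu(g), z^\diamond_\nu - z^\diamond_\nu(g))$,
to plug in the singular developments of $f^\bullet_\nu$ and $f^\diamond_\nu$ from Lemma~\ref{lem:asymptfbfg}, and to invert the resulting system. First I would observe that $g \mapsto z^+_\nu(g)$ and $g \mapsto z^\diamond_\nu(g)$ are non-decreasing in $g$ (their combinatorial interpretations in \eqref{eq:zpzdcombig} make this obvious), so both $\delta^+$ and $\delta^\diamond$ are nonnegative and tend to $0$ as $g \to 1^-$. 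This supplies the qualitative smallness needed to use the developments of Lemma~\ref{lem:asymptfbfg}.

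Writing $A := \partial_{z_1} f^\diamond_\nu(z^+_\nu, z^\diamond_\nu) \in (0,1)$, Taylor expanding $g/z^+_\nu(g) = 1/z^+_\nu + \delta^+/(z^+_\nu)^2 - (1-g)/z^+_\nu + \text{l.o.t.}$, and substituting the expansions of Lemma~\ref{lem:asymptfbfg} into \eqref{eq:zgtcrit}, the constant terms cancel (using $f^\bullet_\nu(z^+_\nu,z^\diamond_\nu) = 1 - 1/z^+_\nu$ and $f^\diamond_\nu(z^+_\nu,z^\diamond_\nu) = z^\diamond_\nu$) and, after a short rearrangement, the remaining linear terms combine to yield the system
\begin{align*}
\tfrac{A}{\sqrt{z^+_\nu}} \big(\delta^+ - \sqrt{z^+_\nu}\,\delta^\diamond\big) + \aleph^\bullet(\nu)\,\big(\delta^\diamond + \tfrac{\delta^+}{\sqrt{z^+_\nu}}\big)^{\alphaa(\nu)-1/2} &= \tfrac{1-g}{z^+_\nu} + \mathcal O\!\big((\delta^+)^2 + (\delta^\diamond)^2\big),\\
A\big(\delta^+ - \sqrt{z^+_\nu}\,\delta^\diamond\big) - \aleph^\diamond(\nu)\,\big(\delta^\diamond + \tfrac{\delta^+}{\sqrt{z^+_\nu}}\big)^{\alphaa(\nu)-1/2} &= \mathcal O\!\big((\delta^+)^2 + (\delta^\diamond)^2\big).
\end{align*}

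The decisive move is the linear change of variables
$u := \delta^\diamond + \delta^+/\sqrt{z^+_\nu}$ and $v := \delta^+ - \sqrt{z^+_\nu}\,\delta^\diamond$,
which decouples the leading behavior: the second equation reads $A v = \aleph^\diamond(\nu)\,u^{\alphaa(\nu)-1/2} + \mathcal O(u^2 + v^2)$. Since $\alphaa(\nu) - 1/2 \in \{7/6, 11/6\}$ is strictly greater than $1$, a bootstrap argument yields $v = \mathcal O\big(u^{\alphaa(\nu)-1/2}\big)$, so that $v = o(u)$. Substituting this estimate into the first equation kills the $v$-contribution at leading order and gives
\begin{equation*}
\Big(\tfrac{\aleph^\diamond(\nu)}{\sqrt{z^+_\nu}} + \aleph^\bullet(\nu)\Big)\,u^{\alphaa(\nu)-1/2} = \tfrac{1-g}{z^+_\nu} + o\!\big(u^{\alphaa(\nu)-1/2}\big),
\end{equation*}
which inverts to $u = \big(\sqrt{z^+_\nu}\,\aleph^\diamond(\nu) + z^+_\nu\,\aleph^\bullet(\nu)\big)^{-1/(\alphaa(\nu)-1/2)}(1-g)^{1/(\alphaa(\nu)-1/2)} + \text{l.o.t.}$ Unraveling $\delta^\diamond = u/2 - v/(2\sqrt{z^+_\nu})$ and $\delta^+ = \sqrt{z^+_\nu}\,u/2 + v/2$ and absorbing the $v$ term into the error yields \eqref{eq:zdgexp} and \eqref{eq:zpgexp}.

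The main obstacle will be the rigorous justification of the bootstrap, since the developments of Lemma~\ref{lem:asymptfbfg} carry a non-smooth fractional-power singular term together with smooth $\mathcal O((\delta^+)^2 + (\delta^\diamond)^2)$ corrections. Concretely, one must check that $2 > \alphaa(\nu) - 1/2$ so that the quadratic errors are genuinely negligible compared with the singular forcing; this holds since $\alphaa(\nu) \leq 7/3 < 5/2$. A clean way to make the argument rigorous is to recast the system as a fixed-point problem in rescaled variables $\tilde u := u / (1-g)^{1/(\alphaa(\nu)-1/2)}$ and $\tilde v := v / (1-g)^{(\alphaa(\nu)-1/2)/(\alphaa(\nu)-1/2)}$, and verify a contraction on a small interval around the predicted leading values; monotonicity in $g$ guarantees uniqueness of the branch selected.
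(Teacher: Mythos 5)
Your proof is correct and follows essentially the same route as the paper's: plug the singular expansions of $f^\bullet_\nu$ and $f^\diamond_\nu$ from Lemma~\ref{lem:asymptfbfg} into the system~\eqref{eq:zgtcrit}, use criticality to cancel constant terms, Taylor expand $g/z^+_\nu(g)$, and solve. The linear change of variables $(u,v) = (\delta^\diamond + \delta^+/\sqrt{z^+_\nu},\, \delta^+ - \sqrt{z^+_\nu}\delta^\diamond)$ is a nice presentational refinement: the paper arrives at the same conclusion by deriving~\eqref{eq:zpg1}, which is precisely the relation $v = \mathcal O\big(u^{\alpha(\nu)-1/2}\big)$ written in the original coordinates (with $u$ replaced by $2\delta^\diamond$ at leading order), and then substituting into the $f^\bullet$ equation; your change of variables makes the decoupling explicit rather than implicit. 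Two small remarks. First, the sentence ``kills the $v$-contribution at leading order'' is slightly misleading: the $v$-contribution produces a term of the same order as $\aleph^\bullet u^{\alpha-1/2}$, namely $\tfrac{\aleph^\diamond}{\sqrt{z^+_\nu}}u^{\alpha-1/2}$, and both survive in the final constant $\sqrt{z^+_\nu}\aleph^\diamond + z^+_\nu\aleph^\bullet$; what happens is that $v$ is eliminated as an independent unknown, not that its contribution vanishes. Second, the closing fixed-point contraction argument is more machinery than the paper actually needs: once one knows $\delta^+,\delta^\diamond \to 0$ (from monotonicity in $g$ and convergence as $g \to 1^-$, which you correctly invoke), the standard $\mathcal O$/$o$ bookkeeping already closes the bootstrap, since $\alpha(\nu) - 1/2 < 2$ so the quadratic errors in Lemma~\ref{lem:asymptfbfg} are subdominant. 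Both observations are cosmetic; the argument is sound and matches the paper.
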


\begin{rema}
As for Lemma \ref{lem:asymptfbfg}, we could establish a similar development for $z^+_\nu(g)$ and $z^\diamond_\nu (g)$ for $\nu > \nu_c$, with a singular term of order $(1-g)^{1/2}$. Since it could also be obtained directly with the regular criticality of the weight sequence in this regime (and is not needed for the rest of the article), we do not include it here. 
\end{rema}

\begin{proof}[Proof of Lemma \ref{lem:asymzgtcrit}]
As explained at the end of Section \ref{sec:BDGgen}, the functions $g \mapsto z^+_\nu(g)$ and $g \mapsto z^\diamond_\nu(g)$ are positive and increasing in $g$, and we have $\Big(z^+_\nu(g),z^\diamond_\nu(g)\Big) \to (z^+_\nu,z^\diamond_\nu)$ as $g \to 1^-$.
\bigskip

Fix $\nu \in (0, \nu_c]$. We start from the expansion for $f^\diamond_\nu$ established in Lemma \ref{lem:asymptfbfg}, where we take $z_1=z_\nu^+(g)$ and $z_2=z^\diamond_\nu(g)$. Since, by definition $f^\diamond_\nu \Big(z^+_\nu(g),z^\diamond_\nu(g)\Big) = z^\diamond_\nu(g)$ (see~\eqref{eq:zgtcrit}), this gives:
	\begin{multline*}
z^\diamond_\nu - z^\diamond_\nu(g) = \partial_{z_1} f^\diamond_\nu (z^+_\nu,z^\diamond_\nu) \cdot \Big(z^+_\nu - z^+_\nu(g)\Big) + \left( 1 - \sqrt{z^+_\nu} \partial_{z_1} f^\diamond_\nu(z^+_\nu,z^\diamond_\nu) \right)  \cdot \Big(z^\diamond_\nu - z^\diamond_\nu(g)\Big) \\
 - \aleph^\diamond(\nu) \left( \Big(z^\diamond_\nu - z^\diamond_\nu(g)\Big) + \frac{1}{\sqrt{z^+_\nu}} \Big(z^+_\nu - z^+_\nu(g)\Big) \right)^{\alphaa(\nu)-1/2} \\
+ \mathcal O \left( \Big(z^\diamond_\nu - z^\diamond_\nu(g)\Big)^2 + \Big(z^+_\nu - z^+_\nu(g)\Big)^2 \right).
\end{multline*}
As a consequence, we have: 
\begin{equation} \label{eq:zpg1}
z^+_\nu - z^+_\nu(g) = {\sqrt{z^+_\nu}} \, \Big(z^\diamond_\nu - z^\diamond_\nu(g)\Big)
+ \frac{ \aleph^\diamond (\nu)}{\partial_{z_1} f^\diamond_\nu(z^+_\nu,z^\diamond_\nu)} \left( 2 \Big(z^\diamond_\nu - z^\diamond_\nu(g)\Big) \right)^{\alphaa(\nu)-1/2} + o \left(\Big(z^\diamond_\nu - z^\diamond_\nu(g)\Big)^{\alphaa(\nu)-1/2} \right).
\end{equation}

Now, we take $z_1=z_\nu^+(g)$ and $z_2=z^\diamond_\nu(g)$ in the development for $f^\bullet_\nu$ obtained in Lemma \ref{lem:asymptfbfg}. Combined with the fact that $f^\bullet_\nu \Big(z^+_\nu(g),z^\diamond_\nu(g)\Big) = 1 - \frac{g}{z^+_\nu(g)}$, we get:
\begin{align}
& \frac{1}{z^+_\nu} - \frac{g}{z^+_\nu(g)} \notag\\
&= - \frac{1}{z^+_\nu} \left( \frac{1}{z^+_\nu} -\sqrt{z^+_\nu} \partial_{z_1} f^\diamond_\nu (z^+_\nu,z^\diamond_\nu) \right) \left( z^+_\nu - z^+_\nu(g) \right)
- \partial_{z_1} f^\diamond_\nu (z^+_\nu,z^\diamond_\nu) \left( z^\diamond_\nu - z^\diamond_\nu (g) \right) \notag\\
& \qquad + \aleph^\bullet(\nu) \left( \Big(z^\diamond_\nu - z^\diamond_\nu(g)\Big) + \frac{1}{\sqrt{z^+_\nu}} \Big(z^+_\nu - z^+_\nu(g)\Big) \right)^{\alphaa(\nu) - 1/2} 
 + \mathcal O \left( \Big(z^\diamond_\nu - z^\diamond_\nu(g)\Big)^2 + \Big(z^+_\nu - z^+_\nu(g)\Big)^2 \right), \notag
\end{align}
Then, replacing $z^+_\nu - z^+_\nu(g)$ by its expansion given in~\eqref{eq:zpg1} yields:
\begin{multline}\label{eq:zpg2}
\frac{1}{z^+_\nu} - \frac{g}{z^+_\nu(g)} =  -\frac{1}{(z^+_\nu)^{3/2}} \, \Big(z^\diamond_\nu - z^\diamond_\nu(g)\Big)\\
- \frac{1}{z^+_\nu} \left( \frac{1}{z^+_\nu} -\sqrt{z^+_\nu} \partial_{z_1} f^\diamond_\nu (z^+_\nu,z^\diamond_\nu) \right)  \frac{ \aleph^\diamond (\nu)}{\partial_{z_1} f^\diamond_\nu(z^+_\nu,z^\diamond_\nu)} \left( 2 \Big(z^\diamond_\nu - z^\diamond_\nu(g)\Big) \right)^{\alphaa(\nu) - 1/2}\\
+\aleph^\bullet(\nu) \left( 2 \Big(z^\diamond_\nu - z^\diamond_\nu(g)\Big) \right)^{\alphaa(\nu) -1/2}
 + o \left( \Big(z^\diamond_\nu - z^\diamond_\nu(g)\Big)^{\alphaa(\nu) -1/2} \right).
\end{multline}
On the other hand, we may also write: 
\begin{align*}
\frac{1}{z^+_\nu} - \frac{g}{z^+_\nu(g)}&=\frac{1-g}{z^+_\nu} + \frac{g}{z^+_\nu}\left(1-\frac{z^+\nu}{z^+_\nu(g)}\right)\\&= \frac{1-g}{z^+_\nu} - \frac{g}{(z^+_\nu)^2} \left(z^+_\nu - z^+_\nu(g) \right) +  \mathcal O \left( \Big(z^+_\nu - z^+_\nu(g)\Big)^2 \right) \notag \\
\end{align*}
Using again the expansion of $z^+_\nu - z^+_\nu(g)$ obtained in~\eqref{eq:zpg1}, we get:
\begin{multline}\label{eq:zpg3}
\frac{1}{z^+_\nu} - \frac{g}{z^+_\nu(g)} = \frac{1-g}{z^+_\nu}
 - \frac{g}{(z^+_\nu)^{3/2}}\Big(z^\diamond_\nu - z^\diamond_\nu(g)\Big)\\
- \frac{g}{(z^+_\nu)^{2}} \frac{ \aleph^\diamond(\nu)}{\partial_{z_1} f^\diamond_\nu(z^+_\nu,z^\diamond_\nu)} \left( 2 \Big(z^\diamond_\nu - z^\diamond_\nu(g)\Big) \right)^{\alphaa(\nu) - 1/2}  +o \left(\Big(z^\diamond_\nu - z^\diamond_\nu(g)\Big)^{\alphaa(\nu) - 1/2} \right).
\end{multline}
Identifying the right-hand sides of both~\eqref{eq:zpg2} and~\eqref{eq:zpg3} gives the following equality:
\begin{multline*}
\frac{1-g}{z^+_\nu} + \frac{1-g}{(z^+_\nu)^{3/2}}
 \Big(z^\diamond_\nu - z^\diamond_\nu(g)\Big)
+ \frac{1-g}{(z^+_\nu)^{2}} \frac{ \aleph^\diamond (\nu)}{\partial_{z_1} f^\diamond_\nu(z^+_\nu,z^\diamond_\nu)} \left( 2 \Big(z^\diamond_\nu - z^\diamond_\nu(g)\Big) \right)^{\alphaa(\nu) - 1/2}\\
=\left( \frac{1}{\sqrt{z^+_\nu}} \aleph^\diamond (\nu) + \aleph^\bullet(\nu) \right) \left( 2 \Big(z^\diamond_\nu - z^\diamond_\nu(g)\Big) \right)^{\alphaa(\nu) - 1/2} 
 + o \left( \Big(z^\diamond_\nu - z^\diamond_\nu(g)\Big)^{\alphaa(\nu) - 1/2} \right),
\end{multline*}
from which the singular development for $z^\diamond_\nu(g)$ follows.

The expansion for $z^+_\nu (g)$ can then be obtained by plugging the expansion for $z^\diamond_\nu(g)$ in \eqref{eq:zpg1}.
\end{proof}

\section{root spin cluster in finite random triangulations}

\subsection{root spin cluster in Ising Boltzmann triangulations (proof of Theorem \ref{th:mainBoltzmann})}

\subsubsection{Perimeter distribution and regular criticality of \texorpdfstring{$\mathbf q (\nu,t_\nu)$}{q(nu,tnu)}}

We start by proving the statements of Theorem~\ref{th:mainBoltzmann} about the perimeter of the root spin cluster. 
The proof relies on the fact that under $\mathbb P^\nu$, the root spin cluster is a Boltzmann random map with weight sequence $\mathbf q(\nu,t_\nu)$, as stated in Proposition \ref{prop:bolt}. We have:
\[
\mathbb P^\nu \left( | \partial \mathfrak C(\mathbf{T}^\nu) | = l \right)
= \frac{w_{\mathbf q(\nu,t_\nu)} (\mathcal M_{l})}{\mathcal Z^\ps (\nu,t_\nu)},
\]
where we recall that $\mathcal{M}_l$ is the set of planar maps with a root face of degree $l$. 
By definition of the disk generating series $W^{(l)}_{\mathbf q(\nu,t_\nu)}$, given in \eqref{eq:defDiskL}, the Boltzmann weight of clusters with perimeter $l \geq 1$ is given by
\[
w_{\mathbf q(\nu,t_\nu)} (\mathcal M_{l}) = W^{(l)}_{\mathbf q(\nu,t_\nu)} \cdot q_l(\nu,t_\nu).
\]
Therefore, we have 
\[
\mathbb P^\nu \left( | \partial \mathfrak C(\mathbf{T}^\nu) | = l \right)
=
\frac{W^{(l)}_{\mathbf q(\nu,t_\nu)} \cdot q_l(\nu,t_\nu)}{\mathcal Z^\ps (\nu,t_\nu)} = \frac{(t_\nu \nu)^{-l /2}
Q_l^+ (\nu, t_\nu) \cdot q_l(\nu,t_\nu)}{\mathcal Z^\ps (\nu,t_\nu)},
\]
where the last equality comes from \eqref{eq:Wl}. Define $\mathrm{C}^{\mathrm p}_{\mathrm{bol}}(\nu)$ by: 
 \begin{equation}\label{eq:CperBol}
 \mathrm{C}^{\mathrm p}_{\mathrm{bol}}(\nu):=\frac{(y_\nu-1)^{\alphaa(\nu)}\Big(\aleph^{Q^+}(\nu)\Big)^2}{\mathcal Z^\ps(\nu,t_\nu)\left(\Gamma\Big(1-\alphaa(\nu)\Big)\right)^2}.
 \end{equation}
Then, the asymptotics given in Corollary~\ref{cor:Q} and Proposition \ref{prop:asymptoq} yield: 
\begin{align}\label{eq:perimeterTail}
\mathbb P^\nu \left( \partial \mathfrak C(\mathbf{T}^\nu) = l \right) 
&\underset{l \to \infty}{\sim} \mathrm{C}^{\mathrm p}_{\mathrm{bol}}(\nu) (t_\nu \nu)^{-l /2} 
\times
\left({t_\nu} \right)^{-l} \left({y(\nu,t_\nu)} \right)^{-l} l^{-\alphaa(\nu)}
\times
\left(\frac{y(\nu,t_\nu)}{y(\nu,t_\nu)-1} \, t_\nu^{3/2} \nu^{1/2} \right)^l l^{-\alphaa(\nu)} \notag \\
&\underset{l \to \infty}{\sim} \mathrm{C}^{\mathrm p}_{\mathrm{bol}}(\nu)
\,
\left(\frac{1}{y(\nu,t_\nu)-1} \, \right)^l l^{-2\alphaa(\nu)},
\end{align}
The statements about the perimeter tail distribution from Theorem~\ref{th:mainBoltzmann} follow since $y(\nu,t_\nu) = 2$ for $\nu \leq \nu_c$ and $y(\nu,t_\nu) > 2$ for $\nu > \nu_c$.
\bigskip

A direct consequence of this result is the following classification of $\mathbf{q} (\nu,t_\nu)$ according to Definition~\ref{def:regcrit}:
\begin{prop}\label{prop:regNonReg}
The weight sequence $\mathbf{q}(\nu,t_\nu)$ is:
\begin{itemize}
\item non-regular critical for $\nu\leq\nu_c$. 
\item regular critical for $\nu>\nu_c$.
\end{itemize}
\end{prop}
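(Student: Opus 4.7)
The proposition follows almost immediately from the perimeter asymptotics derived in the preceding few lines, combined with the values of $y_\nu$ recorded in Lemma~\ref{lem:expanVy}. Criticality of $\mathbf q(\nu,t_\nu)$ for every $\nu>0$ has already been established in Proposition~\ref{prop:Wnut}, so the only content of the statement is the regular versus non-regular dichotomy in the sense of Definition~\ref{def:regcrit}.

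The plan is to read off the base of the geometric decay in \eqref{eq:perimeterTail}. Recall that Proposition~\ref{prop:bolt} identifies $\mathfrak C(\mathbf T^\nu)$ with a $\mathbf q(\nu,t_\nu)$-Boltzmann map, so that
\[
\mathbb P^{\mathbf q(\nu,t_\nu)}\!\left(|\partial \mathbf M^{\mathbf q(\nu,t_\nu)}|=l\right)
= \mathbb P^\nu\!\left(|\partial\mathfrak C(\mathbf T^\nu)|=l\right)
\underset{l\to\infty}{\sim}
\mathrm C^{\mathrm p}_{\mathrm{bol}}(\nu)\,\bigl(y_\nu-1\bigr)^{-l}\,l^{-2\alphaa(\nu)}.
\]
Thus regular criticality is equivalent to $y_\nu-1>1$, i.e. $y_\nu>2$. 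Lemma~\ref{lem:expanVy} tells us exactly that $y_\nu=2$ for $\nu\leq \nu_c$ and $y_\nu>2$ for $\nu>\nu_c$.

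Hence for $\nu\leq \nu_c$ the probability decays only polynomially as $l^{-2\alphaa(\nu)}$ with a constant $\mathrm C^{\mathrm p}_{\mathrm{bol}}(\nu)>0$ (non-vanishing by Proposition~\ref{lemm:weightsclusters}), which rules out the existence of any $r\in(0,1)$ with $\mathbb P^{\mathbf q(\nu,t_\nu)}(|\partial\mathbf M^{\mathbf q(\nu,t_\nu)}|=l)=\mathcal O(r^l)$, so $\mathbf q(\nu,t_\nu)$ is non-regular critical. For $\nu>\nu_c$, one has $y_\nu-1>1$, and the estimate above yields an exponential bound of the form $\mathcal O\bigl((y_\nu-1)^{-l}\bigr)$, so any $r\in\bigl((y_\nu-1)^{-1},1\bigr)$ certifies regular criticality.

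The only potential obstacle is a minor bookkeeping one: to invoke Definition~\ref{def:regcrit} we need $\mathrm C^{\mathrm p}_{\mathrm{bol}}(\nu)>0$ so the asymptotic lower bound on the probability is genuinely of order $l^{-2\alphaa(\nu)}$ in the non-regular case. This is immediate from the explicit expression \eqref{eq:CperBol} since $\aleph^{Q^+}(\nu)\neq 0$ (Proposition~\ref{lemm:weightsclusters}) and $\mathcal Z^\ps(\nu,t_\nu)<\infty$ (Proposition~\ref{prop:asymptoZmono}). No further computation is needed.
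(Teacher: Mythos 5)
Your proof is correct and follows exactly the same route as the paper: the paper presents the proposition as a direct consequence of the perimeter tail estimate \eqref{eq:perimeterTail}, with non-regularity versus regularity read off from $y_\nu = 2$ for $\nu \leq \nu_c$ and $y_\nu > 2$ for $\nu > \nu_c$. Your additional care in checking that $\mathrm C^{\mathrm p}_{\mathrm{bol}}(\nu) > 0$ is a sensible precaution but brings nothing essentially new.
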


\subsubsection{Volume distribution}\label{sub:volumeBoltzmann}

We now move to the volume tail distribution and start with the (much easier) case $\nu > \nu_c$ where the cluster is a regular critical Boltzmann map, and finally we establish the volume tail behavior in the case $\nu \leq \nu_c$ where the root spin cluster is a non generic critical Boltzmann map.

\paragraph{Volume tail when $\nu > \nu_c$.}
In this range of values for $\nu$, Proposition~\ref{prop:regNonReg} states that $\mathfrak C(\mathbf T ^\nu)$ is a regular critical Boltzmann map and the statement follows by Proposition~\ref{prop:regcritvol}.

\paragraph{Volume tail when $\nu \leq \nu_c$.} In this regime, the map $\mathfrak C(\mathbf T^\nu)$ is non-regular critical and we have to compute the generating function of its volume.

For $g \leq 1$, recall the definition of $z^+(g)$ and $z^\diamond(g)$ given in \eqref{eq:zgtcrit}.
Equation \eqref{eq:Zg} states that:
\begin{equation}\label{eq:Ebolg}
\mathbb E^\nu \left[|V(\mathfrak C (\mathbf T^\nu))| \,  g^{|V(\mathfrak C (\mathbf T^\nu))|}\right]
=
g \, \frac{ 2 (z^+(g) - g) + (z^\diamond(g))^2}{\mathcal Z^\ps(\nu,t_\nu)}.
\end{equation}
The expansions of $z^+(g)$ and $z^\diamond(g)$ obtained in~Lemma \ref{lem:asymzgtcrit} imply that $\mathbb E^\nu \left[|V(\mathfrak C (\mathbf T^\nu))| \,  g^{|V(\mathfrak C (\mathbf T^\nu))|}\right]$ has a singular expansion as $g \to 1^-$ of the form:
\begin{equation}\label{eq:expE}
\mathbb E^\nu \left[|V(\mathfrak C (\mathbf T^\nu))| \,  g^{|V(\mathfrak C (\mathbf T^\nu))|}\right] = \mathbb E^\nu \left[|V(\mathfrak C (\mathbf T^\nu))| \right] - \mathrm{C}(\nu) (1-g)^{\frac{1}{\alphaa(\nu) - 1/2}} + o \left( (1-g)^{\frac{1}{\alphaa(\nu) - 1/2}} \right),
\end{equation}
with $\mathrm{C}(\nu) >0$.

Unfortunately, this expansion is not enough to extract the asymptotic behavior of the coefficients in $g$ of $\mathbb E^\nu \left[ |V(\mathfrak C (\mathbf T^\nu)| \, g^{|V(\mathfrak C (\mathbf T^\nu)|}\right]$ since it could have complex singularities of modulus $1$ other than $1$. To work around this problem, we rely on a related series in $g$ with decreasing coefficients and Tauberian theorems, as we now explain.
\bigskip

It follows directly from~\eqref{eq:expE} that: 
\[
\frac{\mathbb E^\nu \left[|V(\mathfrak C (\mathbf T^\nu))| \right] - \mathbb E^\nu \left[|V(\mathfrak C (\mathbf T^\nu))| \,  g^{|V(\mathfrak C (\mathbf T^\nu))|}\right]}{1-g}
\underset{g \to 1^-}{\sim}
\mathrm{C}(\nu) (1-g)^{\frac{1}{\alphaa(\nu) - 1/2}-1},
\]
where the exponent $\frac{1}{\alphaa(\nu) - 1/2}-1$ is negative for every $\nu \leq \nu_c$, since $\alphaa(\nu) = 5/3$ for $\nu <\nu_c$ and $\alphaa(\nu_c) = 7/3$. 

On the other hand, denoting $p_n = \mathbb P^\nu \left(|V(\mathfrak C (\mathbf T^\nu))| = n \right)$, we also have
\begin{align*}
\frac{\mathbb E^\nu \left[|V(\mathfrak C (\mathbf T^\nu))| \right] - \mathbb E^\nu \left[|V(\mathfrak C (\mathbf T^\nu))| \,  g^{|V(\mathfrak C (\mathbf T^\nu))|}\right]}{1-g}
&=
\sum_{n \geq 0} \left( \mathbb E^\nu \left[|V(\mathfrak C (\mathbf T^\nu))| \right] - \sum_{k=0}^n k p_k\right) \, g^n, \\
& = \sum_{n \geq 0} \sigma_n \, g^n,
\end{align*}
with $\sigma_n := \sum_{k >n} k p_k$ positive and decreasing with $n$. From there, a classical Tauberian theorem (see e.g. Theorem VI.13 of \cite{FS} and the following discussion) implies that:
\[
\sigma_n \underset{n\to \infty}{\sim} \mathrm{C}(\nu) \frac{n^{-\frac{1}{\alphaa(\nu) - 1/2}}}{\Gamma\left(1-\frac{1}{\alphaa(\nu) - 1/2} \right)}.
\]

To complete the proof of Theorem~\ref{th:mainBoltzmann}, we want to relate this last asymptotic behavior to the behavior of $s_n = \sum_{k>n} p_k$. Since
\[
\sum_{k \geq n} s_k = \sum_{l>n} p_l (l-n) = \sigma_n - n s_n,
\]
we can again use Tauberian methods to establish the asymptotic behavior of $s_n$. Indeed, for $\varepsilon >0$, we have: 
\[
	ns_n - n(1+\varepsilon ) s_{n(1+\varepsilon )} + \sum_{k = n}^{n (1+ \varepsilon)-1} s_n = \sigma_n-\sigma_{n(1+\varepsilon)},
\]
which implies
\begin{multline} \label{eq:snintermediaire}
ns_n - n(1+\varepsilon ) s_{n(1+\varepsilon )} + \sum_{k = n}^{n (1+ \varepsilon)-1} s_n \\
 = 
\frac{\mathrm{C}(\nu)}{\Gamma\left(1-\frac{1}{\alphaa(\nu) - 1/2} \right)} \, n^{-\frac{1}{\alphaa(\nu) - 1/2}} \, \left(1 - (1+\varepsilon)^{-\frac{1}{\alphaa(\nu) - 1/2}} \right) + o \left( n^{-\frac{1}{\alphaa(\nu) - 1/2}} \right).
\end{multline}
Using the fact that $s_n$ is decreasing, we get
\[
n (1+ \varepsilon) s_n \geq n(1+\varepsilon ) s_{n(1+\varepsilon )} +
\frac{\mathrm{C}(\nu)}{\Gamma\left(1-\frac{1}{\alphaa(\nu) - 1/2} \right)} \, n^{-\frac{1}{\alphaa(\nu) - 1/2}} \, \left(1 - (1+\varepsilon)^{-\frac{1}{\alphaa(\nu) - 1/2}} \right) + o \left( n^{-\frac{1}{\alphaa(\nu) - 1/2}} \right).
\]
Multiplying by $n^{ \frac{1}{\alphaa(\nu) - 1/2}}$ and taking the $\varliminf$ as $n \to \infty$ on both sides we get:
\begin{align*}
&(1+ \varepsilon) \varliminf \left(n^{ 1+  \frac{1}{\alphaa(\nu) - 1/2}}  s_n \right)
\\ \geq\quad &
(1+\varepsilon )^{- \frac{1}{\alphaa(\nu) - 1/2}}
\varliminf \left(
\Big(n(1+\varepsilon )\Big)^{ 1+  \frac{1}{\alphaa(\nu) - 1/2}} s_{n(1+\varepsilon )} \right) \ + \frac{\mathrm{C}(\nu)}{\Gamma\left(1-\frac{1}{\alphaa(\nu) - 1/2} \right)} \left(1 - (1+\varepsilon)^{-\frac{1}{\alphaa(\nu) - 1/2}} \right),\\
 \geq\quad&
(1+\varepsilon )^{- \frac{1}{\alphaa(\nu) - 1/2}}
\varliminf \left(n^{ 1+  \frac{1}{\alphaa(\nu) - 1/2}}  s_n \right)
 + \frac{\mathrm{C}(\nu)}{\Gamma\left(1-\frac{1}{\alphaa(\nu) - 1/2} \right)} \left(1 - (1+\varepsilon)^{-\frac{1}{\alphaa(\nu) - 1/2}} \right).
\end{align*}
Letting $\varepsilon \to 0$ in the last display yields
\begin{equation} \label{eq:snlliminf}
\varliminf \left(n^{ 1+  \frac{1}{\alphaa(\nu) - 1/2}}  s_n \right)
\geq
\frac{\frac{1}{\alphaa(\nu) - 1/2}}{1+\frac{1}{\alphaa(\nu) - 1/2}}
\frac{\mathrm{C}(\nu)}{\Gamma\left(1-\frac{1}{\alphaa(\nu) - 1/2} \right)}
=\frac{1}{\alphaa(\nu) + 1/2}
\frac{\mathrm{C}(\nu)}{\Gamma\left(1-\frac{1}{\alphaa(\nu) - 1/2} \right)}.
\end{equation}

We can establish an inequality for the $\varlimsup$ in a similar fashion. Indeed, starting from~\eqref{eq:snintermediaire} and using the fact that $s_n$ decreases with $n$, we get
\[
ns_n - n s_{n(1+\varepsilon)}
\leq
\frac{\mathrm{C}(\nu)}{\Gamma\left(1-\frac{1}{\alphaa(\nu) - 1/2} \right)} \, n^{-\frac{1}{\alphaa(\nu) - 1/2}} \, \left(1 - (1+\varepsilon)^{-\frac{1}{\alphaa(\nu) - 1/2}} \right)
+ o \left( n^{-\frac{1}{\alphaa(\nu) - 1/2}} \right).
\]
Multiplying by $n^{ \frac{1}{\alphaa(\nu) - 1/2}}$ and taking the $\varlimsup$ as $n \to \infty$ on both sides we get
\begin{align*}
&\varlimsup \left( n^{1 + \frac{1}{\alphaa(\nu) - 1/2}} s_n \right)\\
 \leq\quad &
\varlimsup \left( n^{1 + \frac{1}{\alphaa(\nu) - 1/2}} s_{n(1+\varepsilon)} \right) 
 + \frac{\mathrm{C}(\nu)}{\Gamma\left(1-\frac{1}{\alphaa(\nu) - 1/2} \right)} \, n^{-\frac{1}{\alphaa(\nu) - 1/2}} \, \left(1 - (1+\varepsilon)^{-\frac{1}{\alphaa(\nu) - 1/2}} \right),\\
\leq\quad &
(1+\varepsilon)^{-1 - \frac{1}{\alphaa(\nu) - 1/2}}
\varlimsup \left( n^{1 + \frac{1}{\alphaa(\nu) - 1/2}} s_n \right)  + \frac{\mathrm{C}(\nu)}{\Gamma\left(1-\frac{1}{\alphaa(\nu) - 1/2} \right)} \, n^{-\frac{1}{\alphaa(\nu) - 1/2}} \, \left(1 - (1+\varepsilon)^{-\frac{1}{\alphaa(\nu) - 1/2}} \right).
\end{align*}
Letting $\varepsilon \to 0$ in the last display finally gives
\begin{equation} \label{eq:snlimsup}
\varlimsup \left( n^{1 + \frac{1}{\alphaa(\nu) - 1/2}} s_n \right)
\leq
\frac{1}{\alphaa(\nu) + 1/2}
\frac{\mathrm{C}(\nu)}{\Gamma\left(1-\frac{1}{\alphaa(\nu) - 1/2} \right)}.
\end{equation}
By combining inequalities \eqref{eq:snlliminf} and \eqref{eq:snlimsup}, we get:
\begin{equation} \label{eq:snequiv}
s_n = \mathbb P^\nu \left(|V(\mathfrak C (\mathbf T^\nu))| > n \right)
\underset{n \to \infty}{\sim}
\frac{1}{\alphaa(\nu) + 1/2}
\frac{\mathrm{C}(\nu)}{\Gamma\left(1-\frac{1}{\alphaa(\nu) - 1/2} \right)} \,
n^{- \frac{\alphaa(\nu) + 1/2}{\alphaa(\nu) - 1/2}}.
\end{equation}
This finishes the proof of Theorem~\ref{th:mainBoltzmann} since the exponent in the previous expansion is $-13/7$ in the high temperature case when $\alphaa(\nu) = 5/3$, and $-17/11$ in the critical temperature case when $\alphaa(\nu) = 7/3$.

\subsection{root spin cluster in large random Ising triangulations (proof of Theorem \ref{th:mainsizen})}

Recall from~\eqref{eq:Pnundef} that $\mathbf T_{n}^\nu$ denotes a random triangulation with law $\mathbb P^\nu$ conditioned to have $3n$ edges, meaning:
\[
\forall (\mathfrak t,\sigma) \in \mathcal T^\ps \, , \quad \mathbb P_{n}^\nu \left( \mathbf T^\nu_n = (\mathfrak t,\sigma)\right) = \frac{\nu^{m(\mathfrak t,\sigma)} }{ [t^{3n}] \, \mathcal Z^\ps (\nu,t)} \mathbf{1}_{\{ | \mathfrak t | = 3n\}}.
\]
It follows from Proposition~\ref{prop:q_k}, that the law of $\mathfrak{C}\left( \mathbf T^\nu_n\right)$ is given by: 
\begin{equation*}
\forall \mathfrak m \in \mathcal M   \, , \quad
 \mathbb P_n^\nu \left( \mathfrak C(\mathbf T_n^\nu) = \mathfrak m \right)
=
\frac{[t^{3n}] \prod_{f \in F(\mathfrak m)} q_{\mathrm{deg}(f)}(\nu,t)}{[t^{3n}] \mathcal Z^\ps (\nu,t)},
\end{equation*}
and therefore
\begin{align*}
\mathbb E_n^\nu \left[ |V(\mathfrak C (\mathbf T_n^\nu))| \right]
&=
\frac{[t^{3n}] \sum_{\mathfrak m \in \mathcal M} |V( \mathfrak m )| \, \prod_{f \in F(\mathfrak m)} q_{\mathrm{deg}(f)}(\nu,t)}{[t^{3n}] \mathcal Z^\ps (\nu,t)},\\
& = \frac{[t^{3n}] w_{\mathbf q(\nu,t)} \left( \mathcal M ^\bullet \right) }{[t^{3n}] \mathcal Z^\ps (\nu,t)}.
\end{align*}
Equations~\eqref{eq:cyltobolt} and \eqref{eq:pointedDisk} then give
\begin{align*}
\mathbb E_n^\nu \left[ |V(\mathfrak C (\mathbf T_n^\nu))| \right]
&=
\frac{[t^{3n}] \left( W_{\mathbf q(\nu,t),\bullet}^{(2)} - 1 \right)}{[t^{3n}] \mathcal Z^\ps (\nu,t)}\\
& = \frac{[t^{3n}] \left( \frac{3}{8} c_+^2 (\nu,t) + \frac{3}{8} c_-^2(\nu,t) + \frac{1}{4} c_+(\nu,t)c_-(\nu,t) \right) }{[t^{3n}] \mathcal Z^\ps (\nu,t)}.
\end{align*}
We can then use the link between the quantities $c_\pm(\nu,t)$ and the singularities $y_\pm (\nu,t)$ of $Q^+(\nu,t,ty)$ given in \eqref{eq:cy+-} to get
\begin{equation} \label{eq:expectCy+-}
\mathbb E_n^\nu \left[ |V(\mathfrak C (\mathbf T_n^\nu))| \right] = \frac{[t^{3n}] \frac{1}{\nu t^3} \left(\frac{3}{8} \frac{1}{y_+(\nu,t)^2} + \frac{3}{8} \frac{1}{y_-(\nu,t)^2} + \frac{1}{4} \frac{1}{y_+(\nu,t)y_-(\nu,t)} \right)}{[t^{3n}] \mathcal Z^\ps (\nu,t)}.
\end{equation}

Proposition \ref{prop:asym_t_sign_y} ensures that the numerator of \eqref{eq:expectCy+-} has asymptotics of the form $\mathrm{Cst} \, t_\nu^{-3n} \, n^{-\gamma_+(\nu)}$, with $\gamma_+ (\nu) = 7/4$ for $\nu < \nu_c$ and  $\gamma_+ (\nu) = 3/2$ for $\nu > \nu_c$. Moreover, the asymptotics for $[t^{3n}] \mathcal Z^\ps (\nu,t)$ are of the form $\mathrm{Cst} \, (t_\nu)^{-3n} \, n^{-\gamma(\nu)}$, with $\gamma (\nu) = 5/2$ for $\nu \neq \nu_c$ and  $\gamma (\nu) = 7/3$ for $\nu = \nu_c$. Therefore we have
\[
\mathbb E_n^\nu \left[ |V(\mathfrak C(\mathbf T_n^\nu))| \right] \underset{n \to \infty}{\sim} \mathrm{Cst} \, n^{\gamma(\nu) - \gamma_+(\nu)},
\]
finishing the proof of Theorem~\ref{th:mainsizen}.

\section{root spin cluster in the IIPT}

\subsection{Cluster volume generating function and percolation probability}

We start by identifying the law of the root spin cluster in the IIPT on the event where it is finite.

\begin{prop} \label{prop:boltIIPT}
For any non atomic finite rooted map $\mathfrak m$ and every $\nu >0$ one has
\begin{equation} \label{eq:boltIIPT}
\mathbb P_\infty^\nu \left( \mathfrak C(\mathbf T_\infty^\nu) = \mathfrak m \right) =  \left( \prod_{f \in F(\mathfrak m)} q_{\mathrm{deg}(f)} (\nu ,t_\nu) \right)
\cdot \sum_{f \in F(\mathfrak m)} \frac{(\nu t_\nu^3)^{\mathrm{deg}(f)/2} \, \delta_{\mathrm{deg}(f)} (\nu)}{q_{\mathrm{deg}(f)} (\nu ,t_\nu)},
\end{equation}
where the numbers $\left( \delta_k(\nu) \right)_{k \geq 1}$ are defined in Proposition \ref{prop:asymptQk}.
\end{prop}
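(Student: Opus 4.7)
The starting point is local convergence. Since $\{\mathfrak C(\cdot) = \mathfrak m\}$ is a continuous event for the local topology (it is determined by any ball of radius larger than the diameter of $\mathfrak m$ together with the spins of its one-neighbourhood), Theorem~\ref{th:localCV} gives
\[
\mathbb P_\infty^\nu\bigl(\mathfrak C(\mathbf T_\infty^\nu) = \mathfrak m\bigr) = \lim_{n\to\infty} \mathbb P_n^\nu\bigl(\mathfrak C(\mathbf T_n^\nu) = \mathfrak m\bigr),
\]
and by Proposition~\ref{prop:bolt} the right-hand side equals $\lim_n [t^{3n}]\prod_f q_{\mathrm{deg}(f)}(\nu,t)\big/[t^{3n}]\mathcal Z^\ps(\nu,t)$.

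To handle the numerator I would set $\tilde q_k(\nu,t) := (\nu t^3)^{-k/2} q_k(\nu,t)$ so that, using $\sum_{f\in F(\mathfrak m)} \mathrm{deg}(f) = 2|\mathfrak m|$,
\[
[t^{3n}]\prod_{f\in F(\mathfrak m)} q_{\mathrm{deg}(f)}(\nu,t) = \nu^{|\mathfrak m|}\cdot [t^{3(n-|\mathfrak m|)}]\prod_{f\in F(\mathfrak m)} \tilde q_{\mathrm{deg}(f)}(\nu,t).
\]
The key input is then the singular expansion of each factor at $t_\nu^3$ obtained inside the proof of Proposition~\ref{prop:asymptQk}: setting $\epsilon := 1-(t/t_\nu)^3$, one has $\tilde q_k(\nu,t) = \tilde q_k(\nu,t_\nu) + \beth_1^{\tilde q_k}(\nu)\,\epsilon + \beth^{\tilde q_k}(\nu)\,\epsilon^{\gamma(\nu)-1}+o(\epsilon^{\gamma(\nu)-1})$. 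Crucially $\gamma(\nu)-1 \in \{3/2, 4/3\}$ is strictly larger than $1$, so all cross-products of two singular contributions are $o(\epsilon^{\gamma(\nu)-1})$. A Leibniz-type expansion of the finite product then yields
\[
\prod_f \tilde q_{\mathrm{deg}(f)}(\nu,t) = \Bigl(\prod_f \tilde q_{\mathrm{deg}(f)}(\nu,t_\nu)\Bigr)\Bigl[1 + O(\epsilon) + \sum_f \frac{\beth^{\tilde q_{\mathrm{deg}(f)}}(\nu)}{\tilde q_{\mathrm{deg}(f)}(\nu,t_\nu)}\,\epsilon^{\gamma(\nu)-1}\Bigr] + o(\epsilon^{\gamma(\nu)-1}).
\]
The product is algebraic in $t^3$ with a unique dominant singularity at $t_\nu^3$ (it inherits these properties from its finitely many factors, as already used in Proposition~\ref{prop:asymptQk}), so the transfer theorem of Appendix~\ref{sec:algebraic} extracts the asymptotics of the coefficients: the $O(\epsilon)$ analytic contribution is negligible and
\[
[t^{3m}]\prod_f \tilde q_{\mathrm{deg}(f)}(\nu,t)\underset{m\to\infty}{\sim}\frac{1}{\Gamma(1-\gamma(\nu))}\Bigl(\prod_f \tilde q_{\mathrm{deg}(f)}(\nu,t_\nu)\Bigr)\sum_f\frac{\beth^{\tilde q_{\mathrm{deg}(f)}}(\nu)}{\tilde q_{\mathrm{deg}(f)}(\nu,t_\nu)}\,t_\nu^{-3m}m^{-\gamma(\nu)}.
\]

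Dividing by Corollary~\ref{cor:equivZ+}, the factors $t_\nu^{-3n}$, $n^{-\gamma(\nu)}$ and $\Gamma(1-\gamma(\nu))^{-1}$ cancel (the shift $m = n-|\mathfrak m|$ is harmless and only produces an extra factor $t_\nu^{3|\mathfrak m|}$), giving
\[
\mathbb P_\infty^\nu(\mathfrak C = \mathfrak m) = (\nu t_\nu^3)^{|\mathfrak m|}\Bigl(\prod_f \tilde q_{\mathrm{deg}(f)}(\nu,t_\nu)\Bigr)\sum_f\frac{1}{\tilde q_{\mathrm{deg}(f)}(\nu,t_\nu)}\cdot\frac{\beth^{\tilde q_{\mathrm{deg}(f)}}(\nu)}{\beth^{\mathcal Z^\ps}(\nu)}.
\]
Identifying $\delta_k(\nu) = \beth^{\tilde q_k}(\nu)/\beth^{\mathcal Z^\ps}(\nu)$ (Proposition~\ref{prop:asymptQk}) and redistributing the factors $(\nu t_\nu^3)^{\mathrm{deg}(f)/2}$ using $(\nu t_\nu^3)^{|\mathfrak m|} = \prod_f (\nu t_\nu^3)^{\mathrm{deg}(f)/2}$ and $q_k(\nu,t_\nu) = (\nu t_\nu^3)^{k/2}\tilde q_k(\nu,t_\nu)$ yields exactly~\eqref{eq:boltIIPT}.

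The main technical point is controlling the Leibniz expansion of the product at the singularity, which works precisely because $\gamma(\nu)-1 > 1$: this ensures that the first singular correction in the product is the sum of the singular corrections of each factor (divided by its constant term), with all other contributions absorbed in $o(\epsilon^{\gamma(\nu)-1})$ or in the $O(\epsilon)$ analytic part. A minor nuisance is the exceptional term $(\nu t)^{3/2}\mathbf 1_{k=3}$ in the definition of $q_k$, which adds a $t^{-3}$ summand to $\tilde q_3$; this is invisible to $[t^{3m}]$ for $m\geq 1$ and does not affect the argument.
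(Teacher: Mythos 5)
Your proof is correct and follows the same route as the paper: pass to the limit of the conditioned finite-volume law via Proposition~\ref{prop:bolt}, factor out $(\nu t^3)^{|\mathfrak m|}$, and use the singular expansions of the $\tilde q_k$ from Proposition~\ref{prop:asymptQk} together with Corollary~\ref{cor:equivZ+} to extract the limit. The paper's own proof compresses your Leibniz-expansion-plus-transfer-theorem step into the phrase ``classical calculations give the announced formula'', but the underlying argument is the same, and your handling of the $\mathbf 1_{k=3}$ exceptional term is consistent with the paper's definition of $\tilde q_k$.
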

\begin{proof}
We start from~\eqref{eq:boltn}, which can be written as:
\[
\mathbb P_n^\nu \left( \mathfrak C(\mathbf T_n^\nu) = \mathfrak m \right) = \frac{[t^{3n}]\left( (\nu t^3)^{|\mathfrak m|} \cdot \prod_{f \in F(\mathfrak m)} (\nu t^3)^{-\mathrm{deg}(f)/2} q_{\mathrm{deg}(f)} (\nu ,t) \right)}{[t^{3n}] \mathcal Z ^\ps ( \nu ,t)}.
\]
Proposition \ref{prop:asymptQk} then states that, for every $k$, the asymptotic behavior of $[t^{3n}] \left( (\nu t^3)^{-k/2} q_{k} (\nu ,t) \right)$ are of the form $\mathrm{Cst}_k\cdot t_\nu^{-3n} n ^{-\gamma(\nu)}$ with $\gamma(\nu) >2$, which is also the asymptotic behavior of $[t^{3n}] \mathcal Z^\ps(\nu,t)$. We also proved in the same lemma that the ratio of these asymptotics converge towards $\delta_k(\nu)$ for every $k$. From there, classical calculations give the announced formula.
\end{proof}

\bigskip

Our starting point to prove Theorem~\ref{th:expo} and Theorem~\ref{th:mainIIPT} will be the next statement, which establishes integral formulae for the probability that the root spin cluster is finite and for the generating function of the volume of the root spin cluster.

\begin{prop} \label{prop:formulaperco}
Recall the definition of the function $\Delta$ given in Proposition~\ref{prop:asymptQk} and set $c_\pm^\nu := c_\pm(\nu,t_\nu$). For every $\nu >0$ one has
\begin{align} \label{eq:probaCfinite}
\mathbb P_\infty^{\nu} & (|\mathfrak C(\mathbf T_\infty^\nu))| < \infty)= -\frac{1}{2 \pi} \int_{c_-^\nu}^{c_+^\nu} \frac{dz}{z} \Delta  \left(\nu,\sqrt{\nu t_\nu^3} \,  z \right) 
\left(z + \frac{c_+^\nu + c_-^\nu}{2} \right) \sqrt{(c_+^\nu - z)(z-c_-^\nu)}.
\end{align}
In addition, for $g \in (0,1)$ one has
\begin{align}
\mathbb E_\infty^\nu & \left[ g^{|V(\mathfrak C (\mathbf T_\infty^\nu))|}\right] \notag \\
& =
-\frac{g}{2 \pi} \int_{c_-^\nu(g)}^{c_+^\nu(g)} \frac{dz}{z} \Delta \left(\nu,\sqrt{g \, \nu t_\nu ^3} \, z \right)
\left(z + \frac{c_+^\nu(g) + c_-^\nu(g)}{2} \right) \sqrt{(c_+^\nu(g)-z)(z-c_-^\nu(g))},
\end{align}
where
\begin{equation}
c_\pm^\nu(g) := c_\pm(\mathbf q_g(\nu,t_\nu)).
\end{equation}
\end{prop}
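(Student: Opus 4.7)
The proof starts from the explicit formula for $\mathbb P_\infty^\nu(\mathfrak C(\mathbf T_\infty^\nu)=\mathfrak m)$ given in Proposition \ref{prop:boltIIPT}. Setting $g(k):=(\nu t_\nu^3)^{k/2}\delta_k(\nu)$, so that $G(z):=\sum_k g(k)z^k=\Delta(\nu,\sqrt{\nu t_\nu^3}\,z)$, summing that formula over all finite rooted planar maps $\mathfrak m$ yields
\[
\mathbb P_\infty^\nu(|\mathfrak C|<\infty) = \sum_{(\mathfrak m,f)} g(\mathrm{deg}(f))\prod_{f'\in F(\mathfrak m)\setminus\{f\}} q_{\mathrm{deg}(f')}(\nu,t_\nu),
\]
where $(\mathfrak m,f)$ ranges over pairs of a rooted finite map and a marked face. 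I would split this sum according to whether $f$ coincides with the root face of $\mathfrak m$ or not; in the second case, choosing a root edge on $f$ out of its $l_2=\mathrm{deg}(f)$ corners converts the pair into a cylinder map in $\mathcal M_{(l_1,l_2)}$, so that
\[
\mathbb P_\infty^\nu(|\mathfrak C|<\infty) = \sum_{l\geq 1} g(l)\,W^{(l)}_{\mathbf q(\nu,t_\nu)} + \sum_{l_1,l_2\geq 1} q_{l_1}(\nu,t_\nu)\,W^{(l_1,l_2)}_{\mathbf q(\nu,t_\nu),\,2}\,\frac{g(l_2)}{l_2}.
\]

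The second step is to evaluate the right-hand side using the universal forms of the cylinder generating function (Proposition \ref{prop:gencylinders}) and of the pointed disk function $W_{\mathbf q,\bullet}(z)=1/\sqrt{(z-c_+^\nu)(z-c_-^\nu)}$, analytic on $\mathbb C\setminus[c_-^\nu,c_+^\nu]$ with jump $W_{\mathbf q,\bullet}(z\pm i0)=\mp i/\sqrt{(c_+^\nu-z)(z-c_-^\nu)}$ across the cut. Expressing each sum as a contour integral via the standard Laurent-coefficient extraction --- using $\sum_{l_1} q_{l_1}z_1^{l_1}=F(\nu,t_\nu,z_1)$ on the $z_1$-side, and absorbing the $1/l_2$ weight into a primitive $H(z_2)$ of $G(z_2)/z_2$ on the $z_2$-side --- reduces the computation to a double contour integral over contours encircling $[c_-^\nu,c_+^\nu]$. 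Plugging in the explicit expression for $W_{\mathbf q,2}$ and carrying out the $z_1$-integration by residue calculus, the double pole $(z_1-z_2)^{-2}$ in the cylinder kernel produces, via a derivative at $z_1=z_2$, a specialization collapsing the polynomial $z_1z_2-\tfrac{c_+^\nu+c_-^\nu}{2}(z_1+z_2)+c_+^\nu c_-^\nu$ onto the diagonal, while residue contributions at infinity recombine with the disk term $\sum_l g(l)W^{(l)}_{\mathbf q}$. After integrating by parts in $z_2$ to restore $\Delta(\nu,\sqrt{\nu t_\nu^3}z)/z$ and collapsing the remaining contour onto $[c_-^\nu,c_+^\nu]$, the jump formula produces the factor $\sqrt{(c_+^\nu-z)(z-c_-^\nu)}$ and, after algebraic simplification, the announced integral with linear factor $(z+\tfrac{c_+^\nu+c_-^\nu}{2})$.

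The second identity follows from exactly the same computation applied to the tilted weight sequence $\mathbf q_g(\nu,t_\nu)$ with $q_{g,k}:=g^{(k-2)/2}q_k(\nu,t_\nu)$. By Euler's formula, $g^{|V(\mathfrak m)|}\prod_{f}q_{\mathrm{deg}(f)}=g^2\prod_{f}q_{g,\mathrm{deg}(f)}$, so after absorbing the extra powers of $g$ one obtains
\[
\mathbb E_\infty^\nu\bigl[g^{|V(\mathfrak C)|}\bigr] = g\sum_{(\mathfrak m,f)}\tilde g(\mathrm{deg}(f))\prod_{f'\neq f}q_{g,\mathrm{deg}(f')}(\nu,t_\nu),\qquad \tilde g(k):=(g\nu t_\nu^3)^{k/2}\delta_k(\nu).
\]
This has the exact same structure as the finiteness-probability sum with $\mathbf q$ replaced by $\mathbf q_g$; the previous contour-integral derivation, now with cuts at $[c_-^\nu(g),c_+^\nu(g)]$ and $\tilde G(z)=\Delta(\nu,\sqrt{g\nu t_\nu^3}\,z)$ in place of $G$, produces the claimed formula with overall prefactor $g$.

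The main technical difficulty of this plan is the careful treatment of the double pole $(z_1-z_2)^{-2}$ in the cylinder kernel and the verification that the disk term $\sum_l g(l)W^{(l)}_{\mathbf q}$ recombines cleanly with the residue-at-infinity contributions from the cylinder integration, so that only a single integral along the branch cut survives. Convergence of all intermediate series is controlled by the tail asymptotics of $\delta_k(\nu)$ established in Corollary \ref{coro:asymptodeltak}.
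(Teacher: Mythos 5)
Your starting point and overall strategy are the same as the paper's: summing the formula of Proposition~\ref{prop:boltIIPT} over finite maps, recognizing the result in terms of cylinder partition functions, and finishing via a Hadamard-product contour integral; likewise, your tilting by $\mathbf q\to\mathbf q_g$ for the volume generating function matches the paper. However, you miss a key simplifying step, and as a result your plan runs into a genuine obstacle that you flag but do not resolve — and the way you propose to resolve it does not actually work.

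The paper, before introducing a marked face, first applies the bijection behind \eqref{eq:cyltobolt}: opening the root edge identifies $\mathcal M$ with maps in $\mathcal M_2$ (root face of degree $2$), under which the faces of a map in $\mathcal M$ correspond exactly to the \emph{non-root} faces of the corresponding element of $\mathcal M_2$. After this re-encoding, the marked face is automatically distinct from the (degree-$2$) root face, so the whole sum collapses to the single series $\sum_{k\ge1}(\nu t_\nu^3)^{k/2}\delta_k\,\tfrac1k W_2^{(2,k)}$ — a single Hadamard product, hence one contour integral, with $W_2^{(2,k)}$ extractable from $[z_1^{-3}]W_{\mathbf q,2}(z_1,z)$ in closed form. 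Because you keep the original representation of $\mathcal M$, you are forced into the case split $f=f_r$ versus $f\neq f_r$, which produces the extra disk term $\sum_l g(l)W_{\mathbf q}^{(l)}$ and a \emph{double} sum over $(l_1,l_2)$ in the cylinder term, and hence a double contour integral. Your decomposition is combinatorially correct, so the answer would have to agree; the issue is that you neither carry out the double integral nor establish the recombination of the disk term with the cylinder term, which is precisely the "technical difficulty" you yourself point out at the end.

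Worse, your proposed mechanism for handling that difficulty is incorrect: you say the $(z_1-z_2)^{-2}$ in the cylinder kernel produces a derivative-type residue at $z_1=z_2$. But the double pole in the expression \eqref{eq:cylindre} is removable. At $z_1=z_2=z$ the bracket equals $W_{\mathbf q,\bullet}(z)^2\bigl(z^2-(c_+^\nu+c_-^\nu)z+c_+^\nu c_-^\nu\bigr)-1=0$, and its $z_1$-derivative at $z_1=z_2$ also vanishes (the two terms cancel using $W_{\mathbf q,\bullet}=\bigl((z-c_+^\nu)(z-c_-^\nu)\bigr)^{-1/2}$), so $W_{\mathbf q,2}$ is analytic at $z_1=z_2$ away from the cut and contributes no residue there. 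Thus the recombination you are counting on does not come from that pole, and the plan as sketched has a gap. The clean way through is the paper's: use the $\mathcal M\leftrightarrow\mathcal M_2$ identification first, reduce to $\sum_k g(k)W_2^{(2,k)}/k$, compute the generating function $\sum_k \tfrac1k W_2^{(2,k)}z^{-k}=\tfrac{z^2}{2}-\tfrac{z+(c_+^\nu+c_-^\nu)/2}{2}\sqrt{(z-c_+^\nu)(z-c_-^\nu)}$ by integrating $[z_1^{-3}]W_{\mathbf q,2}$, and finish with one Hadamard contour collapsed onto the cut $[c_-^\nu,c_+^\nu]$ (after a regularization $w\to1^-$ when $z_\nu=c_+^\nu$, i.e.\ $\nu\le\nu_c$). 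Your argument for the second formula via the weight sequence $\mathbf q_g$ is fine once the first formula is in place.
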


\begin{proof}
By Proposition \ref{prop:boltIIPT}, for any $\nu>0$ and any non atomic map $\mathfrak m$ one has
\begin{equation*}
\mathbb P_\infty^\nu  \left( \mathfrak C(\mathbf T^\nu_\infty) = \mathfrak m \right) =
\sum_{f \in F(\mathfrak m)} (\nu t_\nu^3)^{\mathrm{deg}(f)/2} \,  \delta_{\mathrm{deg}(f)}
\prod_{f' \in F(\mathfrak m), f' \neq f} q_{\mathrm{deg}(f')} (\nu ,t_\nu).
\end{equation*}
We want to sum this probability over finite rooted non atomic maps $\mathfrak m \in \mathcal M$. This will be easier to manage if we sum maps with a marked face (the face denoted $f'$ in the above expression).

Let $\mathfrak m_\square:=(\mathfrak m, f_\square)$ be a non-atomic rooted map $\mathfrak m$, with an additional marked face $f_\square$, such that $f_\square$ is different from the root face $f_r$ of $\mathfrak m$. The Boltzmann weight $w_\square(\mathfrak m_\square)$ of $\mathfrak{m}_\square$ is defined as:
\[
w_{\square}(\mathfrak m_\square) = \prod_{f \in F(\mathfrak m)\setminus\{f_r,f_\square\}} q_{\mathrm{deg}(f)} (\nu ,t_\nu).
\]
For $l_1,l_2 \geq 1$, let us denote by $\mathcal M_\square^{(l_1,l_2)}$ the set all maps with root face degree $l_1$ and a marked face of degree $l_2$, and set
\[
W_\square^{(l_1,l_2)} = \sum_{\mathfrak m_\square \in \mathcal M_\square^{(l_1,l_2)}} w_{\square}(\mathfrak m_\square).
\]
By rooting the marked face so that it lies on the left side of this new root, we see that
\[
W_\square^{(l_1,l_2)} = \frac{1}{l_2} W^{(l_1,l_2)}_{2},
\]
where we recall that $W^{(l_1,l_2)}_{2}$ is the generating function of $\mathbf q(\nu,t_\nu)$-Boltzmann maps of the cylinder with perimeters $l_1$ and $l_2$ defined in~\eqref{eq:cylinder}.
\bigskip

We then have:
\begin{align*}
\mathbb P_\infty^{\nu} (|\mathfrak C(\mathbf T_\infty^\nu)| < \infty) &=
\sum_{\mathfrak m \in \mathcal M} \mathbb P_\infty^{\nu}(\mathfrak C = \mathfrak m),\\
&= \sum_{\mathfrak m \in \mathcal M_f^{(2)}} \, 
\sum_{f \in F(\mathfrak m)\setminus\{f_r\}} \,  (\nu t_\nu^3)^{\mathrm{deg}(f)/2} \, \delta_{\mathrm{deg}(f)}
\prod_{f' \in F(\mathfrak m), f' \neq f,f_r} q_{\mathrm{deg}(f')} (\nu ,t_\nu),\\
&=\sum_{\mathfrak m \in \mathcal M_{f,\square}^{(2)}} \, 
 (\nu t_\nu^3)^{\mathrm{deg}(f_\square)/2} \, 
\delta_{\mathrm{deg}(f_\square)} \,
w_\square(\mathfrak m_\square),\\
&=
\sum_{k \geq 1}  (\nu t_\nu^3)^{k/2} \,  \, \delta_k \cdot W_\square^{(2,k)}, \\
&=
\sum_{k \geq 1}   (\nu t_\nu^3)^{k/2} \,  \, \delta_k \cdot \frac{1}{k} W^{(2,k)}_2.
\end{align*}
The last display is the Hadamard product of two generating series evaluated at $z=1$. Similarly to what was done in the proof of Proposition \ref{prop:fbulletdiamond} (see also Appendix~\ref{sec:Hadamard}), we can write:
\begin{align} \label{eq:probafinitecontour}
\mathbb P_\infty^{\nu} (|\mathfrak C(\mathbf T_\infty^\nu)| < \infty)
&= \frac{1}{2 i \pi} \oint_\gamma \frac{dz}{z} \Delta \left( \nu,\sqrt{\nu t_\nu^3} \, z \right) \cdot \left(\sum_{k \geq 1}  \frac{1}{k} W_2^{(2,k)} z^{-k}\right),
\end{align}
where the integration contour $\gamma$ is inside a domain where both functions $\Delta (\nu,\sqrt{\nu t_\nu^3} \, z)$ and $\sum_{k \geq 1}  \frac{1}{k} W_2^{(2,k)} z^{-k}$ are analytic. 

To check that such a contour exists and to be able to evaluate the integral,
we will compute $\sum_{k \geq 1}  \frac{1}{k} W_2^{(2,k)} z^{-k}$ below in~\eqref{eq:1/kW2z}. The expression we obtain shows that this function is analytic on $\mathbb C \setminus [c_-^\nu,c_+^\nu]$. On the other hand, from Proposition~\ref{prop:asymptQk}, we have that:
\[
\Delta \left( \nu,\sqrt{\nu t_\nu^3} \,z \right) = \frac{1}{ \beth^{\mathcal Z^\ps}(\nu) } \, \frac{ \sqrt{\nu t_\nu^3} \,z}{1 -  \sqrt{\nu t_\nu^3} \,z} \beth^{Q^+} \left( \nu, \frac{1}{1-  \sqrt{\nu t_\nu^3} \,z}\right).
\]
The parametric expression of $\beth^{Q^+} \left( \nu, y\right)$ in terms of $V(\nu,U_\nu,y)$ ensures that it is analytic on $\mathbb C \setminus \left( (- \infty , y_-(\nu,t_\nu] \cup [y_\nu , +\infty) \right)$. Therefore, the function $\Delta \left( \nu,\sqrt{\nu t_\nu^3} \,z \right)$ is analytic on $\mathbb C \setminus [z_\nu, \infty)$, where $z_\nu$ is computed in~\eqref{eq:znu} and satisfies
\[
z_\nu = c_+^\nu \cdot (y_\nu -1) \geq c_+^\nu
\]
since $y_\nu \geq 2$. 

As in the proof of Proposition~\ref{prop:fbulletdiamond}, when $z_\nu = c_+^\nu$ (which corresponds to $\nu\leq \nu_c$), we cannot directly find a suitable contour $\gamma$. However, we can fortunately first compute
\begin{align*}
\frac{1}{2 i \pi} \oint_\gamma \frac{dz}{z} \Delta \left( \nu,\sqrt{\nu t_\nu^3} \, w \, z \right) \cdot \left(\sum_{k \geq 1}  \frac{1}{k} W_2^{(2,k)} z^{-k}\right),
\end{align*}
for $w < 1$ where a contour exists and take the limit as $w \to 1^-$.

\bigskip

To evaluate $\sum_{k \geq 1}  \frac{1}{k} W_2^{(2,k)} z^{-k}$, we first apply Proposition \ref{prop:gencylinders} (with weight sequence $\mathbf q(\nu,t_\nu)$) to get:
\begin{align*}
\sum_{k \geq 1} W_2^{(2,k)} z^{-k-1}&=
[z_1^{-3}] W_2(z_1,z) \\
&= - z + W_\bullet(z) \, \left( z^2 - \frac{c_+^\nu + c_-^\nu}{2} z + \frac{1}{4} c_+^\nu c_-^\nu - \frac{1}{8} \left((c_+^\nu)^2 + (c_-^\nu)^2\right)\right).
\end{align*}
This can be rewritten as:
\[
\frac{d}{dz} \left(\sum_{k \geq 1} \frac{1}{k} W_2^{(2,k)} z^{-k}\right)
= z - W_\bullet(z) \, \left( z^2 - \frac{c_+^\nu + c_-^\nu}{2} z + \frac{1}{4} c_+^\nu c_-^\nu - \frac{1}{8} \left((c_+^\nu)^2 + (c_-^\nu)^2 \right)\right).
\]
The antiderivative has a simple expression:
\begin{equation} \label{eq:1/kW2z}
\sum_{k \geq 1} \frac{1}{k} W_2^{(2,k)} z^{-k}
=
\frac{z^2}{2} - \frac{z + \frac{c_+^\nu + c_-^\nu}{2}}{2 W_\bullet(z)}
=
\frac{z^2}{2} - \frac{z + \frac{c_+^\nu + c_-^\nu}{2}}{2} \sqrt{(z-c_+^\nu)(z-c_-^\nu)},
\end{equation}
which gives: 
\begin{align*}
\mathbb P_\infty^{\nu} (|\mathfrak C| < \infty) &=
\frac{1}{2 i \pi} \oint_\gamma \frac{dz}{z} \Delta \left( \nu,\sqrt{\nu t_\nu^3} \,  z \right) \cdot \left(\frac{z^2}{2} - \frac{z + \frac{c_+^\nu + c_-^\nu}{2}}{2} \sqrt{(z-c_+^\nu)(z-c_-^\nu)}\right), \notag\\
&= -\frac{1}{4 i \pi} \oint_\gamma \frac{dz}{z} \Delta  \left(\nu, \sqrt{\nu t_\nu^3} \,  z \right) \cdot  \left(z + \frac{c_+^\nu + c_-^\nu}{2} \right) \sqrt{(z-c_+^\nu)(z-c_-^\nu)}. 
\end{align*}
By deforming the contour $\gamma$ so that it surrounds the cut $[c_-^\nu,c_+^\nu]$ where the root is not analytic, we finally obtain:
\begin{equation*}
\mathbb P_\infty^{\nu} (|\mathfrak C| < \infty)= -\frac{1}{2 \pi} \int_{c_-^\nu}^{c_+^\nu} \frac{dz}{z} \Delta  \left( \nu, \sqrt{\nu t_\nu^3} \,  z \right) \cdot  \left(z + \frac{c_+^\nu + c_-^\nu}{2} \right) \sqrt{(c_+^\nu - z)(z-c_-^\nu)}.
\end{equation*}

\bigskip

To establish the second formula of the proposition, we also start from~\eqref{eq:boltIIPT}. Euler's formula implies that, for every $\mathfrak m \in\mathcal{M}$ and $g\in (0,1)$:
\begin{multline}
g^{|V(\mathfrak m)|} \mathbb P_\infty^{\nu}\Big(\mathfrak C (\mathbf T_\infty^\nu) = \mathfrak m\Big)
=
g \, \sum_{f \in F(\mathfrak m)} g^{\mathrm{deg}(f)/2} \, (\nu t_\nu^3)^{\mathrm{deg}(f)/2}\delta_{\mathrm{deg}(f)}\\ \cdot
\left(\prod_{f' \in F(\mathfrak m), f' \neq f} g^{(\mathrm{deg}(f')-2)/2}q_{\mathrm{deg}(f')} (\nu ,t_\nu)\right).
\end{multline}
We can then perform the exact same sequence of computations as above, but with the weight sequence $\mathbf q_g(\nu,t_\nu)$ (introduced in~\eqref{eq:zpzdcombig} of Section \ref{sec:BDGgen}) instead of the weight sequence $\mathbf q(\nu,t_\nu)$. 
We obtain in this way: 
\begin{multline*}
\mathbb E_\infty^\nu  \left[ g^{|V(\mathfrak C (\mathbf T_\infty^\nu))|}\right]\\
 =
-\frac{g}{2 \pi} \int_{c_-^\nu(g)}^{c_+^\nu(g)} \frac{dz}{z} \Delta \left(\nu,\sqrt{g \, \nu t_\nu ^3} \, z \right) \cdot  \left(z + \frac{c_+^\nu(g) + c_-^\nu(g)}{2} \right) \sqrt{(c_+^\nu(g)-z)(z-c_-^\nu(g))},
\end{multline*}
which finishes the proof.
\end{proof}

\subsection{Percolation probability and critical exponent (proof of Theorem \ref{th:expo})}

\paragraph{The root spin cluster is finite almost surely when $\nu \leq \nu_c$.}
We prove the first statement of the theorem by computing explicitly the probability that the root spin cluster in the IIPT is finite for $\nu \leq \nu_c$. To do so, we rely on the integral formula obtained in~Proposition~\ref{prop:formulaperco}.
\bigskip

Recall that $c_\pm^\nu = 1/(\sqrt{\nu t_\nu^3} y_\pm^\nu)$.
The change of variables $z = (1 - 1/y) (\nu t_\nu^3)^{-1/2}$ in \eqref{eq:probaCfinite} gives
\begin{align*}
\mathbb P_\infty^{\nu} (|\mathfrak C| < \infty) &= 
\frac{1}{2 \pi \, \nu t_\nu^3} \int_{\frac{y_-^\nu}{y_-^\nu-1}}^{\frac{y_+^\nu}{y_+^\nu-1}} \frac{dy}{y(y-1)} 
\Delta \left(\nu, \frac{y-1}{y} \right)
\left(\frac{y -1}{y} + \frac{\frac{1}{y_+^\nu} + \frac{1}{y_-^\nu}}{2} \right)\\
& \qquad \qquad\qquad\qquad\qquad\qquad\times
\sqrt{ \left( \frac{1}{y_+^\nu} - \frac{y-1}{y} \right) \left( \frac{y-1}{y} - \frac{1}{y_-^\nu}\right)},\\
& =\frac{1}{2 \pi \, \nu t_\nu^3 \, \beth^{\mathcal Z^\ps}(\nu)} \int_{\frac{y_-^\nu}{y_-^\nu-1}}^{\frac{y_+^\nu}{y_+^\nu-1}} \frac{dy}{y} \,
 \beth^{Q^+} \left(\nu, y \right)
\left(\frac{y -1}{y} + \frac{\frac{1}{y_+^\nu} + \frac{1}{y_-^\nu}}{2} \right)\\
& \qquad \qquad\qquad\qquad\qquad\qquad\times
\sqrt{ \left( \frac{1}{y_+^\nu} - \frac{y-1}{y} \right) \left( \frac{y-1}{y} - \frac{1}{y_-^\nu}\right)},
\end{align*}
where we used the expression of $\Delta$ in terms of $\beth^{Q^+}$ given in Proposition \ref{prop:asymptQk}.

To compute this integral, we perform the change of variable $y = \hat Y(\nu,U_\nu,V)$, which allows in particular to use the explicit formula for $\beth^{Q^+}\!\!(\nu ,\hat Y (\nu,U_\nu,V))$. We start by computing the new bounds of the integral, which amounts to solving for $V$ the two equations:
\[
\hat Y (\nu,U_\nu,V) = \frac{y_\pm^\nu}{y_\pm^\nu-1} \Leftrightarrow
\frac{\hat Y (\nu,U_\nu,V) - 1}{\hat Y (\nu,U_\nu,V)} = \frac{1}{y_\pm^\nu}.
\]
When $\nu \leq \nu_c$, it is easily done since
\[
\frac{1}{y_+^\nu} - \frac{\hat Y (\nu,U_\nu,V) - 1}{\hat Y (\nu,U_\nu,V)}
= \frac{2U_\nu-3}{24(1-U_\nu)} \, \frac{(1-V)^3}{V(V+1)},
\]
and
\[
\frac{\hat Y (\nu,U_\nu,V) - 1}{\hat Y (\nu,U_\nu,V)} - \frac{1}{y_-^\nu}
= \frac{2U_\nu-3}{24(1-U_\nu)} \, \frac{(V+2+\sqrt{3})^2 (V-7+4\sqrt{3})}{V(V+1)}.
\]
Since $y_+^\nu$ and $y_-^\nu$ correspond respectively to $V$ equal to $1$ and to $-2+\sqrt{3}$ in the parametrization $y = \hat Y (\nu,U_\nu,V)$, the only legitimate solution for the second equation is $V=7-4\sqrt{3}$. We then obtain:
\begin{align} \label{eq:probaHT}
\mathbb P_\infty^{\nu} (|\mathfrak C| < \infty) &= 
\frac{1}{2 \pi \, \nu t_\nu^3 \, \beth^{\mathcal Z^\ps}(\nu)} \int_{7-4\sqrt{3}}^{1} dV \, \frac{\partial_V \hat Y (\nu,U_\nu,V)}{\hat Y (\nu,U_\nu,V)} \,
\beth^{Q^+} \left(\nu, \hat Y (\nu,U_\nu,V) \right) \notag \\
& \qquad \times \left(\frac{\hat Y (\nu,U_\nu,V) -1}{\hat Y (\nu,U_\nu,V)} + \frac{\frac{1}{y_+^\nu} + \frac{1}{y_-^\nu}}{2} \right) \notag \\
& \qquad \times
\sqrt{ \left( \frac{1}{y_+^\nu} - \frac{\hat Y (\nu,U_\nu,V)-1}{\hat Y (\nu,U_\nu,V)} \right) \left( \frac{\hat Y (\nu,U_\nu,V)-1}{\hat Y (\nu,U_\nu,V)} - \frac{1}{y_-^\nu}\right)},
\end{align}
with as a bonus a fully factorized rational fraction under the square root. There are a lot of cancellations in the other terms and the integral is easy to compute with Maple. It turns out that the value of this probability is $1$ and we refer to the Maple file \cite{Maple} for the explicit computation.

\paragraph{The root spin cluster is infinite with positive probability when $\nu >\nu_c$.}
The explicit formula~\eqref{eq:ProbaPerco} that we obtain for the probability that the root spin cluster is infinite when $\nu >\nu_c$ is awful and we prefer to give a simpler argument that does not depend on this formula.
Indeed, in this temperature regime, a general argument already found in \cite[proof of Theorem 1.2]{BeCuMie} can be readily applied to our setting as follows.

We established in Proposition~\ref{prop:regNonReg} that, for $\nu>\nu_c$, the root spin cluster under $\mathbb P^\nu$ is a regular critical Boltzmann map. Proposition~\ref{eq:regcritvol} then states that for any $\nu > \nu_c$, there exists $c$ such that 
\[\mathbb P^\nu \left( |V(\mathfrak C (\mathbf T ^\nu))| \geq n \right) \underset{n\to \infty}{\sim} c \, n^{-3/2}.\]
However, for $\nu \neq \nu_c$, Corollary~\ref{cor:equivZ+} gives
\[\mathbb P^\nu \left( |V(\mathbf{T}^\nu)| \geq n \right) \underset{n\to \infty}{\sim} C \, n^{-3/2},\]
for some $C>0$ depending on $\nu$. In addition, the event $\{ |V(\mathfrak C(\mathbf T ^\nu))| \geq n \}$ is the same as the event $\{ |V(\mathfrak C(\mathbf T ^\nu )| \geq n , |V(\mathbf{T}^\nu )| \geq n \}$. Hence, we have for $n$ large enough
\[
\mathbb P^\nu \left( |V(\mathfrak C (\mathbf T ^\nu))| \geq n \, \Big| \, |V(\mathbf{T}^\nu)| \geq n \right) \geq \frac{c}{2C} > 0.
\]
Then, for every $n$ large enough and any $N\leq n$, we have $\mathbb P^\nu \left( |V(\mathfrak C (\mathbf T ^\nu))| \geq N \, \Big| \, |V(\mathbf{T}^\nu)| \geq n \right) \geq \frac{c}{2C}$. Since the event $\{ |V(\mathfrak C (\mathbf T ^\nu))| \geq N \}$ is a local event, taking the limit as $n\to \infty$ yields $\mathbb P_\infty^\nu \left( |V(\mathfrak C (\mathbf T ^\nu_\infty))| \geq N \right) \geq \frac{c}{2C}$. This last statement being true for any $N$, the root spin cluster is therefore infinite with positive probability when $\nu > \nu_c$.

\paragraph{Explicit formula and critical exponent $\beta$.}

We start from the formula for the probability that the root spin cluster is finite obtained in Proposition~\ref{prop:formulaperco}:
\begin{align*}
\mathbb P_\infty^{\nu} (|\mathfrak C| < \infty)
& =\frac{1}{2 \pi \, \nu t_\nu^3 \, \beth^{\mathcal Z^\ps}(\nu)} \int_{\frac{y_-^\nu}{y_-^\nu-1}}^{\frac{y_+^\nu}{y_+^\nu-1}} \frac{dy}{y} \,
 \beth^{Q^+} \left(\nu, y \right)
\left(\frac{y -1}{y} + \frac{\frac{1}{y_+^\nu} + \frac{1}{y_-^\nu}}{2} \right)\\
& \qquad \qquad\qquad\qquad\times
\sqrt{ \left( \frac{1}{y_+^\nu} - \frac{y-1}{y} \right) \left( \frac{y-1}{y} - \frac{1}{y_-^\nu}\right)}.
\end{align*}

To compute this integral and then do an asymptotic expansion as $\nu \to \nu_c^+$, we will use the change of variable $y = \hat Y(\nu,U_\nu,V)$ with $\nu$ and $U_\nu$ parametrized by $K_\nu$ according to~\eqref{eq:defKnu} and~\eqref{eq:defKU}.
This relation also gives that as $K_\nu \to K_{\nu_c}^+$ we have
\begin{equation} \label{eq:Knucdev}
K_\nu-K_{\nu_c} = \frac{56 +14 \sqrt 7}{81} (\nu - \nu_c) + o \left( \nu - \nu_c \right),
\end{equation}
so we can do an asymptotic expansion in $K$.
The main difficulty will come from the fact that several roots and poles of the rational functions in $V$ that are in the integral converge to $1$ at the same time and we have to be careful to analyze each contribution separately.
\bigskip

We proceed similarly as for $\nu\leq \nu_c$ and start by computing the bounds in $V$ for the integral after the change of variables. This means solving for $V$ the equation
\[
\hat Y (\nu, U_\nu,V) = \frac{y_{\pm}^\nu}{y_{\pm}^\nu-1},
\]
where we recall that $y_{\pm}^\nu$ are the positive and negative singularities of $Q_+(\nu, t_\nu, t_\nu y)$. Via the rational parametrization, they correspond respectively to $V_+(K_\nu) \in (0,1)$ and $V_-(K_\nu) < 0$, for which we have the explicit expressions from \eqref{eq:Vijcrit}:
\begin{align*}
V_+(K_\nu) &= \frac{2(K_\nu+1)^2 - \sqrt{(K_\nu^2 + 4K_\nu + 5)(3K_\nu^2 + 4K_\nu - 1)}}{3-K_\nu^2},\\
V_-(K_\nu) &= \frac{-(K_\nu^2+4K_\nu+5) + 2\sqrt{2} \sqrt{(K_\nu + 2)(K_\nu + 1)^2}}{3-K_\nu^2}.
\end{align*}
We thus have to solve for $V$ the two equations of degree three
\[
\hat Y (\nu,U_\nu,V)  = \frac{\hat Y (\nu,U_\nu,V_+(K_\nu))}{\hat Y (\nu,U_\nu,V_+(K_\nu))-1}
\quad \text{and} \quad
\hat Y (\nu,U_\nu,V)  = \frac{\hat Y (\nu,U_\nu,V_-(K_\nu))}{\hat Y (\nu,U_\nu,V_-(K_\nu))-1}.
\]
Fortunately we have the relation
\[
\hat Y (\nu,U_\nu,1/V)  = \frac{\hat Y (\nu,U_\nu,V)}{\hat Y (\nu,U_\nu,V)-1},
\]
and therefore we have to solve
\[
\hat Y (\nu,U_\nu,V)  = \hat Y (\nu,U_\nu,1/V_+(K_\nu)) 
\quad \text{and} \quad
\hat Y (\nu,U_\nu,V)  = \hat Y (\nu,U_\nu,1/V_-(K_\nu)).
\]
We obtain readily that $1/V_{+}(K_\nu)$ and $1/V_{-}(K_\nu)$ are respective solutions of the two equations. But, we can moreover check that they are in fact double roots of each equation. Furthermore, the third solution of each equation is given respectively by $V_+(K_\nu)^2$ and $V_-(K_\nu)^2$. See the Maple file \cite{Maple} for the detailed calculations.

Since $0<V_+(K_\nu)<1$, we have $0<V_+(K_\nu)^2 < V_+(K_\nu)$ and $1/V_+(K_\nu) > V_+(K_\nu)$. Similarly, we also have $1/V_-(K_\nu) < V_-(K_\nu)$ and $0 < -V_-(K_\nu) < V_+(K_\nu)$. The bounds for the integral after the change of variable $y =  \hat Y(\nu,U_\nu,V)$ are then $V_-(K_\nu)^2$ and $V_+(K_\nu)^2$. We thus have:
\begin{align*}
\mathbb P_\infty^{\nu} (|\mathfrak C| < \infty)
& = \frac{1}{2 \pi \, \nu t_\nu^3 \, \beth^{\mathcal Z^\ps}(\nu)} \int_{V_-(K_\nu)^2}^{V_+(K_\nu)^2} dV \, \frac{\partial_V \hat Y(\nu,U_\nu,V)}{\hat Y(\nu,U_\nu,V)} 
{\beth^{Q^+} \left(\nu ,\hat Y(\nu,U_\nu,V) \right)} \\
& \qquad \times 
\left(
\frac{\hat Y(\nu,U_\nu,V) -1}{\hat Y(\nu,U_\nu,V)}
+ \frac{
\frac{1}{y_+^\nu} + \frac{1}{y_-^\nu}
}{2}
\right)\\
& \qquad \times 
\sqrt{ \left( \frac{1}{y_+^\nu} - \frac{\hat Y(\nu,U_\nu,V)-1}{\hat Y(\nu,U_\nu,V)} \right) \left( \frac{\hat Y(\nu,U_\nu,V)-1}{\hat Y(\nu,U_\nu,V)} - \frac{1}{y_-^\nu}\right)}.
\end{align*}
Since we identify all the roots of the factors of the square root, it is fortunately already factorized, and only the constant coefficient needs to be identified. We can hence write:
\begin{align*}
&\sqrt{ \left( \frac{1}{y_+^\nu} - \frac{\hat Y(\nu,U_\nu,V)-1}{\hat Y(\nu,U_\nu,V)} \right) \left( \frac{\hat Y(\nu,U_\nu,V)-1}{\hat Y(\nu,U_\nu,V)} - \frac{1}{y_-^\nu}\right)}
\\
&=\sqrt{ \left( \frac{1}{y_+^\nu} - \frac{1}{\hat Y(\nu,U_\nu,1/V)} \right) \left( \frac{1}{\hat Y(\nu,U_\nu,1/V)} - \frac{1}{y_-^\nu}\right)}
\\
&=
f_1(K_\nu)
\frac{ \sqrt{\left({V_+(K_\nu)^2} -V \right) \left(V - V_-(K_\nu)^2\right)}}{ V (V+1)} 
\left(\frac{1}{V_+(K_\nu)} -V \right) \left(V - \frac{1}{V_-(K_\nu)}\right),
\end{align*}
for any $V \in [V_-(K_\nu)^2,V_+(K_\nu)^2]$, where $f_1(K_\nu)$ is an explicit rational function of $K_\nu$. 
\bigskip

We turn our attention to the other factors in the integral. A second factor is also easily factorized. Indeed we have:
\begin{align*}
\frac{1}{\nu t_\nu^3 \, \beth^{\mathcal Z^\ps}(\nu)}
& \frac{\partial_V \hat Y(\nu,U_\nu,V)}{\hat Y(\nu,U_\nu,V)} 
{\beth^{Q^+}\!\! \left(\nu ,\hat Y(\nu,U_\nu,V) \right)} =
f_2(K_\nu) \,
\frac{\left(V^{2} - V \frac{3-K_\nu^2}{(K_\nu+1)^2} +1 \right)}{\left( V - V_+(K_\nu) \right)^2 \left( V - \frac{1}{V_+(K_\nu)} \right)^2},
\end{align*}
where $f_2(K_\nu)$ is another explicit rational function of $K_\nu$.
Note that the two complex roots roots of the numerator in the last display as well as the four roots $1/V_\pm(K_\nu)$ and $V_\pm^i (K_\nu)$ all converge to $1$ as $K_\nu \to K_{\nu_c}^+$. 

The last factor in the integral is of the form:
\begin{align*}
\frac{\hat Y(\nu,U_\nu,V) -1}{\hat Y(\nu,U_\nu,V)}
+ \frac{
\frac{1}{y_+^\nu} + \frac{1}{y_-^\nu}
}{2} =\frac{\mathrm{Pol}_3(K_\nu,V)}{V(V+1)},
\end{align*}
where $\mathrm{Pol}_3(K_\nu,V)$ is a polynomial of degree $3$ in $V$ with explicit coefficients in $K_\nu$ whose roots will not matter for our analysis. 

We now have
\begin{align} \label{eq:probaLT}
\mathbb P_\infty^{\nu} (|\mathfrak C| < \infty)
& =
\frac{f(K_\nu)}{2 \pi} \int_{V_-(K_\nu)^2}^{V_+(K_\nu)^2} dV \,
\frac{\mathrm{Pol}_3(K_\nu,V) \cdot \left( V - \frac{1}{V_-(K_\nu)}\right)}{V^2 (V+1)^2} \notag \\
& \qquad \times \frac{V^2 - \frac{3-K_\nu^2}{(K_\nu+1)^2} V + 1}{\left( V - V_+(K_\nu) \right)^2 \left(\frac{1}{V_+(K_\nu)} -V \right)}
\sqrt{\left({V_+(K_\nu)^2} -V \right) \left(V - V_-(K_\nu)^2\right)},
\end{align}
with $f(K_\nu)$ an explicit rational function of $K_\nu$.

This last integral can be computed explicitly but it requires a bit of work. Indeed Maple does not seem able to compute it directly and needs our help. First, after doing a partial fraction decomposition, the integral we want to compute can be decomposed into a sum of integrals of the following form:
\[
\frac{1}{\pi}\int_{V_-(K_\nu)^2}^{V_+(K_\nu)^2} dV \,
 \sqrt{\left({V_+(K_\nu)^2} -V \right) \left(V - V_-(K_\nu)^2\right)} \,
 \frac{\mathrm{Pol}(V)}{(V - \mathrm{pole})^j},
\]
where $\mathrm{Pol}$ is a polynomial of degree $6$ in $V$, $\mathrm{pole} \in \{ 0 , -1, V_+(K_\nu) , 1/V_+(K_\nu) \}$, and $j \in \{1,2\}$. Then, after doing the change of variable $V = V_-(K_\nu)^2 + \left( V_+(K_\nu)^2 - V_-(K_\nu)^2\right) \xi$, we are left with integrals of the form
\[
\frac{1}{\pi}\int_{0}^{1} d\xi \,
 \sqrt{\xi (1- \xi) } \,
 \frac{\xi^i }{(1 - z(\mathrm{pole}) \, \xi)^j},
\]
where $\mathrm{pole} \in \{ 0 , -1, V_+(K_\nu) , 1/V_+(K_\nu) \}$, $j \in \{1,2\}$, $i \in \{ 0 , \ldots ,6 \}$, and where
\[
z(\mathrm{pole}) =\frac{ V_+(K_\nu)^2 -  V_-(K_\nu)^2}{\mathrm{pole} -  V_-(K_\nu)^2} < 1.
\]
Each of these integrals has an explicit rational expression in terms of $z(\mathrm{pole})$ and $\sqrt{1-z(\mathrm{pole})}$. Summing every contribution gives an explicit formula in terms of $K_\nu$ for the probability $\mathbb P_\infty^{\nu} (|\mathfrak C| < \infty)$. See the Maple file \cite{Maple} for the detailed computations.

The first expression we obtain with Maple is terrible. However, with some work, we were able to simplify it significantly into the following expression valid for $\nu \geq \nu_c$:
\begin{equation} \label{eq:ProbaPerco}
{\tiny
\begin{aligned}
&\mathbb P_\infty^{\nu} (|\mathfrak C| < \infty)
 ={}
\frac{1}
{2 \left(K_\nu^{2}+4 K_\nu+1\right) \left(7 K_\nu^{2}+20 K_\nu+15 \right)}
\,
\left({\frac{3 K_\nu^{2}+4 K_\nu-1}{K_\nu^{2}+4 K_\nu+5}}\right)^{1/4}
\left(
\frac{\left(3K_\nu^{2}+ 8 K_\nu+ 7\right) \left(K_\nu+1\right)}{\left(3-K_\nu^{2}\right)^{3}}
\right)^{1/2}\\
&\Bigg[
(K_\nu+1) \bigg(
\sqrt{2}\left(2+K_\nu\right)^{\frac{3}{2}} \left(145 K_\nu^{8}+1160 K_\nu^{7}+4612 K_\nu^{6}+11832 K_\nu^{5}+23430 K_\nu^{4}+39000 K_\nu^{3}+46916 K_\nu^{2}+31912 K_\nu+8753
\right)
\\
& \qquad +162 \left(K_\nu^{8}+8 K_\nu^{7}+\frac{76}{3} K_\nu^{6}+\frac{2872}{81} K_\nu^{5}+\frac{518}{81} K_\nu^{4}-\frac{3752}{81} K_\nu^{3}-\frac{5308}{81} K_\nu^{2}-\frac{3416}{81} K_\nu-\frac{1039}{81}\right)\,
\left(K_\nu^{3}+3 K_\nu^{2}+9 K_\nu+11\right)
\bigg)\\
&-
\left(3 K_\nu^{2}+4 K_\nu-1 \right)^{3/2} \, \sqrt{K_\nu^{2}+4 K_\nu+5} \,
\bigg(
32 \sqrt{2}\, \left(2+K_\nu\right)^{\frac{3}{2}} \left(K_\nu^{3}+3 K_\nu^{2}+9 K_\nu+11\right) \left(K_\nu+1\right)^{2}\\
& \qquad \qquad +
\frac{307 K_\nu^{8}}{8}+{307 K_\nu^{7}}+\frac{2179 K_\nu^{6}}{2}+{2133 K_\nu^{5}}+\frac{10185 K_\nu^{4}}{4}+{2337 K_\nu^{3}}+\frac{4211 K_\nu^{2}}{2}+{1343 K_\nu}+\frac{2579}{8}
\bigg)
\Bigg]^{1/2}
\end{aligned}
}
\end{equation}
We tried to simplify this expression further but were not able to. If the reader has some ideas on how to proceed, we would welcome any suggestions. In particular, the last big term should have a factor $(3-K^2)^3$, which would cancel out the same factor in the third term.

The asymptotic behavior of the probability when $\nu \to \nu_c^+$ is driven by the factor with exponent $1/4$. Indeed, all the other factors do not cancel at $K_{\nu_c}$, while $K_{\nu_c}$ is a root of the polynomial $3 K_\nu^{2}+4 K_\nu-1$. Doing an asymptotic expansion as $K_\nu \to K_{\nu_c}^+$ of our expression and plugging the expansion \eqref{eq:Knucdev} of $K$ in terms of $\nu$ finally gives the expansion of Theorem~\ref{th:expo}.

\subsection{Perimeter and volume critical exponents (proof of Theorem~\ref{th:mainIIPT})}

\subsubsection{Perimeter distribution}

We start from \eqref{eq:boltn} that we reproduce here:
\begin{equation*}
\mathbb P_n^\nu \left( \mathfrak C(\mathbf T_n^\nu) = \mathfrak m \right) = \frac{[t^{3n}] \prod_{f \in F(\mathfrak m)} q_{\mathrm{deg}(f)} (\nu ,t)}{[t^{3n}] \mathcal Z ^\ps ( \nu ,t)}.
\end{equation*}
Summing over maps in $\mathcal M_k$ gives
\[
\mathbb P_n^\nu \left( |\partial \mathfrak C (\mathbf T_n^\nu)| = k \right)  = \frac{[t^{3n}] \left(q_k(\nu, t) \cdot W_{ \mathbf q (\nu, t)}^{(k)} \right)}{[t^{3n}] \mathcal Z^\ps(\nu,t)}.
\]
Proposition \ref{prop:Wnut} then gives
\begin{equation}\label{eq:exprPeri}
\mathbb P_n^\nu \left( |\partial \mathfrak C (\mathbf T_n^\nu)| = k \right)  = \frac{[t^{3n}] \left( \sqrt{\nu t^3}^{-k} \, q_k(\nu, t) \cdot t^k Q^+_k (\nu, t) \right)}{[t^{3n}] \mathcal Z^\ps(\nu,t)}.
\end{equation}
From this expression, Proposition \ref{prop:asymptQk} and Proposition \ref{prop:serQty} give
\begin{equation} \label{eq:perimlaw iipt}
\lim_{n \to \infty} \mathbb P_n^\nu \left( |\partial \mathfrak C(\mathbf T_n^\nu)| = k \right) =
\delta_k(\nu) \, t_\nu^k Q_k^+(\nu, t_\nu)  + q_k(\nu,t_\nu) \, \sqrt{\nu t_\nu^3}^{-k} \, \frac{[y^k] \beth^{Q^+}(\nu,y) }{\beth^{\mathcal Z^\ps}(\nu)}.
\end{equation}

\paragraph{Case $\nu \leq \nu_c$.}
In this range of values for $\nu$, we know from Theorem~\ref{th:expo} that the root spin cluster is almost surely finite. Therefore, in this regime, the perimeter of the root spin cluster of the IIPT is a finite random variable whose law is given by the above limits.

Proposition~\ref{prop:asymptalephQ+} and Proposition~\ref{prop:asymptoq} ensure that
\begin{align*}
[y^k] \beth^{Q^+}(\nu,y) &\underset{k \to \infty}{\sim} \frac{\aleph_{4/3}^{\beth^{Q^+}}(\nu)}{\Gamma(4/3)} 2^{-k} \, k^{1/3},\\
q_k (\nu,t_\nu) \, \sqrt{\nu t_\nu^3}^{-k} &\underset{k\to \infty}{\sim} \frac{ \aleph^{Q^+} (\nu)}{\Gamma (1-\alphaa(\nu))} \, 2^k \, k^{-\alphaa(\nu)}.
\end{align*}
Corollary~\ref{cor:Q} and Corollary~\ref{coro:asymptodeltak} give similarly
\begin{align*}
t_\nu^k \, Q^+_k(\nu,t_\nu) &\underset{k \to \infty}{\sim} \frac{\aleph^{Q^+}(\nu)}{\Gamma\Big(1-\alphaa(\nu)\Big)} \, 2^{-k} \, k^{-\alphaa(\nu)},\\
\delta_k(\nu) &\underset{k \to \infty}{\sim}
\frac{1}{\beth^{\mathcal Z^\ps}(\nu)}
\frac{\aleph_{4/3}^{\beth^{Q^+}}(\nu)}{2^{4/3} \, \Gamma(4/3)} \,
2^k \, k^{1/3}.
\end{align*}
From there, it is easy to see that we have:
\begin{equation} \label{eq:asymperimiiptHT}
\mathbb P_\infty \left( |\partial \mathfrak C| = k \right) \underset{k \to \infty}{\sim} 
\frac{\aleph^{Q^+}(\nu) \, \aleph_{4/3}^{\beth^{Q^+}}(\nu)}{\beth^{\mathcal Z^\ps}(\nu) \, \Gamma(4/3) \, \Gamma\Big(1-\alphaa(\nu)\Big)}
 \,
\left(1 + 2^{-4/3} \right) \,
k^{-(\alphaa(\nu) -1/3)}
\end{equation}
where $\alphaa(\nu) = 5/3$ if $\nu < \nu_c$ and $\alphaa(\nu_c) = 7/3$. The statements on the tail distribution of the perimeter of the root spin cluster for $\nu \leq \nu_c$ follow.

\paragraph{Case $\nu > \nu_c$.}
When $\nu > \nu_c$, the root spin cluster is infinite with positive probability and we have to prove first that its perimeter is finite almost surely. One possibility would be to prove that the probabilities \eqref{eq:perimlaw iipt} sum to 1, and then establish their asymptotic behavior, which will be of order $(y_\nu - 1)^{-k}$ with polynomial corrections in $k$. A less tedious solution that suffices for our purpose is to establish exponential bounds for the probabilities $\mathbb P_n^\nu \left( |\partial \mathfrak C (\mathbf T_n^\nu)| = k \right)$ which hold uniformly in $n$.

We start again from~\eqref{eq:exprPeri}. Fix $\nu > \nu_c$ and $y \in (2,y_+^\nu)$. For every $n \geq 0$, we have
\[
\sum_{k \geq 1} [t^{3n}] \left( t^k \, Q^+_k (\nu, t) \right) y^k = [t^{3n}] \left( \sum_{k \geq 1} t^k  Q^+_k (\nu, t)  y^k \right) = [t^{3n}] Q^+ \left(\nu, t, t y \right),
\]
and therefore, for every $n \geq 1$ and $k \geq 1$, we have
\[
[t^{3n}]  t^k \, Q^+_k (\nu, t) \leq y^{-k} \, [t^{3n}] Q^+ \left(\nu, t, t y \right).
\]
To get a similar bound for the second factor of the numerator of~\eqref{eq:exprPeri}, we first observe that $q_k (\nu, t) \sqrt{\nu t^3}^{-k}$ is the formal power series in $t^3$ for $k\neq 3$ and is a formal Laurent series, whose expansion starts with $t^{-3}$ for $k=3$. Hence, for the same range of $y$ as above and $n \geq -1$:
\begin{align*}
\sum_{k \geq 1} [t^{3n}] & \left( q_k (\nu, t) \sqrt{\nu t^3}^{-k} \right)   \cdot (1-1/y)^k \\
&= [t^{3n}] \left( \sum_{k \geq 1} q_k (\nu, t) \sqrt{\nu t^3}^{-k}  \cdot  (1-1/y)^k \right),\\
&= [t^{3n}] \left( (\nu t)^{3/2} \left(\frac{1}{\sqrt{\nu t^3}} (1-1/y) \right)^3
+ \frac{(1-1/y)}{1-(1-1/y)} Q^+ \left(\nu,t, \frac{t}{1-(1-1/y)} \right)  \right),\\
& =(1-1/y)^3 \mathbf{1}_{\{n = -1\}} + [t^{3n}] \left( \frac{y-1}{y}Q^+ \left(\nu, t, ty \right) \right).
\end{align*}
Therefore, for every $n \geq 0$ and $k \geq 1$, we also have
\[
[t^{3n}]  \left( q_k (\nu, t) \sqrt{\nu t^3}^{-k} \right) \leq
(1-1/y)^{-k}
[t^{3n}] \left( \frac{y-1}{y}Q^+ \left(\nu, t, ty \right) \right).
\]
Combining the bounds we obtained, we get that for every $n \geq 1$ and every $k \geq 1$, if $y \in (2,y_+^\nu)$, then
\[
\mathbb P_n^\nu \left( |\partial C(\mathbf T_n^\nu)| = k \right) \leq \frac{
(y-1)^{-k}
[t^{3n}] \left( \frac{y-1}{y} \left( Q^+ \left( \nu,t, ty \right) \right)^2 \right)}{[t^{3n}] \mathcal Z^\ps(\nu,t)}
\]
Since the coefficients in $t$ of $Q^+ \left( \nu,t, ty \right)$ are increasing in $y$, the asymptotics of Lemma \ref{prop:serQty} show that for every $\nu > \nu_c$ and $y \in (2,y_+(\nu,t_\nu))$, there exists a constant $C(\nu)$ such that, for every $k \geq 1$ and every $n \geq 1$, one has
\[
\mathbb P_n^\nu \left( |\partial C (\mathbf T_n^\nu)| = k \right)
\leq C(\nu) \, (y-1)^{-k}.
\]
Since $y >2$, this proves that the perimeters of the root spin cluster under $\mathbb P_n^\nu$ for $n \geq 1$ form a tight sequence of random variables. Since we know that each probability $\mathbb P_n^\nu \left( |\partial \mathfrak C (\mathbf T_n^\nu)| = k \right)$ converges, the random variables $|\partial \mathfrak C (\mathbf T_n^\nu)|$ converge in law towards a limiting random variable with exponential tail. This finishes the proof of the statement of Theorem~\ref{th:mainIIPT} on the distribution of the perimeter of the root spin cluster in the low temperature regime.

\subsubsection{Volume distribution}

In this whole section, we only consider $\nu \leq \nu_c$. We start with the second formula of Proposition~\ref{prop:formulaperco}: for $\nu >0$ and $g \in (0,1)$ we have
\begin{align*}
\mathbb E_\infty^\nu & \left[ g^{|V(\mathfrak C (\mathbf T_\infty^\nu))|}\right]\\
& =
-\frac{g}{2 \pi} \int_{c_-^\nu(g)}^{c_+^\nu(g)} \frac{dz}{z} \Delta \left(\nu,\sqrt{g \, \nu t_\nu ^3} \, z \right) \cdot  \left(z + \frac{c_+^\nu(g) + c_-^\nu(g)}{2} \right) \sqrt{(c_+^\nu(g)-z)(z-c_-^\nu(g))},\\
& =
-\frac{g}{2 \pi \, \beth^{\mathcal Z^\ps}(\nu)} \int_{c_-^\nu(g)}^{c_+^\nu(g)} dz \, \frac{\sqrt{g \, \nu t_\nu ^3}}{1-\sqrt{g \, \nu t_\nu ^3} \, z} \beth^{Q^+} \left(\nu, \frac{1}{1-\sqrt{g \, \nu t_\nu ^3} \, z} \right) \cdot  \left(z + \frac{c_+^\nu(g) + c_-^\nu(g)}{2} \right)\\
& \qquad \qquad \qquad \qquad \qquad  \times \sqrt{(c_+^\nu(g)-z)(z-c_-^\nu(g))}.
\end{align*}

We want an asymptotic expansion as $g\to 1^-$ of the previous quantity. It relies on two things: first we  established the asymptotic expansion of $z^+(g)$ and $z^\diamond (g)$ at $g = 1$ in Lemma~\ref{lem:asymzgtcrit}; second we established the analytic properties and singular behavior at $y=2$ of the function $y \mapsto \beth^{Q^+}(\nu,y)$ in Proposition~\ref{prop:asymptalephQ+}. Combining the two will lead to the desired expansion.

\bigskip

The change of variable $z(\nu,g,\xi)=c_-^\nu(g) + \left(c_+^\nu(g) - c_-^\nu(g) \right) \xi$ in the last display gives the following identity:
\[
\mathbb E_\infty^\nu  \left[ g^{|V(\mathfrak C (\mathbf T_\infty^\nu))|}\right]
=
\frac{1}{\pi} \int_0^1 d\xi \, \sqrt{\xi (1-\xi)} \, \phi_1(\nu,g,\xi) \, \phi_2(\nu,g,\xi),
\]
with
\begin{align*}
\phi_1(\nu,g,\xi) & = -\frac{g}{2} \, \frac{\sqrt{g \, \nu t_\nu ^3}}{1-\sqrt{g \, \nu t_\nu ^3} \, z(\nu,g,\xi)} \,  \left(z(\nu,g,\xi) + \frac{c_+^\nu(g) + c_-^\nu(g)}{2} \right) \,  \left(c_+^\nu(g) - c_-^\nu(g) \right)^2,\\
\phi_2(\nu,g,\xi) & = \frac{1}{\beth^{\mathcal Z^\ps}(\nu)} \, \beth^{Q^+} \left(\nu, \frac{1}{1-\sqrt{g \, \nu t_\nu ^3} \, z(\nu,g,\xi)} \right).
\end{align*}
The asymptotic expansions for $z^+_\nu(g)$ and $z^\diamond _\nu(g)$ obtained in~\eqref{eq:zdgexp} and~\eqref{eq:zpgexp} can be translated into the following expansions for $c_+^\nu(g)$ and $c_-^\nu(g)$ via~\eqref{eq:cpmqg}:
\begin{align*}
c_+^\nu(g) &= \frac{1}{2 \sqrt{\nu t^3_\nu}} - \left( {\sqrt{z^+_\nu}} \aleph^\diamond (\nu) + z^+_\nu \aleph^\bullet(\nu) \right)^{-\frac{1}{\alphaa(\nu)-1/2}} \, \left( 1-g \right)^{\frac{1}{\alphaa(\nu)-1/2}} + o \left(\left( 1-g \right)^{\frac{1}{\alphaa(\nu)-1/2}} \right), \\
c_-^\nu(g) &= \frac{1}{ \sqrt{\nu t^3_\nu} \, y_-(\nu,t_\nu)} + o \left(\left( 1-g \right)^{\frac{1}{\alphaa(\nu)-1/2}} \right),
\end{align*}
where every constant is explicit. This in turn gives the following expansion in $g$ for $\phi_1$
\[
\phi_1(\nu,g,\xi) = \phi_1(\nu,1,\xi) + \mathcal O  \left(\left( 1-g \right)^{\frac{1}{\alphaa(\nu)-1/2}} \right),
\]
where the error term is uniform in $\xi \in [0,1]$. Therefore we have
\begin{equation} \label{eq:volcluster1}
\mathbb E_\infty^\nu  \left[ g^{|V(\mathfrak C (\mathbf T_\infty^\nu))|}\right]
=
\frac{1}{\pi} \int_0^1 d\xi \, \sqrt{\xi (1-\xi)} \, \phi_1(\nu,1,\xi) \, \phi_2(\nu,g,\xi) + \mathcal O  \left(\left( 1-g \right)^{\frac{1}{\alphaa(\nu)-1/2}} \right).
\end{equation}
Indeed, we have
\begin{align*}
&\left|
\mathbb E_\infty^\nu  \left[ g^{|V(\mathfrak C (\mathbf T_\infty^\nu))|}\right]
- \frac{1}{\pi} \int_0^1 d\xi \, \sqrt{\xi (1-\xi)} \, \phi_1(\nu,1,\xi) \, \phi_2(\nu,g,\xi)
\right|\\
& \qquad \qquad\qquad \leq
\frac{1}{\pi} \int_0^1 d\xi \, \sqrt{\xi (1-\xi)} \, |\phi_1(\nu,1,\xi)-
\phi_1(\nu,g,\xi)| \, \phi_2(\nu,g,\xi),\\
& \qquad \qquad\qquad \leq \mathrm{C}(\nu) \,
\left( 1-g \right)^{\frac{1}{\alphaa(\nu)-1/2}}
\frac{1}{\pi} \int_0^1 d\xi \, \sqrt{\xi (1-\xi)} \,  \, \phi_2(\nu,g,\xi),
\end{align*}
with $\mathrm C(\nu)$ a positive constant.  We will see that the integral in the last display is bounded with $g$, which will establish~\eqref{eq:volcluster1}.

\bigskip

To study the dependency in $g$ of the integral in~\eqref{eq:volcluster1}, we will proceed in a similar fashion than in the proof of Lemma~\ref{lem:asymptfbfg} by subtracting for $\beth^{Q^+}$ its singular expansion obtained in Proposition~\ref{prop:asymptalephQ+}. Recall from this proposition, that $\beth^{Q^+}(\nu,y)$ is analytic on $\{y\,\Big|\,|y|\leq 2 \text{ and }y\neq 2\}$, and has a unique dominant singularity at $y_\nu=2$. Hence, $\phi_2(\nu,g,\xi)$ is analytic for $(g,\xi)\in (0,1]\times[0,1]\backslash\{(1,1)\}$.

Hence, if we set
\[
\varphi(\nu,g,\xi) = \phi_1(\nu,1,\xi) \phi_2(\nu,g,\xi) - 
\frac{\phi_1(\nu,1,1)}{\beth^{\mathcal Z^\ps}(\nu)} \, 
\sum_{i \in \left\{\frac{4}{3},\frac{2}{3},\frac{1}{3},0,-\frac{1}{3},-\frac{2}{3}\right\}}\frac{\aleph_i^{\beth^{Q^+}} (\nu)}{\left( 2 - \frac{1}{1-\sqrt{g \, \nu t_\nu ^3} \, z(\nu,g,\xi)} \right)^{i}},
\]
then the function 
\[
g \mapsto \frac{1}{\pi} \int_0^1 d\xi \, \sqrt{\xi (1-\xi)} \, \varphi(\nu,g,\xi)
\]
is differentiable in $g$ and has a non singular development up to an error term $o(1-g)$.

The first singular terms in the development of $\mathbb E_\infty^\nu  \left[ g^{|V(\mathfrak C (\mathbf T_\infty^\nu))|}\right]$ then come from the integrals
\begin{equation*}
I_i(\nu,g)=\frac{\phi_1(\nu,1,1) \, \aleph_i^{\beth^{Q^+}} (\nu)}{\beth^{\mathcal Z^\ps}(\nu)} \, \frac{1}{\pi} \int_0^1 {d\xi}  \,
\sqrt{\xi(1-\xi)} \,
\left( 2 - \frac{1}{1-\sqrt{g \, \nu t_\nu ^3} \, z(\nu,g,\xi)} \right)^{-i},
\end{equation*}
for $i \in \left\{ \frac{4}{3}, \frac{2}{3}, \frac{1}{3},0,-\frac{1}{3},-\frac{2}{3}\right\}$. We will only detail the case when $i = 4/3$ as the other cases are treated similarly and have singular terms of smaller order.

We can simplify $I_{4/3}$ into
\begin{multline*}
I_{4/3}(\nu,g) = \frac{\phi_1(\nu,1,1) \, \aleph_{4/3}^{\beth^{Q^+}} (\nu)}{\beth^{\mathcal Z^\ps}(\nu)} \,
\left( \frac{1 - 2 \sqrt{g \, \nu t_\nu ^3} \, c_-^\nu(g)}{1 - \sqrt{g \, \nu t_\nu ^3} \, c_-^\nu(g)} \right)^{-4/3} \\
\times
\frac{1}{\pi} \int_0^1 {d\xi}  \,
\sqrt{\xi(1-\xi)} \,
\left(
\frac{
1- 
\dfrac{2 \sqrt{g \, \nu t_\nu ^3} \, (c_+^\nu(g) - c_-^\nu(g))}{1-2 \sqrt{g \, \nu t_\nu ^3} \, c_-^\nu(g)} \xi
}
{
1- 
\dfrac{ \sqrt{g \, \nu t_\nu ^3} \, (c_+^\nu(g)-c_-^\nu(g))}{1- \sqrt{g \, \nu t_\nu ^3} \, c_-^\nu(g)} \xi
}
\right)^{-4/3}.
\end{multline*}
In this last quantity, we have as $g \to 1^-$:
\[
\frac{ \sqrt{g \, \nu t_\nu ^3} \, (c_+^\nu(g)-c_-^\nu(g))}{1- \sqrt{g \, \nu t_\nu ^3} \, c_-^\nu(g)} \to \frac{\frac{1}{2} - \frac{1}{y_-^\nu}}{1 - \frac{1}{y_-^\nu}} < 1 
\quad \text{and} \quad 
\frac{ 2 \sqrt{g \, \nu t_\nu ^3} \, (c_+^\nu(g)-c_-^\nu(g))}{1- 2 \sqrt{g \, \nu t_\nu ^3} \, c_-^\nu(g)} \to 1^-.
\]
The same line of reasoning as before shows that the leading singular term will come from the numerator and will be given by the leading singular term of
\begin{align*}
I(\nu,g) & =\frac{\phi_1(\nu,1,1) \, \aleph_{4/3}^{\beth^{Q^+}} (\nu)}{\beth^{\mathcal Z^\ps}(\nu)} \,
\left( \frac{1 - 2 \sqrt{g \, \nu t_\nu ^3} \, c_-^\nu}{1 - \sqrt{g \, \nu t_\nu ^3} \, c_-^\nu} \right)^{-4/3} \, \left( 1- 
\frac{ \sqrt{\nu t_\nu ^3} \, (c_+^\nu-c_-^\nu)}{1- \sqrt{\nu t_\nu ^3} \, c_-^\nu} \right)^{4/3} \\
& \qquad \times \frac{1}{\pi} \int_0^1 {d\xi}  \,
\xi^{1/2} (1-\xi)^{1/2} \,
\left(
1- 
\frac{2 \sqrt{g \, \nu t_\nu ^3} \, (c_+^\nu(g) - c_-^\nu(g))}{1-2 \sqrt{g \, \nu t_\nu ^3} \, c_-^\nu(g)} \xi
\right)^{-4/3},\\
& = \frac{\phi_1(\nu,1,1) \, \aleph_{4/3}^{\beth^{Q^+}} (\nu)}{\beth^{\mathcal Z^\ps}(\nu)} \, \left(2 -\frac{4}{y_-^\nu} \right)^{-4/3} \, 
_2F_1  \left(\frac{4}{3},\frac{3}{2}; 3; \frac{2 \sqrt{g \, \nu t_\nu ^3} \, (c_+^\nu(g) - c_-^\nu(g))}{1-2 \sqrt{g \, \nu t_\nu ^3} \, c_-^\nu(g)} \right).
\end{align*}
We also have
\begin{multline*}
\frac{2 \sqrt{g \, \nu t_\nu ^3} \, (c_+^\nu(g) - c_-^\nu(g))}{1-2 \sqrt{g \, \nu t_\nu ^3} \, c_-^\nu(g)} \\
=
1- \frac{2 \sqrt{ \nu t_\nu ^3} \,  \left( {\sqrt{z^+}} \aleph^\diamond (\nu) + z^+ \aleph^\bullet(\nu) \right)^{-\frac{1}{\alphaa(\nu)-1/2}} }{1-2/y_-^\nu}
\, \left( 1-g \right)^{\frac{1}{\alphaa(\nu)-1/2}} + o \left(\left( 1-g \right)^{\frac{1}{\alphaa(\nu)-1/2}} \right).
\end{multline*}
So that, by standard singular expansion of hypergeometric functions at $1$, the leading singular term of $I(\nu,g)$ is
\[
- \mathrm{C}(\nu) \,  \left( 1-g \right)^{\frac{1}{6(\alphaa(\nu)-1/2)}}
\]
with
\begin{multline*}
\mathrm{C}(\nu) =  \frac{\phi_1(\nu,1,1) \, \aleph_{4/3}^{\beth^{Q^+}} (\nu)}{\beth^{\mathcal Z^\ps}(\nu)} \, \left(2 -\frac{4}{y_-^\nu} \right)^{-4/3} \,  \frac{54 \cdot 2^{\frac{1}{3}} \cdot\Gamma(\frac{2}{3})^{3}}{\pi^{2}} \\
\cdot \left( \frac{2 \sqrt{ \nu t_\nu ^3} \,  \left( {\sqrt{z^+}} \aleph^\diamond (\nu) + z^+ \aleph^\bullet(\nu) \right)^{-\frac{1}{\alphaa(\nu)-1/2}} }{1-2/y_-^\nu} \right)^{1/6}.
\end{multline*}
This term is also the leading singular term of $\mathbb E_\infty^\nu \left[ g^{|V(\mathfrak C (\mathbf T_\infty^\nu))|}\right]$ and we finally get:
\begin{align*}
\mathbb E_\infty^\nu \left[ g^{|V(\mathfrak C (\mathbf T_\infty^\nu))|}\right]
& = 1 -  \mathrm{C}(\nu) \,  \left( 1-g \right)^{\frac{1}{6(\alphaa(\nu)-1/2)}}+
o \left( \left( 1-g \right)^{\frac{1}{6(\alphaa(\nu)-1/2)}} \right).
\end{align*}

\bigskip

As in the proof of Theorem~\ref{th:mainBoltzmann}, this expansion is not quite enough to get the asymptotic behavior of $\mathbb P^\nu_\infty \left(|V(\mathfrak C (\mathbf T_\infty^\nu))| =n \right)$ because of possible complex singularities of modulus $1$ (which we do not believe to exist). It is however enough to get the tail probabilities:
\[
\frac{1-\mathbb E_\infty^\nu \left[ g^{|V(\mathfrak C (\mathbf T_\infty^\nu))|}\right]}{1-g}
= \sum_{n \geq 0} \mathbb P^\nu_\infty \left(|V(\mathfrak C (\mathbf T_\infty^\nu))| > n \right) \, g^n
\underset{g \to 1^-}{\sim}
\mathrm{C}(\nu) \,  \left( 1-g \right)^{\frac{1}{6(\alphaa(\nu)-1/2)}-1},
\]
and the classical Tauberian theorem (see e.g. Theorem VI.13 of \cite{FS} and the following discussion) gives
\[
\mathbb P^\nu_\infty \left(|V(\mathfrak C (\mathbf T_\infty^\nu))| > n \right)
\underset{n \to \infty}{\sim}
\frac{\mathrm{C}(\nu)}{\Gamma \left(1 -  \frac{1}{6(\alphaa(\nu)-1/2)}\right)} \,  n^{-\frac{1}{6(\alphaa(\nu)-1/2)}},
\]
finishing the proof of Theorem~\ref{th:mainIIPT}.

\section{Cluster interface and Looptrees}\label{sec:looptrees}

In this section, we study the geometry of the root spin cluster boundary seen as a discrete looptree. Following the work of Curien and Kortchemski in the setting of site percolation on the UIPT in \cite{CuKo}, we encode the boundary of the root spin cluster by a two type Galton--Watson tree. The asymptotic properties of this tree can be fully characterized thanks to the enumerative results on triangulations with simple monochromatic boundary obtained in section~\ref{sec:monochromatic}. The arguments developed in \cite{CuKo} can then be carried out to our setting without major difficulties to prove Theorem~\ref{th:looptrees}.

\subsection{Encoding by a two type Galton--Watson tree}

Recall the definition of discrete looptrees given in the introduction and the construction that associates to a tree $\tau$, the looptree $\mathrm{Loop}(\tau)$, as illustrated in Figure~\ref{fig:deflooptree}. 

We can define a different mapping that associates a tree $\mathrm{Tree}_2(L)$ with alternating white and black vertices and rooted at a white vertex to a looptree $L$  as follows: we put a black vertex inside each cycle of $L$ (except the exterior cycle), and connect it to each white vertex of $L$ that belongs to this cycle. Notice that the perimeter of $L$ is equal to the total number of vertices of $\mathrm{Tree}_2(L)$. This mapping is bijective, we denote by $\mathrm{Loop}_2(\tau)$ the looptree associated to an alternating white and black tree $\tau$ rooted at a white vertex. The index $2$ in these two mappings is to stress out the fact that the corresponding trees have two alternating colors as we have the similar construction $\mathrm{Loop}(\tau)$ for uncolored trees. See Figure \ref{fig:looptree} for an illustration.
\begin{figure}[!ht]
\centering
\includegraphics[width=0.9\linewidth]{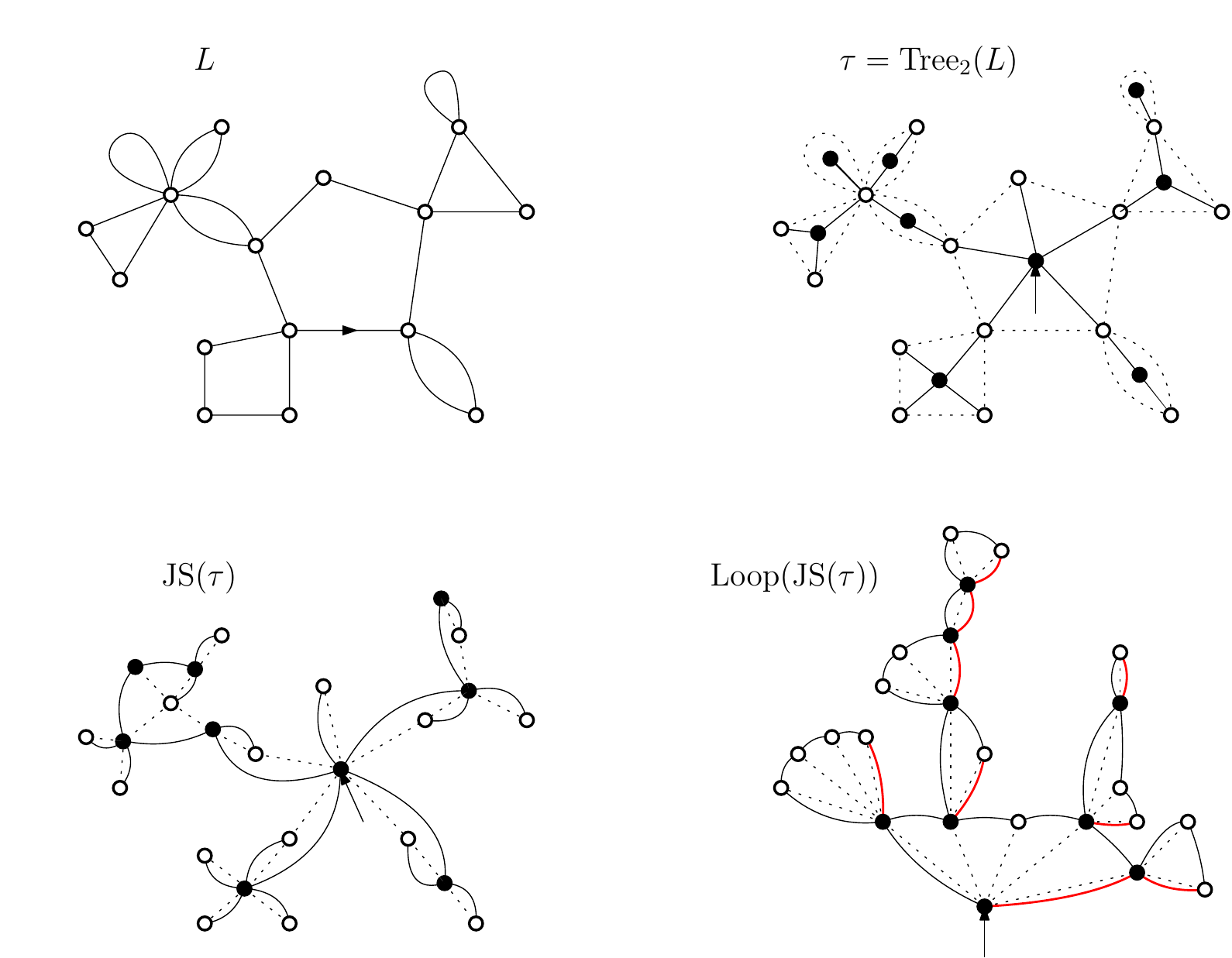}
\caption{\label{fig:looptree} A looptree $L$, its associated alternating two type tree $\tau = \mathrm{Tree}_2(L)$, the tree associated by Janson-Stefánsson bijection $\mathrm{JS}(\tau)$, and the looptree $\mathrm{Loop}(\mathrm{JS}(\tau))$. Contracting the red edges of $\mathrm{Loop}(\mathrm{JS}(\tau))$ gives back $L$.}
\end{figure}

\bigskip

For $\nu >0$, define the two probability measures $\mu^\circ_\nu$ and $\mu^\bullet_\nu$ on $\mathbb Z^+$ by setting, for $k \geq 0$:
\begin{align*}
\mu^\circ_\nu (k) &= \frac{1}{1+Z^+(\nu,t_\nu, t_\nu x_\nu)} \, \left(\frac{Z^+(\nu,t_\nu, t_\nu x_\nu)}{1+Z^+(\nu,t_\nu, t_\nu x_\nu)} \right)^k,\\
\mu^\bullet_\nu (k) &=  \frac{(t_\nu x_\nu)^{k+1} \, Z_{k+1}^+ (\nu,t_\nu)}{Z^+(\nu,t_\nu, t_\nu x_\nu)}.
\end{align*}
Denote by $\mathrm{GW}_{\mu^\circ_\nu,\mu^\bullet_\nu}$ the law of an alternating two type Galton-Watson tree with respective offspring distributions  $\mu^\circ_\nu$ for white vertices (having black offspring) and $\mu^\bullet_\nu$ for black vertices (having white offspring).

\begin{prop} \label{prop:2typeGW}
For every $\nu >0$ and every alternating two type tree $\tau$ rooted at a white vertex, we have
\begin{align*}
\mathbb P_\infty^\nu &(\mathrm{Tree}_2(\partial\mathfrak C (\mathbf T^\nu_\infty)) = \tau | |\mathfrak H(T^\nu_\infty)| < \infty) \\
& \qquad = \frac{\delta_{|V(\tau)| -1}(\nu) \, x_\nu^{1 - |V(\tau)|} \, \left(1 + Z^+(\nu,t_\nu, t_\nu x_\nu)\right)^{|V(\tau)|+1}}{Z^+(\nu,t_\nu, t_\nu x_\nu)}
\mathrm{GW}_{\mu^\circ_\nu,\mu^\bullet_\nu}(\tau).
\end{align*}
In particular, the law of $\mathrm{Tree}_2(\partial \mathfrak C(\mathbf T^\nu_\infty))$ conditioned on $|\partial \mathfrak C(\mathbf T^\nu_\infty)| = n$ and $|\mathfrak H(\mathbf T^\nu_\infty)| < \infty$ is $\mathrm{GW}_{\mu^\circ_\nu,\mu^\bullet_\nu}$ conditioned to have $n$ vertices.
\end{prop}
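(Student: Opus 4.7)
The strategy is to compute the joint probability $\mathbb{P}_\infty^\nu\!\left(\mathrm{Tree}_2(\partial \mathfrak{C}(\mathbf{T}_\infty^\nu)) = \tau,\, |\mathfrak{H}(\mathbf{T}_\infty^\nu)| < \infty\right)$ in closed form, recognize the result as a multiple of $\mathrm{GW}_{\mu^\circ_\nu,\mu^\bullet_\nu}(\tau)$, and close by normalization. The starting observation is that $\{|\mathfrak{H}|<\infty\}$ means precisely that the unique infinite end of $\mathbf{T}_\infty^\nu$ sits inside the root face $f_r$ of $\mathfrak{C}$; in the sum of Proposition~\ref{prop:boltIIPT} indexed by the faces of $\mathfrak{m}$, only the term $f=f_r$ contributes, giving
\[
\mathbb{P}_\infty^\nu\!\left(\mathfrak{C}(\mathbf{T}_\infty^\nu)=\mathfrak{m},\,|\mathfrak{H}|<\infty\right)
= (\nu t_\nu^3)^{|\partial \mathfrak{m}|/2}\,\delta_{|\partial \mathfrak{m}|}(\nu) \prod_{f \in F(\mathfrak{m})\setminus\{f_r\}} q_{\deg f}(\nu, t_\nu).
\]

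I would then fix $\tau$ and sum over clusters $\mathfrak{m}$ with $\mathrm{Tree}_2(\partial \mathfrak{m})=\tau$. Viewed as a disk map with outer face $f_r$, such an $\mathfrak{m}$ is split by the looptree $L=\mathrm{Loop}_2(\tau)$ into disk sub-maps with \emph{simple} outer boundaries, one per cycle of $L$ (equivalently per black vertex $b$ of $\tau$), of length $k_b=\deg_\tau(b)$. These sub-maps meet only at the pinch-point white vertices of $L$, so the Boltzmann weight factorizes:
\[
\sum_{\mathfrak{m}:\,\mathrm{Tree}_2(\partial \mathfrak{m})=\tau}\;\prod_{f\neq f_r} q_{\deg f}(\nu,t_\nu)\; =\; \prod_{b \text{ black}} \widetilde{W}^{(k_b)}_{\mathbf{q}(\nu,t_\nu)},
\]
where $\widetilde{W}^{(k)}_{\mathbf{q}(\nu,t)}$ denotes the partition function of $\mathbf{q}(\nu,t)$-Boltzmann disk maps with simple outer boundary of length $k$. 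A straightforward variant of the gasket argument in Proposition~\ref{prop:Wnut}, applied to triangulations with a simple (rather than non-simple) monochromatic boundary, then gives the analogue of \eqref{eq:Wl}, namely $\widetilde{W}^{(k)}_{\mathbf{q}(\nu,t)}=(\nu t)^{-k/2}Z^+_k(\nu,t)$.

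Substituting this and using $\sum_b k_b = |\partial \mathfrak{m}| = |V(\tau)|-1$ collapses all the $\nu$- and $t_\nu$-powers to
\[
\mathbb{P}_\infty^\nu\!\left(\mathrm{Tree}_2(\partial \mathfrak{C})=\tau,\,|\mathfrak{H}|<\infty\right)
= t_\nu^{\,|V(\tau)|-1}\,\delta_{|V(\tau)|-1}(\nu)\prod_{b\text{ black}} Z^+_{k_b}(\nu,t_\nu).
\]
A direct computation from the definitions of $\mu^\circ_\nu$ and $\mu^\bullet_\nu$, keeping track of the total offspring counts of each color, gives on the other hand
\[
\mathrm{GW}_{\mu^\circ_\nu,\mu^\bullet_\nu}(\tau) = \bigl(1+Z^+(\nu,t_\nu,t_\nu x_\nu)\bigr)^{-|V(\tau)|}\,(t_\nu x_\nu)^{|V(\tau)|-1}\prod_{b\text{ black}} Z^+_{k_b}(\nu,t_\nu).
\]
The ratio of these two displays depends on $\tau$ only through $|V(\tau)|$, so summing both sides over all finite $\tau$ identifies $\mathbb{P}_\infty^\nu(|\mathfrak{H}|<\infty)$ with $Z^+(\nu,t_\nu,t_\nu x_\nu)/\bigl(1+Z^+(\nu,t_\nu,t_\nu x_\nu)\bigr)$, from which the announced formula for the conditional law follows at once. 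The second assertion is then immediate: at fixed $|V(\tau)|$ the prefactor is constant and the conditional law reduces to $\mathrm{GW}_{\mu^\circ_\nu,\mu^\bullet_\nu}$ restricted to trees of that size.

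The main obstacle will be justifying the decomposition of $\mathfrak{m}$ along its boundary looptree in the second paragraph: one must check that the cycles of $L$ partition the complement of $f_r$ in $\mathfrak{m}$ into disjoint disk regions whose sub-maps have disjoint face sets, despite the cycles possibly sharing many pinch-point vertices. The planar-topology argument is standard but deserves care; by contrast, the simple-boundary analogue of \eqref{eq:Wl} is only a minor variant of an argument already in the paper.
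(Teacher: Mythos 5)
Your approach is sound in outline and reaches the same factorized joint law as the paper, but it takes a genuinely different route and has one under-justified step at the end.

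On the route: the paper computes $\mathbb P_\infty^\nu(\mathfrak H(\mathfrak C)=\mathfrak h)$ directly by writing $\mathbb P_n^\nu(\mathfrak H(\mathfrak C)=\mathfrak h)$ via the gasket decomposition and passing to the $n\to\infty$ limit through Proposition~\ref{prop:asymptQk}, and then sums this \emph{triangulation-level} weight $\nu^{m(\mathfrak h)}t_\nu^{|\mathfrak h|}$ over hulls with prescribed boundary looptree, factorizing into $\prod_b Z^+_{\deg(b)}(\nu,t_\nu)$. You instead start from the already-proved law of the cluster (Proposition~\ref{prop:boltIIPT}), observe that conditioning on $\{|\mathfrak H|<\infty\}$ retains only the $f=f_r$ term of the sum, and then factorize the \emph{Boltzmann-weighted} sum over clusters $\mathfrak m$ using a simple-boundary analogue $\widetilde W^{(k)}_{\mathbf q(\nu,t)} = (\nu t)^{-k/2}Z^+_k(\nu,t)$ of \eqref{eq:Wl}. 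Both are valid; the hull-level summation is slightly more economical because it sums directly against the triangulation weights (so no separate simple-boundary analogue of \eqref{eq:Wl} is needed), while your cluster-level summation cleanly reuses Proposition~\ref{prop:boltIIPT} and stays inside the Boltzmann-map formalism. Your observation that $\{|\mathfrak H|<\infty\}$ picks out the $f=f_r$ term is correct and worth noting explicitly, since it makes precise the probabilistic interpretation of the sum in Proposition~\ref{prop:boltIIPT}; that interpretation is implicit in the proof of Proposition~\ref{prop:asymptQk} but is not spelled out in the paper.

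The genuine gap is the last normalization step. From your joint formula $\mathbb P_\infty^\nu(\mathrm{Tree}_2(\partial\mathfrak C)=\tau,\ |\mathfrak H|<\infty)=\delta_{|V(\tau)|-1}(\nu)\,x_\nu^{1-|V(\tau)|}(1+Z^+)^{|V(\tau)|}\,\mathrm{GW}_{\mu^\circ_\nu,\mu^\bullet_\nu}(\tau)$ one passes to the conditional law by dividing by $\mathbb P_\infty^\nu(|\mathfrak H|<\infty)$, and matching the stated Proposition requires $\mathbb P_\infty^\nu(|\mathfrak H|<\infty)=Z^+/(1+Z^+)$. You assert that this ``follows from summing both sides over all finite $\tau$,'' but that is not so: the ratio between the joint probability and $\mathrm{GW}(\tau)$ is not constant, it is $\delta_{|V(\tau)|-1}(\nu)\,x_\nu^{1-|V(\tau)|}(1+Z^+)^{|V(\tau)|}$ and depends on $|V(\tau)|$. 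Summing therefore gives $\mathbb P_\infty^\nu(|\mathfrak H|<\infty)=\sum_{n\ge 1}\delta_{n-1}(\nu)\,x_\nu^{1-n}(1+Z^+)^{n}\,\mathbb P(|\mathrm{GW}|=n)$, which does not collapse to $Z^+/(1+Z^+)$ by any identity you have established. You would need either a separate computation of $\mathbb P_\infty^\nu(|\mathfrak H|<\infty)$, or an algebraic rearrangement (as in the paper, where the normalizing $(1+Z^+)/Z^+$ is produced by inserting a telescoping factor of $a=1+1/Z^+$ across the products over white and black vertices). Note, however, that this gap is immaterial for the second assertion of the Proposition: conditionally on $|\partial\mathfrak C|=n$ the prefactor depends on $\tau$ only through $|V(\tau)|=n$ and cancels, so your argument fully establishes that part.
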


\begin{proof}
Fix $\mathfrak h$ a finite triangulation with spins and a boundary (not necessarily simple) with positive spins. Using Proposition \ref{prop:asymptQk} we have:
\begin{align*}
\mathbb P_n^\nu \left( \mathfrak H (\mathfrak C) = \mathfrak h \right)
&= \frac{[t^{3n}] \left( \nu^{m(\mathfrak h)} t^{|\mathfrak h|} \cdot (\nu t)^{-|\partial \mathfrak h|/2}  q_{|\partial \mathfrak h|}(\nu,t) \right)}{[t^{3n}] \mathcal Z^\ps (\nu,t)},\\
& \underset{n \to \infty}{\rightarrow} \nu^{m(\mathfrak h)} t_\nu^{|\mathfrak h|} \cdot t_\nu^{|\partial \mathfrak h|} \delta_{|\partial \mathfrak h|}(\nu).
\end{align*}
As a consequence, by continuity with respect to the local topology, we have:
\begin{equation} \label{eq:hullproba}
\mathbb P_\infty^\nu (\mathfrak H (\mathfrak C) = \mathfrak h) =  \nu^{m(\mathfrak h)} t_\nu^{|\mathfrak h|} \cdot t_\nu^{|\partial \mathfrak h|} \delta_{|\partial \mathfrak h|}(\nu).
\end{equation}

Fix $\tau$ an alternating white and black tree rooted at a white vertex and denote by $|\partial \tau|$ the perimeter of $\mathrm{Loop}_2(\tau)$. Summing \eqref{eq:hullproba} over every hull having the same looptree as boundary we get:
\begin{align*}
\mathbb P_\infty^\nu (\mathrm{Tree}(\partial\mathfrak C) = \tau | |\mathfrak H(\mathfrak C)| < \infty)
& = \delta_{|\partial \tau|}(\nu) \, t_\nu^{|\partial \tau|} \, \sum_{\mathfrak h \, : \, \partial \mathfrak h = \mathrm{Loop}(\tau)}
\nu^{m(\mathfrak h)} t_\nu^{|\mathfrak h|},\\
& = \delta_{|\partial \tau|}(\nu) \, t_\nu^{|\partial \tau|} \,
\prod_{v \in V_\bullet(\tau)} Z_{\mathrm{deg}(v)}^+ (\nu,t_\nu),\\
& = \delta_{|\partial \tau|}(\nu) \, 
\prod_{v \in V_\bullet(\tau)} t_\nu^{\mathrm{deg}(v)} \, Z_{\mathrm{deg}(v)}^+ (\nu,t_\nu),\\
&= \delta_{|\partial \tau|}(\nu) \, x_\nu^{- | \partial \tau|} \, \prod_{v \in V_\bullet(\tau)} (t_\nu x_\nu)^{\mathrm{deg}(v)} \, Z_{\mathrm{deg}(v)}^+ (\nu,t_\nu).
\end{align*}
By construction of $\mathrm{Loop}_2(\tau)$, we have $\sum_{u \in V_\bullet(\tau)} \mathrm{deg} (u)  = |\partial \tau|$. Moreover, since $\mathrm{Loop}_2(\tau)$ is an alternating two-type tree, we clearly have:
\[\sum_{u \in V_\circ(\tau)} \mathrm{deg} (u)  = |E(\tau)| = |V(\tau)|  +1= |V_\circ(\tau)| + |V_\bullet(\tau)| +1.
\]
Hence, we get for any $ a > 0$
\begin{align*}
\frac{1}{a} & = \left(\prod_{u \in V_\circ(\tau)} \frac{1}{a^{\mathrm{deg}(u) - 1}} \right) \, \left(\prod_{v \in V_\bullet(\tau)} a \right),\\
& = \left(1 - \frac{1}{a} \right)^{-|V(\tau)|} \, \left(\prod_{u \in V_\circ(\tau)} \left(1 - \frac{1}{a} \right) \frac{1}{a^{\mathrm{deg}(u) - 1}} \right) \, \left(\prod_{v \in V_\bullet(\tau)} (a-1) \right).
\end{align*}
Setting 
\[
a = 1 + \frac{1}{Z^+(\nu,t_\nu, t_\nu x_\nu)},
\]
we get
\begin{align*}
\mathbb P_\infty^\nu &(\mathrm{Tree}_2(\partial\mathfrak C) = \tau | |\mathfrak H(\mathfrak C)| < \infty) \\
& \qquad = \delta_{|V(\tau)| -1}(\nu) \, x_\nu^{1 - |V(\tau)|} \, \left(1 + Z^+(\nu,t_\nu, t_\nu x_\nu)\right)^{|V(\tau)|} \, \frac{1+Z^+(\nu,t_\nu, t_\nu x_\nu)}{Z^+(\nu,t_\nu, t_\nu x_\nu)} \\
& \qquad \qquad \times
\left( \prod_{u \in V_\circ(\tau)}
\frac{1}{1+Z^+(\nu,t_\nu, t_\nu x_\nu)}
\left(\frac{Z^+(\nu,t_\nu, t_\nu x_\nu)}{1+Z^+(\nu,t_\nu, t_\nu x_\nu)} \right)^{\mathrm{deg}(u)-1} 
\right) \\
& \qquad \qquad \times
\left( \prod_{v \in V_\bullet(\tau)} \frac{(t_\nu x_\nu)^{\mathrm{deg}(v)} \, Z_{\mathrm{deg}(v)}^+ (\nu,t_\nu)}{Z^+(\nu,t_\nu, t_\nu x_\nu)} \right),
\end{align*}
which finishes the proof, since the tree is rooted at a white vertex.
\end{proof}

The case of finite random triangulations is very similar:

\begin{prop} \label{prop:2typeGWbolt}
For every $\nu >0$ and every alternating two type tree $\tau$ rooted at a white vertex, we have
\begin{align*}
\mathbb P^\nu &(\mathrm{Tree}_2(\partial\mathfrak C (\mathbf T^\nu)) = \tau ) \\
& \qquad =
\frac{q_{|V(\tau)| -1}(\nu,t_\nu ) \, (\sqrt{\nu t_\nu^3} \,x_\nu)^{1 - |V(\tau)|} \, \left(1 + Z^+(\nu,t_\nu, t_\nu x_\nu)\right)^{|V(\tau)|+1}}{\mathcal Z^\ps(\nu,t_\nu) \, Z^+(\nu,t_\nu, t_\nu x_\nu)}
\mathrm{GW}_{\mu^\circ_\nu,\mu^\bullet_\nu}(\tau).
\end{align*}
In particular, the law of $\mathrm{Tree}_2(\partial \mathfrak C(\mathbf T^\nu))$ conditioned on $|\partial \mathfrak C(\mathbf T^\nu)| = n$ is $\mathrm{GW}_{\mu^\circ_\nu,\mu^\bullet_\nu}$ conditioned to have $n$ vertices.
\end{prop}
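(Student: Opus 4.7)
The plan is to mirror the proof of Proposition~\ref{prop:2typeGW}, but to start directly from the $\mathbf{q}(\nu,t_\nu)$-Boltzmann characterization of the root spin cluster given by Proposition~\ref{prop:bolt}, rather than first computing the probability of a given hull. Fix an alternating two-type tree $\tau$ rooted at a white vertex and set $L := \mathrm{Loop}_2(\tau)$, so that the perimeter of $L$ equals $\sum_{v \in V_\bullet(\tau)} \mathrm{deg}(v) = |V(\tau)|-1$. Proposition~\ref{prop:bolt} yields
\[
\mathbb P^\nu\bigl(\mathrm{Tree}_2(\partial \mathfrak C(\mathbf T^\nu)) = \tau\bigr) = \frac{1}{\mathcal Z^\ps(\nu, t_\nu)} \sum_{\mathfrak m\,:\,\partial \mathfrak m = L} \, \prod_{f \in F(\mathfrak m)} q_{\mathrm{deg}(f)}(\nu, t_\nu).
\]
Cutting each cluster $\mathfrak m$ with $\partial \mathfrak m = L$ along the simple cycles $(C_v)_{v \in V_\bullet(\tau)}$ of $L$ provides a bijection between such clusters and families $(\mathfrak m_v)_{v \in V_\bullet(\tau)}$ of $\mathbf q(\nu, t_\nu)$-Boltzmann maps with simple outer boundary of perimeter $\mathrm{deg}(v)$. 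Denoting the corresponding simple-boundary disk function by $W^{(\ell),\mathrm{s}}_{\mathbf q}$, the Boltzmann weight factorizes as
\[
\sum_{\mathfrak m\,:\,\partial \mathfrak m = L} \prod_{f \in F(\mathfrak m)} q_{\mathrm{deg}(f)}(\nu, t_\nu) = q_{|V(\tau)|-1}(\nu, t_\nu) \prod_{v \in V_\bullet(\tau)} W^{(\mathrm{deg}(v)),\mathrm{s}}_{\mathbf q(\nu, t_\nu)}.
\]

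The next step is to relate $W^{(\ell),\mathrm{s}}_{\mathbf q(\nu, t_\nu)}$ to the simple-boundary triangulation series $Z^+_\ell(\nu, t_\nu)$. Running the gasket argument used in the proof of Proposition~\ref{prop:Wnut} with simple rather than non-simple boundaries shows that each of the $\ell$ boundary edges contributes $\sqrt{\nu t_\nu}$ to $W^{(\ell),\mathrm{s}}_{\mathbf q(\nu, t_\nu)}$ but $\nu t_\nu$ to $Z^+_\ell(\nu, t_\nu)$, yielding the identity $W^{(\ell),\mathrm{s}}_{\mathbf q(\nu, t_\nu)} = (\nu t_\nu)^{-\ell/2} Z^+_\ell(\nu, t_\nu)$. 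Substituting this in and factoring $(\nu t_\nu)^{-\mathrm{deg}(v)/2} Z^+_{\mathrm{deg}(v)}(\nu, t_\nu) = (\sqrt{\nu t_\nu^3}\, x_\nu)^{-\mathrm{deg}(v)} (t_\nu x_\nu)^{\mathrm{deg}(v)} Z^+_{\mathrm{deg}(v)}(\nu, t_\nu)$ leaves the product $\prod_{v \in V_\bullet(\tau)} (t_\nu x_\nu)^{\mathrm{deg}(v)} Z^+_{\mathrm{deg}(v)}(\nu, t_\nu)$ times the global prefactor $(\sqrt{\nu t_\nu^3}\, x_\nu)^{1-|V(\tau)|}$. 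The remaining product is precisely the one reorganized at the end of the proof of Proposition~\ref{prop:2typeGW}.

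Applying verbatim the algebraic identity used there, namely $1/a = (1-1/a)^{-|V(\tau)|} \prod_{u\in V_\circ(\tau)} (1-1/a)\, a^{-(\mathrm{deg}(u)-1)} \prod_{v \in V_\bullet(\tau)} (a-1)$ with $a = 1 + 1/Z^+(\nu, t_\nu, t_\nu x_\nu)$, rewrites this product as $(1+Z^+(\nu, t_\nu, t_\nu x_\nu))^{|V(\tau)|+1} \cdot Z^+(\nu, t_\nu, t_\nu x_\nu)^{-1} \cdot \mathrm{GW}_{\mu^\circ_\nu, \mu^\bullet_\nu}(\tau)$ and delivers the stated formula. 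The conditional statement concerning $\mathbf T^\nu$ given $|\partial \mathfrak C(\mathbf T^\nu)| = n$ then follows immediately, since the prefactor depends on $\tau$ only through $|V(\tau)|$. The main obstacle is purely one of bookkeeping: all three ingredients (the Boltzmann description of Proposition~\ref{prop:bolt}, the simple-boundary analogue of~\eqref{eq:Wl}, and the algebraic reorganization into Galton--Watson form) are already available or are an immediate variant of arguments already given, and the only substantive difference with the IIPT case is the replacement of the prefactor $t_\nu^{|L|} \delta_{|L|}(\nu)$ of~\eqref{eq:hullproba} by the Boltzmann root-face weight $q_{|L|}(\nu, t_\nu)/\mathcal Z^\ps(\nu, t_\nu)$.
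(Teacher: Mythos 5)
Your proof is correct and reaches the same intermediate identity as the paper before the common algebraic reorganization, but via a slightly different entry point. The paper writes $\mathbb P^\nu(\mathfrak H(\mathfrak C)=\mathfrak h)$ directly from the definition (decomposing $\mathbf T^\nu$ into the finite hull and a necklace-plus-island filling of the cluster's root face) and then sums over hulls with prescribed looptree boundary, cutting each such hull into one simple-boundary triangulation per loop; this is carried out entirely at the triangulation-with-spins level. You instead take Proposition~\ref{prop:bolt} as the starting point and decompose the \emph{cluster} itself along the loops of its root-face boundary, which reduces the sum to a product of simple-boundary disk functions. The price is the extra lemma $W^{(\ell),\mathrm{s}}_{\mathbf q(\nu,t_\nu)} = (\nu t_\nu)^{-\ell/2}\, Z^+_\ell(\nu,t_\nu)$, which is not stated in the paper but is a direct variant of~\eqref{eq:Wl}: the gasket decomposition sends triangulations of the $\ell$-gon with $\ps$ boundary to cluster maps with a \emph{simple} root face of degree $\ell$ (the boundary $\ell$-cycle is monochromatic, hence lies entirely in the cluster and still bounds its root face), and the same boundary-edge weight count as in Proposition~\ref{prop:Wnut} gives the factor $(\nu t_\nu)^{-\ell/2}$. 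Both routes then use the same algebraic reorganization, so the difference is essentially one of presentation: your version has the merit of letting the $\mathbf q$-Boltzmann formalism do more of the work and of producing a lemma ($W^{(\ell),\mathrm{s}}_{\mathbf q(\nu,t_\nu)} = (\nu t_\nu)^{-\ell/2}\,Z^+_\ell(\nu,t_\nu)$) worth recording on its own. One small imprecision in your closing sentence: the Boltzmann prefactor replacing $t_\nu^{|L|}\delta_{|L|}(\nu)$ from~\eqref{eq:hullproba} is $(\nu t_\nu)^{-|L|/2}\, q_{|L|}(\nu,t_\nu)/\mathcal Z^\ps(\nu,t_\nu)$ rather than $q_{|L|}(\nu,t_\nu)/\mathcal Z^\ps(\nu,t_\nu)$, consistent with Proposition~\ref{prop:asymptQk} defining $\delta_k(\nu)$ as the limit ratio of coefficients of $(\nu t^3)^{-k/2}q_k(\nu,t)$ and $\mathcal Z^\ps(\nu,t)$; this does not affect the body of your argument.
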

\begin{proof}
By definition we have
\[
\mathbb P^\nu (\mathfrak H (\mathfrak C) = \mathfrak h) = \frac{1}{\mathcal Z^\ps (\nu,t_\nu)} \nu^{m(\mathfrak h)} t_\nu^{|\mathfrak h|} \cdot (\nu \, t_\nu)^{-|\partial \mathfrak h|/2} q_{|\partial \mathfrak h|}(\nu).
\]
From there, the same calculation as in the proof of Proposition~\ref{prop:2typeGW} gives
\[
\mathbb P^\nu (\mathrm{Tree}(\partial\mathfrak C) = \tau )
= \frac{q_{|\partial \tau|}(\nu) \, (\nu t_\nu^3)^{-|\partial \tau|/2} \, x_\nu^{- | \partial \tau|}}
{\mathcal Z^\ps (\nu,t_\nu)} \, 
 \, \prod_{v \in V_\bullet(\tau)} (t_\nu x_\nu)^{\mathrm{deg}(v)} \, Z_{\mathrm{deg}(v)}^+ (\nu,t_\nu),
\]
and the result follows as in the proof of Proposition~\ref{prop:2typeGW}.
\end{proof}

\subsection{Proof of Theorem~\ref{th:looptrees}}

We are now ready to prove Theorem \ref{th:looptrees}. The same proof can originally be found in \cite[Proof of Theorem 1.2]{CuKo} in the setting of site percolation on the UIPT. We adapt it here in our setting for the sake of completeness.

\begin{proof}[Proof of Theorem \ref{th:looptrees}]
The proofs for $\partial \mathfrak C_\infty^\nu(n)$ and for $\partial \mathfrak C^\nu(n)$ are identical since both of them rely on the law of the associated two type Galton--Watson trees, which is the same by Propositions \ref{prop:2typeGW} and \ref{prop:2typeGWbolt}. We only write the details in the infinite volume case.

Recall from the introduction that $\partial \mathfrak C_\infty^\nu(n)$ denotes $\partial \mathfrak C(\mathbf T_\infty^\nu)$ conditioned on the event that $\mathfrak H (\mathbf T_\infty^\nu)$ is finite and that the perimeter of $\mathfrak C(\mathbf T_\infty^\nu)$ is $n$. Proposition \ref{prop:2typeGW} ensures that $\mathrm{Tree}_2(\partial \mathfrak C_\infty^\nu(n))$ has law $\mathrm{GW}_{\mu^\circ_\nu,\mu^\bullet_\nu}^{(n)}$, which is the law $\mathrm{GW}_{\mu^\circ_\nu,\mu^\bullet_\nu}$ conditioned on the event that the tree has $n$ vertices.

We can use the Janson--Stef\'ansson bijection \cite{JStree} to map alternating two type trees rooted at a white vertex and uncolored rooted trees. For an alternating two type tree $\tau$, we denote  by $\mathrm{JS}(\tau)$ its image by the Janson--Stef\'ansson bijection. It is not needed to know details about this bijection as all the arguments used to prove Theorem \ref{th:looptrees} are established in \cite{CuKo}. The illustration given in Figure \ref{fig:looptree} should suffice for our purpose.

It is proved in \cite[Appendix A]{JStree} and in \cite[Proposition 3.6]{CuKo}) that the Janson--Stef\'ansson bijection 
maps a two-type Galton--Watson tree with law $\mathrm{GW}_{\mu^\circ_\nu,\mu^\bullet_\nu}$ conditioned to have $n$ vertices to a one type Galton--Watson tree conditioned to have $n$ vertices and with reproduction law $\mu_\nu$ defined by
\[
\mu_\nu (k) =
\begin{cases}
\frac{1}{1+Z^+(\nu,t_\nu, t_\nu x_\nu)} & \text{for $k=0$},\\
\frac{Z^+(\nu,t_\nu, t_\nu x_\nu)}{1+Z^+(\nu,t_\nu, t_\nu x_\nu)} \, \mu^\bullet_\nu(k-1) & \text{for $k \geq 1$}.
\end{cases}
\]
Therefore, $\mathrm{JS} ( \mathrm{Tree}_2(\partial \mathfrak C_\infty^\nu(n)) )$ is a Galton--Watson tree with reproduction law $\mu_\nu$ conditioned to have $n$ vertices.

The generating function of $\mu_\nu$ is given by
\[
G_{\mu_\nu} ( z) = \sum_{k \geq 0} \mu_\nu(k) \, z ^k = \frac{1+Z^+(\nu,t_\nu, t_\nu x_\nu \, z)}{1+Z^+(\nu,t_\nu, t_\nu x_\nu)}.
\]
According to Lemma \ref{lem:expZplusxc}, this function is differentiable at $z=1$ and we have
\begin{equation} \label{eq:looptreeder}
G_{\mu_\nu} ^\prime (1) = \frac{-\aleph_1^{Z^+}(\nu)}{1+ Z^+(\nu,t_\nu ,t_\nu x_\nu)},
\end{equation}
Hence, the distribution is critical when $\nu < \nu_c$ and subcritical when $\nu \geq \nu_c$.
We can separate into two cases depending on whether the reproduction law $\mu_\nu$ is critical or subcritical.
\paragraph{Case $\nu \geq \nu_c$.} In this case, the reproduction law $\mu_\nu$ is subcritical. It is proved in \cite[Theorem 5.5]{JScondensation} that, with probability tending to $1$ as $n\to \infty$, the random tree $\mathrm{JS} ( \mathrm{Tree}_2(\partial \mathfrak C_\infty^\nu(n)) )$ has a unique vertex $v_n^\star$ of maximal degree and that, in probability,
\[
\frac{\mathrm{deg}(v_n^\star)}{n} \underset{n \to \infty}{\rightarrow} 1 - G_{\mu_\nu} ^\prime (1).
\]
By \cite[Corollary 2]{KorGW}, the maximal size of the connected components of $\mathrm{JS} ( \mathrm{Tree}_2(\partial \mathfrak C_\infty^\nu(n)) ) \setminus \{ v_n^\star\}$ divided by $n$ vanishes in probability as $n \to \infty$. Since $\mu_\nu^\circ$ has exponential tails, the vertex $v_n^\star$ corresponds necessarily to a black vertex of $\mathrm{Tree}_2(\partial \mathfrak C_\infty^\nu(n))$, and therefore to a macroscopic loop of $\partial \mathfrak C_\infty^\nu(n)$. In addition, the other loops of $\partial \mathfrak C_\infty^\nu(n)$ are microscopic and their size divided by $n$ vanish in probability as $n\to \infty$. This implies that we have the following convergence in distribution for the Gromov--Hausdorff topology:
\[
\frac{1}{n} \partial \mathfrak C_\infty^\nu(n) \underset{n\to \infty}{\rightarrow} \left(1 + \frac{\aleph_1^{Z^+}(\nu)}{1+ Z^+(\nu,t_\nu ,t_\nu x_\nu)} \right)  \mathscr{C}_1.
\]
The constant in the right hand side of the previous equation is explicit and calculated in the Maple companion file \cite{Maple}.

\paragraph{Case $\nu < \nu_c$.} In this case, the reproduction law $\mu_\nu$ is critical and we have from \eqref{eq:looptreeder} and Lemma~\ref{lem:expZplusxc}:
\[
\mu_\nu(k) \underset{k\to \infty}{\sim} \frac{3}{4\sqrt{\pi}}
\frac{-\aleph^{Z^+}(\nu)}{1+ Z^+(\nu,t_\nu ,t_\nu x_\nu)}
\, k^{-5/2}.
\]
From there classical results (see for instance \cite{DuqLtree} and \cite{KorLtree}) imply that $n^{-1/3} \mathrm{JS} ( \mathrm{Tree}_2(\partial \mathfrak C_\infty^\nu(n)) )$ converges in distribution toward the $3/2$-stable Lévy tree for the Gromov-Hausdorff topology. The canonical looptree $\mathrm{Loop}(\mathrm{JS} ( \mathrm{Tree}_2(\partial \mathfrak C_\infty^\nu(n)) )$ associated to $\mathrm{JS} ( \mathrm{Tree}_2(\partial \mathfrak C_\infty^\nu(n)) )$ as defined in the introduction and in \cite{CKlooptrees}, illustrated in Figure~\ref{fig:looptree} in the present setting, has a Gromov--Hausdorff distance to $\partial \mathfrak C_\infty^\nu(n))$ bounded by twice its height. The results of \cite[Section 4.2]{CKlooptrees} show that we have the following convergence in distribution for the Gromov--Hausdorff topology:
\[
\frac{1}{n^{2/3}} \partial \mathfrak C_\infty^\nu(n)) 
 \underset{n \to \infty}{\rightarrow}
\left(\frac{-\aleph^{Z^+}(\nu)}{1+ Z^+(\nu,t_\nu ,t_\nu x_\nu)} \right)^{-2/3} \, \mathscr{L}_{3/2},
\]
which finishes the proof.
\end{proof}

\newpage

\begin{appendices}
\section{Generating series of \texorpdfstring{$\mathbf{q}$}{q}-Boltzmann maps of the cylinder}\label{appendix:cylinder}

\subsection{Parametrization in terms of Zhukovsky's variables}

In the book by Eynard~\cite{EynardBook}, the generating series $W_{\mathbf q , 2}$ is denoted $W_2^{(0)}$ where the ``2'' stands for the number of boundaries and the ``(0)'' for the genus of the underlying surface (which is equal to 0 here, since we only consider maps on the sphere). In Theorem~3.2.1, Eynard obtains the following rational parametrization for $W_{\mathbf q , 2}$ in terms of the so-called Zhukovsky's variables:
\begin{theo}[Theorem~3.2.1 of~\cite{EynardBook}]\label{theo:Eynard}
\begin{equation}\label{eq:formuleEynard}
W_{\mathbf q , 2}\Big(z(x_1),z(x_2)\Big)z'(x_1)z'(x_2)=\frac{-1}{(x_1x_2-1)^2}, 
\end{equation}
where $z(x)=z^\diamond(\mathbf q)+\sqrt{z^+(\mathbf q)}\Big(x+\dfrac{1}{x}\Big)$. 
\end{theo}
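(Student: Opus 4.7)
The plan is to deduce Theorem~\ref{theo:Eynard} from the closed form expression of $W_{\mathbf q,2}$ given in Proposition~\ref{prop:gencylinders} (established by slice decomposition in the remainder of the appendix). Assuming that expression, the proof reduces to performing the Zhukovsky change of variable directly inside \eqref{eq:cylindre} and checking that the result collapses to a rational fraction in $(x_1, x_2)$.

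First, I would record the elementary identities that the map $z(x) = z^\diamond(\mathbf q) + \sqrt{z^+(\mathbf q)}\,(x + 1/x)$ satisfies. Using $z^\diamond(\mathbf q) = (c_+(\mathbf q) + c_-(\mathbf q))/2$ and $4\sqrt{z^+(\mathbf q)} = c_+(\mathbf q) - c_-(\mathbf q)$, which both follow from~\eqref{eq:cpmz}, a direct calculation gives $z(\pm 1) = c_\pm(\mathbf q)$ together with
\begin{align*}
(z(x)-c_+(\mathbf q))(z(x)-c_-(\mathbf q)) &= z^+(\mathbf q)\,\frac{(x^2-1)^2}{x^2},\\
z(x_1)-z(x_2) &= \sqrt{z^+(\mathbf q)}\,\frac{(x_1-x_2)(x_1 x_2-1)}{x_1 x_2},\\
z'(x) &= \sqrt{z^+(\mathbf q)}\,\frac{x^2-1}{x^2}.
\end{align*}
Plugging the first identity into the universal form~\eqref{eq:pointedDisk} of the pointed disk function, and choosing the branch of the square root such that $W_{\mathbf q,\bullet}(z(x)) \sim 1/z(x)$ as $|x| \to \infty$, one obtains the clean identity
$W_{\mathbf q,\bullet}(z(x))\cdot z'(x) = 1/x$.

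Next I would compute the quadratic factor appearing in~\eqref{eq:cylindre}. Expanding $z_i = z(x_i)$ and simplifying yields
\[
z_1 z_2 - \frac{c_+(\mathbf q) + c_-(\mathbf q)}{2}(z_1 + z_2) + c_+(\mathbf q) c_-(\mathbf q)
 = z^+(\mathbf q)\,\frac{(x_1 x_2 - 1)^2 + (x_1 - x_2)^2}{x_1 x_2}.
\]
Multiplying this by $W_{\mathbf q,\bullet}(z(x_1))\, W_{\mathbf q,\bullet}(z(x_2)) = x_1 x_2/\bigl(z^+(\mathbf q)(x_1^2-1)(x_2^2-1)\bigr)$ and subtracting $1$, the numerator collapses via the polynomial identity $(x_1 x_2 - 1)^2 + (x_1 - x_2)^2 - (x_1^2 - 1)(x_2^2 - 1) = 2(x_1 - x_2)^2$ to give $2(x_1-x_2)^2/\bigl((x_1^2-1)(x_2^2-1)\bigr)$. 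Dividing by $2(z_1 - z_2)^2$ and multiplying the whole expression by $z'(x_1)z'(x_2)$, all the factors $(x_1-x_2)^2$, $(x_1^2-1)$ and $(x_2^2-1)$ cancel, producing the desired rational fraction with denominator $(x_1 x_2 - 1)^2$.

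The only (mild) difficulty is bookkeeping the branch cut of the square root in the universal formula~\eqref{eq:pointedDisk} consistently on both sheets of the Zhukovsky double cover, and reconciling this with the sign convention used by Eynard in~\eqref{eq:formuleEynard}. This amounts to tracking the behavior of $W_{\mathbf q,\bullet}(z(x))$ at infinity and near the cut $[c_-(\mathbf q),c_+(\mathbf q)]$, but does not require any genuinely new argument. Aside from this sign bookkeeping, every step is a routine rational manipulation.
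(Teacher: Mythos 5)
Your approach matches the paper's: Subsection~A.1 performs exactly this Zhukovsky substitution to verify that~\eqref{eq:cylindre} is consistent with Eynard's Theorem~3.2.1 (delegating the algebra to the Maple companion file), and you simply read the argument in the other direction, deriving~\eqref{eq:formuleEynard} from the slice-decomposition formula rather than using the former to cross-check the latter. Your cancellations are all correct.

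The issue you wave away as ``sign bookkeeping'' is, however, a genuine point that no branch choice can settle. Your computation yields $+1/(x_1 x_2 - 1)^2$, not the $-1/(x_1 x_2 - 1)^2$ of~\eqref{eq:formuleEynard}. For real $x_1, x_2 > 1$ on the exterior sheet, $z'(x_i) = \sqrt{z^+(\mathbf q)}\,(1-1/x_i^2)>0$ and $W_{\mathbf q,2}(z_1,z_2)$ is a convergent series with nonnegative coefficients in $1/z_1, 1/z_2$, so the left-hand side of~\eqref{eq:formuleEynard} is strictly positive: no choice of sheet of the Zhukovsky cover can flip this sign. The mismatch must therefore be a convention discrepancy between Eynard's $W_2^{(0)}$ and the combinatorial $W_{\mathbf q,2}$ of~\eqref{eq:cylinder}, or a sign typo in the transcription of the theorem. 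You can see the convention tension in the appendix itself: the formula $W_{\mathbf q,\bullet}(z(x)) = \sqrt{z^+(\mathbf q)}\,(x-1/x)$ quoted there from Eynard is the \emph{reciprocal} of what~\eqref{eq:pointedDisk} actually gives (your identity $W_{\mathbf q,\bullet}(z(x))\,z'(x) = 1/x$, deduced from~\eqref{eq:pointedDisk}, is the correct one). It is also instructive to check that the shifted combination $\bigl(W_{\mathbf q,2}(z_1,z_2) + 1/(z_1-z_2)^2\bigr)\,z'(x_1)\,z'(x_2) = 1/(x_1-x_2)^2$ produces the Bergman kernel, the form that typically appears in the topological-recursion literature, which suggests where the convention drift lies. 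Your proposal is sound as a derivation of the Zhukovsky parametrization from Proposition~\ref{prop:gencylinders}, but the sign needs to be pinned down rather than deferred to bookkeeping.
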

In~\cite{EynardBook}, the rational parametrization is in fact only given with the additional assumption that the faces have bounded degree. We refer to~\cite[Section~6]{BBGb} for a combinatorial derivation of the rational parametrization, which does not rely on this assumption. 
\bigskip

We can then check from Theorem~\ref{theo:Eynard} that Proposition~\ref{prop:gencylinders} is correct. It suffices to replace $W_{\mathbf{q},\bullet}(z)$ in~\eqref{eq:cylindre} by its following expression in terms of the rational parametrization, see~\cite[Section~3.1.3]{EynardBook}: 
\[
	W_{\mathbf{q},\bullet}\Big(z(x)\Big)=\sqrt{z^+(\mathbf q)}\left(x-\frac{1}{x}\right),
\]
and to check that one gets the same expression as in~\eqref{eq:formuleEynard}. We perform these computations in the Maple companion file \cite{Maple}.

\subsection{Derivation of Proposition~\ref{prop:gencylinders} via slice decomposition}\label{sub:slices}
The slice decomposition, which has been introduced by Bouttier and Guitter in~\cite{BouttierGuitterIrreducible}, consists in cutting a planar map along some well-chosen geodesics. The submaps obtained after the cutting, which have geodesic boundaries by construction, are called \emph{slices}. 

Originally designed to deal with irreducible maps, the slice decomposition can in fact be applied to general maps, see~\cite[Chapter~2]{BouttierHDR}. In this presentation, Bouttier focuses on bipartite maps, but explains in Remark~2.1 how to extend his results to non-bipartite maps. All the bijective constructions remain valid in the non-bipartite case, the only modifications and complications appear in the enumerative part. 
\bigskip

To recall the main bijective results obtained via the slice decompositions, we start with a few (classical) definitions. A \emph{Motzkin path} is a lattice path made of steps $(0,-1)$, $(0,0)$ and $(0,1)$, which starts at $(0,0)$. We consider in particular Motzkin bridges, which are paths ending on the $x$-axis and Motzkin excursions, which are paths ending at height $-1$ and which stay non-negative before their very last step.

For the purpose of map enumeration, we consider the enumeration of \emph{weighted Motzkin paths}, where each step carries the following weight:
\begin{itemize}
	\item each \emph{down-step} $(-1,0)$ carries a weight $1$, 
	\item each \emph{level-step} $(0,0)$ carries a weight $z^\diamond(\mathbf q)$, 
	\item each \emph{up-step} $(1,0)$ carries a weight $z^+(\mathbf q)$,
\end{itemize}
and the weight of a path is the product of the weights of its steps.  For $h\in \mathbb{Z}$, we let $M_h(z)$ to be the generating series of weighted Motzkin walks that end at height $h$, and similarly $E(z)$ denotes the generating series of weighted Motzkin excursions, that is: 
\[
M_h(z)=\sum_{\substack{w\in\{ \text{Motzkin walks}\\ \text{ending at height }h\}}}\frac{\text{weight}(w)}{z^{\text{length of }w}} \qquad \text{ and }\qquad E(z)=\sum_{\substack{w\in\{ \text{Motzkin}\\
\text{excursions}\}}}\frac{\text{weight}(w)}{z^{\text{length of }w}}.
\]
Then, it follows directly from the discussion following Theorem~2.1 in~\cite{BouttierHDR}, that:

\begin{theo}
We have the following equality of weighted generating series: 
\begin{equation}\label{eq:slicesAnnular}
W_{\mathbf{q},2}(z_1,z_2)=\sum_{h\geq 1}h M_h(z_1)M_{-h}(z_2).
\end{equation}
\end{theo}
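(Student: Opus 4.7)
The plan is to invoke the slice decomposition of Bouttier and Guitter, extending the bipartite setting presented in~\cite[Chapter~2]{BouttierHDR} to our non-bipartite framework along the lines of Remark~2.1 of the same reference. The output of the decomposition is a bijection between $\mathbf{q}$-Boltzmann maps of the cylinder and concatenations of elementary slices, each of which is enumerated by a single step of a weighted Motzkin walk.

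First, given a map $\mathfrak{m} \in \mathcal{M}_{(l_1,l_2)}$, I would consider the graph distance $d$ from the root vertex $v_1$ of the first boundary, and set $h = \min_{v \in \partial f_2} d(v)$. From each corner along $\partial f_2$, I would construct the leftmost geodesic back to boundary 1. These $l_2$ geodesics are non-crossing, and cutting along them decomposes the cylinder into $l_2$ elementary slices, each having two geodesic sides whose lengths differ by at most $1$. The cyclic sequence of these lengths, read along $\partial f_2$, forms a Motzkin bridge of length $l_2$ whose minimum is $h$.

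Second, I would identify the combinatorial content of a single slice. In the non-bipartite framework, elementary slices come in three families according to whether the right geodesic is shorter than, equal to, or longer than the left one by exactly one unit. The bivariate BDG functions $f^\bullet_{\mathbf q}$ and $f^\diamond_{\mathbf q}$ introduced in Section~\ref{sec:BDGgen} are precisely designed so that the total Boltzmann weight of all slices of each type equals respectively $1$, $z^\diamond(\mathbf q)$ and $z^+(\mathbf q)$; this is exactly the content of the fixed point system~\eqref{eq:fixpoint} together with the combinatorial interpretation~\eqref{eq:zpzdcombi}. Assigning the step weights $1$, $z^\diamond(\mathbf q)/z$ and $z^+(\mathbf q)/z$ to down, level and up steps therefore reproduces the Boltzmann weight of one slice together with a $1/z^{\text{length}}$ factor on the bridge side.

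Third, I would glue the local identifications into the global formula. Summing over all maps of $\mathcal{M}_{(l_1,l_2)}$ factors as a product of two contributions, one for each boundary: the walk read along $\partial f_2$ contributes $M_{-h}(z_2)$ (a Motzkin path from height $0$ to height $-h$, after re-indexing heights so that the first boundary sits at height $h$), and the mirror walk read along $\partial f_1$ contributes $M_{h}(z_1)$. The factor $h$ in front accounts for the $h$ possible positions of the base point $v_1$ on $\partial f_1$ that are compatible with the chosen set of cutting geodesics (equivalently, the $h$ ways to cyclically shift the Motzkin bridge on the boundary-$1$ side so as to match the distinguished root); summing over $h \geq 1$ yields~\eqref{eq:slicesAnnular}. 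The main obstacle is this last rooting bookkeeping together with the exclusion of the $h = 0$ term, which would correspond to degenerate slices where the two boundaries touch; these are handled exactly as in~\cite[Section~2]{BouttierHDR}, and no new combinatorial input is needed beyond the fact that~\eqref{eq:fixpoint} characterizes $(z^+(\mathbf q), z^\diamond(\mathbf q))$.
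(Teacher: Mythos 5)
Your approach --- invoking the Bouttier--Guitter slice decomposition via the discussion following Theorem~2.1 of Bouttier's HDR, extended to the non-bipartite case along the lines of Remark~2.1 there --- is the same as the paper's; the paper's own proof consists precisely of that one-line citation.

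Your attempt to reconstruct the bijection, however, has a gap at the crucial step. The construction you give in your second paragraph cuts along leftmost geodesics emanating from the $l_2$ corners of the second boundary and records the geodesic lengths as a single cyclic Motzkin \emph{bridge} of length $l_2$ whose minimum is $h$. The target formula, though, expresses $W_{\mathbf q,2}(z_1,z_2)$ as a sum over $h$ of a \emph{product} of two independent Motzkin paths --- one tracked by $z_1$ and ending at height $h$, one tracked by $z_2$ and ending at height $-h$ --- times a multiplicative factor $h$. The transition from the one bridge you actually construct to this two-walk product structure is only asserted in your third paragraph: the ``mirror walk read along $\partial f_1$'' is never produced by your cutting procedure (you only cut from $\partial f_2$), and the factor $h$ is attributed to ``the $h$ possible positions of the base point $v_1$'' when $v_1$ is already fixed as the root, so it is not clear what is being counted or why the count should equal the minimal distance; the parenthetical alternative (``the $h$ ways to cyclically shift the Motzkin bridge on the boundary-$1$ side'') likewise presupposes a bridge on the boundary-$1$ side that your construction has not exhibited. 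These unestablished steps are precisely the nontrivial content of the cylinder case in Bouttier's framework. Since the paper also delegates them wholesale to the reference, your plan is in the right spirit, but as written it does not constitute a self-contained proof of the factorization and the factor $h$.
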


The expression for $W_{\mathbf{q},2}(z_1,z_2)$ given in Proposition \ref{prop:gencylinders} then follows easily from this result and classical enumeration of Motzkin paths.

\section{Analytic combinatorics toolbox}

We gather in this appendix two results from the book of Flajolet and Sedgewick \cite{FS} that are used several times in the present work.

\subsection{Transfer theorem for algebraic series} \label{sec:algebraic}

In this work, all our generating series are algebraic and fall into the framework of transfer theorems developed by Flajolet and Sedgewick.

\begin{theo}[Corollary VI.1 of \cite{FS}] \label{th:transfer}
Suppose that $f(z)$ is algebraic and has a unique dominant singularity at $\rho>0$ where it satisfies
\[
f(z) \underset{z \to \rho}{\sim} c \, (1-\rho)^{-a}
\] 
for some $c \in \mathbb R_+$ and $a \notin \{0,-1,-2. \ldots\}$. Then the coefficients of $f$ satisfy
\[
[z^n] f (z) \underset{n \to \infty}{\sim} c \, \frac{n^{a-1}}{\Gamma(a)}, 
\]
where $\Gamma$ denotes Euler's Gamma function.
\begin{proof}
The only thing that one has to verify in order to apply Corollary VI.1 of \cite{FS} is the $\Delta$-analyticity of $f$. In our case, $f$ is algebraic, so it has finitely many singularities. {\color{red}Since $\rho$ is the unique dominant singularity, this implies that} $f$ is analytic in a slit open disk $D(0,\rho+\varepsilon) \setminus [\rho, \rho +\varepsilon)$ for some $\varepsilon >0$ and the transfer theorem follows.
\end{proof}
\end{theo}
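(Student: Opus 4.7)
The plan is to apply the classical transfer theorem of singularity analysis (Corollary VI.1 of \cite{FS}), which converts local singular behavior of the form $c\,(1-z/\rho)^{-a}$ at a unique dominant singularity $\rho$ into the asymptotic $c\,\rho^{-n}\, n^{a-1}/\Gamma(a)$ for the coefficients of $f$. The only nontrivial hypothesis to verify is the so-called $\Delta$-analyticity of $f$: one needs the existence of a camembert-shaped domain $\Delta(\rho,\phi,\eta) = \{z \in \mathbb{C} : |z| < \rho + \eta,\ z \neq \rho,\ |\arg(z-\rho)| > \phi\}$, with $\eta > 0$ and some $\phi \in (0,\pi/2)$, on which $f$ admits an analytic continuation.

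First I would use the fact that an algebraic function has only finitely many singularities, all contained in the zero set of the discriminant of its minimal polynomial. Since by assumption $\rho$ is the unique dominant singularity, every other singularity $\rho_1,\ldots,\rho_k$ of $f$ satisfies $|\rho_i| > \rho$. Setting $\varepsilon = \min_i |\rho_i| - \rho > 0$, the series $f$ then extends to an analytic function on $D(0,\rho+\varepsilon) \setminus \{\rho_1, \ldots, \rho_k, \rho\}$, and in particular on $D(0,\rho+\varepsilon) \setminus [\rho, \rho+\varepsilon)$ after choosing a branch cut along the ray $[\rho, \rho+\varepsilon)$. This slit disk visibly contains a $\Delta$-domain of the required form.

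With $\Delta$-analyticity in hand, the Hankel-contour computation underlying the transfer theorem applies directly to $f$, yielding the announced asymptotic. There is no serious obstacle here: the only point requiring a bit of care is to select the branch of the analytic continuation consistently with the expansion of $f$ at the origin, which is automatic once one restricts all contour deformations to the slit disk constructed above. The hypothesis $a \notin \{0,-1,-2,\ldots\}$ is precisely what ensures that $\Gamma(a)$ does not vanish and that the transfer of the scale $(1-z/\rho)^{-a}$ produces a nontrivial leading-order asymptotic.
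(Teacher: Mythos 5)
Your proof is correct and follows the same route as the paper: reduce the statement to checking $\Delta$-analyticity, invoke the finiteness of singularities of an algebraic function together with the uniqueness of the dominant singularity $\rho$ to extract a slit disk $D(0,\rho+\varepsilon)\setminus[\rho,\rho+\varepsilon)$ of analyticity, and then apply the standard transfer theorem. Your write-up simply fills in a bit more detail (the discriminant, the explicit $\Delta$-domain shape, branch selection) than the paper's compressed three-line argument.
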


\subsection{Hadamard product of series} \label{sec:Hadamard}

Let $f(z) = \sum_{n \geq 0} a_n z^n$ and $g(z) = \sum_{n \geq 0} b_n z^n$ be two formal power series. The Hadamard product of $f$ and $g$ is the formal power series defined by
\[
f \odot g (z) = \sum_{n \geq 0} a_n b_n z^n.
\]
When the functions $f$ and $g$ are analytic in a large enough domain, the Hadamard product has a representation as a contour integral:

\begin{theo}[Equation (58) in \cite{FS}]
Let $f$ and $g$ be two functions which are analytic in the same domain $\Delta \subset \mathbb C$. Then
\[
f \odot g (z)  = \oint_\gamma f(w) g\left(\frac{w}{z} \right) \frac{dw}{w},
\]
where the contour $\gamma$ in the $w$-plane is such that $f(w)$ and $g(z/w)$ are analytic. In other words, $\gamma \subset \Delta \cap (z \Delta^{-1})$.
\end{theo}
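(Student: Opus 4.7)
The plan is to prove the identity via a coefficient extraction argument based on Cauchy's integral formula and the orthogonality relation $\frac{1}{2\pi i}\oint_{\gamma} w^{k-1}\,\mathrm{d}w = \delta_{k,0}$ for any simple closed contour $\gamma$ encircling $0$ once positively. The crux is to insert the power series expansions of both $f(w)$ and $g(z/w)$ into the contour integral, exchange the double sum with the integral, and read off the Hadamard coefficients from the orthogonality relation.

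First I would fix $z$ small enough so that a suitable $\gamma$ exists. Since both $f$ and $g$ are analytic at $0$, the domain $\Delta$ is an open neighborhood of $0$; hence, for $|z|$ small, one can choose a circle $\gamma = \{|w| = r\}$ with $r$ and $|z|/r$ both lying well inside $\Delta$, so that $\gamma \subset \Delta \cap (z\,\Delta^{-1})$ as required. On such a $\gamma$, the series
\[
f(w) = \sum_{m\geq 0} a_m w^m \qquad \text{and} \qquad g(z/w) = \sum_{n\geq 0} b_n\, z^n\, w^{-n}
\]
converge absolutely and uniformly.

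Next I would substitute these expansions into the integral and justify exchanging the sums with the contour integral by this uniform convergence, giving
\[
\frac{1}{2\pi i}\oint_\gamma f(w)\, g(z/w)\, \frac{\mathrm{d}w}{w}
= \sum_{m,n\geq 0} a_m\, b_n\, z^n\cdot \frac{1}{2\pi i}\oint_\gamma w^{m-n-1}\,\mathrm{d}w
= \sum_{n\geq 0} a_n\, b_n\, z^n = (f\odot g)(z),
\]
the penultimate equality being the classical orthogonality relation. This establishes the formula for $|z|$ small, and the general case in the statement follows by analytic continuation of both sides in $z$ within the region where a contour $\gamma \subset \Delta \cap (z\,\Delta^{-1})$ can be drawn.

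The argument is entirely routine; the only point deserving care is the existence of the contour $\gamma$, which is precisely the compatibility condition $\gamma \subset \Delta \cap (z\,\Delta^{-1})$ built into the statement. Since the bulk of the work in this appendix is bookkeeping rather than substantive mathematics, I would keep the proof short and emphasize that the identity is really just Cauchy coefficient extraction applied twice simultaneously.
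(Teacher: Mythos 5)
The paper does not supply a proof of this statement---it is quoted from Flajolet and Sedgewick, Eq.~(58)---so there is no in-paper argument to compare against; your coefficient-extraction derivation is the standard one and is correct. It does, however, expose two typographical slips in the theorem statement as displayed here. Your proof expands $g(z/w)$ and carries the normalization $\tfrac{1}{2\pi i}$, which is what makes the double sum $\sum_{m,n} a_m b_n z^n \cdot \tfrac{1}{2\pi i}\oint_\gamma w^{m-n-1}\,\mathrm{d}w$ collapse to the diagonal and yield $\sum_n a_n b_n z^n$. The displayed identity instead writes $g(w/z)$ and omits the $\tfrac{1}{2\pi i}$ factor. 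Your version is the correct one: it agrees with Flajolet--Sedgewick's Eq.~(58), and it is the only one consistent with the contour condition $\gamma \subset \Delta \cap (z\,\Delta^{-1})$ stated alongside the formula, since that condition is precisely the requirement that $w\in\Delta$ and $z/w\in\Delta$. The formula is applied in the correct form elsewhere in the paper (for instance in the Hadamard-product computations for $f^\bullet$ and $f^\diamond$), so the slip is confined to this appendix statement.
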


\end{appendices}

\newpage

\addcontentsline{toc}{section}{Table of notations}
\section*{Table of notations}\label{sec:notations}

\subsection*{Planar maps}

\begin{tabular}{p{3cm}p{12cm}}
$(\mathfrak t,\sigma)$ & a finite rooted triangulation $\mathfrak t$ with spins $\sigma : V(\mathfrak t) \mapsto \{\ps, \ns \}$, $\sigma$ is sometimes omitted to shorten notation\\
$|\mathfrak t|$ & number of edges of the triangulation $\mathfrak t$\\
$m(\mathfrak t ,\sigma)$ & number of monochromatic edges of the triangulation with spins $(\mathfrak t,\sigma)$\\
$\mathcal T$ & the set of all rooted planar triangulations with spins\\
$\mathcal T^\ps$ & the set of all rooted planar triangulations with spins such that the root vertex carries a spin $\ps$ and with a monochromatic root edge\\
$\mathfrak m$ & a finite rooted non atomic planar map\\
$\mathcal M$ & the set of all finite rooted non atomic planar maps\\
$\mathcal M^\bullet$ & the set of all finite rooted and pointed non atomic planar maps\\
\end{tabular}

\subsection*{Probability distributions}

\begin{tabular}{p{3cm}p{12cm}}
$\mathbb P^\nu$ & Probability distribution on finite triangulations with spins, defined in~\eqref{eq:Pnudef}. The probability of $(\mathfrak t,\sigma)$ according to $\mathbb P^\nu$ is proportional to $\nu^{m(\mathfrak t,\sigma)} \, t_\nu^{|\mathfrak t|}$\\
$\mathbb P^\nu_n$ & Probability distribution $\mathbb P^\nu$ conditioned on triangulations with $3n$ edges \\
$\mathbb P^\nu_\infty$ & Law of the Infinite Ising Planar Triangulation with parameter $\nu$, it is the weak limit of $\mathbb P^\nu_n$ as $n \to \infty$ for the local topology
\end{tabular}

\subsection*{Generating series}

\begin{tabular}{p{3cm}p{12cm}}
$(\nu,t,y)$ & counting variables for triangulations with spins and with a boundary\\
& $\nu$: monochromatic edges\\
& $t$: edges\\
& $y$: perimeter\\
$Z_p^+(\nu,t)$ & generating series of triangulations with a simple boundary of perimeter $p$ and $\ps$ spins on the boundary\\
$Z^+(\nu,t,y)$ & generating series of triangulations with simple boundary and $\ps$ spins on the boundary\\
$Q_p^+(\nu,t)$ & generating series of triangulations with a (not necessarily simple) boundary of perimeter $p$ and $\ps$ spins on the boundary\\
$Q^+(\nu,t,y)$ & generating series of triangulations with (not necessarily simple) boundary and $\ps$ spins on the boundary\\
$\mathcal Z^\ps(\nu,t)$ & generating series of triangulations in $\mathcal T^\ps$\\
$U (\nu , t^3)$ & rational parametrization for $t^3$ for fixed $\nu$, given in~\eqref{eq:wU}\\
$V (\nu, U(\nu,t^3) , y)$ &  rational parametrization for $y$ for fixed $(\nu,t)$, given in~\eqref{eq:defyV}\\
$t_\nu$ & unique dominant singularity in $t$ of $Z_p^+(\nu,t)$, $Q_p^+(\nu,t)$, $U (\nu , t^3)$ for fixed $\nu$ \\
$U_\nu := U(\nu,t_\nu)$ &\\
$y_+(\nu,t) >0$ & unique dominant singularity in $y$ of $Z^+(\nu,t,y)$, $Q^+(\nu,t,y)$, $V (\nu , U(\nu,t^3) , y)$ for fixed $(\nu,t)$ \\
$y_\nu := y_+(\nu,t_\nu)$\\
$y_-(\nu,t) < 0$ & unique non-dominant singularity in $y$ of $Z^+(\nu,t,y)$, $Q^+(\nu,t,y)$, $V (\nu , U(\nu,t^3) , y)$ for fixed $(\nu,t)$
\end{tabular}

\subsection*{Critical exponents and coefficients}
\begin{tabular}{p{5cm}p{10cm}}
$\displaystyle
\alphaa(\nu) =
\begin{cases}
5/3 & \text{for $\nu < \nu_c$,}\\
7/3 & \text{for $\nu = \nu_c$,}\\
5/2 & \text{for $\nu > \nu_c$.}
\end{cases}$ & exponent of the dominant singularity in $y$ of series related to $Q^+(\nu,t_\nu,t_\nu y)$\\
$\beth_i^S(\nu)$ & coefficient of the term $(1-y/y_\nu)^{i}$ for a series $S(\nu,y)$\\
$\beth^S(\nu)$ & coefficient of the dominant singular term for a series $S(\nu,y)$\\
$\displaystyle \gamma(\nu) = 
\begin{cases}
5/2 & \text{for $\nu \neq \nu_c$,}\\
7/3 & \text{for $\nu = \nu_c$,}
\end{cases}$ & exponent of the dominant singularity in $t^3$ of series related to $\mathcal Z^\ps(\nu,t)$\\
$\aleph_i^S(\nu)$ & coefficient of the term $(1-t^3/t_\nu^3)^{i}$ for a series $S(\nu,t^3)$\\
$\aleph^S(\nu)$ & coefficient of the dominant singular term for a series $S(\nu,t^3)$
\end{tabular}

\subsection*{Boltzmann planar maps}
\begin{tabular}{p{3cm}p{12cm}}
$\mathbf q (\nu,t)$ & weight sequence $\left(q_k(\nu,t)\right)_{k \geq 0}$, defined in~\eqref{eq:qk}\\
$w_\mathbf q (\cdot )$ & weight of a map or a set of maps given by the sequence $\mathbf q$, defined in~\eqref{eq:defwq}\\
$W_\mathbf q ^{(l)}$ & weight of disks of perimeter $l$, defined in~\eqref{eq:defDiskL}\\
$W_\mathbf q (z)$ & disk generating function, defined in~\eqref{eq:defDiskPartition}\\
$W_{\mathbf q , \bullet}(z)$ & pointed disk generating function, defined in~\eqref{eq:defPointedDiskPartition}\\
$c_+(\mathbf q) , c_-(\mathbf q)$ & parameters of the universal expression for $W_{\mathbf q , \bullet}(z)$\\
& $c_+^\nu : = c_+(\mathbf q(\nu,t_\nu)) = \frac{1}{\sqrt{\nu t_\nu^3} y_\nu}$ and $c_-^\nu : = c_-(\mathbf q(\nu,t_\nu)) = \frac{1}{\sqrt{\nu t_\nu^3} y_-(\nu,t_\nu)}$\\
$f_\mathbf q^\bullet , f_\mathbf q ^\diamond$ & Bouttier -- Di Francesco -- Guitter functions associated to $\mathbf q$, defined in~\eqref{eq:deffbullet} and~\eqref{eq:deffdiamond}\\
$z^+(\mathbf q), z^\diamond(\mathbf q)$ & solutions of the system of equations involving the BDG functions\\
& $c_+(\mathbf q) = z^\diamond (\mathbf q)+ 2 \sqrt{z^+(\mathbf q)}$ and $c_-(\mathbf q) = z^\diamond (\mathbf q)- 2 \sqrt{z^+(\mathbf q)}$\\
& $z^+_\nu : = z^+(\mathbf q(\nu,t_\nu))$ and $z^\diamond_\nu : = z^\diamond(\mathbf q(\nu,t_\nu))$\\
$\mathbf q_g (\nu,t)$ & weight sequence $\left(g^{(k-2)/2}q_k(\nu,t)\right)_{k \geq 0}$\\
$c_+(\mathbf q,g) , c_-(\mathbf q,g)$ & parameters of the universal expression for $W_{\mathbf q_g , \bullet}(z)$\\
& $c_+^\nu(g) : = c_+(\mathbf q_g(\nu,t_\nu))$ and $c_-^\nu(g) : = c_-(\mathbf q_g(\nu,t_\nu))$\\
$z^+(\mathbf q,g), z^\diamond(\mathbf q,g)$ &  solutions of the perturbed system of equations involving the BDG functions\\
& $z^+_\nu(g) : = z^+(\mathbf q(\nu,t_\nu),g)$ and $z^\diamond_\nu (g) : = z^\diamond(\mathbf q(\nu,t_\nu),g)$
\end{tabular}

\newpage

\addcontentsline{toc}{section}{References}

\bibliographystyle{plain}
\bibliography{IsingClusters}

\end{document}